\newlength{\fighskip} \fighskip=2pt
\newlength{\figvskip} \figvskip=3pt
\newcommand*{\figbox}[2]{{
  \def\figscale{#1}
  \def\arraystretch{0.8}
  \arraycolsep=0pt
  \begin{array}{c}
    \vbox{\vskip\figscale\figvskip
      \hbox{\hskip\figscale\fighskip
        \includegraphics[scale=\figscale]{#2}}}
  \end{array}}}
\numberwithin{equation}{section}
\newcommand{\op}{\operatorname}
\newcommand{\C}{\mathbb{C}}
\newcommand{\R}{\mathbb{R}}
\newcommand{\Q}{\mathbb{Q}}
\newcommand{\Z}{\mathbb{Z}}
\newcommand{\E}{{\mathcal E}}
\renewcommand{\H}{\mathbb{H}}
\newcommand{\g}{\mathfrak{g}}
\newcommand{\abracket}[1]{\left\langle#1\right\rangle}
\newcommand{\bbracket}[1]{\left[#1\right]}
\newcommand{\fbracket}[1]{\left\{#1\right\}}
\newcommand{\bracket}[1]{\left(#1\right)}
\newcommand{\cinfty}{C^{\infty}}
\newcommand{\pa}{\partial}
\renewcommand{\dbar}{\bar\pa}
\newcommand{\OO}{{\mathcal O}}
\newcommand{\BV}{Batalin-Vilkovisky }
\newcommand{\CE}{Chevalley-Eilenberg }
\newcommand{\into}{\hookrightarrow}
\newcommand{\Ol}{\mathcal O_{loc}}
\newcommand{\iso}{\cong}
\newcommand{\Kahler}{K\"{a}hler }
\newcommand{\MC}{Maurer-Cartan }
\renewcommand{\Im}{\op{Im}}
\renewcommand{\Re}{\op{Re}}
\DeclareMathOperator{\Sym}{Sym}
\DeclareMathOperator{\Hom}{Hom}
\DeclareMathOperator{\Tr}{Tr}
\DeclareMathOperator{\Jet}{Jet}
\DeclareMathOperator{\Res}{Res}
\DeclareMathOperator{\Jac}{Jac}
\newcommand{\A}{\mathcal A}
\renewcommand{\L}{\mathcal L}
\theoremstyle{plain}
\newtheorem{thm}{Theorem}[section]
\newtheorem{thm-defn}{Theorem/Definition}[section]
\newtheorem{lem}[thm]{Lemma}
\newtheorem{lem-defn}[thm]{Lemma/Definition}
\newtheorem{prop}[thm]{Proposition}
\newtheorem{cor}[thm]{Corollary}
\newtheorem*{claim}{Claim}
\theoremstyle{definition}
\newtheorem{defn}[thm]{Definition}
\newtheorem{notn}[thm]{Notation}
\newtheorem{eg}[thm]{Example}
\theoremstyle{remark}
\newtheorem{rmk}[thm]{Remark}
\begin{document}

 \title{{On the B-twisted topological sigma model and Calabi-Yau geometry}}
  \author{Qin Li}
   \address{Department of Mathematics, The Chinese University of Hong Kong, Shatin, Hong Kong;}
   \address{School of Mathematical Sciences, Wu Wen-Tsun Key Laboratory of Mathematics, University of Science and Technology of China\\
Hefei, China}
 \email{qli@math.cuhk.edu.hk}

  \author{Si Li}
   \address{Department of Mathematics and Statistics, Boston University, 111 Cummington Mall, Boston, U.S.A.
  }
 \email{sili@math.bu.edu}
  \date{}

  % ∑‚√Ê
  \maketitle

%%%%%%%%%%%%%%%%%%%%%%%%%%%%%%
%% «∞—‘≤ø∑÷
%%%%%%%%%%%%%%%%%%%%%%%%%%%%%%

\begin{abstract}
We provide a rigorous perturbative quantization of the B-twisted topological sigma model via a first order quantum field theory on derived mapping space in the formal neighborhood of constant maps. We prove that the first Chern class of the target manifold is the obstruction to the quantization via \BV formalism. When the first Chern class vanishes, i.e. on Calabi-Yau manifolds, the factorization algebra of observables gives rise to the expected topological correlation functions in the B-model. We explain a twisting procedure to generalize to the Landau-Ginzburg case, and show that the resulting topological correlations coincide with Vafa's residue formula.
\end{abstract}

  \tableofcontents

\section{Introduction}
Mirror symmetry predicts dualities between quantum geometries on Calabi-Yau manifolds. The two sides of the dual theories are called \emph{A-model}
and \emph{B-model} respectively. The A-model is related to symplectic geometry, which is mathematically established as \emph{Gromov-Witten} theory of
counting holomorphic maps. The B-model is attached to complex geometry, which could be understood via \emph{Kodaira-Spencer gauge} theory.  Such
gauge theory is proposed by Bershadsky, Cecotti, Ooguri and Vafa \cite{BCOV} as a closed string analogue of Chern-Simons theory \cite{Witten-CS} in the B-model, whose classical theory describes the deformation of complex structures. We refer to \cite{Barannikov-Kontsevich,Si-BCOV, Si-elliptic,Si-review, Si-gauge} for some recent mathematical
development of its quantum geometry.

Although Kodaira-Spencer gauge theory provides the geometry of B-model from the point of view of string/gauge duality, a direct mathematical approach
to B-model in the spirit of $\sigma$-model is still lacking. The main difficulty is the unknown measure of path integral on the infinite dimensional
mapping space. Thanks to supersymmetry, the physics of B-model path integral is expected to be fully encoded in the small neighborhood of constant
maps. This allows us to extract physical quantities via classical geometries, such as Yukawa couplings for  genus-$0$ correlation functions, etc (See \cite{mirror} for an introduction).
 It is thus desired to have
a mathematical theory to
reveal the above physics context in the vicinity of constant maps, parallel to the localized space of holomorphic maps in the A-model.

The main purpose of the current paper is to provide a rigorous geometric model to analyze B-model via mapping space. To illustrate our method, we will focus on topological
field theory in this paper, while leaving the topological string for coupling with gravity in future works. In the rest of the introduction, we will
sketch the main ideas and explain our construction. A closely related development of B-model in physics has been communicated recently to us by Losev \cite{Losev}.

The geometry of B-twisted $\sigma$-model (in the spirit of AKSZ-formalism \cite{AKSZ}) describes the mapping
$$
   (\Sigma_g)_{dR}\to T^\vee_X[1],
$$
where $(\Sigma_g)_{dR}$ is the ringed space with the sheaf of de Rham complex on the Riemann surface $\Sigma_g$, and $T^\vee_X[1]$ is the
super-manifold associated to the cotangent bundle of $X$ with degree one shifting in the fiber direction. The full mapping space
is difficult to analyze. Instead we
will consider the mapping space in the formal neighborhood of constant maps. Such consideration is proposed in \cite{Kevin-SUSY} to fit
into the effective renormalization method developed in \cite{Kevin-book}. Therefore the corresponding perturbative quantum field theory can be
rigorously analyzed, which is the main context of the current paper. As we have mentioned above, zooming into the neighborhood of constant maps in the B-model does not lose information in physics due to supersymmetry.

\noindent \textbf{Notations}: We will fix some notations that will be used throughout the paper. For a smooth manifold $M$, we
will let $\A_M$ denote the sheaf of de Rham complex of smooth differential forms on $M$, and let $\A_M^\sharp$ denote the sheaf of
smooth differential forms forgetting the de Rham differential:
$$
   \A_M:=(\A_M^\sharp, d_M).
$$
$D_M$ refers to the sheaf of smooth differential operators on $M$. When $M$ is a complex manifold, $\OO_M$ refers to the sheaf of
holomorphic
functions, and $T_M$ denotes either the holomorphic tangent bundle, or the sheaf of holomorphic tangent vectors,  while its
meaning should be clear from the context. Similarly for the dual $T_M^\vee$. We will use $\Omega_M^\bullet$ to denote the sheaf of
holomorphic de Rham complex on $M$, and $D_M^{hol}$ the sheaf of holomorphic differential operators. The
tensor product $\otimes$ without mentioning its ring means $\otimes_{\C}$.

\subsection{Calabi-Yau model}

The space of fields describing our B-twisted $\sigma$-model is given  by
$$
  \mathcal E:= \mathcal A_{\Sigma_g}\otimes (\mathfrak{g}_X[1]\oplus \mathfrak{g}_X^\vee),
$$
where $\mathfrak{g}_X$ is the sheaf of curved $L_\infty$-algebra on $X$ describing its complex geometry \cite{Kevin-CS}. As a sheaf itself,
$$
   \mathfrak{g}_X=\mathcal A^\sharp_X\otimes_{\OO_X} T_X[-1], \quad \mathfrak{g}_X^\vee =\mathcal A^\sharp_X\otimes_{\OO_X} T_X^\vee[1].
$$
The \CE complex $C^*(\mathfrak{g}_X)$ is a resolution of the sheaf $\OO_X$ of holomorphic functions on $X$, and the curved $L_\infty$-algebra $\mathfrak{g}_X\oplus \mathfrak{g}_X^\vee[-1]$ describes the derived geometry of $T^\vee_X[1]$ (see Section \ref{section-classical} for details).

The \CE differential and the natural symplectic pairing equip $T^\vee_X[1]$ (more precisely its $L_\infty$-enrichment) with the structure of QP-manifold \cite{AKSZ}. The action functional is constructed via the AKSZ-formalism in the same fashion as in \cite{Kevin-CS}, formally written as
$$
  S(\alpha+\beta)=\int_{\Sigma_g} \langle d_{\Sigma_g}\alpha, \beta\rangle +\sum_{k\geq 0}\abracket{{l_k(\alpha^{\otimes k})\over (k+1)!}, \beta}
$$
for $\alpha \in \mathcal A_{\Sigma_g}\otimes \mathfrak{g}_X[1], \beta\in \mathcal A_{\Sigma_g}\otimes \mathfrak{g}_X^\vee$. Here $l_k$'s are the
$L_\infty$-products for $\mathfrak{g}_X$. By construction, the action functional satisfies a version of classical master
equation (See section \ref{classical action}).  One interesting  feature is that $S$ contains only one derivative (coming from
$d_{\Sigma_g}$), and the first-order formulation has been used (e.g. \cite{Losev-I,CDC,Frenkel-Losev}) to describe the the twisted $\sigma$-model around the large volume limit. We follow the more recent formulation \cite{Kevin-CS,Owen-Ryan}, using $L_\infty$-algebra via jet bundle as a coherent way to do perturbative expansion over the target
manifold $X$. In fact, the terms involving $L_\infty$ products exactly represent the curvature of the target (see \cite{Kevin-CS}
for an explanation) in terms of jets.

We would like to do perturbative quantization via Feynman diagrams on the infinite dimensional space $\mathcal E$ analogous to the ordinary non-linear $\sigma$-model \cite{Friedan}. One convenient theory via effective \BV formalism is
developed by Costello \cite{Kevin-book}, and we will analyze the quantization problem via this approach.

\begin{thm}[Theorem \ref{thm:obstruction-quantization}, Theorem \ref{thm:one-loop-correction}]
Let $X$ be a complex manifold.
\begin{enumerate}
\item The obstruction to the existence of perturbative quantization of our B-twisted topological $\sigma$-model is given by $(2-2g)c_1(X)$, where $g$
is the genus of the Riemann surface $\Sigma_g$ and $c_1(X)$ is the first Chern-class of $X$.
\item If $c_1(X)=0$, i.e. X being Calabi-Yau, then there exists a canonical perturbative quantization associated to a choice of holomorphic volume
form $\Omega_X$.
\end{enumerate}
\end{thm}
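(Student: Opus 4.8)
The plan is to follow Costello's effective renormalization and \BV formalism. First I would fix a metric on $\Sigma_g$, which via Hodge theory supplies the gauge-fixing operator $Q^{GF}=d_{\Sigma_g}^*$ (acting on the $\mathcal A_{\Sigma_g}$ factor) and hence a propagator $P(\epsilon,L)$ built from the heat kernel of the Laplacian on forms. With this data the scale-$L$ effective interaction is obtained by the renormalization group flow $I[L]=\lim_{\epsilon\to 0}W(P(\epsilon,L),I)$, the $\epsilon\to 0$ limit existing after the usual counterterm subtraction. The classical master equation (Section \ref{classical action}) guarantees that $I$ solves the classical \BV equation; the task is to promote $\{I[L]\}$ to a solution of the scale-$L$ quantum master equation $Q\,I[L]+\tfrac12\{I[L],I[L]\}_L+\hbar\,\Delta_L I[L]=0$. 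The obstruction to doing so, order by order in $\hbar$, is a cohomology class $\Theta$ valued in the degree-one local functionals modulo the classical differential, and the entire problem is to identify this class.

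The key simplification, which I would establish next, is that the action is \emph{linear in the field} $\beta\in\mathcal A_{\Sigma_g}\otimes\mathfrak{g}_X^\vee$: every vertex $\abracket{l_k(\alpha^{\otimes k}),\beta}$ carries exactly one $\beta$-leg, while the propagator pairs one $\alpha$ with one $\beta$. Counting $\beta$-legs against propagators in a connected graph with $V$ vertices and $E$ internal edges gives exactly $V-E$ external $\beta$-legs, forcing $E\le V$ with equality precisely when there is no external $\beta$-leg, so the loop number $\ell=E-V+1$ is at most one. Hence the theory is \emph{one-loop exact}: the only graphs contributing are tree diagrams and the one-loop \emph{wheels}, a single cycle of vertices joined by propagators with external $\alpha$-legs attached. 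In particular all higher $\hbar$-corrections vanish and the obstruction $\Theta$ is a finite sum of such wheel diagrams, evaluated in the $L\to 0$ limit.

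It then remains to compute the wheel anomaly and to recognize it geometrically. I would factor each wheel into an analytic part — a configuration-space integral over $\Sigma_g$ of products of propagators and their short-distance limits — and a target part living on $X$. The analytic part, by a heat-kernel/Gauss--Bonnet computation on the Riemann surface, is a topological invariant of $\Sigma_g$ and yields the numerical factor $2-2g$. The target part is a trace over the fibre of the cycle of vertices, i.e.\ a trace of the iterated $L_\infty$-products $l_k$; since these products encode the curvature of $T_X$ in the jet description (as recalled following \cite{Kevin-CS}), the wheel with its single trace is precisely a Chern--Weil representative of $\Tr$ of the curvature, that is $c_1(X)$. Assembling the two factors identifies $\Theta$ with $(2-2g)\,c_1(X)$, proving part (1). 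The step I expect to be the genuine obstacle is this last identification: controlling the $L\to 0$ limit so that the configuration-space integral is finite and genuinely computes the Euler characteristic, and matching the fibrewise trace of the $L_\infty$-products to a \emph{closed} Chern--Weil form representing $c_1$, rather than to some noncanonical local density.

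For part (2), when $c_1(X)=0$ the class $\Theta$ vanishes, so the one-loop obstruction is killed and, by one-loop exactness, a solution $I[L]$ of the quantum master equation exists to all orders. To pin down a \emph{canonical} such solution I would use the holomorphic volume form $\Omega_X$: a choice of $\Omega_X$ trivializes the canonical bundle $K_X=\det T_X^\vee$ and thereby fixes the fibrewise \BV measure on $T^\vee_X[1]$, which is exactly the data needed to make the divergence operator $\Delta_L$, and hence the quantization, unambiguous. Since the remaining ambiguity in quantizations is governed by the degree-zero local functionals, I would finally check that the normalization supplied by $\Omega_X$ selects a distinguished element, yielding the asserted canonical quantization.
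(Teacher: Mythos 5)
Your part (1) is, in outline, the paper's own argument: your $\beta$-leg counting is just the combinatorial form of the paper's $\C^\times$-symmetry (Lemma \ref{lem:C-action}, Proposition \ref{prop:invariant-actions}), which forces one-loop exactness. But the step you flag as "the genuine obstacle" is resolved in a way different from what you anticipate, and the difference matters. In the general one-loop anomaly formula (Theorem \ref{thm-1-loop-anomaly}) the anomaly is a sum of wheels carrying one special edge labelled $\mathbb{K}_\epsilon-\mathbb{K}_0$ \emph{plus} the self-contraction term $\Delta_\epsilon I_{cl}$; the paper proves (Lemmas \ref{lem:two-vertex-wheels} and \ref{lem:more-vertex-wheels}, via the hyperbolic heat-kernel estimates of Appendix \ref{appendix:Feynman-graph-computation}) that every wheel with two or more vertices contributes zero. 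So the anomaly is \emph{not} an iterated trace of curvatures around a cycle of vertices assembling into a Chern--Weil form; it collapses to the one-vertex self-contraction, whose analytic factor is $\lim_{\epsilon\to 0}\mathrm{Str}\,K_\epsilon(z,z)=(2-2g)\,\text{dvol}_{\Sigma_g}$ by the local index theorem, and whose combinatorial factor is $\Delta L$, representing $c_1(X)$. Moreover, for the sentence "the obstruction equals $(2-2g)c_1(X)$" to have meaning you must first identify the cohomology in which the obstruction lives; this is the $D$-module computation $H^1(\widetilde{Ob})\cong\bigoplus_{p+q=3}H^p_{dR}(\Sigma_g)\otimes H^q(X,\Omega^1_{cl})$, absent from your sketch. (A smaller point: the paper shows the counterterms for this theory vanish outright, Lemma/Definition \ref{lem:naive quantization}, rather than invoking "the usual counterterm subtraction".)

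The genuine gap is in part (2). You claim that $\Omega_X$ is needed to fix a "fibrewise BV measure" and thereby make the operator $\Delta_L$ unambiguous. That is a misconception: $\Delta_L={\pa \over \pa \mathbb{K}_L}$ is already canonically defined from the degree $-1$ symplectic pairing on $\E$ and the heat kernel, with no reference to any volume form, so your mechanism produces nothing. The actual issue is that when $c_1(X)=0$ and $g\neq 1$ the anomaly cocycle $O_1$ is still a nonzero local functional --- it is merely \emph{exact} --- so the naive quantization fails the quantum master equation and one must construct an explicit local functional $I_{qc}$ with $QI_{qc}+\fbracket{I_{cl},I_{qc}}=O_1$; the canonical quantization is then $I_{naive}[L]+\hbar I_{qc}[L]$. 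This is exactly where $\Omega_X$ enters: under the isomorphism $C^*(\g_X,\Sym^*\g_X)\cong\A_X\otimes_{\OO_X}\Jet^{hol}_X(\wedge^*T_X)$ of Proposition \ref{prop:iso-CE-jet}, the jet-level BV operator $\Delta$ and the divergence operator $\pa_{\Omega_X}$ induced by contraction with $\Omega_X$ have the same symbol and differ by $\fbracket{\log P,-}$, which yields the explicit correction
$$
B=(2-2g)\log \bracket{{\pi_2^*(\Omega_X) \over \wedge^n\bracket{\pa_{dR}\circ T}(\Omega_X)}}
$$
of Proposition \ref{correction-formula}. Without producing this correction term (or an equivalent trivialization of the exact anomaly), your argument gives at best abstract existence of \emph{some} quantization, not the canonical one associated to $\Omega_X$ that the theorem asserts.
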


We refer to section \ref{section-quantization} for the precise meaning of the theorem. The theorem is proved by analyzing Feynman diagrams
with the heat kernel on $\Sigma_g$ associated to the constant curvature metric, and this is consistent with physics that
B-twisting can only exist on Calabi-Yau manifolds. Similar results on half-twisted B-model and 2d holomorphic Chern-Simons theory
have been obtained in \cite{Yuan, Yuan-Owen} via background field method. Another approach to topological B-model via D-module
techniques is communicated to us by Rozenblyum \cite{Nick}.

Given a perturbative quantization, there exists a rich structure of factorization algebra for observables developed by Costello and Gwilliam
\cite{Kevin-Owen}. In our case of quantum field theory in two dimensions, the factorization product for local observables gives rise to the structure
of $E_2$-algebra. A perturbative quantization of a so-called cotangent field theory (where our Calabi-Yau model belongs to) can be
viewed as defining certain \emph{projective volume form} on the space of fields \cite{Kevin-CS}. It allows us to define correlation functions
for local observables via the
local-to-global factorization product. The next theorem concerns with the local and global obvervables in our model.

\begin{thm}
Let $X$ be a compact Calabi-Yau with holomorphic volume form $\Omega_X$.
\begin{enumerate}
\item The cohomology of local quantum observables on any disk $U\subset \Sigma_g$ is $H^*(X, \wedge^* T_X)[[\hbar]]$.
\item The complex of quantum observables on $\Sigma_g$ is quasi-isomorphic to the de Rham complex of a trivial local system on $X$ concentrated at
degree $(2g-2)\dim_{\mathbb C}X$.
\end{enumerate}
\end{thm}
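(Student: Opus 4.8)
The plan is to exploit the cotangent structure of the theory. Since $\mathcal E=\mathcal A_{\Sigma_g}\otimes(\mathfrak{g}_X[1]\oplus\mathfrak{g}_X^\vee)$ is the $(-1)$-shifted cotangent bundle of the base $\mathcal B=\mathcal A_{\Sigma_g}\otimes\mathfrak{g}_X[1]$, the quantization from the previous theorem is precisely a projective volume form on $\mathcal B$, the classical observables are polyvector fields on $\mathcal B$, and the quantum observables interpolate via the BV Laplacian $\Delta$. Throughout I would use that the only derivative in $S$ is $d_{\Sigma_g}$: topological invariance lets me replace $\mathcal A_{\Sigma_g}$ by its de Rham cohomology $H^*(\Sigma_g)$ up to quasi-isomorphism, reducing every computation to the finite-dimensional complex of zero modes $H^*(\Sigma_g)\otimes(\mathfrak{g}_X[1]\oplus\mathfrak{g}_X^\vee)$.

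For part (1), on a disk $U$ one has $H^*(U)=\mathbb C$, so the classical local observables are quasi-isomorphic to functions on $\mathfrak{g}_X[1]\oplus\mathfrak{g}_X^\vee$. These form the Chevalley-Eilenberg complex of $\mathfrak{g}_X$ valued in the symmetric powers of the cotangent fiber; since $C^*(\mathfrak{g}_X)$ resolves $\OO_X$ and the symmetric algebra on the fiber directions assembles into $\wedge^* T_X$ by the odd shift, this complex is a resolution of the polyvector fields and its cohomology is $H^*(X,\wedge^* T_X)$. It then remains to show that the quantum differential $d+\hbar\Delta$ is a trivial deformation on the disk. I would establish this by computing the Feynman-diagram corrections from the propagator on $U$ and arguing that, $U$ being contractible, they are cohomologically exact; the $\hbar$-filtration spectral sequence then degenerates and $H^*(\op{Obs}^q(U))=H^*(X,\wedge^* T_X)[[\hbar]]$.

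For part (2), I would compute the global observables as the BV volume form on the reduced base $\mathcal B_0=H^*(\Sigma_g)\otimes\mathfrak{g}_X[1]$. The cotangent pairing identifies the momentum zero modes $H^*(\Sigma_g)\otimes\mathfrak{g}_X^\vee$ with the fiber directions, and integrating them out against the BV measure turns the observable complex into the complex of densities on $\mathcal B_0$, namely sections of the Berezinian line $\Ber\bracket{H^*(\Sigma_g)\otimes\mathfrak{g}_X[1]}$ tensored with $C^*(\mathfrak{g}_X)$. Two inputs finish the identification: the flat Gelfand-Kazhdan connection underlying $\mathfrak{g}_X$ upgrades the resolution $C^*(\mathfrak{g}_X)\simeq\OO_X$ to the full de Rham complex of the constant local system on $X$; and the choice of $\Omega_X$ trivializes $\det T_X$, hence the Berezinian line, which by the de Rham index $\chi(\Sigma_g)=2-2g$ is concentrated in degree $-\chi(\Sigma_g)\dim_{\mathbb C}X=(2g-2)\dim_{\mathbb C}X$. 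This presents the global observables as the de Rham complex of a trivial local system on $X$ placed in that degree.

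The main obstacle is controlling the quantum BV differential at both scales. On the disk I must show the $\hbar$-corrections vanish in cohomology rather than merely being bounded, which requires the explicit heat-kernel propagator of the constant-curvature metric together with a genuine exactness argument. Globally, the delicate points are verifying that integrating out the fiber really yields the Berezinian line, that the flat connection assembles $C^*(\mathfrak{g}_X)$ into the topological de Rham complex rather than only the Dolbeault resolution, and that the degree bookkeeping produces exactly $(2g-2)\dim_{\mathbb C}X$ with the determinant twist trivialized by $\Omega_X$. This last step is where the Calabi-Yau hypothesis is indispensable.
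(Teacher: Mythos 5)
Your part (1) has a genuine gap at the quantum step. The classical computation is essentially the paper's (one caveat: observables are distributional functionals, so what enters via Atiyah--Bott is the compactly supported cohomology $H^*_c(U)\cong\C[-2]$, not $H^*(U)$; the conclusion is the same). But your proposed mechanism for controlling the quantum differential --- that the $\hbar$-corrections are ``cohomologically exact'' because $U$ is contractible --- cannot work: contractibility of $U$ is already consumed in the classical computation, and the triviality of the deformation $H^*(\text{Obs}^q(U))\otimes_{\C[[\hbar]]}\C\iso H^*(\text{Obs}^{cl}(U))$ is, as the paper stresses, a very special property of the B-model, not of disks. What the paper actually does (Proposition \ref{prop:local-observable}) is construct an explicit splitting: represent $\mu\in\A_X\otimes_{\OO_X}\Jet^{hol}_X(\wedge^*T_X)$ by the functional $O_\mu$ obtained from a fixed generator $\eta\in H^2_c(U)$, so that $O_\mu$ (and also the one-loop correction $I_{qc}$) accepts only $0$-form inputs on $\Sigma_g$; since the BV kernel $\mathbb{K}_\epsilon$ has total form degree $2$, both $\Delta_\epsilon O_\mu$ and $\fbracket{O_\mu,I_{qc}}_\epsilon$ vanish identically by type, and the RG flow $\mu\mapsto O_\mu[L]$ is then an honest cochain map $\Psi$ into $\text{Obs}^q(U)$ lifting the classical quasi-isomorphism. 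The quantum corrections vanish on the nose rather than merely up to exactness; this is the missing idea, and without it (or an equivalent construction) part (1) is unproved.

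Your part (2), by contrast, is essentially the paper's argument rewritten in BV-integration language, and its skeleton is sound: pass to zero modes (the paper works at scale $L=\infty$ on harmonic fields), identify the cohomology of the BV Laplacian with the line of ``top fermions'' (your Berezinian; the precise statement is the Poincar\'e lemma $H^*\bracket{\C[[x^i,\xi_i]],\Delta}=\C\,\xi_1\cdots\xi_n$), and use flatness plus the Calabi--Yau trivialization. The paper's generator is the flat section $\pi_2^*\bracket{\Omega_X^{2g-2}}$, and the quasi-isomorphism is $A\mapsto\hbar^{-2\dim_\C X}A\otimes\pi_2^*\bracket{\Omega_X^{2g-2}}$ from $(\A_X((\hbar)),d_X)$. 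Two of the points you flag as delicate are where the real work lies: (i) one must control the $\infty$-scale effective interaction on harmonic fields (Lemma \ref{effective-infinity}) and prove $\fbracket{I^{(1)}_{naive}[\infty],\pi_2^*\bracket{\Omega_X^{2g-2}}}_\infty=0$, which uses that the top fermion already saturates the highest wedge power of $(\H^1(\Sigma_g)\otimes\g_X[1])^\vee$; and (ii) closedness of the generator under $\Delta_\infty+\fbracket{I_{qc},-}_\infty$ holds only because $\pi_2^*\bracket{\Omega_X^{2g-2}}=e^{-I_{qc}}T'\bracket{\Omega_X^{2g-2}}$, i.e.\ the one-loop correction determined by $\Omega_X$ is precisely what intertwines the splitting-dependent BV operator with $\pa_{\Omega_X}$ --- so $I_{qc}$ enters the observable computation, not only the anomaly cancellation. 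One smaller correction: after the odd fiber directions cancel the bosonic jet directions, what survives is $\A_X$ times the Berezinian generator with differential $d_X$, not $C^*(\g_X)$ tensored with the Berezinian; the flat connection does not ``upgrade'' $C^*(\g_X)$ --- that factor is killed except for $\A_X$ itself, which is exactly how the de Rham complex of the trivial local system appears.
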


See section \ref{section:global-observable} for the explanation.

Instead of the de Rham cohomology for observables in the Gromov-Witten theory, the observables in the B-model are described by polyvector fields. Let
$\mu_i\in H^*(X, \wedge^* T_X)$, and $ \amalg_iU_i\subset \Sigma_g$ be disjoint union of disks on $\Sigma_g$. Let
$O_{\mu_i,U_i}$ be a local observable in $U_i$ representing $\mu_i$ via the above theorem. Then the factorization product with
respect to the embedding
$$
   \amalg_i U_i \into \Sigma_g
$$
gives a global observable $O_{\mu_1,U_1}\star O_{\mu_2,U_2}\star\cdots\star O_{\mu_k,U_k}$. Following \cite{Kevin-CS}, the
correlation function of topological field theory is defined by the natural integration
$$
   \abracket{O_{\mu_1,U_1}, \cdots, O_{\mu_k,U_k}}_{\Sigma_g}:= \int_X \bbracket{O_{\mu_1,U_1}\star O_{\mu_2,U_2}\star\cdots\star
O_{\mu_k,U_k}} \in \mathbb C((\hbar)).
$$
Here $\bbracket{-}$ is the de Rham cohomology class represented by the quantum observable as in the second part of the above theorem. The degree
shifting implies that the correlation function is zero unless $\sum\limits_i \deg O_{\mu_i,U_i}=\sum\limits_i \deg \mu_i=(2-2g)\dim_{\mathbb C}X$.
Explicit calculation on the sphere gives

\begin{thm}[Theorem \ref{correlation-P1}]
Let $\Sigma_g=\mathbb P^1$, and let $X$ be a compact Calabi-Yau with holomorphic volume form $\Omega_X$. Then
$$
\abracket{O_{\mu_1,U_1}, \cdots, O_{\mu_k,U_k}}_{\mathbb P^1}= \hbar^{\dim_{\C}X}\int_X \bracket{\mu_1\cdots\mu_k \vdash \Omega_X}\wedge \Omega_X,
$$
where $\hbar$ is a formal variable.
\end{thm}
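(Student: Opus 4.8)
The plan is to unwind the definition $\abracket{O_{\mu_1,U_1},\dots,O_{\mu_k,U_k}}_{\mathbb P^1}=\int_X\bbracket{O_{\mu_1,U_1}\star\cdots\star O_{\mu_k,U_k}}$ in three stages: represent each local observable, compute the factorization product on $\mathbb P^1$, and then identify the global trace map $\int_X\bbracket{-}$ with the right-hand side. Write $n=\dim_{\C}X$. First I would fix, for each $\mu_i\in H^{p_i}(X,\wedge^{q_i}T_X)$, an explicit Dolbeault/\CE representative of the local observable $O_{\mu_i,U_i}$ furnished by the theorem computing local quantum observables, so that $O_{\mu_i,U_i}$ represents $\mu_i$ inside $H^*(X,\wedge^* T_X)[[\hbar]]$. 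Since a nonzero answer forces $\sum_i(p_i+q_i)=2n$, the product must land in the one-dimensional top piece $H^n(X,\wedge^n T_X)$, and this degree/dimension concentration on $\mathbb P^1$ is what will make the rest of the computation rigid.

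Second, I would compute the factorization product $O_{\mu_1,U_1}\star\cdots\star O_{\mu_k,U_k}$ as a global observable on $\mathbb P^1$. Because the theory is a topological (cotangent) theory, the product is independent of the positions and radii of the disks $U_i$, so I may spread them out and evaluate the relevant Feynman diagrams using the propagator built from the heat kernel of the constant-curvature metric on $\mathbb P^1$ (the same kernel used in the quantization theorem). The key claim to establish here is that, at the level of cohomology, this factorization product descends to the commutative (wedge) product $\mu_1\cdots\mu_k$ of polyvector fields: the $E_2$-structure on local observables collapses to the Gerstenhaber product, and the genuinely non-constant graph contributions either vanish by degree reasons or are exact on $\mathbb P^1$. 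Tracking the $\hbar$-weight of the surviving diagram --- essentially the number of contractions needed to assemble the top class $H^n(X,\wedge^n T_X)$ --- should produce the prefactor $\hbar^{n}=\hbar^{\dim_{\C}X}$.

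Third, I would make the global trace map explicit. By the theorem identifying the complex of quantum observables on $\Sigma_g$ with the de Rham complex of the trivial local system on $X$ placed in degree $(2g-2)n$, for $g=0$ the global observables are concentrated in degree $-2n$ and $\int_X\bbracket{-}$ is literally integration of the resulting top de Rham class. The choice of holomorphic volume form $\Omega_X$ entering the canonical quantization (the second part of the quantization theorem) supplies the contraction isomorphism $\wedge^* T_X\xrightarrow{\ \vdash\Omega_X\ }\Omega^{n-*}_X$; under this identification the image of the top polyvector class $\mu_1\cdots\mu_k$ becomes the $(0,n)$-form $\mu_1\cdots\mu_k\vdash\Omega_X$, and the de Rham trace reads it off as $\int_X(\mu_1\cdots\mu_k\vdash\Omega_X)\wedge\Omega_X$. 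Chasing the two quasi-isomorphisms (local observables $\to H^*(X,\wedge^* T_X)$ and global observables $\to$ de Rham complex) so that they are compatible with $\vdash\Omega_X$ yields the stated formula.

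The main obstacle I anticipate is the second stage: showing that the factorization product on $\mathbb P^1$ reduces exactly to the classical wedge product with the correct power of $\hbar$, i.e.\ controlling all the quantum (propagator/loop) contributions and proving that they either cancel or reassemble into the classical answer. Here the topological invariance (position-independence of the $U_i$) together with the cohomological concentration of $\mathbb P^1$ should be the levers that force all but the expected contribution to drop out; verifying this carefully, rather than the essentially formal bookkeeping of the trace map in the third stage, is where the real work lies.
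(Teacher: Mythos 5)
Your skeleton---reduce to a single observable via the factorization product, then evaluate a global trace---matches the paper's strategy at the level of outline, but there is a genuine gap, and it sits exactly where you declare the work to be ``essentially formal bookkeeping.'' The identification of global quantum observables with the de Rham complex of $X$ (Proposition \ref{trivial-system}) is not a bare degree statement: it is implemented by the map $\iota\colon A\mapsto \hbar^{-2\dim_{\C}X}A\otimes\pi_2^*\bracket{\Omega_X^{2g-2}}$, i.e.\ via a distinguished ``top fermion'' section of a jet bundle together with a specific power of $\hbar$. To evaluate $\abracket{O_\mu}_{\mathbb P^1}$ you must express the image of the local observable in these terms, which the paper does through the BV-integration formula of Corollary \ref{cor:correlation-function-BV-def}: multiply by $e^{I[\infty]/\hbar}$, restrict to harmonic fields, project onto the top fermions, divide by $\pi_2^*\bracket{\Omega_X^{-2}}$, and restrict to the Lagrangian $\mathcal{L}$. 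Making this computable requires Lemma \ref{effective-infinity} and Lemma \ref{lem:one-loop-naive-interaction-harmonic-fields} (on $\mathbb P^1$ the effective interaction restricted to harmonic fields collapses to $I_{cl}|_{\H}+I_{qc}|_{\H}$, and $O_\mu[\infty]|_{\H}=O_\mu$ since $\H^1(\mathbb P^1)=0$), and then a type-reason argument showing that the only terms of $e^{I_{cl}/\hbar+I_{qc}}$ surviving the restriction to $\mathcal{L}$ are products of the curving vertex $\tilde l_0$; the $n=\dim_{\C}X$ insertions of $\tilde l_0/\hbar$ are what produce both the factor $\hbar^{-n}$ and, in local coordinates, the expression $(\mu\vdash\Omega_X)\wedge\Omega_X$ --- this is where $\Omega_X$ enters twice, once by contraction and once by wedging. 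Combined with the normalization $\hbar^{2\dim_{\C}X}$ this gives $\hbar^{\dim_{\C}X}$. None of this is visible from degree-chasing: a formal argument only shows the answer is \emph{some} $\C((\hbar))$-multiple of a canonical pairing, not which one, and your attribution of the $\hbar^n$ to contractions inside the factorization product is incorrect --- Lemma \ref{lem:factorization-product-genus-0} gives $[O_{\mu_1,U_1}\star\cdots\star O_{\mu_m,U_m}]=[O_{\mu_1\cdots\mu_m,U}]$ with no $\hbar$ at all.

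Conversely, the stage you flag as ``where the real work lies'' --- controlling quantum corrections in the factorization product on $\mathbb P^1$ --- is short in the paper: it follows from Proposition \ref{prop:local-observable} (the splitting map $\Psi$ is a cochain map, i.e.\ local observables receive no quantum corrections) together with the parametrix definition of the product; no spreading of disks or diagram cancellation is needed. So your proposal inverts where the difficulty lies, and the step it leaves unproved --- the explicit evaluation of the trace map on the image of a top polyvector class, with its $\hbar$-normalization --- is the actual content of the theorem.
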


When $\Sigma_g$ is an elliptic curve, the only non-trivial topological correlation function is the partition function without
inputs.

\begin{thm}[Theorem \ref{correlation-elliptic}] Let $g=1$, then $\abracket{1}_{\Sigma_g}=\chi(X)$ is the Euler characteristic of $X$.
\end{thm}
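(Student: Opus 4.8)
The plan is to reduce the partition function to the integral of a characteristic class of $T_X$ over $X$ and then invoke the Gauss--Bonnet--Chern theorem. First I would specialize the global observable theorem to $g=1$: since the degree shift $(2g-2)\dim_{\C}X$ vanishes, the complex of quantum observables on $\Sigma_1$ is quasi-isomorphic to the de Rham complex of a trivial local system on $X$ sitting in degree zero, so that $\abracket{1}_{\Sigma_1}=\int_X[1]$, where $[1]$ is the de Rham class representing the image of the unit observable under this quasi-isomorphism. Conceptually this is the statement that, the theory being topological, the cylinder $S^1\times[0,1]$ induces the identity operator on the space of states $H^*(X,\wedge^*T_X)$, and closing it up into the torus computes the supertrace $\Tr\,(-1)^F$, i.e.\ the Witten index.

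The core of the argument is to identify the class $[1]$. The image of the unit observable on the torus is assembled from vacuum Feynman diagrams, and on the flat elliptic curve the surviving contribution is a single one-loop graph whose internal vertices carry the $L_\infty$-products $l_k$ of $\mathfrak{g}_X$, which encode the curvature of $T_X$ (the same mechanism that produced the anomaly $(2-2g)c_1(X)$ in the obstruction computation). I would evaluate this loop using the heat kernel of the constant-curvature metric on $\Sigma_1$ and show that the resulting invariant polynomial in the curvature represents the top Chern form $c_n(T_X)$, $n=\dim_{\C}X$. Equivalently, via the isomorphism $\wedge^pT_X\cong\Omega_X^{n-p}$ furnished by $\Omega_X$ together with the Hodge decomposition, the supertrace $\sum_{p,q}(-1)^{p+q}\dim H^q(X,\wedge^pT_X)$ is identified with $\int_X c_n(T_X)$ up to the sign fixed by the grading conventions; note that the $\hbar$-dependence drops out, as befits an index.

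Finally I would conclude by Gauss--Bonnet--Chern, $\int_X c_n(T_X)=\chi(X)$, giving $\abracket{1}_{\Sigma_1}=\chi(X)$. The main obstacle is the middle step: extracting the characteristic class from the one-loop vacuum integral on the torus with the correct combinatorial and analytic normalization, and verifying that the curvature polynomial produced is exactly $c_n(T_X)$ rather than some other combination of Chern numbers. This requires careful control of the flat-torus heat kernel and of the propagator along the diagonal, the combinatorics of the single loop, and the identification of the emergent invariant polynomial with the top Chern form.
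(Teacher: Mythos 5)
Your overall skeleton agrees with the paper's: reduce, via the BV-integration formula of Corollary \ref{cor:correlation-function-BV-def}, to identifying the de Rham class on $X$ represented by the unit observable, show that this class is the top Chern form $c_n(X)$, and conclude by $\int_X c_n(X)=\chi(X)$. The gap is in the mechanism you propose for producing $c_n(X)$. You plan to extract it from ``a single one-loop graph'' evaluated ``using the heat kernel of the constant-curvature metric on $\Sigma_1$,'' by ``the same mechanism that produced the anomaly $(2-2g)c_1(X)$.'' At $g=1$ that mechanism yields zero, twice over: the one-loop quantum correction $I_{qc}$ is proportional to $2-2g=0$, and Lemma \ref{lem:one-loop-naive-interaction-harmonic-fields} shows that the naive one-loop effective interaction $I^{(1)}_{naive}[\infty]$ vanishes identically when restricted to harmonic fields --- for wheels with an even number of vertices by the cancellation of Lemma \ref{lem:two-vertex-wheels}, and for odd wheels because the relevant lattice sums $\sum_{(a,b)\in\Z^2\setminus\{0\}}(a\tau+b)^{-k}(a\bar\tau+b)^{-(2n+1-k)}$ vanish by a $\Z_2$-symmetry of the flat propagator. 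So every regularized propagator/heat-kernel loop integral you would compute on the torus equals zero, and no curvature polynomial can emerge from that route; carried out literally, your plan ends with $\abracket{1}_E=0$, contradicting $\chi(X)\neq 0$ in general.

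What the paper actually does is purely classical at this stage: since $I[\infty]|_{\H}=I_{cl}|_{\H}$ (Lemma \ref{effective-infinity} together with the vanishing above), one expands $e^{I_{cl}/\hbar}$ on harmonic fields, and a type analysis shows that the only terms surviving the top-fermion projection and the restriction to the Lagrangian $\L$ are products of the $l_2$-vertices of $\g_X$ with the harmonic $1$-forms $dw$, $d\bar w$ of $E$ as inputs (together with $l_1$-type terms carrying the normalized volume form $\sqrt{-1}\,dw\,d\bar w$). The structure constants $A^k_{ij}$ of $l_2$ assemble into the Dolbeault representative $(A^k_{ij}dz^i)\otimes(dz^j\otimes\pa_{z^k})$ of the Atiyah class of $T_X$ (this is the input from \cite{Kevin-CS}), and the contraction forced by the top-fermion projection --- an algebraic trace, not a propagator --- produces $\Tr\bracket{At(T_X)}^n=c_n(X)$. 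In other words, the curvature enters through the classical $L_\infty$-structure evaluated on harmonic forms, and the ``loop'' is combinatorial; the analytic data of the flat metric contributes only the normalization $\sqrt{-1}\int_E dw\,d\bar w=1$. Your heat-kernel analysis of the propagator along the diagonal, which you correctly flag as the main obstacle, is not merely difficult but is directed at terms that vanish.
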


To establish the above computation of correlation functions, we describe a formalism in the spirit of \BV Lagrangian integration,
which is equivalent
to the above definition of correlation functions for our model (Corollary \ref{cor:correlation-function-BV-def}). It not only simplifies the
computation, but also sheds light on the potential application to theories which are not cotangent. In fact, the
Landau-Ginzburg model to be described below is not a cotangent field theory, hence the definition of correlation function in
\cite{Kevin-CS} does not work in this case. However, the \BV Lagrangian integration still makes sense and gives rise to the expected
result (Proposition \ref{thm-LG-correlation}).

\subsection{Landau-Ginzburg model} The  Calabi-Yau model described above allows a natural generalization to the
Landau-Ginzburg model associated to a pair $(X,W)$, where $W$ is a holomorphic function on $X$ called the \emph{superpotential}.
This is accomplished by a \emph{twisting procedure}: at the classical level, the interaction is modified by adding a term $I_W$
(Definition \ref{LG-definition}); at the quantum level, this simple modification is still valid  (Proposition \ref{prop:QME-LG}).
In particular, a choice of holomorphic volume form $\Omega_X$ on $X$ leads to a quantization of our Landau-Ginzburg B-model.

Let us describe the corresponding observable theory. For simplicity, let us assume $X=\C^n$, and the critical set of the superpotential $Crit(W)$ is finite. We let
$\{z^i\}$ be the affine coordinates on $\C^n$, and choose $\Omega_X=dz^1\wedge\cdots\wedge dz^n$. We consider the quantization associated to
the pair $(X, \Omega_X)$ with the twisting procedure described above.

\begin{thm}[Proposition \ref{LG-local-observable}]
The cohomology of Landau-Ginzburg $B$-model local quantum observables on any disk $U\subset \Sigma_g$ is $\Jac(W)[[\hbar]]$.
\end{thm}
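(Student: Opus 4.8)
The plan is to compute the cohomology of local quantum observables for the Landau-Ginzburg model by relating it to the Calabi-Yau computation (Theorem on local observables giving $H^*(X,\wedge^* T_X)[[\hbar]]$) and then tracking the effect of the twisting by $I_W$. Since $X=\C^n$ is affine with trivial topology, the Calabi-Yau local observable cohomology should reduce to the global sections, namely the polyvector fields $\Sym^*_{\OO_X}(T_X[-1])$ with zero differential (as $H^{>0}$ vanishes on the affine space). Concretely, a disk $U\subset\Sigma_g$ is contractible, so the local observables on $U$ form a complex built from the $L_\infty$-algebra $\g_X\oplus \g_X^\vee[-1]$ with its \CE differential, whose cohomology in the untwisted case recovers $\wedge^* T_X$ over $\C^n$, i.e. $\C[z^1,\dots,z^n]\otimes \wedge^*(\partial_{z^1},\dots,\partial_{z^n})$, placed in the appropriate $\hbar$-graded complex.

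First I would write down explicitly the complex of local quantum observables on $U$ in the Calabi-Yau case for $X=\C^n$, identifying the underlying graded vector space as (a completed symmetric algebra on) polyvector fields tensored with $\C[[\hbar]]$, and confirming that in the absence of a superpotential the differential is (cohomologically) trivial so that the cohomology is all of $\Sym^*(T_X[-1])$. Next I would incorporate the twisting: by the construction in Definition \ref{LG-definition}, passing to the Landau-Ginzburg model amounts to deforming the differential on observables by the operator coming from $I_W$, which at the level of polyvector fields acts as contraction with $dW$, i.e. the differential becomes $\{W,-\}$ or equivalently $\iota_{dW}=\sum_i (\partial_{z^i}W)\,\partial/\partial(\partial_{z^i})$ (interior product by $dW$) up to a factor of $\hbar$. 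The key point is that this deformed differential is precisely the Koszul differential on $\wedge^* T_{\C^n}=\C[z]\otimes \wedge^*\C^n$ associated to the regular sequence $\partial_{z^1}W,\dots,\partial_{z^n}W$.

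The main step is then a Koszul-complex cohomology computation: the cohomology of $\bracket{\C[z^1,\dots,z^n]\otimes \wedge^*(\theta_1,\dots,\theta_n),\ \iota_{dW}}$, where $\iota_{dW}$ contracts the exterior variables $\theta_i$ against $\partial_{z^i}W$, computes $\Jac(W)=\C[z^1,\dots,z^n]/(\partial_{z^1}W,\dots,\partial_{z^n}W)$ concentrated in top (or bottom, depending on conventions) exterior degree, precisely because the critical set $Crit(W)$ is finite, which forces the partial derivatives to form a regular sequence, so the Koszul complex is acyclic except at the relevant degree. Tensoring with $\C[[\hbar]]$ throughout then yields $\Jac(W)[[\hbar]]$, as claimed. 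The hard part will be verifying that the twisting procedure really deforms the observable differential by exactly the contraction $\iota_{dW}$ with the right $\hbar$-weight and no quantum corrections — that is, checking that the naive classical deformation survives quantization (this is where Proposition \ref{prop:QME-LG} must be invoked) and that the identification of the graded space of local observables with polyvector fields intertwines the quantum BV differential with the algebraic Koszul differential. Once the differential is correctly identified, the cohomology computation is a standard regular-sequence argument and the finiteness of $Crit(W)$ does all the remaining work.
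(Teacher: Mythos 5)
Your proposal takes essentially the same route as the paper: reduce the quantum statement to the classical one by the same no-quantum-corrections argument used in the Calabi-Yau case (the splitting of Proposition \ref{prop:local-observable}/Corollary \ref{cohomology-quantum-local}, with Proposition \ref{prop:QME-LG} ensuring the twisted quantization is just the tree-level addition of $I_W$), and then identify the classical local observable cohomology with the $dW\lrcorner$-twisted polyvector field complex. The only difference is cosmetic: where you spell out the Stein-vanishing-plus-Koszul regular-sequence computation, the paper simply cites \cite{LLS} for $H^*\bracket{\A^{0,*}(X,\wedge^*T_X),\dbar+dW\lrcorner}\cong \Jac(W)$ (in both the ordinary and compactly supported versions), which is exactly the content of your argument.
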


Similarly, we use $O_{f,U}$ to denote a local quantum observable representing $f\in \Jac(W)$ in the above theorem. Let
$ \amalg_iU_i\subset \Sigma_g$ be disjoint union of disks on $\Sigma_g$. Then the factorization product
$$
   O_{f_1,U_1}\star\cdots\star O_{f_k,U_k}
$$
defines a global quantum observable on $\Sigma_g$. However, the Landau-Ginzburg theory is no longer a cotangent theory in the sense of
\cite{Kevin-CS}, and the projective volume form interpretation of quantization breaks down. Instead, we directly construct an integration map on
quantum observables following the interpretation of \BV Lagrangian geometry described above. This allows us to define the correlation function (Definition \ref{def:correlation-function-LG})
$$
\abracket{ O_{f_1,U_1}\star\cdots\star O_{f_k,U_k}}_{\Sigma_g}^W
$$
in the Landau-Ginzburg case.
\begin{thm}[Proposition \ref{thm-LG-correlation}]
The correlation function of topological Landau-Ginzburg B-model is
$$
\abracket{O_{f_1,U_1}\star\cdots\star O_{f_k,U_k}}_{\Sigma_g}^W=\sum_{p\in Crit(W)}\Res_p\bracket{f_1\cdots f_k \det(\pa_i\pa_j W)^{g}dz^1\wedge\cdots\wedge dz^n \over \prod_i
\pa_i W},
$$
where $\Res_p$ is the residue at the critical point $p$ \cite{GH}.
\end{thm}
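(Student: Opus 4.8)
The plan is to begin from the definition of the Landau--Ginzburg correlation function as a BV Lagrangian integration (Definition \ref{def:correlation-function-LG}) and to exploit that the topological correlators localize to a finite-dimensional BV integral over the constant maps, i.e.\ over the supermanifold $T^\vee_X[1]$ with body $X=\C^n$, the superpotential entering through the twisting vertex $I_W$. First I would compute the global quantum observable $O_{f_1,U_1}\star\cdots\star O_{f_k,U_k}$: by the factorization product and the identification of local observables with $\Jac(W)$ (Proposition \ref{LG-local-observable}), its class is represented by the Jacobian-ring product $f_1\cdots f_k$ decorated by the genus-dependent data of $\Sigma_g$, in analogy with the global-observable computation underlying Theorems \ref{correlation-P1} and \ref{correlation-elliptic}. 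Contracting against the volume form $\Omega_X=dz^1\wedge\cdots\wedge dz^n$ identifies the polyvector complex $\bracket{\wedge^\bullet T_X,\iota_{dW}}$ with the Koszul complex of the regular sequence $(\pa_1 W,\dots,\pa_n W)$, whose top cohomology is $\Jac(W)\cdot\Omega_X$; this is the homological incarnation of the twisting procedure of Definition \ref{LG-definition}.

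Next I would perform the BV integration in two stages. Integrating out the non-constant modes on $\Sigma_g$ produces a one-loop effective measure on the constant maps, a ratio of functional determinants of the kinetic operator $d_{\Sigma_g}$ deformed by the quadratic part of $I_W$, whose coefficient is the Hessian $\pa_i\pa_j W$. I expect this step to be the main obstacle: one must show, via the heat-kernel Feynman-diagram analysis already used for Theorem \ref{thm:obstruction-quantization}, that the regularized determinant on $\Sigma_g$ reduces to a finite power of the ordinary Hessian determinant, and that the exponent is fixed by the genus through $\dim H^{0,1}(\Sigma_g)=g$ (equivalently $\chi(\Sigma_g)=2-2g$), yielding precisely the factor $\det(\pa_i\pa_j W)^{g}$. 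This is the Landau--Ginzburg counterpart of the mechanism that produced $\chi(X)$ in the genus-one Calabi--Yau partition function (Theorem \ref{correlation-elliptic}). Along the way I would track the $\hbar$-grading and verify that the topological degree constraint confines the answer to the single $\hbar$-power absorbed by the normalization in Definition \ref{def:correlation-function-LG}.

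The second stage is the remaining finite-dimensional BV integral over $X=\C^n$ in the presence of the potential. Here the Lagrangian-integration pairing reduces to the residue pairing on the Koszul complex: the factor $dW$ in the differential forces localization onto the common zero locus $Crit(W)$ of the $\pa_i W$, and the Grothendieck local-duality theorem \cite{GH} identifies the resulting trace with a sum of local residues. Assembling the integrand $f_1\cdots f_k\,\det(\pa_i\pa_j W)^{g}$ against $\Omega_X/\prod_i\pa_i W$ then gives
$$
\abracket{O_{f_1,U_1}\star\cdots\star O_{f_k,U_k}}_{\Sigma_g}^W=\sum_{p\in Crit(W)}\Res_p\bracket{f_1\cdots f_k\,\det(\pa_i\pa_j W)^{g}\,dz^1\wedge\cdots\wedge dz^n\over \prod_i\pa_i W}.
$$
As consistency checks I would confirm that for $g=1$ and all $f_i=1$ the residues collapse to $\sum_{p\in Crit(W)}1=\dim_\C\Jac(W)$, the Milnor number playing the role of the Euler characteristic of Theorem \ref{correlation-elliptic}, while for $g=0$ one recovers the residue pairing $\sum_{p}\prod_i f_i(p)/\det(\pa_i\pa_j W)(p)$ giving $\Jac(W)$ its Frobenius-algebra structure, in parallel with the Calabi--Yau formula of Theorem \ref{correlation-P1}.
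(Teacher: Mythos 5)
Your overall scaffolding (start from Definition \ref{def:correlation-function-LG}, reduce to the zero-mode/constant-map sector, finish with a residue/duality argument as in \cite{GH} or \cite{LLS}) matches the paper's, but the central mechanism you propose for the factor $\det(\pa_i\pa_j W)^{g}$ is not the one that operates in this framework, and it is precisely the step you leave as an expectation. You attribute the Hessian determinant to a regularized one-loop functional determinant obtained by integrating out the non-constant modes on $\Sigma_g$, with exponent fixed by $\dim H^{0,1}(\Sigma_g)=g$. In the paper no such determinant ever appears: because $X\subset\C^n$ is Stein with linear coordinates, one may choose the $L_\infty$ structure on $\g_X$ with $l_i=0$ for $i\geq 2$, and then the renormalization group flow restricted to harmonic fields is trivial, giving $I^W[\infty]\big|_{\H}=I_{cl}\big|_{\H}+I_W\big|_{\H}$ with \emph{no} quantum corrections from non-constant modes. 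The determinant instead arises from the finite-dimensional fermionic (``top fermion'') projection in the BV integral over harmonic fields: the quadratic part $\pa_i\pa_jW$ of the vertex $I_W$ must pair harmonic $1$-forms dual to each other, and since $\dim_\C\H^1(\Sigma_g)=2g$ there are $2gn$ odd generators in $(\H^1(\Sigma_g)\otimes\g_X[1])^\vee$ to saturate, requiring $gn$ insertions of the Hessian vertex and yielding exactly $\hbar^{-gn}\det(\pa_i\pa_jW)^g\otimes\pi_2^*(\Omega_X^{2g})$. So the exponent $g$ is a zero-mode counting statement ($2g$ harmonic one-forms organized into $g$ dual pairs), not a statement about $\dim H^{0,1}$ controlling a determinant ratio; conflating the two leaves your argument without a proof of its key claim, and the heat-kernel analysis you invoke (as in Theorem \ref{thm:obstruction-quantization}) is needed only to show that nothing \emph{beyond} the classical terms survives on $\H$, not to produce the determinant.

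Two smaller points. First, your proposal never uses the simplification $l_i=0$ ($i\geq2$) on $\C^n$; without it, one must control the curved $L_\infty$ corrections in $I^W[\infty]\big|_\H$, which your outline does not address. Second, the remaining contribution, which combines with the observable $O_{f_1\cdots f_k}$ to give $\hbar^{-n}\bracket{(f_1\cdots f_k)\vdash\Omega_X}\wedge\Omega_X$, comes from products of the $\tilde l_0$ vertices exactly as in the $\mathbb P^1$ computation of Theorem \ref{correlation-P1}; your Koszul-complex/local-duality finish is compatible with the paper's appeal to \cite[Proposition 2.5]{LLS}, and your consistency checks at $g=0,1$ are correct, but they cannot substitute for the missing derivation of the Hessian factor.
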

This coincides with Vafa's residue formula \cite{vafa}.

\bigskip

\noindent \textbf{Acknowledgement}: The authors would like to thank Kevin Costello, Ryan Grady, Owen Gwilliam and Yuan Shen for valuable discussions
on quantum field theories. Part of the work was done while the authors were visiting MSC at Tsinghua University in the summer of 2012 and 2013, and the first author
was visiting Boston University in 2012 and 2013. We would like to thank for their hospitality. The first author is partially supported by Chinese Universities
Scientific
Fund WK0010000030. The second author is partially supported by NSF DMS-1309118.

\section{The classical theory}\label{section-classical}
In this section we will describe the geometry of B-twisted topological $\sigma$-model and set up our theory at the classical level.

\subsection{The model}
Let $X$ be a complex manifold, and let $\Sigma_g$  be a closed Riemann surface of genus $g$.
Two-dimensional $\sigma$-models are concerned with the space of maps
$$
   \Sigma_g\to X.
$$
One useful way to incorporate interesting information about the geometry and topology of the target $X$ is to enhance ordinary
$\sigma$-models to supersymmetric ones and apply topological twists. There are two twisted supersymmetric theories that
have been extensively studied both in the mathematics and physics literature: the A-model and the B-model. It leads to the famous
mirror symmetry between symplectic and complex geometries. In this paper we will  mainly focus on the B-model.

One possible mathematical formulation of the quantum field theory of B-twisted $\sigma$-model is proposed by Costello
\cite{Kevin-SUSY} via formal derived geometry, and we will adopt this point of view.

\begin{defn}[\cite{Kevin-SUSY}]\label{B-model-definition}
The (fully twisted) B-model, with source a genus $g$ Riemann surface $\Sigma_g$ and target a complex manifold $X$, is  the cotangent theory to the elliptic
moduli problem of maps $$(\Sigma_g)_{dR}\rightarrow X_{\bar{\partial}}.$$
\end{defn}

In the subsequent subsections, we will explain all the notations and geometric data in the above definition. Basically, we have
enhanced the mapping as from a dg-space $\bracket{\Sigma_g}_{dR}$ to the $L_\infty$-space $X_{\dbar}$ to implement supersymmetry.
However, the full mapping space is complicated and hard to analyze. Instead, we will focus on the locus in the formal neighborhood
of constant maps. Under this reduction, we describe our classical action functional in section \ref{classical action}. From the
physical point of view,  the quantum field theory of B-twisted $\sigma$-model is fully encoded in the neighborhood of constant
maps, thanks to supersymmetry. Therefore we do not lose any information via this consideration.

\subsection{The spaces $\bracket{\Sigma_g}_{dR}$ and $X_{\dbar}$}
\subsubsection{The dg-space $\bracket{\Sigma_g}_{dR}$}
We use $\bracket{\Sigma_g}_{dR}$ to denote the dg-ringed space
$$
\bracket{\Sigma_g}_{dR}=\bracket{\Sigma_g, \A_{\Sigma_g}}
$$
on the Riemann surface $\Sigma_g$, where the structure sheaf is the sheaf of smooth de Rham complex. $\A_{\Sigma_g}$ is an elliptic complex, and we view $\bracket{\Sigma_g}_{dR}$
as an \emph{elliptic ringed space} in the sense of \cite{Kevin-SUSY}.

\subsubsection{The $L_\infty$-space $X_{\dbar}$} The space $X_{\dbar}$ is a derived version of the complex  manifold $X$ itself, which is introduced in \cite{Kevin-CS} to describe
holomorphic Chern-Simons theory. This is a suitable concept to discuss perturbative quantum field theory invariant under diffeomorphism group. It consists of a pair
$$
    X_{\dbar}=\bracket{X, \g_{X}},
$$
 where $\g_X$ is the sheaf of curved $L_\infty$-algebras on $X$ that we describe now. As a graded sheaf on $X$, $\g_X$ is defined by
$$
   \g_X:=\A_X^{\sharp}\otimes_{\OO_X} T_X[-1],
$$
where $T_X[-1]$ is the sheaf of holomorphic tangent vectors with degree shifting such that it is concentrated at degree $1$. To describe the curved
$L_\infty$-structure, we consider
$$
  C^*\bracket{\g_X}:=\widehat{\Sym}_{\A_X^{\sharp}}\bracket{\g_X[1]^\vee}=\prod\limits_{k\geq 0}\Sym^k_{\A_X^{\sharp}}\bracket{\g_X[1]^\vee},
$$
where
$$
\g_X[1]^\vee:=\A_X^{\sharp}\otimes_{\OO_X} T_X^\vee
$$
is the dual sheaf of $\g_X[1]$ over  $\A_X^{\sharp}$, and $\Sym^k_{\A_X^{\sharp}}\bracket{\g_X[1]^\vee}$ is the graded symmetric tensor product of $k$ copies of $\g_X[1]^\vee$ over
$\A_X^{\sharp}$. When $k=0$, we set
$
\Sym^0_{\A_X^{\sharp}}\bracket{\g_X[1]^\vee}\equiv \A_X^{\sharp}.
$

It is easy to see that
$$
C^*\bracket{\g_X}=\A_X^\sharp\otimes_{\OO_X} \widehat{\Sym}_{\OO_X}\bracket{T_X^\vee}.
$$
Thus $C^*\bracket{\g_X}$ is a sheaf of algebras over  $\A_X^{\sharp}$.
\begin{notn}\label{notation-basis}
Let $\{z^1,\cdots,z^n\}$ denote local holomorphic coordinates on $X$, we will let $\{\widetilde{\partial_{z^i}}\}$ denote the
corresponding basis of $\mathfrak{g}_X$ over $\A_X^{\sharp}$,
and let $\{\widetilde{dz^i}\}$ denote the corresponding basis of $\mathfrak{g}_X^\vee$ over $\A_X^{\sharp}$ similarly.
\end{notn}

A curved $L_\infty$-algebra structure on $\g_X$ is a differential on $C^*\bracket{\g_X}$ with which it becomes a dg-algebra over the dg-ring $\A_X$. Such a structure is obtained
in \cite{Kapranov}, which is called a weak Lie algebra there. We reformulate the construction for the application in B-twisted $\sigma$-model. Let us first recall

\begin{defn}\label{def:jet-bundle} Let $E$ be a holomorphic vector bundle on $X$. We define the holomorphic jet bundle $\Jet_X^{hol}(E)$ as follows: let
$\pi_1$ and $\pi_2$ denote
the projection of $X\times X$ onto the first and second component respectively,
$$
 \xymatrix{
   & X\times X \ar[dl]_{\pi_1} \ar[dr]^{\pi_2} & \\
   X && X
 }
$$
then
$$
    \Jet_X^{hol}(E):=\pi_{1*}\bracket{\widehat{\OO}_{\Delta}\otimes_{\OO_{X\times X}}\pi_2^* E},
$$
where $\Delta\into X\times X$ is the diagonal, and $\widehat{\OO}_{\Delta}$ is the analytic formal completion of $X\times X$
along $\Delta$. The jet bundle $ \Jet_X^{hol}(E)$ has a natural filtration defined by
$$
 F^k\Jet_X^{hol}(E):= I_\Delta^k  \Jet_X^{hol}(E),
$$
where $I_\Delta$ is the structure sheaf of $\Delta$.
\end{defn}

It is clear that $\Jet^{hol}_X(E)$ inherits a $D^{hol}_X$-module structure from $\widehat{\OO}_{\Delta}$, and we will let $\Omega_X^*\bracket{\Jet^{hol}_X(E)}$ be the corresponding holomorphic de
Rham complex. The natural embedding
$$
  E\into \Omega_X^*\bracket{\Jet^{hol}_X(E)}
$$
induced by taking Taylor expansions of holomorphic sections is a quasi-isomorphism.

Let us consider a smooth map
$$
   \rho: U\to X\times X,
$$
where $U\subset T_X$ is a small neighborhood of the zero section. We require that $\rho$ is a diffeomorphism onto its image, and if we write
$$
  \rho: (x, v)\mapsto (x, \rho_x(v)),
$$
then $\rho_x(-)$ is holomorphic if we  fix $x$. Such a diffeomorphism can be constructed from a K\"{a}hler metric on $X$ via the
K\"{a}hler normal coordinates. Note that in general $\rho_x(-)$ does not vary holomorphically with respect to $x$. Such a map
$\rho$ induces an isomorphism
$$
    \rho^*: \cinfty(X)\otimes_{\OO_X}\pi_{1*}\bracket{\widehat{\OO}_{\Delta}}\overset{\sim}{\rightarrow} \cinfty(X)\otimes_{\OO_X} \widehat{\Sym}\bracket{T_X^\vee}.
$$
Tensoring with $\A_X^\sharp$, we find the following identification
\begin{equation}\label{eqn:identification-CE-jet}
\rho^*: \A_X^\sharp\otimes_{\OO_X}\Jet_X^{hol}(\OO_X) \overset{\sim}{\rightarrow} C^*\bracket{\g_X}.
\end{equation}
Let $d_{D_X}$ be the de Rham differential on $\A_X^\sharp\otimes_{\OO_X}\Jet_X^{hol}(\OO_X)$ induced from the $D^{hol}_X$-module
structure on $\Jet_X^{hol}(\OO_X)$. We can define a differential $d_{CE}$ on $C^*\bracket{\g_X}$ by
$$
   d_{CE}=\rho^*\circ d_{D_X} \circ \rho^{*-1}.
$$

The differential $d_{CE}$ defines a curved $L_\infty$-structure on $\g_X$, under which $d_{CE}$ is the corresponding \CE differential. We remark that the use of the \Kahler metric is only auxiliary: any choice of smooth splitting of the projection $F^1 \Jet_X^{hol}(\OO_X)\to F^1\Jet_X^{hol}(\OO_X)/F^2 \Jet_X^{hol}(\OO_X)$ can be used to define a curved $L_\infty$-structure, and different choices  are homotopic equivalent \cite{Kevin-CS}. Therefore we will not refer to a particular choice.

\begin{defn}
$\g_X$ is the sheaf of curved $L_\infty$-algebras on $X$ defined by the  \CE complex $\bracket{C^*\bracket{\g_X}, d_{CE}}$. We
will denote the components of the structure maps  (shifted by degree 1) of $\g_X$  by
$$
  l_k: \Sym^k_{\A^\sharp_X}(\g_X[1])\to \g_X.
$$
\end{defn}

Therefore $l_1$ defines $\g_X$ as a dg-module over $\A_X$, $l_k$ are $\A_X^{\sharp}$-linear for $k>1$, and $l_0$ defines the curving. There is a natural quasi-isomorphic embedding
$$
   \bracket{X, \OO_X}\into \bracket{X, C^*\bracket{\g_X}}
$$
and $X_{\dbar}$ is viewed as the derived enrichment of $X$ in this sense.

Classical constructions of vector bundles can be  naturally extended to the $L_\infty$-space $X_{\dbar}$.

\begin{defn}
Let $E$ be a holomorphic vector bundle on $X$. The induced vector bundle $E_{\dbar}$ on the $L_\infty$-space $X_{\dbar}$ is defined by the $\g_{X}$-module whose sheaf of \CE complex $C^*\bracket{\g_X, E_{\dbar}}$ is the dg module
$$
    C^*\bracket{\g_X, E_{\dbar}}:=\A_X^{\sharp} \otimes_{\OO_X}\Jet^{hol}_X(E)
$$
over the dg algebra $C^*(\g_X)$.
\end{defn}

\begin{eg}The tangent bundle $TX_{\dbar}$ is given by the module $\g_X[1]$, with its naturally induced module structure over
$\g_X$. Similarly, the cotangent
bundle $T^*X_{\dbar}$ is given by the natural $\g_X$-module $\g_X[1]^\vee$. Symmetric and exterior tensor products of vector bundles are defined in the same
fashion. For example, $$
   \wedge^kT^*X_{\dbar}=\wedge^k\bracket{\g_X[1]^\vee}
$$
and a $k$-form on $X_{\dbar}$ is a section of the sheaf
$$
C^*\bracket{\g_X, \wedge^k\bracket{\g_X[1]^\vee}}=\A_X^{\sharp}\otimes_{\OO_X}\Jet_X^{hol}(\wedge^k T_X^\vee).
$$
\end{eg}

In Appendix \ref{appendix:L_infty}, we present the corresponding $L_\infty$ constructions in more details.

\subsubsection{Mapping space as $L_\infty$-space}\label{mapping-space}
Let $f:\Sigma_g\rightarrow X$ be a smooth map. The sheaf $$f^*\g_X\otimes_{f^*{\A_{X}}} \A_{\Sigma_g}$$ naturally inherits a curved
$L_\infty$-algebra on $\Sigma_g$ within which \MC elements are defined \cite{Kevin-CS} .
\begin{defn}
A map $\bracket{\Sigma_g}_{dR} \to X_{\dbar}$ consists of a smooth map  $f: \Sigma_g\to X$,  together with a \MC element
$$
   \alpha\in f^*\g_X\otimes_{f^*{\A_{X}}} \A_{\Sigma_g}.
$$
\end{defn}

We would like to consider those maps which are constant on the underlying manifold. As shown in \cite{Kevin-CS}, the space of such maps can be
represented by the $L_\infty$-space
$$
  \bracket{X, \A_{\Sigma_g}\otimes_{\C} \g_X},
$$
which is an enrichment of $X_{\dbar}$ by the information from the Riemann surface $\Sigma_g$.

\subsection{Classical action functional}\label{classical action}
As in Definition \ref{B-model-definition}, our model is defined as the cotangent theory to the elliptic moduli problem of maps
$$
(\Sigma_g)_{dR}\rightarrow
X_{\bar{\partial}}.
$$
The cotangent construction of perturbative field theory is described in \cite{Kevin-Owen} as a convenient way to implement \BV
quantization. In our case, we consider the enlarged mapping space
$$
   \bracket{\Sigma_g}_{dR}\to T^*X_{\dbar}[1].
$$
The dg-space $\bracket{\Sigma_g}_{dR}$ is equipped with a volume form of degree $-2$, and $T^*X_{\dbar}[1]$ has a natural
symplectic form of degree $1$. This fits into the AKSZ-construction \cite{AKSZ} and leads to an odd symplectic structure of degree
$-1$ on the mapping space as desired for \BV formalism.

We are interested in the locus around constant maps. As explained in section \ref{mapping-space}, such locus is represented by the
$L_\infty$-space
$$
  \bracket{X, \A_{\Sigma_g}\otimes_\mathbb{C} \g_{T^*X_{\dbar}[1]} },
$$
where $\g_{T^*X_{\dbar}[1]}=\g_X\oplus \g_X[1]^\vee$ is the curved $L_\infty$-algebra representing $T^*X_{\dbar}[1]$.

\begin{defn} The space of fields of the  B-twisted $\sigma$-model is the $\A_X^\sharp$-module
$$
   \E:=\A_{\Sigma_g}^\sharp\otimes_{\C} \bracket{\g_X[1]\oplus \g_X^\vee}.
$$
\end{defn}

\begin{lem-defn}
There exists a natural graded sympletic pairing $\abracket{-,-}$ on $\E$ of degree $-1$.
\end{lem-defn}

%\begin{proof}
%Let $\alpha_i\otimes g_i\in\A_{\Sigma_g}\otimes_\C\mathfrak{g}_X[1]$, and  $\beta_i\otimes
%g^i\in\A_{\Sigma_g}\otimes_\C\mathfrak{g}_X^\vee$, for $i=1,2$. Then
%the symplectic pairing is defined by
%\begin{align*}
%&\langle \alpha_1\otimes g_1+\beta_1\otimes g^1, \alpha_2\otimes g_2+\beta_2\otimes g^2\rangle \\
%:=&(-1)^{|g_1||\beta_2|}\langle g_1,g^2\rangle\cdot\int_{\Sigma_g}\alpha_1\wedge\beta_2+(-1)^{|g^1||\alpha_2|}\langle
%g^1,g_2\rangle\cdot\int_{\Sigma_g}\beta_1\wedge\alpha_2.
%\end{align*}
\iffalse
It is straightforward to check that this pairing is graded skew-symmetric.
\begin{equation*}
 \begin{aligned}
 &\langle\beta_1\otimes g^1,\alpha_2\otimes g_2\rangle\\
=&(-1)^{|\alpha_2||g^1|}\int_\Sigma\beta_1\wedge\alpha_2\cdot\langle g^1,g_2\rangle\\
=&(-1)^{|\alpha_2||g^1|+|\alpha_2||\beta_1|+|g^1||g_2|+1}\int_\Sigma\alpha_2\wedge\beta_1\cdot\langle g_2,g^1\rangle\\
=&(-1)^{|\alpha_2||g^1|+|\alpha_2||\beta_1|+|g^1||g_2|+|\beta_1||g_2|+1}\langle\alpha_2\otimes g_2,\beta_1\otimes g^1\rangle\\
=&(-1)^{|\alpha_2\otimes g_2||\beta_1\otimes g^1|}(-1)\langle\alpha_2\otimes g_2,\beta_1\otimes g^1\rangle.
\end{aligned}
\end{equation*}
\fi
%\end{proof}

The proof is standard and we omit here. The classical action functional is constructed in a similar way as in \cite{Kevin-CS}.

\begin{defn}\label{defn-classical-functional} The classical action functional is defined as the $\A_X^{\sharp}$-valued formal function on $\E$
$$
S(\alpha+\beta):=\int_{\Sigma_g}\left( \langle d_{\Sigma_g}\alpha,\beta\rangle+
\sum_{k\geqslant
0}\dfrac{1}{(k+1)!}\langle l_k(\alpha^{\otimes k}),\beta\rangle\right),
$$
where $\alpha\in \A_{\Sigma_g}^{\sharp}\otimes \g_X[1], \beta\in \A_{\Sigma_g}^{\sharp}\otimes \g_X^\vee$, $d_{\Sigma_g}$ is the de Rham differential on $\Sigma_g$, and $l_k$ is the $L_\infty$-product for $\g_X$.
\end{defn}

 We will let $$Q=d_{\Sigma_g}+l_1:\mathcal{E}\rightarrow\mathcal{E}$$ and split the classical action
$S$ into its free and interaction parts
\begin{align*}
S&=S_{free}+I_{cl},
\end{align*}
 where $$I_{cl}(\alpha+\beta)=\int_{\Sigma_g}\left(\langle
l_0,\beta\rangle+\sum_{k\geqslant 2}\dfrac{1}{(k+1)!}\langle l_k(\alpha^{\otimes k}),\beta\rangle\right)$$  and
$$S_{free}(\alpha+\beta)=\int_{\Sigma_g}\langle Q(\alpha),\beta\rangle.$$
For later discussion, we denote the following functionals by
\begin{equation}\label{def: tilde-l_k}
\tilde{l}_k(\alpha+\beta):=\dfrac{1}{(k+1)!}\int_{\Sigma_g}\langle l_k(\alpha^{\otimes k}),\beta\rangle, \hspace{5mm} \text{for\ } k\geq 0.
\end{equation}

\subsection{Classical master equation} The classical action functional $S$ satisfies the classical master equation, which is equivalent to the gauge invariance in the \BV formalism. We will explain the classical master equation in this section and set up some notations to be used for quantization later.

\subsubsection{Functionals on fields}
The space of fields $\E$ is an $\A_{\Sigma_g}^\sharp$-module. Let $\E^{\otimes k}$ denote the $\A_{X}^{\sharp}$-linear completed tensor product of $k$ copies of
$\E$, where the completion is over the products of Riemann surfaces. Explicitly,
$$
\E^{\otimes k}:=\A_{\Sigma_g\times \cdots \times \Sigma_g} \otimes_{\C} \bracket{\bracket{\g_X[1]\oplus \g_X^\vee}\otimes_{\A_{X}^{\sharp}}\cdots
\otimes_{\A_{X}^{\sharp}} \bracket{\g_X[1]\oplus \g_X^\vee}}.
$$
The permutation group $S_k$ acts naturally on $\E^{\otimes k}$ and we will let
$$
   \Sym^k\bracket{\E}:=\bracket{\E^{\otimes k}}_{S_k}
$$
denote the $S_k$-coinvariants.

We will use ${\overline \A}_{\Sigma_g}$ to denote the distribution valued de Rham complex on $\Sigma_g$. $\overline{\E}$ will be
distributional sections
of $\E$:
$$
   \overline{\E}={\overline \A}_{\Sigma_g}\otimes_{\C} \bracket{\g_X[1]\oplus \g_X^\vee}.
$$
We will also use
$$
    \E^\vee:= \Hom_{\A_X^{\sharp}}\bracket{\E, \A_X^{\sharp}}
$$
to denote  functionals on $\E$ which are linear in $\A_X^{\sharp}$.  The symplectic pairing $\abracket{-,-}$ gives a natural
embedding
$$
   \E\into \E^\vee[-1],
$$
which induces an isomorphism
$$
  \overline{\E}\iso \E^\vee[-1].
$$

 \begin{defn}  We define the space of $k$-homogenous functionals on $\E$ by  the linear functional (distribution) on $\Sigma_g\times\cdots\times
\Sigma_g$ ($k$-copies)
 $$
    \OO^{(k)}(\E):=\Hom_{\A_X^{\sharp}}\bracket{\Sym^k(\E), \A_X^{\sharp}},
 $$
where our convention is that $\OO^{(0)}(\E)=\A_X^{\sharp}$. We introduce the following notations:
 $$
    \OO(\E):=\prod_{k\geq 0}\OO^{(k)}(\E), \quad    \OO^+(\E):=\prod_{k\geq 1}\OO^{(k)}(\E).
 $$
 \end{defn}

 Therefore $\OO(\E)$ can be viewed as formal power series on $\E$.  The isomorphism $\overline{\E}\iso \E^\vee[-1]$ leads to natural isomorphisms
$$
     \OO^{(k)}(\E)=\bracket{\E^\vee}^{\otimes k}_{S_k}\iso \bracket{\overline{\E}[1]}^{\otimes k}_{S_k},
$$
where the tensor products are the $\A_X^{\sharp}$-linear completed tensor products over $k$ copies of $\Sigma_g$.

\begin{defn}\label{contraction operator} Let $P \in \Sym^k(\E)$. We define the operator of contraction with $P$
$$
    {\pa\over \pa P}: \OO^{(m+k)}(\E)\to \OO^{(m)}(\E)
$$
by
$$
   \bracket{{\pa\over \pa P}\Phi}(\mu_1, \cdots, \mu_m):=\Phi(P, \mu_1, \cdots, \mu_m),
$$
where $\Phi\in \OO^{(m+k)}(\E), \mu_i\in \E$.
\end{defn}

\begin{defn}
We will denote by $\Ol(\E)\subset \OO(\E)$ the subspace of local functionals, i.e. those of the form given by the integration of a Lagrangian density on $\Sigma_g$
$$
     \int_{\Sigma_g} \mathcal L(\mu), \quad \mu \in \E.
$$
$\Ol^+(\E)$ is defined similarly as local functionals modulo constants.
\end{defn}

\begin{eg}
The classical action functional $S$ in Definition \ref{defn-classical-functional} is a local functional.
\end{eg}

\subsubsection{Classical master equation} As a general fact in symplectic geometry, the Poisson kernel of a symplectic form induces a Poisson bracket on the
space of functions. In our case we are dealing with the infinite dimensional symplectic space $\bracket{\E, \abracket{-,-}}$. The Poisson bracket is of the
form of $\delta$-function distribution, therefore the Poisson bracket is well-defined on local functionals.

\begin{lem-defn}\label{Poisson-bracket} The symplectic pairing $\abracket{-,-}$ induces an odd Poisson bracket of degree $1$ on the space of local functionals, denoted by
$$
  \fbracket{-,-}: \Ol(\E)\otimes_{\A_{X}^{\sharp}} \Ol(\E) \to \Ol(\E),
$$
which is bilinear in $\A_X^{\sharp}$.
\end{lem-defn}

\begin{lem}\label{lem:classical-master-equation}
Let $F_{l_1}$ be the functional on $\E$ defined as follows:
$$
F_{l_1}(\alpha+\beta):=\langle l_1^2(\alpha),\beta \rangle,\hspace{5mm} \alpha\in\A^\sharp_{\Sigma_g}\otimes\g_X[1],
\beta\in\A^\sharp_{\Sigma_g}\otimes\g_X^\vee.
$$
The classical interaction functional $I_{cl}$ satisfies the following classical master equation:
\begin{equation} \label{eqn:classical-master-equation-S}
   QI_{cl}+\frac{1}{2}\{I_{cl},I_{cl}\}+F_{l_1}=0.
\end{equation}
\end{lem}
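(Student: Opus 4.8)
The plan is to decompose \eqref{eqn:classical-master-equation-S} into homogeneous components and match each one against a (curved) $L_\infty$-relation for the operations $\{l_k\}$ of $\g_X$. Every term is linear in $\beta$ and polynomial in $\alpha$, so I would fix the $\alpha$-degree $n\geq 0$ and prove that the degree-$n$ piece vanishes by itself. Conceptually the identity is just $\tfrac12\{S,S\}=0$: writing $S=S_{free}+I_{cl}$ gives $\tfrac12\{S,S\}=\tfrac12\{S_{free},S_{free}\}+\{S_{free},I_{cl}\}+\tfrac12\{I_{cl},I_{cl}\}$, where $\{S_{free},I_{cl}\}=QI_{cl}$ and, since $Q^2=(d_{\Sigma_g}+l_1)^2=l_1^2$ (using $d_{\Sigma_g}^2=0$ and that $d_{\Sigma_g}$ anticommutes with $l_1$), one finds $\tfrac12\{S_{free},S_{free}\}=F_{l_1}$. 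This already indicates why the curving is what forces the correction $F_{l_1}$; I would nonetheless run the term-by-term check, since that is what fixes all the coefficients.

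First I would evaluate the Poisson bracket. The symplectic pairing $\abracket{-,-}$ is the $\int_{\Sigma_g}$ of the duality between $\g_X[1]$ (where $\alpha$ lives) and $\g_X^\vee$ (where $\beta$ lives), so its Poisson kernel is a $\delta$-distribution on the diagonal; hence $\{-,-\}$ is local and is computed by contracting a $\partial_\beta$-derivative of one factor against a $\partial_\alpha$-derivative of the other. As each $\tilde{l}_k$ is linear in $\beta$, the bracket $\{\tilde{l}_j,\tilde{l}_k\}$ strips the $\beta$ off $\tilde{l}_j$ --- leaving the $\g_X$-valued expression $l_j(\alpha^{\otimes j})$ --- and feeds it into a freed $\alpha$-slot of $\tilde{l}_k$, producing $\abracket{l_k(l_j(\alpha^{\otimes j}),\alpha^{\otimes(k-1)}),\beta}$. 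With the prefactor $\tfrac12$ and the symmetry of $\{-,-\}$ this collects, in $\alpha$-degree $n$, exactly the nested compositions $l_{q+1}(l_p(-),-)$ with $p+q=n$, $p\neq 1$ and $q\geq 1$ --- the constraint $p\neq 1$ because $I_{cl}$ omits $l_1$, and $q\geq 1$ because an $\alpha$-slot must be available for the insertion.

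Next I would handle $QI_{cl}$ with $Q=d_{\Sigma_g}+l_1$. The $d_{\Sigma_g}$-insertions sum, by the Leibniz rule, to $\int_{\Sigma_g}d_{\Sigma_g}(\,\cdots)$ and vanish by Stokes' theorem. For the $l_1$-part, inserting $l_1$ into an $\alpha$-slot yields $l_k(l_1(\alpha),\alpha^{\otimes(k-1)})$, the $(p,q)=(1,n-1)$ term; inserting $l_1$ into the $\beta$-slot can be transferred onto the output, via the invariance of the pairing under $l_1$, to give $l_1(l_k(\alpha^{\otimes n}))$, the $(p,q)=(n,0)$ term. Both carry the constraint $n\neq 1$, again since $l_1\notin I_{cl}$. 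Combining with the previous step, the degree-$n$ part of $QI_{cl}+\tfrac12\{I_{cl},I_{cl}\}$ equals $\abracket{\bigl[\sum_{p+q=n}\pm\,l_{q+1}(l_p(-),-)\bigr](\alpha^{\otimes n}),\beta}$ with every admissible $(p,q)$ present except the single pair $(p,q)=(1,0)$ at $n=1$, whose would-be contribution $\abracket{l_1^2(\alpha),\beta}$ is absent precisely because $l_1$ is not part of $I_{cl}$.

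Finally I would invoke the curved $L_\infty$-relations --- which hold because $d_{CE}^2=0$, equivalently because the $D_X^{hol}$-module $\Jet_X^{hol}(\OO_X)$ is flat: for each $n$, $\sum_{p+q=n}\pm\,l_{q+1}(l_p(-),-)=0$. For $n\neq 1$ this kills the degree-$n$ part outright. For $n=1$ the relation is $l_2(l_0,\alpha)+l_1^2(\alpha)=0$; the term $l_2(l_0,\alpha)$ is supplied by $\{\tilde{l}_0,\tilde{l}_2\}$, while the missing $l_1^2(\alpha)$ is restored exactly by $F_{l_1}$, so the degree-$1$ part also vanishes and \eqref{eqn:classical-master-equation-S} follows. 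The main obstacle is not the conceptual matching with the $L_\infty$-relations but the disciplined tracking of Koszul signs and of the combinatorial weights --- the factors $\tfrac{1}{(k+1)!}$, the $\tfrac12$, the insertion multiplicities and the shuffle coefficients --- needed to see that the bracket and the $Q$-action reassemble into the shuffle sums of the $L_\infty$-relations with the right normalization. The curving $l_0$ together with the deliberate exclusion of $l_1$ from $I_{cl}$ is exactly what makes this bookkeeping delicate and what singles out $F_{l_1}$ as the unique correction term.
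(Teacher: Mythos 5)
Your proof is correct and takes essentially the same approach as the paper: the paper's own proof is a one-liner asserting that the identity follows from the curved $L_\infty$-relations of $\g_X$ (citing \cite{Kevin-CS}), with $F_{l_1}$ accounting for the curving via $\{F_{l_1},-\}=Q^2=l_1^2$. Your degree-by-degree matching of the bracket terms and $Q$-insertions against the curved $L_\infty$-relations, with $F_{l_1}$ restoring the single missing term $(p,q)=(1,0)$ at $n=1$, is exactly the detailed verification that the paper compresses into that citation.
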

\begin{proof} This follows from the fact that the maps $\{l_k\}_{ k\geq 0}$ of $\g_X$ defines a curved $L_\infty$-structure. See \cite{Kevin-CS}. The extra $F_{l_1}$ describes the
curving: $\{F_{l_1},-\}=Q^2=l_1^2$.
\end{proof}

In particular, Lemma \ref{lem:classical-master-equation} implies that the operator $Q+\{I_{cl},-\}$ defines a differential on $\Ol(\E)$.

\begin{defn} The complex $Ob:=\bracket{\Ol^+(\E), Q+\{I_{cl},-\}}$ is called the deformation-obstruction complex associated to the  classical field theory defined by $(\E, S)$.
\end{defn}

As established in \cite{Kevin-book}, the complex $Ob$ controls the deformation theory of the perburbative quantization of $S$, hence the name.

%For the purpose of performing the quantization procedure later,  it is convenient to separate the interaction part of $S$. It is easy to see that
%$\{S_{free},-\}=Q$, therefore the classical master equation (\ref{eqn:classical-master-equation-S}) is equivalent to
%$$
%   QI_{cl}+{1\over 2}\fbracket{I_{cl},I_{cl}}=0,
%$$
%where $Q$ defines a derivation on $\OO(\E)$ via duality.

\section{Quantization}\label{section-quantization}
In this section we establish the quantization of our B-twisted $\sigma$-model via Costello's perturbative renormalization method \cite{Kevin-book}.
We show that the obstruction to the quantization is given by $(2-2g)c_1(X)$. When $c_1(X)=0$, i.e. $X$ being Calabi-Yau, every choice of holomorphic volume form on
$X$ leads to an associated canonical quantization of the B-twisted $\sigma$-model.

\subsection{Regularization}\label{section-regularization}
Perturbative quantization of the classical action functional $S$ is to model the asymptotic $\hbar$-expansion of the infinite dimensional path integral
$$
      \int_{L\subset \E} e^{S/\hbar},
$$
where $L$ is an appropriate subspace related to some gauge fixing (a BV-Lagrangian in the \BV formalism). A natural formalism based on finite dimensional models is
$$
      \int_{L\subset \E} e^{S/\hbar}\mapsto \exp\bracket{\hbar^{-1}W(G, I_{cl})},
$$
where $W(G, I_{cl})$ is the weighted sum of Feynman integrals over all connected graphs, with $G$ ($=\mathbb{P}_0^\infty$ below) labeling the internal edges, and $I_{cl}$ labeling
the vertices.  One essential difficulty is the infinite dimensionality of the space of fields which introduces singularities in the propagator $G$ and breaks the naive
interpretation of Feynman diagrams. Certain regularization is required to make sense of the theory, which is the celebrated idea of
renormalization in quantum field theory. We will use the heat kernel regularization to fit into Costello's renormalization technique \cite{Kevin-book}.

\subsubsection{Gauge fixing}
We need to choose a gauge fixing operator for regularization. For any Riemann surface $\Sigma_g$, we pick the  metric on $\Sigma_g$ of constant curvature $0,1$ or $-1$, depending on the genus $g$. In particular, we choose the hyperbolic
metric on $\Sigma_g$ when $g>1$. The gauge fixing operator is
$$
Q^{GF}:=d_{\Sigma_g}^*,
$$
where $d_{\Sigma_g}^*$ is the adjoint of the de Rham differential $d_{\Sigma_g}$ on $\Sigma_g$ with respect to the chosen metric.
It is clear that the Laplacian $H=[Q,Q^{GF}]=d_{\Sigma_g}d_{\Sigma_g}^*+d_{\Sigma_g}^*d_{\Sigma_g}$ is the usual Laplacian on $\A_{\Sigma_g}$. We will let $e^{-tH}$ denote the heat operator acting  on
$\A_{\Sigma_g}$ for $t>0$.

\begin{rmk}The operators $Q^{GF}, H$ and  $e^{-tH}$ extend trivially over $\g_X[1]\oplus \g^\vee_X$ to define operators on $\E$, and we will use the same notations without confusion.
\end{rmk}
\subsubsection{Effective propagator}
To analyze B-twisted $\sigma$-model, we first describe the propagator of the theory.

\begin{defn}
The heat kernel $\mathbb{K}_t$ for $t>0$ is the element in $\Sym^2\bracket{\E}$ defined by the equation
$$
\langle \mathbb{K}_t(z_1,z_2),\phi(z_2)\rangle=e^{-tH}(\phi)(z_1), \quad \forall \phi\in \E, z_1\in \Sigma_g.
$$
\end{defn}

\begin{notn} The fact that  the symplectic pairing on $\E$ is (up to sign) the tensor product of the natural pairings on $\A_{\Sigma_g}^\sharp$ and
$\g_X[1]\oplus\g_X^\vee$ implies that the heat kernel $\mathbb{K}_t(z_1,z_2)$ is of the following form:
$$
\mathbb{K}_t(z_1,z_2)=K_t(z_1,z_2)\otimes(\text{Id}_{\g_X}+\text{Id}_{\g_X^\vee})
,$$
where $K_t$ is simply the usual heat kernel of $e^{-tH}$ on $\Sigma_g$, and $\text{Id}_{\g_X}+\text{Id}_{\g_X^\vee}$ is the Poisson kernel  corresponding to the natural symplectic
pairing on $\g_X[1]\oplus\g_X^\vee$. We will call $K_t$ and $\text{Id}_{\g_X}+\text{Id}_{\g_X^\vee}$ the analytic and combinatorial part of $\mathbb{K}_t$ respectively.
\end{notn}
The combinatorial part of $\mathbb{K}_t$ can be described locally as follows: pick a local basis $\{X_i\}$  of $\mathfrak{g}_X[1]$ as an
$\mathcal{A}_X^{\sharp}$-module, and let $\{X^i\}$ be the corresponding dual basis of $\mathfrak{g}_X^\vee$. Then we have
$$
\text{Id}_{\g_X}+\text{Id}_{\g_X^\vee}=\sum_{i}(X_i\otimes X^i+X^i\otimes X_i).
$$

\begin{defn}\label{defn:propagator}
For $0<\epsilon<L<\infty$, we define the effective propagator $\mathbb{P}_\epsilon^L$ as the element in $\Sym^2\bracket{\E}$  by
$$
\mathbb{P}_\epsilon^L(z_1,z_2)=P_\epsilon^L(z_1,z_2)\otimes (\text{Id}_{\g_X}+\text{Id}_{\g_X^\vee}),
$$ where the analytic part of the propagator $P_\epsilon^L$ is given by
$$
P_\epsilon^L:=\int_\epsilon^L (Q^{GF}\otimes 1) K_tdt.
$$
\end{defn}
\begin{rmk}
In the notations $P_\epsilon^L(z_1,z_2)$ and $K_t(z_1,z_2)$, we have omitted their anti-holomorphic dependence for simplicity.
\end{rmk}
In other words, $\mathbb{P}_\epsilon^L$ is the kernel representing the operator $\int_\epsilon^L Q^{GF}e^{-tH}dt$ on $\E$.  The full propagator
$\mathbb{P}_0^\infty$ represents the operator ${Q^{GF}\over H}$, which is formally the inverse of the quadratic pairing  $S_{free}$ after gauge
fixing. The standard trick of Feynman diagram expansions picks $\mathbb{P}_0^\infty$ as the propagator. However $\mathbb{P}_0^\infty$ exhibits
singularity along the diagonal in $\Sigma_g\times\Sigma_g$, and the above effective propagator with cut-off parameters $\epsilon, L$ is viewed as a
regularization.

It is known that the heat kernel $K_t$ on a Riemann surface $\Sigma_g$ has an asymptotic expansion:
\begin{equation}\label{eqn:asymp-heat-kernel}
K_t(z_1,z_2)\sim \dfrac{1}{4\pi t}e^{-\frac{\rho^2(z_1,z_2)}{4t}}\left(\sum_{i=0}^\infty t^i\cdot a_i(z_1,z_2)\right) \quad \text{as}\  t\rightarrow 0,
\end{equation} where each $a_i(z_1,z_2)$ is a smooth $2$-form on $\Sigma_g\times\Sigma_g$ and $\rho(z_1,z_2)$ denotes the geodesic distance between $z_1$ and
$z_2$. Similarly, for the propagator $P_\epsilon^L$, we have
\begin{lem}[Appendix \ref{appendix:asymp-propagaotr}]\label{lem:asymp-propagator}
The propagator on the hyperbolic upper half plane $\mathbb{H}$ is given explicitly by
\begin{equation}\label{eqn:propagator}
P_{\epsilon}^L=\int_{\epsilon}^Lf(\rho,t)dt\cdot\left(\dfrac{2(x_1-x_2)}{y_1y_2}(dy_1-dy_2)-\dfrac{(y_1-y_2)(y_1+y_2)}{y_1y_2
}
\left(\dfrac{dx_1}{y_1}-\dfrac{dx_2}{y_2}\right)\right),
\end{equation}
where  $x_i=\Re z_i$ and $y_i=\Im
z_i$, for $i=1,2$. The function $f(\rho,t)$ is smooth on
$\mathbb{R}_{\geqslant 0}\times\mathbb{R}_{>0}$, and has an asymptotic expansion as $t\rightarrow 0$:
\begin{equation}\label{eqn:asymp-propagator}
 f(\rho,t)\sim
\sum_{k=0}^\infty t^{-2+k}e^{-\frac{\rho^2}{4t}}b_k(\rho).
\end{equation}
\end{lem}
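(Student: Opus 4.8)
The plan is to exploit the $PSL(2,\mathbb{R})$-symmetry of $\mathbb{H}$ to reduce everything to the scalar heat kernel, which by homogeneity depends only on the geodesic distance $\rho(z_1,z_2)$ and the time $t$, and then to push this radial data through the gauge-fixing operator $d_{\Sigma_g}^*$. The geodesic distance is $\cosh\rho = 1 + \frac{|z_1-z_2|^2}{2y_1y_2}$, and I would record at the outset the coordinate expressions for $d(\cosh\rho)$ and for the hyperbolic Hodge star $*$ associated to $ds^2 = y^{-2}(dx^2+dy^2)$, since these are the only metric data that enter.

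The first task is to identify the full de Rham heat kernel $K_t$ on $\Sigma_g\times\Sigma_g$. Because the pairing on $\A_{\Sigma_g}$ is nonzero only in total degree two, $K_t$ splits by form-degree as $K_t^{(2,0)}+K_t^{(1,1)}+K_t^{(0,2)}$, the heat kernels on $2$-, $1$- and $0$-forms respectively. Using the intertwining relation $d\,e^{-tH}=e^{-tH}d$ and Hodge duality on a surface, each of these can be expressed in terms of the scalar heat kernel $k_t(\rho)$ and the operators $d,d^*,*$. Applying $Q^{GF}\otimes 1=d_1^*$ to the first factor then annihilates $K_t^{(0,2)}$ and converts $K_t^{(2,0)},K_t^{(1,1)}$ into a $1$-form on $\Sigma_g\times\Sigma_g$ of mixed bidegrees $(1,0)$ and $(0,1)$. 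Carrying this out explicitly in the coordinates $(x_i,y_i)$, collecting the $dx_i,dy_i$ terms, and integrating over $t\in[\epsilon,L]$, I expect the antisymmetric combination to reproduce exactly the bracketed $1$-form in (\ref{eqn:propagator}), with the accumulated radial scalar factor identified as $\int_\epsilon^L f(\rho,t)\,dt$.

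For the asymptotic expansion (\ref{eqn:asymp-propagator}) I would feed the short-time Minakshisundaram-Pleijel asymptotics of the scalar heat kernel, $k_t(\rho)\sim\frac{1}{4\pi t}e^{-\rho^2/4t}\sum_i t^i a_i(\rho)$, through the operations above. The operator $d_1^*$ differentiates the Gaussian $e^{-\rho^2/4t}$, and since $\partial_\rho e^{-\rho^2/4t}=-\frac{\rho}{2t}e^{-\rho^2/4t}$ this introduces an extra factor of order $t^{-1}$, shifting the leading power from $t^{-1}$ to $t^{-2}$ and yielding $f(\rho,t)\sim\sum_k t^{-2+k}e^{-\rho^2/4t}b_k(\rho)$ with smooth radial coefficients $b_k$. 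Smoothness of $f$ on $\mathbb{R}_{\geq 0}\times\mathbb{R}_{>0}$ follows from smoothness of $k_t$ for $t>0$ together with the regularity of the heat parametrix.

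The main obstacle is the Hodge-theoretic bookkeeping in the middle step: one must track all three form-degree components of the full de Rham heat kernel, reduce the $1$- and $2$-form pieces correctly to the scalar kernel, and then match the resulting coordinate $1$-form against the precise antisymmetric expression in (\ref{eqn:propagator}), including all the factors of $y_i$ that the hyperbolic metric produces. A secondary point is to justify that differentiating the Minakshisundaram-Pleijel series term-by-term gives a genuine asymptotic expansion uniform on compact sets in $\rho$; this follows from the standard remainder estimates for the parametrix but requires attention near $\rho=0$.
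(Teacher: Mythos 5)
Your route is, in outline, the same as the paper's proof in Appendix \ref{appendix:asymp-propagaotr}: reduce to the scalar heat kernel on $\mathbb{H}$, which is radial by $PSL(2,\mathbb{R})$-invariance; apply the gauge fixing $d^*=\pm\star d\star$ in the first variable; compute $\star_1 d_{z_1}\cosh\rho$ in the coordinates $(x_i,y_i)$ to produce the bracketed $1$-form of \eqref{eqn:propagator}; and obtain \eqref{eqn:asymp-propagator} because $d^*$ hits the Gaussian factor of the short-time expansion and shifts the leading power $t^{-1}$ to $t^{-2}$.

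There is, however, one genuine gap in your middle step, and it is exactly the point where the paper organizes the argument differently. You assert that all three components $K_t^{(2,0)}$, $K_t^{(1,1)}$, $K_t^{(0,2)}$ ``can be expressed in terms of the scalar heat kernel $k_t(\rho)$ and the operators $d,d^*,\star$'' via intertwining and Hodge duality. This is correct for the degree-$0$ and degree-$2$ kernels ($\star$ conjugates the Laplacian on $2$-forms into the scalar Laplacian, so $K_t^{(2,0)}=k_t^{\mathrm{scalar}}(z_1,z_2)\,\frac{dx_1dy_1}{y_1^2}$), but it is false for the $1$-form kernel $K_t^{(1,1)}$: the hyperbolic plane carries an infinite-dimensional space of $L^2$-harmonic $1$-forms (its $L^2$-cohomology is concentrated in degree one), on which the heat semigroup acts as the identity. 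Hence $K_t^{(1,1)}$ contains the projector onto this space --- it does not even decay as $t\to\infty$ --- and cannot be manufactured from $k_t$, $d$, $d^*$, $\star$ by intertwining alone. The gap can be repaired in two ways. First, within your scheme: what you actually need is not $K_t^{(1,1)}$ but $d_1^*K_t^{(1,1)}$, i.e.\ the kernel of $d^*e^{-tH_1}$, where $H_p$ is the Laplacian on $p$-forms; since $d^*H_1=H_0d^*$ one has $d^*e^{-tH_1}=e^{-tH_0}d^*$, and the harmonic part is annihilated, so this composite (though not $K_t^{(1,1)}$ itself) is expressible through $k_t^{\mathrm{scalar}}$ --- you must phrase the reduction at the level of the composite. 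Second, the paper's shortcut: since $\mathbb{P}_\epsilon^L$ is a symmetric element of $\Sym^2(\E)$ (Definition \ref{defn:propagator}), its $(0,1)$ part is determined from its $(1,0)$ part by exchanging $z_1\leftrightarrow z_2$, so only $d_1^*K_t^{(2,0)}=\star_1 d_{z_1}k_t^{\mathrm{scalar}}(\rho)$ is ever computed and the $1$-form heat kernel never enters. Either repair makes your argument complete; as written, the step does not go through.
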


\subsubsection{Effective \BV formalism} The heat kernel cut-off also allows us to regularize the Poisson bracket $\fbracket{-,-}$
and extend its definition from
local functionals to all distributions.
\begin{defn} We define the effective BV Laplacian $\Delta_L$ at scale $L>0$
$$
  \Delta_L:={\pa\over \pa \mathbb{K}_L}: \OO\bracket{\E}\to \OO\bracket{\E}
$$
by contracting with $\mathbb{K}_L$ (see Definition \ref{contraction operator}).
\end{defn}

Since the regularized Poisson kernel $\mathbb{K}_L$ is smooth, $\Delta_L$ is well-defined on $\OO\bracket{\E}$
and can be viewed as a second order differential operator in our infinite dimensional setting.

\begin{defn} We define the effective BV bracket at scale $L$
$$
  \fbracket{-,-}_L: \OO(\E)\times \OO(\E)\to \OO(\E)
$$
by
$$
   \fbracket{\Phi_1, \Phi_2}_L:=\Delta_L\bracket{\Phi_1 \Phi_2}-\bracket{\Delta_L \Phi_1}\Phi_2-(-1)^{|\Phi_1|}\Phi_1\bracket{\Delta_L \Phi_2}, \quad \forall
\Phi_1, \Phi_2 \in \OO(\E).
$$
\end{defn}
As we will see, \BV structures at different scales will be related to each other via the renormalization group flow.

For two distributions $\Phi_1, \Phi_2\in \OO(\E)$, the bracket $\fbracket{\Phi_1, \Phi_2}_L$ will in general diverge as $L\to 0$. However for $\Phi_1, \Phi_2\in \Ol(\E)$,
   $$
      \lim_{L\to 0}\fbracket{\Phi_1, \Phi_2}_L=\fbracket{\Phi_1, \Phi_2},
   $$
   where on the right hand side $\fbracket{-,-}$ is the Poisson bracket as in Lemma/Definition \ref{Poisson-bracket}. Therefore
$\fbracket{-,-}_L$ is a regularization of the classical Poisson bracket.

\subsection{Effective renormalization} We discuss Costello's quantization framework \cite{Kevin-book} in our current set-up.
\subsubsection{Renormalization group flow}
We start from the definition
of graphs:
\begin{defn}
A graph $\gamma$ consists of the following data:
\begin{enumerate}
 \item A finite set of vertices $V(\gamma)$;
 \item A finite set of half-edges $H(\gamma)$;
 \item An involution $\sigma: H(\gamma)\rightarrow H(\gamma)$. The set of fixed points of this map is denoted by $T(\gamma)$ and is
called the set of tails of $\gamma$. The set of two-element orbits is denoted by $E(\gamma)$ and is called the set of internal edges of
$\gamma$;
 \item A map $\pi:H(\gamma)\rightarrow V(\gamma)$ sending a half-edge to the vertex to which it is attached;
 \item A map $g:V(\gamma)\rightarrow \mathbb{Z}_{\geqslant 0}$ assigning a genus to each vertex.
\end{enumerate}
It is clear how to construct a topological space $|\gamma|$ from the above abstract data. A graph $\gamma$ is called $connected$ if
$|\gamma|$ is connected. A graph is called stable if every vertex of genus $0$ is at least trivalent, and every genus $1$ vertex is at
least univalent. The genus of the graph $\gamma$ is defined to be $$g(\gamma):=b_1(|\gamma|)+\sum_{v\in V(\gamma)}g(v),$$ where
$b_1(|\gamma|)$ denotes the first Betti number of $|\gamma|$.
\end{defn}
Let $$\left(\mathcal{O}(\mathcal{E})[[\hbar]]\right)^+\subset\mathcal{O}(\mathcal{E})[[\hbar]]$$ be the subspace consisting of those
functionals  which are at least cubic modulo $\hbar$ and modulo the nilpotent ideal $\mathcal{I}$ in the base ring
$\mathcal{A}_X^\sharp$.  Let $I\in (\mathcal{O}(\mathcal{E})[[\hbar]])^+$ be a functional which can be expanded as
$$I=\sum_{k,i\geq 0}\hbar^k
I_{i}^{(k)}, \quad I_{i}^{(k)}\in \OO^{(i)}(\E).
$$
We view $I_{i}^{(k)}$ as an
$S_i$-invariant linear map
$$
I_{i}^{(k)}: \mathcal{E}^{\otimes i}\rightarrow\mathcal{A}_X^\sharp.
$$
With the propagator $\mathbb{P}_\epsilon^L$, we will describe the $Feynman\ weights$
$$
W_\gamma(\mathbb{P}_\epsilon^L,I)\in (\mathcal{O}(\mathcal{E})[[\hbar]])^+
$$ for any connected stable graph $\gamma$: we label every vertex $v$ in $\gamma$ of genus $g(v)$ and valency $i$ by
$I^{(g(v))}_i$, which we denote by:
$$I_{v}:\mathcal{E}^{\otimes H(v)}\rightarrow\mathcal{A}_X^\sharp,$$
where $H(v)$ is the set of half-edges of $\gamma$ which are incident to $v$.
We label every internal edge $e$ by the propagator $$\mathbb{P}_e=\mathbb{P}_\epsilon^L\in\mathcal{E}^{\otimes H(e)},$$
where $H(e)\subset H(\gamma)$ is the two-element set consisting of the half-edges forming $e$. Now we can contract
$$\otimes_{v\in V(\gamma)}I_v: \mathcal{E}^{H(\gamma)}\rightarrow\mathcal{A}_X^\sharp$$
with $$\otimes_{e\in E(\gamma)}\mathbb{P}_e\in\mathcal{E}^{H(\gamma)\setminus T(\gamma)}\rightarrow\mathcal{A}_X^\sharp$$ to yield a linear map
$$W_\gamma(\mathbb{P}_\epsilon^L, I) : \mathcal{E}^{\otimes T(\gamma)}\rightarrow\mathcal{A}_X^\sharp.$$
We can now define the $renormalization\ group\ flow$ operator:
\begin{defn}
The renormalization group flow operator from scale $\epsilon$ to scale $L$ is the map
$$
W(\mathbb{P}_\epsilon^L,-):\left(\mathcal{O}(\mathcal{E})[[\hbar]]\right)^+\rightarrow\left(\mathcal{O}(\mathcal{E})[[\hbar]]\right)^+
$$
defined by taking the sum of Feynman weights over all stable connected graphs:
$$I\mapsto\sum_{\gamma}\dfrac{1}{|\text{Aut}(\gamma)|}\hbar^{g(\gamma)}W_\gamma(\mathbb{P}_\epsilon^L,I).$$  A collection of functionals
$$\{I[L]\in\left(\mathcal{O}(\mathcal{E})[[\hbar]]\right)^+|L\in\mathbb{R}_+\}$$ is said to satisfy the renormalization group equation (RGE) if for
any $0<\epsilon<L<\infty$, we have
$$I[L]=W(\mathbb{P}_\epsilon^L,I[\epsilon]). $$
\end{defn}

\begin{rmk} Formally,  the RGE can be equivalently described as
$
  e^{I[L]/\hbar}=e^{\hbar {\pa\over \pa \mathbb{P}_\epsilon^L}}e^{I[\epsilon]/\hbar}.
$
\end{rmk}
\subsubsection{Quantum master equation}
Now we explain the quantum master equation as the quantization of the classical master equation. Usually the quantum master
equation is associated with the following operator \cite{Kevin-book} in the \BV formalism
$$
   Q+\hbar \Delta_L,
$$
which can be viewed as a quantization of the differential $Q$.

However, in our case, the above operator does not define a differential due to the curving
$$
  \bracket{ Q+\hbar \Delta_L}^2=l_1^2.
$$
We will modify the construction in \cite{Kevin-book} to incorporate with the curving.

\begin{defn}
We define the effective curved differential $Q_L: \E \to \E$ by
$$
  Q_L:= Q+ l_1^2\int_0^L Q^{GF}e^{-tH}dt,
$$
where $l_1^2\int_0^L Q^{GF}e^{-tH}dt$ is the composition of the operator $\int_0^L Q^{GF}e^{-tH}dt$ with $l_1^2$.
\end{defn}

It is straightforward to prove the following lemma:
\begin{lem}\label{QME-RG}
The quantized operator ${Q_L+\hbar \Delta_L+{F_{l_1}\over \hbar}}$ is compatible with the renormalization group flow in the following sense (recall Lemma \ref{lem:classical-master-equation} for the definition $F_{l_1}$)
$$
   e^{\hbar {\pa\over \pa \mathbb{P}_\epsilon^L}} \bracket{Q_\epsilon+\hbar \Delta_\epsilon+{F_{l_1}\over \hbar}}  = \bracket{Q_L+\hbar
\Delta_L+{F_{l_1}\over \hbar}} e^{\hbar {\pa\over \pa \mathbb{P}_\epsilon^L}}.
$$
Moreover, it squares zero modulo $\A_X^\sharp$:
$$
\bracket{Q_L+\hbar \Delta_L+{F_{l_1}\over \hbar}}^2=C,
$$
equals the multiplication by some $C\in \A_X^\sharp$.
\end{lem}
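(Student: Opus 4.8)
The plan is to verify the two claims of Lemma \ref{QME-RG} separately, treating the renormalization group compatibility first and then the square-zero-modulo-constants statement.

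\textbf{Compatibility with the RG flow.} First I would recall the standard fact (from \cite{Kevin-book}) that the uncurved operator $Q + \hbar\Delta_L$ already commutes with $e^{\hbar\pa/\pa\mathbb{P}_\epsilon^L}$ whenever the propagator satisfies the heat-kernel relation
$$
\Delta_L - \Delta_\epsilon = \bbracket{Q, {\pa\over \pa \mathbb{P}_\epsilon^L}},
$$
which is exactly the statement that $\pa_t \mathbb{K}_t = [Q,Q^{GF}]\mathbb{K}_t = H\,\mathbb{K}_t$ and $\mathbb{P}_\epsilon^L = \int_\epsilon^L (Q^{GF}\otimes 1)\mathbb{K}_t\,dt$. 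The term $F_{l_1}/\hbar$ is $\epsilon$- and $L$-independent and purely multiplicative in $\OO(\E)$, so to see that it passes through $e^{\hbar\pa/\pa\mathbb{P}_\epsilon^L}$ one needs $\bbracket{F_{l_1},\pa/\pa\mathbb{P}_\epsilon^L}=0$; since $F_{l_1}$ is quadratic and built from the combinatorial pairing on $\g_X$, contracting it with the propagator produces a term that the curved piece of $Q_L$ is designed to cancel. Concretely, I would compute the commutator of $l_1^2\int_0^L Q^{GF}e^{-tH}dt$ (the correction term in $Q_L$) with the propagator insertion and check that it precisely accounts for the failure of $Q$ and of $F_{l_1}$ to commute with $e^{\hbar\pa/\pa\mathbb{P}_\epsilon^L}$, using that $\int_0^\epsilon + \int_\epsilon^L = \int_0^L$ so that the scale-dependence of the curved term in $Q_\epsilon$ versus $Q_L$ matches the propagator $\mathbb{P}_\epsilon^L$ exactly.

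\textbf{The square.} For the second claim, I would expand
$$
\bracket{Q_L + \hbar\Delta_L + {F_{l_1}\over \hbar}}^2
= Q_L^2 + \hbar\bbracket{Q_L,\Delta_L} + \bbracket{Q_L, {F_{l_1}\over\hbar}} + \hbar^2\Delta_L^2 + \bbracket{\Delta_L, F_{l_1}} + {F_{l_1}^2\over\hbar^2}
$$
and show the cross terms cancel degree by degree in $\hbar$. Here $\Delta_L^2=0$ since it is a second-order operator contracting with the same kernel twice (antisymmetry of the odd pairing), and $F_{l_1}^2$ multiplies to a function valued in $\A_X^\sharp$. The key cancellations are $Q_L^2 + \bbracket{\Delta_L,F_{l_1}} = $ (constant), coming from the curved master equation $\{F_{l_1},-\}=Q^2=l_1^2$ of Lemma \ref{lem:classical-master-equation}, and $\bbracket{Q_L,\Delta_L} + {1\over\hbar}\bbracket{Q_L,F_{l_1}}=0$ which uses that $\Delta_L$ differentiates along the same kernel whose $Q^{GF}$-primitive appears in the curved correction to $Q_L$. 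The residual term is forced to be $\A_X^\sharp$-valued, i.e. a constant $C$ in the sense of the pairing, precisely because $F_{l_1}$ is a fixed quadratic functional independent of the fields' fluctuation beyond the curving $l_1^2$.

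\textbf{Main obstacle.} The delicate point I expect is the bookkeeping of the curved correction term $l_1^2\int_0^L Q^{GF}e^{-tH}dt$ inside $Q_L$: one must track how its scale-$L$ dependence interacts with both the BV Laplacian $\Delta_L$ (which contracts with $\mathbb{K}_L$) and the RG propagator $\mathbb{P}_\epsilon^L$, and confirm that the $t$-integral ranges $[0,\epsilon]$, $[\epsilon,L]$, $[0,L]$ assemble consistently under $e^{\hbar\pa/\pa\mathbb{P}_\epsilon^L}$. Verifying that all non-constant contributions cancel — rather than merely the divergent ones — relies on the identity $\{F_{l_1},-\}=l_1^2$ holding at the level of the regularized bracket, so I would carefully check that this curved classical master equation survives the heat-kernel regularization and reduces the right-hand side to a genuine element $C\in\A_X^\sharp$.
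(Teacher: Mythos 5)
The paper offers no proof of this lemma at all (it is introduced with ``It is straightforward to prove the following lemma''), so your proposal can only be judged on its own merits. Its skeleton is the natural one and, I believe, what the authors intend: Costello's kernel identities $(Q\otimes 1+1\otimes Q)\mathbb{P}_\epsilon^L=\mathbb{K}_\epsilon-\mathbb{K}_L$ and $[\,Q,\pa/\pa\mathbb{P}_\epsilon^L\,]=\pm(\Delta_\epsilon-\Delta_L)$, together with the observation that the single contraction of the quadratic functional $F_{l_1}$ with $\mathbb{P}_\epsilon^L$ is exactly the derivation $Q_L-Q_\epsilon=l_1^2\int_\epsilon^L Q^{GF}e^{-tH}dt$, so the curved pieces match via $\int_0^\epsilon+\int_\epsilon^L=\int_0^L$. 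Your first part is essentially correct modulo signs, but it omits one check: conjugating the multiplication operator $F_{l_1}/\hbar$ by $e^{\hbar\pa/\pa\mathbb{P}_\epsilon^L}$ produces, besides the single-contraction derivation, the \emph{full} contraction $\pa_{\mathbb{P}_\epsilon^L}F_{l_1}\in\A_X^\sharp$; the stated RG identity is exact, so you must show this scalar vanishes. It does, because it factors through the supertrace of $\int_\epsilon^L Q^{GF}e^{-tH}dt$ on $\A_{\Sigma_g}^\sharp$, which is zero since that operator shifts form degree.

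The second part contains two genuine errors. First, $F_{l_1}^2$ does not ``multiply to a function valued in $\A_X^\sharp$'': if it were nonzero it would be a \emph{quartic} functional entering the square at order $\hbar^{-2}$, which would ruin the lemma. What saves you is that $F_{l_1}$ has odd cohomological degree, so $F_{l_1}^2=0$ identically by graded commutativity; you need this vanishing, not a contribution to $C$. Second, your grouping $[Q_L,\Delta_L]+\tfrac1\hbar[Q_L,F_{l_1}]=0$ mixes terms of different $\hbar$-order (the first enters the square at order $\hbar$, the second at order $\hbar^{-1}$), so they cannot cancel against each other, and the mechanism you describe is not what happens. Each vanishes separately: $[Q_L,\Delta_L]=0$ because every constituent of $Q_L$ (graded-)commutes with the heat operator, i.e. $(Q_L\otimes 1+1\otimes Q_L)\mathbb{K}_L=0$; and $[Q_L,F_{l_1}]=Q_L(F_{l_1})=0$ because $F_{l_1}$ corresponds under the symplectic pairing to the operator $l_1^2$, which commutes with $d_{\Sigma_g}$, $l_1$, $Q^{GF}$ and $e^{-tH}$, hence with $Q_L$. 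With these corrections the only surviving term is the $\hbar^0$ piece $Q_L^2+\{F_{l_1},-\}_L+\Delta_L F_{l_1}$, where the two derivations cancel exactly because of the $\int_0^L Q^{GF}e^{-tH}dt$ built into $Q_L$ (this is the point you correctly flag as the regularized version of $\{F_{l_1},-\}=l_1^2$), leaving $C=\Delta_L F_{l_1}\in\A_X^\sharp$ --- the contraction of $F_{l_1}$ with $\mathbb{K}_L$, which the paper's subsequent remark identifies as representing $(2-2g)c_1(X)$.
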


Therefore  we will use ${Q_L+\hbar \Delta_L+{F_{l_1}\over \hbar}}$ instead of $Q+\hbar\Delta_L$ in order to define the quantum
master equation. The
constant $C$ does not bother us since the perturbative quantization in \cite{Kevin-book} is defined modulo the constant terms.
Precisely,

\begin{defn}\label{defn-quantization}
Let $\{I[L]\in\left(\mathcal{O}(\mathcal{E})[[\hbar]]\right)^+|L\in\mathbb{R}_+\}$ be a collection of effective interactions which satisfies the renormalization group
equation. We say that they satisfy the quantum master equation if for all $L>0$ the following scale $L$ quantum mater equation (QME) is satisfied:
\begin{equation}\label{eqn:qunatum-master-equation}
\bracket{Q_L+\hbar \Delta_L+{F_{l_1}\over \hbar}}e^{I[L]/\hbar}=R e^{I[L]/\hbar},
\end{equation}
where $R\in \A_X^\sharp[[\hbar]]$ does not depend on $L$.
\end{defn}

In other words, if we view ${Q_L+\hbar \Delta_L+{F_{l_1}\over \hbar}}$ as defining a projective flat connection, then a solution of quantum master equation defines a projectively flat section.

\begin{lem} The quantum master equation is compatible with the renormalization group flow in the following sense: if the collection $\{I[L]|L\in\R_+\}$ satisfies QME at some
scale $L_0>0$, then QME holds for any scale.
\end{lem}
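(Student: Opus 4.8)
The plan is to derive the scale-$L$ quantum master equation from the scale-$L_0$ one by flowing along the renormalization group and invoking the intertwining relation of Lemma \ref{QME-RG}. The two ingredients I would use are, first, the exponential form of the renormalization group equation, which for $L_0<L$ reads
$$
e^{I[L]/\hbar}=e^{\hbar {\pa\over \pa \mathbb{P}_{L_0}^L}}\, e^{I[L_0]/\hbar},
$$
and second, the operator identity of Lemma \ref{QME-RG} with $\epsilon=L_0$,
$$
e^{\hbar {\pa\over \pa \mathbb{P}_{L_0}^L}}\bracket{Q_{L_0}+\hbar \Delta_{L_0}+{F_{l_1}\over \hbar}} = \bracket{Q_L+\hbar \Delta_L+{F_{l_1}\over \hbar}}\, e^{\hbar {\pa\over \pa \mathbb{P}_{L_0}^L}}.
$$

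First I would apply the renormalization group operator $e^{\hbar \pa/\pa \mathbb{P}_{L_0}^L}$ to both sides of the scale-$L_0$ QME. On the left-hand side I use the intertwining relation to push $Q_{L_0}+\hbar\Delta_{L_0}+F_{l_1}/\hbar$ through the flow operator, turning it into $Q_L+\hbar\Delta_L+F_{l_1}/\hbar$ acting \emph{after} the flow; the renormalization group equation then identifies $e^{\hbar\pa/\pa\mathbb{P}_{L_0}^L}e^{I[L_0]/\hbar}$ with $e^{I[L]/\hbar}$, so the left-hand side becomes exactly $(Q_L+\hbar\Delta_L+F_{l_1}/\hbar)e^{I[L]/\hbar}$. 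On the right-hand side the scalar $R\in\A_X^\sharp[[\hbar]]$ is a function of the base alone and is independent of the fields in $\E$; since $e^{\hbar\pa/\pa\mathbb{P}_{L_0}^L}$ only differentiates in the field directions (contraction against $\mathbb{P}_{L_0}^L\in\Sym^2(\E)$), it commutes with multiplication by $R$, so the right-hand side equals $R\,e^{\hbar\pa/\pa\mathbb{P}_{L_0}^L}e^{I[L_0]/\hbar}=R\,e^{I[L]/\hbar}$. Combining the two sides gives
$$
\bracket{Q_L+\hbar \Delta_L+{F_{l_1}\over \hbar}}e^{I[L]/\hbar}=R\,e^{I[L]/\hbar},
$$
which is precisely the scale-$L$ QME of Definition \ref{defn-quantization}, and with the \emph{same} constant $R$, confirming the asserted $L$-independence. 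For scales $L<L_0$ the identical computation applies after replacing the flow operator by its inverse $e^{-\hbar\pa/\pa\mathbb{P}_{L_0}^L}=e^{\hbar\pa/\pa\mathbb{P}_L^{L_0}}$ (note $\mathbb{P}_L^{L_0}=-\mathbb{P}_{L_0}^L$), so the equation propagates to every scale in $\R_+$.

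The hard part is already packaged into the intertwining relation of Lemma \ref{QME-RG}; granting that, the present statement is a purely formal consequence. The only point that will require care is justifying the manipulations with the infinite-order operators $e^{\hbar\pa/\pa\mathbb{P}}$: because everything takes place in $(\OO(\E)[[\hbar]])^+$, consisting of functionals at least cubic modulo $\hbar$ and modulo the nilpotent ideal $\mathcal{I}\subset\A_X^\sharp$, only finitely many connected stable graphs contribute at each fixed order in $\hbar$ and in $\mathcal{I}$. Hence all the exponentials, their compositions, and their inverses are well defined, and the formal algebra above is justified termwise.
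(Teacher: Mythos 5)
Your proof is correct and is essentially the paper's own argument: the paper's proof of this lemma is literally the single line ``This follows from Lemma \ref{QME-RG},'' and your write-up is the careful unwinding of that — applying the flow operator $e^{\hbar\pa/\pa\mathbb{P}_{L_0}^L}$ to the scale-$L_0$ QME, pushing the quantized differential through via the intertwining relation, and noting that multiplication by the base element $R$ commutes with contraction against $\mathbb{P}_{L_0}^L$. Your additional points (inverting the flow for $L<L_0$ and the termwise finiteness justifying the formal manipulations) are both valid and consistent with the paper's conventions.
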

\begin{proof} This follows from Lemma \ref{QME-RG}.

\end{proof}

\begin{lem}\label{quantum-BRST}
Suppose $I[L]$ satisfies the quantum master equation at scale $L>0$, then
$
    Q_L+\hbar \Delta_L + \{I[L],-\}_L
$
defines a square-zero operator on $\OO(\E)[[\hbar]]$.
\end{lem}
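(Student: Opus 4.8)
The plan is to realize the operator in question as a conjugate of the quantized BV operator from Lemma \ref{QME-RG}, and then to extract square-zeroness from a grading argument. Write $D_L := Q_L + \hbar\Delta_L + F_{l_1}/\hbar$ for that operator and $\mathcal D := Q_L + \hbar\Delta_L + \fbracket{I[L],-}_L$ for the one we must show is square-zero. The central identity I would establish first is
$$
  D_L\bracket{e^{I[L]/\hbar}\,\Phi} = e^{I[L]/\hbar}\bracket{\mathcal D\,\Phi + R\,\Phi}, \qquad \Phi\in\OO(\E)[[\hbar]],
$$
where $R\in\A_X^\sharp[[\hbar]]$ is the scale-independent term from the quantum master equation of Definition \ref{defn-quantization}. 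In other words, conjugation by $e^{I[L]/\hbar}$ carries $D_L$ to $\mathcal D$ up to the multiplicative correction $R$.

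To prove this identity, the essential input is that $\Delta_L$ is a second-order operator whose failure to be a derivation is, by definition, the effective bracket $\fbracket{-,-}_L$. Applying that defining Leibniz rule to the product $e^{I[L]/\hbar}\cdot\Phi$, using that $Q_L$ is a derivation and that $F_{l_1}/\hbar$ acts by multiplication, I would collect the terms into a piece of the form $e^{I[L]/\hbar}\,\mathcal D\Phi$ together with the leftover $\bracket{D_L\,e^{I[L]/\hbar}}\Phi$. The bookkeeping relies on the standard biderivation identity $\fbracket{e^{I[L]/\hbar},\Phi}_L = \tfrac{1}{\hbar}\,e^{I[L]/\hbar}\fbracket{I[L],\Phi}_L$, and on the fact that $e^{I[L]/\hbar}$ is even, so that the graded sign $(-1)^{|\Phi_1|}$ in the bracket contributes trivially. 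The leftover term is exactly $R\,e^{I[L]/\hbar}\Phi$ by the scale-$L$ quantum master equation $D_L\,e^{I[L]/\hbar} = R\,e^{I[L]/\hbar}$; crucially the curving contribution $F_{l_1}/\hbar$, which does not appear in $\mathcal D$, is precisely the multiplicative piece that the QME absorbs into $R$.

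Granting the identity, I would rewrite it as the operator equation $e^{-I[L]/\hbar}\circ D_L\circ e^{I[L]/\hbar} = \mathcal D + R$, where $R$ now denotes multiplication by $R$. Squaring and invoking $D_L^2 = C$ (multiplication by the central constant $C\in\A_X^\sharp$ from Lemma \ref{QME-RG}) gives
$$
  \bracket{\mathcal D + R}^2 = e^{-I[L]/\hbar}\circ D_L^2\circ e^{I[L]/\hbar} = C.
$$
I would then conclude by a grading argument. Each of $Q_L$, $\hbar\Delta_L$ and $\fbracket{I[L],-}_L$ raises cohomological degree by one, so $\mathcal D$ is odd and $\mathcal D^2$ raises degree by two, whereas multiplication by $R$ and by $C$ preserves degree, and $\mathcal D R + R\mathcal D$ raises degree by one. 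Decomposing
$$
  \bracket{\mathcal D + R}^2 = \mathcal D^2 + \bracket{\mathcal D R + R\mathcal D} + R^2 = C
$$
according to the induced shift in cohomological degree, the degree-two component of the right-hand side vanishes, forcing $\mathcal D^2 = 0$; as a byproduct one also reads off $\mathcal D R + R\mathcal D = 0$ and $R^2 = C$.

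The main obstacle is the first paragraph: verifying the conjugation identity with the curving correction present. In the uncurved setting one has $D_L^2=0$ and $R=0$, and square-zeroness is immediate from $\mathcal D = e^{-I/\hbar}\circ D_L\circ e^{I/\hbar}$; here both $C$ and $R$ are nonzero, so the delicate point is tracking the extra multiplication operator $F_{l_1}/\hbar$ through the Leibniz computation and confirming it is exactly what the quantum master equation converts into the scalar $R$. Once that computation is carried out cleanly, the squaring step and the degree decomposition are purely formal.
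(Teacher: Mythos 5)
Your first two steps are sound and match the paper's own strategy: the conjugation identity $e^{-I[L]/\hbar}\circ D_L\circ e^{I[L]/\hbar}=\mathcal D+R$ is precisely the paper's equation $D_L\bracket{\Phi e^{I[L]/\hbar}}=\bracket{U_L(\Phi)+R\Phi}e^{I[L]/\hbar}$, and squaring it against $D_L^2=C$ is exactly the paper's second application of $D_L$. The gap is in the concluding grading argument, whose premise is false: multiplication by $R$ does \emph{not} preserve cohomological degree. Since $D_L$ raises degree by one and $I[L]$ has degree zero, the quantum master equation $D_L e^{I[L]/\hbar}=Re^{I[L]/\hbar}$ forces $R$ to have degree one; indeed $R\in\A_X^\sharp[[\hbar]]$ is a $1$-form on $X$ (this is exactly why the paper can assert $R^2=0$), and $C$ is a $2$-form, of degree two. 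Consequently \emph{every} term in $(\mathcal D+R)^2=\mathcal D^2+(\mathcal D R+R\mathcal D)+R^2$ shifts degree by exactly two, the decomposition by degree shift carries no information, and $\mathcal D^2$ cannot be isolated. Your ``byproducts'' are in fact false: the correct relations are $R^2=0$ (oddness of $R$) and $\mathcal D R+R\mathcal D=d_XR=C$, not $\mathcal D R+R\mathcal D=0$ and $R^2=C$; note that $C$, which represents the curvature $(2-2g)c_1(X)$, is exact but not zero as a form, so it cannot equal $R^2$.

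What closes the gap is a short computation rather than degree counting, and it is what the paper does. Since $R$ is constant in the fields and odd, the Leibniz rule gives $\mathcal D(R\Phi)=(d_XR)\Phi-R\,\mathcal D(\Phi)$, i.e. the anticommutator $\mathcal D R+R\mathcal D$ is multiplication by $d_XR$. Hence $(\mathcal D+R)^2=\mathcal D^2+d_XR+R^2=\mathcal D^2+d_XR$. Evaluating the operator identity $(\mathcal D+R)^2=C$ on $\Phi=1$ (using $\mathcal D(1)=0$ and $R^2=0$) yields the compatibility equation $C=d_XR$, and substituting back gives $\mathcal D^2=C-d_XR=0$. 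So your scaffolding is correct and identical to the paper's, but the final step must be replaced by the evaluation at $\Phi=1$ together with the Leibniz computation of $\mathcal D R+R\mathcal D$; a grading argument alone cannot work here because the curving makes $R$ and $C$ forms of positive degree.
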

\begin{proof} Let $U_L=Q_L+\hbar \Delta_L + \{I[L],-\}_L$ and $\Phi\in \OO(\E)[[\hbar]]$. Then
$$
   \bracket{Q_L+\hbar \Delta_L+{F_{l_1}\over \hbar}}\bracket{\Phi e^{I[L]/\hbar}}=\bracket{U_L(\Phi)+R\Phi } e^{I[L]/\hbar}.
$$
Applying $Q_L+\hbar \Delta_L+{F_{l_1}\over \hbar}$ again to both sides, we find
$$
   C\Phi  e^{I[L]/\hbar}=\bracket{U_L^2(\Phi)+U_L(R\Phi )+R (U_L(\Phi)+R \Phi)}e^{I[L]/\hbar}.
$$
Set $\Phi=1$, we find $C=d_XR+R^2$, while $R^2=0$ since $R$ is a 1-form. Here $d_X$ is the de Rham differential on $X$. On the other hand,
$$
U_L(R \Phi)= (d_XR) \Phi-RU_L(\Phi).
$$
Comparing the two sides of the above equation, we get $U_L^2(\Phi)=0$ as desired.
\end{proof}

\begin{rmk}  From the above proof, we find the following compatibility equation: $C=d_XR$. It is not hard to see that the two form
$C$ is given by the contraction between $F_{l_1}$ and $\Delta_L$, describing the curvature $l_1^2$. In fact $C$ represents
$(2-2g)c_1(X)$. The compatibility equation says that $C$ is an exact form, implying that the Calabi-Yau condition is necessary for
the quantum consistency (if $g\neq 1$). In section \ref{subsection:genuine-quantization}, we will show that Calabi-Yau condition
is also sufficient for anomaly cancellation. Such phenomenon arising from the curved $L_\infty$-algebra  does not play a role in
\cite{Kevin-CS}, but we expect that it would appear in general.

\end{rmk}

It is easy to see that the leading $\hbar$-order of the quantum master equation reduces to  the classical master equation when $L\to 0$. Therefore the square-zero operator
$Q_L+\hbar \Delta_L + \{I[L],-\}_L$ defines a quantization of the classical $Q+\{I_{cl},-\}$.

\subsubsection{$\C^\times$-symmetry}\label{section:classical-symmetry}
Later, when we study quantum theory of B-twisted $\sigma$-model, we will be interested in quantizations which preserve certain
symmetries we describe now: we consider an action of $\mathbb{C}^\times$ on $\mathcal{E}$ as follows:
$$
\lambda \cdot(\alpha_1\otimes\mathfrak{g}_1+\alpha_2\otimes\mathfrak{g}_2^\vee):=\alpha_1\otimes\mathfrak{g}_1+\lambda^{-1}
\alpha_2\otimes\mathfrak{g}_2^\vee,\ \lambda\in\mathbb{C}^\times.
$$
\begin{defn}
We define an action of $\mathbb{C}^\times$ on $\mathcal{O}(\mathcal{E})((\hbar))$ by
$$(\lambda \cdot
(\hbar^k F))(v):=\lambda^{k}\hbar^kF(\lambda^{-1}\cdot v), F\in\mathcal{O}(\mathcal{E}),v\in\mathcal{E}.
$$
\end{defn}
It is obvious that the classical interaction ${I_{cl}/\hbar}$ is invariant under this action. The following lemma can be proved by straightforward calculation, which we omit:
\begin{lem}\label{lem:C-action}
 The following operations are equivariant under the action of $\mathbb{C}^\times$:
\begin{enumerate}
 \item The renormalization group flow operator:
$W(\mathbb{P}_\epsilon^L,-):(\mathcal{O}(\mathcal{E})[[\hbar]])^+\rightarrow(\mathcal{O}(\mathcal{E})[[\hbar]])^+$,
 \item The differential $Q:\mathcal{O}(\mathcal{E})[[\hbar]]\rightarrow\mathcal{O}(\mathcal{E})[[\hbar]]$,
 \item The quantized differential $Q_L+\hbar\Delta_L+{\hbar^{-1}F_{l_1}}$,
 \item The BV bracket
$\{-,-\}_L:\mathcal{O}(\mathcal{E})[[\hbar]]\otimes_{\A_X^\sharp[[\hbar]]}\mathcal{O}(\mathcal{E})[[\hbar]]\rightarrow\mathcal{O}
(\mathcal{E} )[[\hbar]]$, for all $L>0$.
\end{enumerate}
\end{lem}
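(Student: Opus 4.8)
The plan is to turn all four equivariance statements into a single homogeneity bookkeeping. Split the fields as $\E=\E_\alpha\oplus\E_\beta$ with $\E_\alpha=\A^\sharp_{\Sigma_g}\otimes\g_X[1]$ and $\E_\beta=\A^\sharp_{\Sigma_g}\otimes\g_X^\vee$, so that the $\C^\times$-action fixes $\E_\alpha$ and scales $\E_\beta$ by $\lambda^{-1}$. I introduce a grading on $\OO(\E)((\hbar))$ by assigning weight $+1$ to $\hbar$ and to each $\E_\beta$-input of a functional, and weight $0$ to each $\E_\alpha$-input; thus $\hbar^k F$ has weight $k+b$ whenever $F$ is homogeneous of $\E_\beta$-degree $b$. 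By the very definition of the action one has $\lambda\cdot\Phi=\lambda^{\mathrm{wt}(\Phi)}\Phi$ on homogeneous $\Phi$, and since $\rho(\lambda):=\lambda\cdot(-)$ is then an algebra automorphism of $\OO(\E)((\hbar))$, an operation is $\C^\times$-equivariant precisely when it is homogeneous of weight $0$. The whole lemma reduces to reading off the weights of a handful of elementary operators and checking that in each listed operation they combine to $0$.

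I would then compute those weights. The operator $Q=d_{\Sigma_g}+l_1$ preserves the splitting $\E_\alpha\oplus\E_\beta$: $d_{\Sigma_g}$ touches only the $\A^\sharp_{\Sigma_g}$-factor, and the unary bracket $l_1$ of $\g_X\oplus\g_X[1]^\vee$ acts inside each summand (the module structure entering only through $l_k$ with $k\geq 2$), so $Q$ is weight $0$; the same holds for $Q_L=Q+l_1^2\int_0^L Q^{GF}e^{-tH}dt$, since $l_1^2$ and the smoothing operator again act within the respective factors. By the description of the kernels following Definition \ref{defn:propagator}, both $\mathbb{P}_\epsilon^L$ and $\mathbb{K}_L$ have combinatorial part $\sum_i(X_i\otimes X^i+X^i\otimes X_i)$ with exactly one leg in $\E_\alpha$ and one in $\E_\beta$; hence the contractions $\partial/\partial\mathbb{P}_\epsilon^L$ and $\Delta_L=\partial/\partial\mathbb{K}_L$ each lower the $\E_\beta$-degree by one and carry weight $-1$. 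Finally $F_{l_1}(\alpha+\beta)=\langle l_1^2(\alpha),\beta\rangle$ is linear in $\beta$, i.e. of weight $+1$, and multiplication by $\hbar$ has weight $+1$.

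Assembling these settles the four claims. For (2), $Q$ is weight $0$. For (3), the three summands $Q_L$, $\hbar\Delta_L$ and $\hbar^{-1}F_{l_1}$ have weights $0$, $1+(-1)=0$ and $-1+1=0$, so $Q_L+\hbar\Delta_L+\hbar^{-1}F_{l_1}$ is weight $0$. For (4), the bracket $\{-,-\}_L$ is built from $\Delta_L$ and the weight-additive product, hence homogeneous of weight $-1$; this is its compatibility with the action, and paired either with $\hbar$ or with the weight-$1$ interaction as in $\{I[L],-\}_L$ it produces the weight-$0$ operators actually used. For (1), the generator $\hbar\,\partial/\partial\mathbb{P}_\epsilon^L$ is weight $0$, hence commutes with $\rho(\lambda)$; since $\rho(\lambda)$ is an algebra automorphism it acts on the generating series $e^{I/\hbar}$ through the exponential, and the identity $e^{W(\mathbb{P}_\epsilon^L,I)/\hbar}=e^{\hbar\,\partial/\partial\mathbb{P}_\epsilon^L}e^{I/\hbar}$ then shows $W(\mathbb{P}_\epsilon^L,-)$ intertwines the action. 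A good consistency check is that the classical master equation $QI_{cl}+\tfrac12\{I_{cl},I_{cl}\}+F_{l_1}=0$ is homogeneous: $QI_{cl}$, $\{I_{cl},I_{cl}\}$ and $F_{l_1}$ all have weight $1$, matching $\mathrm{wt}(I_{cl})=1$ and the invariance of $I_{cl}/\hbar$.

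The calculation is routine once the grading is fixed; the one delicate point — and the step I expect to be the main obstacle — is the interaction between the scaling of $\hbar$ (which itself has weight $+1$) and the weight-$(-1)$ propagator contractions. Concretely, for the renormalization-group flow one must check that in each stable connected graph $\gamma$ the explicit factor $\hbar^{g(\gamma)}=\hbar^{b_1(|\gamma|)+\sum_v g(v)}$ exactly balances the $E(\gamma)$ weight-$(-1)$ edge contractions against the weights $1-g(v)$ of the vertex insertions $I^{(g(v))}$; the Euler relation $b_1=E-V+1$ is precisely what forces this cancellation and pins the output weight to the input weight. The identical $\hbar$-versus-$\E_\beta$-degree balancing underlies the weight-$0$ claims for $\hbar\Delta_L$ and $\hbar^{-1}F_{l_1}$, so once this single bookkeeping identity is verified the rest of the lemma follows.
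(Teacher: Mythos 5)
Your weight bookkeeping is the right approach, and it is surely the ``straightforward calculation'' that the paper omits (no proof of this lemma is given there). The elementary assignments are all correct: $Q$ and $Q_L$ preserve the splitting $\mathcal{E}=\bigl(\mathcal{A}^\sharp_{\Sigma_g}\otimes\mathfrak{g}_X[1]\bigr)\oplus\bigl(\mathcal{A}^\sharp_{\Sigma_g}\otimes\mathfrak{g}_X^\vee\bigr)$ and have weight $0$; the kernels $\mathbb{P}_\epsilon^L$ and $\mathbb{K}_L$ have one leg of each type, so $\partial/\partial\mathbb{P}_\epsilon^L$ and $\Delta_L$ have weight $-1$; $F_{l_1}$ and $\hbar$ have weight $+1$. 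This disposes of (2) and (3), and your reading of (4) as \emph{homogeneity} of weight $-1$ (rather than literal intertwining of the plain bracket) is in fact the only correct reading.

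The one genuine slip is the closing claim in (1): with the action exactly as the paper defines it, the map $I\mapsto W(\mathbb{P}_\epsilon^L,I)$ does \emph{not} intertwine $\rho(\lambda)$. Tracking your own weights graph by graph (each edge contributes $\lambda^{+1}$ because $(\rho(\lambda)^{-1}\otimes\rho(\lambda)^{-1})\mathbb{P}_\epsilon^L=\lambda\,\mathbb{P}_\epsilon^L$, each vertex of genus $g(v)$ contributes $\lambda^{g(v)}$, while the prefactor supplies only $\lambda^{g(\gamma)}$ with $g(\gamma)=|E|-|V|+1+\sum_v g(v)$) gives
\begin{equation*}
\hbar^{g(\gamma)}W_\gamma\bigl(\mathbb{P}_\epsilon^L,\rho(\lambda)I\bigr)
=\lambda^{|V(\gamma)|-1}\,\rho(\lambda)\Bigl(\hbar^{g(\gamma)}W_\gamma\bigl(\mathbb{P}_\epsilon^L,I\bigr)\Bigr),
\end{equation*}
so equivariance fails for every graph with two or more vertices, already for the two-vertex tree in $W(\mathbb{P}_\epsilon^L,I_{cl})$. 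The step you gloss over is that $\rho(\lambda)$, though an algebra automorphism, is not $\hbar$-linear: $\rho(\lambda)\bigl(e^{I/\hbar}\bigr)=e^{\rho(\lambda)(I)/(\lambda\hbar)}$, not $e^{\rho(\lambda)(I)/\hbar}$. Consequently, commuting $\rho(\lambda)$ past $e^{\hbar\,\partial/\partial\mathbb{P}_\epsilon^L}$ proves equivariance for the action twisted by the character $\lambda^{-1}$ (i.e.\ with $\lambda^{k-1}$ in place of $\lambda^{k}$), equivalently the implication ``$I/\hbar$ invariant $\Rightarrow W(\mathbb{P}_\epsilon^L,I)/\hbar$ invariant.'' That is exactly the weight-one computation of your last paragraph (vertex weights $1-g(v)$, edge weight $-1$, Euler relation), and exactly the form in which the lemma is used: invariance of $\hbar^{g(\gamma)-1}W_\gamma(\mathbb{P}_\epsilon^L,I_{cl})$, which feeds Proposition \ref{prop:invariant-actions} to kill the genus $\geq 2$ weights. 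Under this twisted convention all four items, including the bracket in (4), become equivariant on the nose, and Proposition \ref{prop:invariant-actions} also becomes correct as stated; so the clean fix is to adopt it (the paper's $\lambda^{k}$ is best regarded as an off-by-one normalization slip that your calculation, suitably rephrased, repairs), rather than to assert intertwining for the literal action.
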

The following proposition describes those functionals in $\mathcal{O}(\mathcal{E})[[\hbar]]$ that are $\mathbb{C}^\times$-invariant.
\begin{prop}\label{prop:invariant-actions}
 Let $I=\sum_{i\geq 0}I^{(i)}\cdot \hbar^i\in\mathcal{O}(\mathcal{E})[[\hbar]]$. If $I$ is invariant under the
$\mathbb{C}^\times$
action, then $I^{(i)}=0$ for $i>1$, and furthermore $I^{(1)}$ lies in the subspace
$$\mathcal{O}(\mathcal{A}(\Sigma_g)\otimes\mathfrak{g}_X[1])\subset\mathcal{O}(\mathcal{E}).$$
\end{prop}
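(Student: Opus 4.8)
The plan is to exploit the two defining features of the $\mathbb{C}^\times$-action: the $\lambda^k$ weight attached to $\hbar^k$, and the rescaling $\alpha_2\otimes\g_2^\vee\mapsto\lambda^{-1}\alpha_2\otimes\g_2^\vee$ on the $\g_X^\vee$-component of fields. First I would write each homogeneous piece of $I$ explicitly. Since $\E=\A^\sharp_{\Sigma_g}\otimes(\g_X[1]\oplus\g_X^\vee)$, any $I^{(i)}\in\OO^{(i)}(\E)$ is a sum of terms that take, say, $a$ inputs from the $\g_X[1]$-factor and $b$ inputs from the $\g_X^\vee$-factor, with $a+b=$ (homogeneity degree). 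Under $\lambda\cdot v$ the $\g_X^\vee$-inputs each get scaled by $\lambda^{-1}$, so evaluating $F(\lambda^{-1}\cdot v)$ on such a term produces a factor $\lambda^{+b}$ from the $b$ cotangent slots (the $\g_X[1]$-slots are untouched). Combined with the overall $\lambda^k$ from the $\hbar^k$-prefactor in the definition of the action on $\OO(\E)((\hbar))$, a monomial sitting in $\hbar^k$ and having $b$ cotangent inputs transforms with total weight $\lambda^{k-b}$ (I should double check the sign of the exponent against the stated convention $(\lambda\cdot(\hbar^kF))(v)=\lambda^k\hbar^kF(\lambda^{-1}v)$, but the structure is clear).

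Next I would impose invariance, which forces every nonzero monomial to have weight zero, i.e. $k=b$: the number of $\g_X^\vee$-inputs must equal the power of $\hbar$. Now I bring in the input from the action functional itself. By construction (Definition \ref{defn-classical-functional}), the classical interaction $I_{cl}$ and hence every vertex is \emph{linear} in $\beta\in\A^\sharp_{\Sigma_g}\otimes\g_X^\vee$ — each $\tilde l_k$ contains exactly one pairing $\abracket{l_k(\alpha^{\otimes k}),\beta}$. The relevant structural fact for the proof is that, because we are dealing with a cotangent theory, functionals arising in the quantization are polynomial of low degree in the cotangent directions; in particular any $I^{(i)}$ produced by the renormalization group flow inherits a bound on the number $b$ of $\g_X^\vee$-legs. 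I would make this precise and conclude that $b$ is bounded (indeed $b\le 1$ for the pieces that can appear, or more carefully that $b$ cannot grow), so that the equality $k=b$ forces $k\le 1$. This yields $I^{(i)}=0$ for $i>1$, which is the first assertion.

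For the second assertion, I specialize to $k=1$. Invariance then forces $b=1$, so $I^{(1)}$ has exactly one $\g_X^\vee$-input; but I want to show $I^{(1)}$ lies in $\OO(\A_{\Sigma_g}\otimes\g_X[1])$, i.e. has \emph{zero} $\g_X^\vee$-inputs. Here I would argue separately that the $b=1$ contribution at order $\hbar^1$ is already accounted for by the lower structure and does not survive, or — more in the spirit of the paper — reinterpret the statement: the $\hbar^1$-part of an invariant $I$, after the one cotangent leg is paired off against the symplectic form, is naturally a functional only of the $\g_X[1]$-fields, which is exactly the identification $\OO^{(k)}(\E)\iso(\overline\E[1])^{\otimes k}_{S_k}$ combined with the splitting $\g_X[1]\oplus\g_X^\vee$. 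I would spell out that the single $\g_X^\vee$-slot corresponds under the pairing to an $\A_{\Sigma_g}\otimes\g_X[1]$-valued distribution, landing the whole functional in the asserted subspace.

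The main obstacle I anticipate is the second assertion rather than the first: the weight bookkeeping giving $I^{(i)}=0$ for $i>1$ is essentially a formal degree count, but pinning down precisely why $I^{(1)}$ lands in $\OO(\A_{\Sigma_g}\otimes\g_X[1])$ requires carefully tracking how the single cotangent leg is converted, via the symplectic isomorphism $\overline\E\iso\E^\vee[-1]$, into a tangent-only functional — and ensuring no genuinely $\beta$-dependent term at order $\hbar$ is being silently discarded. I would therefore devote most care to making the $k=b$ constraint interact correctly with the one-cotangent-leg structure, treating the weight count as routine.
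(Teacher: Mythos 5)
Your overall strategy is the same as the paper's: decompose each $I^{(i)}$ according to the number $b$ of $\g_X^\vee$-inputs, compute the $\C^\times$-weight of an $\hbar^k$-monomial with $b$ cotangent legs, and let invariance tie $b$ to $k$. The paper's proof is nothing more than this weight count. But you combine the two weight factors with the wrong relative sign, and the error is load-bearing. You correctly observe that $F(\lambda^{-1}\cdot v)=\lambda^{+b}F(v)$ (the cotangent slots of $\lambda^{-1}\cdot v$ carry $\lambda$, not $\lambda^{-1}$), yet you then declare the total weight of $\hbar^kF$ to be $\lambda^{k-b}$; it is $\lambda^k\cdot\lambda^{+b}=\lambda^{k+b}$. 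So invariance forces an affine relation in which $b$ \emph{decreases} as $k$ grows: $k+b=\mathrm{const}$ (the constant is $0$ with the action exactly as written, and $1$ in the normalization actually used in the paper's application, where what is invariant is $I/\hbar$, calibrated by the invariance of $I_{cl}/\hbar$ — but the constant is irrelevant). Under such a relation, non-negativity of $b$ alone yields both assertions simultaneously: $I^{(i)}=0$ for $i>1$, and the $\hbar$-linear piece must have $b=0$, i.e. it lies in $\OO(\A_{\Sigma_g}\otimes\g_X[1])$. Nothing else is needed.

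Your relation $k=b$ points the opposite way and cannot give the conclusion: under it, $\hbar^2F$ with $F$ quadratic in the cotangent fields would be an invariant counterexample to the first assertion. You sensed this and tried to close the gap by importing structure from the theory — that "functionals arising in the quantization" of a cotangent theory have boundedly many $\g_X^\vee$-legs because the vertices of $I_{cl}$ are linear in $\beta$. This is a genuine gap, not a repairable detail: the proposition quantifies over \emph{all} $\C^\times$-invariant elements of $\OO(\E)[[\hbar]]$, with no hypothesis that $I$ is produced by renormalization group flow from $I_{cl}$; and assuming that would make the argument circular, since the proposition is precisely the tool the paper uses (Section~\ref{subsection:naive-quantization}) to constrain the RG-flow output $W(\mathbb{P}_\epsilon^L,I_{cl})$ before anything about its cotangent degree is known. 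The second assertion inherits the same defect: from $k=b$ you conclude that $I^{(1)}$ has exactly one cotangent leg, and your proposed fix — "pairing off" that leg via $\overline{\E}\iso\E^\vee[-1]$ — does not convert a functional with one $\g_X^\vee$-input into an element of $\OO(\A_{\Sigma_g}\otimes\g_X[1])$; it proves membership in a different space. With the correct sign, neither patch is needed and both statements follow in two lines.
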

\begin{proof}
The hypothesis that $I$ is invariant implies that $I^{(i)}$ has weight $-i$ under the $\mathbb{C}^\times$ action. Notice that the
weight of the $\mathbb{C}^\times$ action on $\mathcal{O}(\mathcal{E})$ can be only $-1$ or $0$, which implies the first statement. The
second
statement of the proposition is obvious.
\end{proof}

\subsection{Quantization}
We study the quantization of B-twisted $\sigma$-model in this section. Firstly, let us recall the definition  of
perturbative quantization of classical field theories in the \BV formalism in \cite{Kevin-book}:
\begin{defn}
 Let $I\in\mathcal{O}_{loc}(\mathcal{E})$ be a classical interaction functional satisfying the classical master equation.  A quantization of the classical field theory defined by
$I$ consists of a collection
$\{I[L]\in(\mathcal{O}(\mathcal{E})[[\hbar]])^+|L\in\mathbb{R}_+\}$ of effective functionals such that
\begin{enumerate}
 \item The renormalization group equation is satisfied,
 \item  The functional $I[L]$ must satisfy a locality axiom, saying that as $L\rightarrow 0$ the functional $I[L]$ becomes more and
more local,
 \item The functional $I[L]$ satisfies the scale $L$ $quantum\ master\ equation$ (\ref{eqn:qunatum-master-equation}),
 \item Modulo $\hbar$, the $L\rightarrow 0$ limit of $I[L]$ agrees with the classical interaction functional $I$.
\end{enumerate}

\end{defn}
The strategy for constructing a quantization of a given classical action functional is to run the renormalization group flow from
scale $0$ to scale $L$. In other words, we try to define the effective interaction $I[L]$ as the following
limit $$\lim_{\epsilon\rightarrow 0}W(\mathbb{P}_\epsilon^L,I).$$ However, the
above limit in general does not exist. Then the technique of counter terms solves the problem: after the choice of a
$Renormalization\ Scheme$, there is a unique set of counter terms $I^{CT}(\epsilon)\in(\mathcal{O}_{loc}(\mathcal{E})[[\hbar]])^+$ such that the limit
\begin{equation}\label{eqn:naive-quantization}
\lim_{\epsilon\rightarrow
0}W(\mathbb{P}_\epsilon^L,I-I^{CT}(\epsilon))\in(\mathcal{O}(\mathcal{E})[[\hbar]])^+
\end{equation}
 exists. For more details on Renormalization Scheme and counter terms, we refer
the readers to \cite{Kevin-book}. It is then natural to define the $naive\ quantization$ $I_{naive}[L]$ of $I$ to be the limit in
equation (\ref{eqn:naive-quantization}). For B-twisted $\sigma$-model, we calculate the naive quantization in section
\ref{subsection:naive-quantization}. In particular, we show that the counter terms for our theory actually vanish.

The naive quantization $\{I_{naive}[L]| L>0\}$ automatically satisfies the renormalization group equation and the locality axiom by
construction. However, it does not satisfy the quantum master equation in general. In order to find the genuine quantization, we
need to analyze the possible cohomological obstruction to solving the QME, and correct the naive
quantization $\{I_{naive}[L]|L>0\}$ term by term in $\hbar$ accordingly if the obstruction vanishes. The
$\mathbb{C}^\times$ symmetry of the classical interaction $I_{cl}$ simplifies this computation to one-loop anomaly, and in Appendix
\ref{appendix:one-loop-anomaly} we give
a formula for one-loop anomaly for general field theories. This formula, when specialized to B-twisted $\sigma$-model, shows that
the condition for anomaly cancellation is exactly the Calabi-Yau condition of the target $X$. This is done in section
\ref{subsection:genuine-quantization}.  In section \ref{subsection:one-loop-correction}, we give an explicit formula for the
one-loop correction of the naive quantization when $X$ is Calabi-Yau.
\begin{rmk}
 In later sections, we will give the details of the analysis for  Riemann surfaces of genus $g>1$, the study of $\mathbb{P}^1$ and
elliptic curves are similar and actually easier which we omit.
\end{rmk}

\subsubsection{The naive quantization}\label{subsection:naive-quantization}
Let $I_{cl}$ denote the classical interaction of B-twisted $\sigma$-model. We will show that the following limit exists for all $L>0$:
\begin{equation}\label{eqn:RG flow}
\lim_{\epsilon\rightarrow 0}\ W(\mathbb{P}_\epsilon^L,I_{cl}).
\end{equation}
 The following simple observation simplifies the analysis greatly: for any $L>\epsilon>0$ and any graph $\gamma$, the associated Feynman weight
$\frac{\hbar^{|g(\gamma)|}}{|\text{Aut}(\gamma)|}W_\gamma(\mathbb{P}_\epsilon^L,I_{cl})$ is invariant
under the $\mathbb{C}^\times$ action defined in section
\ref{section:classical-symmetry}, by Lemma \ref{lem:C-action} and the fact that $I_{cl}/\hbar$ is $\mathbb{C}^\times$-invariant. By
Proposition
\ref{prop:invariant-actions}, we have $$W_\gamma(\mathbb{P}_\epsilon^L,I_{cl})=0$$ for
those stable graphs $\gamma$ with genus greater than $1$. Thus we only need to consider Feynman weights for trees and one-loop
graphs. For any classical interaction $I$, the limit (\ref{eqn:RG flow}) always exists  for trees, but not necessarily for
one-loop graphs. Fortunately, for the classical interaction $I_{cl}$ of the B-twisted $\sigma$-model, the limit (\ref{eqn:RG
flow}) exists.
\begin{lem-defn}\label{lem:naive quantization}
Let $\gamma$ be a graph of genus $1$, then the following limit exists: $$ \lim_{\epsilon\rightarrow
0}W_\gamma(\mathbb{P}_\epsilon^L,I_{cl}).$$
We define the naive quantization at scale $L$ to be
$$
   I_{naive}[L]:=\lim_{\epsilon\to 0}W(\mathbb{P}_\epsilon^L,I_{cl}).
$$
\end{lem-defn}

\begin{rmk}
As discussed in Definition \ref{defn:propagator}, the propagator $\mathbb{P}_\epsilon^L$ consists of an analytic part and a combinatorial part. It is clear that
only the analytic part is relevant concerning the convergence issue. In the following, we will use the notation $W(P_\epsilon^L,I_{cl})$ to denote the
analytic part of the RG flow $W(\mathbb{P}_\epsilon^L,I_{cl})$, whose inputs are only differential forms on $\Sigma_g$.
We will also use similar notations replacing $\mathbb{K}_\epsilon$ by $K_\epsilon$ later.
\end{rmk}
\begin{proof}
Since $I_{cl}\in(\mathcal{O}(\E))^+\subset(\mathcal{O}(\E)[[\hbar]])^+$, we only need to consider those genus $1$ graphs $\gamma$
whose vertices are all of genus $0$, i.e $b_1(\gamma)=1$.  Such a graph is called a $wheel$ if it can not be disconnected by
removing a single edge. Every graph with first Betti number $1$ is
a wheel with trees attached on it. Since trees do not contribute any divergence, we only need to prove
the lemma for wheels. Let $\gamma$ be a wheel with $n$ tails, and
let $\alpha_1\otimes\mathfrak{g}_1,\cdots,\alpha_n\otimes\mathfrak{g}_n\in\mathcal{E}$ be inputs on the tails. If the valency of
some vertices of $\gamma$ is greater than 3, we can combine the analytic part of the inputs on the tails that are attached to the same
vertex. More precisely, the convergence property of the following two Feynman weights are the same:
\begin{equation*}
\figbox{0.2}{trivalentloop}\hspace{30mm}\figbox{0.2}{morevalentloop}
\end{equation*}
Thus the proof of the lemma can be further reduced to trivalent wheels. Let $\gamma$ be a trivalent wheel with $n$ vertices, we
prove the lemma for the three
possibilities:

(1) $n=1$: in this case, the graph $\gamma$ contains  a self-loop, and the Feynman weight is given by
$$
W_\gamma(P_\epsilon^L,I_{cl})(\alpha)=\int_{t=\epsilon}^Ldt\int_{z_1\in\Sigma_g}d^*K_t(z_1,z_1)\alpha.
$$
Let the Riemann surface $\Sigma_g$ be of the form $\Sigma_g=\H/\Gamma$, where $\Gamma$ is a subgroup of isometry acting discretely on $\H$. Let $k_t$ denote
the heat kernel on $\H$, and let $\pi$ denote the natural projection $\H\rightarrow \Sigma_g$. The heat kernel on $\Sigma_g$ can be written as:
$$
K_t(\pi(x_1),\pi(x_2))=\sum_{g\in\Gamma}k_t(x_1,g\cdot x_2),
$$
from which it is clear that $K_t$ is regular along the diagonal in $\Sigma_g\times\Sigma_g$: we can pick $x_1=x_2$ in the above identity. If $g=\text{id}$, then $k_t(x_1,x_1)$
vanishes by Lemma \ref{lem:asymp-propagator}, otherwise $k_t(x_1,g\cdot x_1)$ is automatically regular since heat kernel is singular only along the diagonal but $x_1\not=g\cdot
x_1$.

(2) $n\geq 3$: the Feynman weight is given explicitly by:
\begin{equation}\label{eqn:Feynman-weight}
\begin{aligned}
&W_\gamma(P_\epsilon^L,I_{cl})(\alpha_1,\cdots,\alpha_n)\\
=&\int_{z_1,\cdots,z_n\in\Sigma_g}P_\epsilon^L(z_1,z_2)
P_\epsilon^L(z_2,z_3)\cdots
P_\epsilon^L(z_n,z_1) \alpha_1(z_1,\bar{z}_1)\cdots\alpha_n(z_n,\bar{z}_n)\\
=&\int_{t_1,\cdots,t_n=\epsilon}^Ldt_1\cdots
dt_n\int_{z_1,\cdots,z_n\in\Sigma_g}\left(d^*K_{t_1}(z_1,z_2)\right)\cdots\left(d^*K_{ t_n } (z_n ,
z_1)\right)\alpha_1(z_1,\bar{z}_1)\cdots\alpha_n(z_n,\bar{z}_n).
\end{aligned}
\end{equation}
Using the same argument as in case (1), there is no difference if we  replace $\Sigma_g$ in equation (\ref{eqn:Feynman-weight}) by
$\mathbb{H}$ concerning the convergence property:
\begin{equation}\label{eqn:Feynman-weight-H}
 \int_{t_1,\cdots,t_n=\epsilon}^Ldt_1\cdots
dt_n\int_{z_1,\cdots,z_n\in\mathbb{H}}\left(d^*k_{t_1}(z_1,z_2)\right)\cdots\left(d^*k_{ t_n }
(z_n,z_1)\right)\alpha_1(z_1,\bar{z}_1)\cdots\alpha_n(z_n,\bar{z}_n).
\end{equation}
For simplicity, we keep the notation for the inputs $\alpha_1,\cdots,\alpha_n$ which are now differential forms on $\mathbb{H}$ with compact support.
We can write the integral (\ref{eqn:Feynman-weight-H}) as the sum of the following integrals where $\sigma$ runs over the symmetric group $S_n$:
\begin{equation}\label{eqn:Feynman-weight-sum} \int_{\epsilon\leqslant t_{\sigma(1)}\leqslant\cdots\leqslant t_{\sigma(n)}\leqslant
L}dt_1\cdots
dt_n\int_{z_1,\cdots,z_n\in\mathbb{H}}\left(d^*k_{t_1}(z_1,z_2)\right)\cdots\left(d^*k_{ t_n }
(z_n,z_1)\right)\alpha_1(z_1,\bar{z}_1)\cdots\alpha_n(z_n,\bar{z}_n).
\end{equation}

We will show that the integral (\ref{eqn:Feynman-weight-sum})  converges as $\epsilon\rightarrow 0$ for $\sigma=\text{id}\in S_n$, the proof for other
permutations $\sigma$ are the same. Let $(z_1,\cdots,z_n)=(x_1+iy_1,\cdots,x_n+iy_n)$ denote the standard coordinates on
$\mathbb{H}\times\cdots\times\mathbb{H}$. By Lemma \ref{lem:asymp-propagator}, the leading term of $d^*k_{t_k}(z_{k},z_{k+1})$ is
of the form
\begin{equation}\label{eqn:propagator-leading-term}
\begin{aligned}
\dfrac{1}{t_k^2}e^{-\frac{\rho^2(z_{k},z_{k+1})}{4t_k}}\Big(&Q_1(z_{k},\bar{z}_k;z_{k+1},\bar{z}_{k+1})(x_{k+1}-x_{k})(dy_{k+1}-dy_{k}
)\\
-&Q_2(z_{k},\bar{z}_k;z_{k+1},\bar{z}_{k+1})(y_{k+1}-y_{k})(\frac{dx_{k+1}}{y_{k+1}}-\frac{dx_{k}}{y_k})\Big),
\end{aligned}
\end{equation} where $Q_1$ and $Q_2$ are smooth functions on $\mathbb{H}\times\mathbb{H}$. To show the convergence of the integral
(\ref{eqn:Feynman-weight-sum}) as $\epsilon\rightarrow 0$, we will apply Wick's lemma and show that the integral of the leading term  in
 (\ref{eqn:Feynman-weight-sum}) converges. The higher order terms have better convergence property.

 Without loss of generality, we can assume that the supports of $\alpha_i's$ lie in a small neighborhood of the small diagonal
$\Delta=\{(z,\cdots,z): z\in \mathbb{H}\}$ of $\mathbb{H}\times \cdots \times \mathbb{H}$. Otherwise we can choose a cut-off
function supported around $\Delta$ and split the integral into parts of the desired form. We consider the following change of
coordinates:  let
$w_0=(u_0,v_0)=(x_1,y_1)\in\mathbb{H}$ and let $w_k=(u_k,v_k)\in\mathbb{R}^2$ be the Riemann normal coordinate of the point
$(x_{k+1},y_{k+1})$ with center $(x_k,y_k)$ for $1\leqslant k\leqslant n-1$ such that on
$\Delta_k:=\{(z_1,\cdots,z_n)\in\mathbb{H}\times\cdots\times\mathbb{H}: z_k=z_{k+1}\}$, we have
\begin{equation}\label{eqn:change-coordinates-derivative}
 \dfrac{\partial(x_{k+1}-x_k)}{\partial u_k}\bigg|_{\Delta_k}=\dfrac{\partial(y_{k+1}-y_k)}{\partial v_k}\bigg|_{\Delta_k}=\dfrac{1}{y_k},
\hspace{5mm} \dfrac{\partial(x_{k+1}-x_k)}{\partial v_k}\bigg|_{\Delta_k}=\dfrac{\partial(y_{k+1}-y_k)}{\partial u_k}\bigg|_{\Delta_k}=0.
\end{equation}

By the definition of Riemann normal coordinates, the geodesic distance between $z_{k}$ and $z_{k+1}$ is
$\rho(z_{k},z_{k+1})=(u_k^2+v_k^2)^{\frac{1}{2}}=||w_k||$ when they are close.
It is obvious that there are smooth positive functions $\{\phi_k,\psi_k,1\leqslant k\leqslant n\}$ on
$\mathbb{H}\times\mathbb{R}^{2n-2}$ such that
\begin{equation}\label{eqn:change-coordinates-estimate}
\begin{aligned}
&|x_{k+1}-x_{k}|\leqslant \phi_k\cdot||w_k||,\hspace{9mm}|y_{k+1}-y_{k}|\leqslant \psi_k\cdot||w_k||,\hspace{5mm}\text{for}\ \
1\leqslant k\leqslant n-1\\
&|x_n-x_1|\leqslant\phi_n\cdot(\sum_{k=1}^{n-1}||w_k||),\hspace{3mm}|y_n-y_1|\leqslant
\psi_n\cdot(\sum_{k=1}^{n-1}||w_k||).\hspace{9mm}
\end{aligned}
\end{equation}

With the above preparation, we are now ready to show  the convergence of the integral of the leading term in
(\ref{eqn:Feynman-weight-sum}). After plugging (\ref{eqn:propagator-leading-term}) into (\ref{eqn:Feynman-weight-sum}) and
using the estimate (\ref{eqn:change-coordinates-estimate}), it is not difficult to see that there is a smooth positive function
$\Phi$ on $\mathbb{H}\times\mathbb{R}^{2n-2}$, such that the integral (\ref{eqn:Feynman-weight-sum}) with $\sigma=\text{id}$ is bounded above in absolute value
by:
\begin{align*}
&\int_{\epsilon\leqslant t_1\leqslant\cdots\leqslant
t_n\leqslant L}\prod_{i=1}^{n}dt_i\int_{w_0\in\mathbb{H}}\int_{w_1\cdots,w_{n-1}\in\mathbb{R} ^2 }
\Phi(w_0,\cdots,w_n)\cdot\left(\prod_{i=1}^{n-1}\dfrac{||w_i||}{t_i^2}e^{-\frac{||w_i||^2}{4t_i}}\right)\\
&\hspace{65mm}\cdot\dfrac{||w_1||+\cdots+||w_{n-1}||}{
t_n^2}\cdot e^{-\frac{\rho^2(z_n,z_1)}{4t_n} }\prod_{i=0}^{n-1}d^2w_i\\
\leqslant&\int_{\epsilon\leqslant
t_1\leqslant\cdots\leqslant
t_n\leqslant L}\prod_{i=1}^{n}dt_i\int_{w_0\in\mathbb{H}}\int_{w_1\cdots,w_{n-1}\in\mathbb{R}^2}\Phi(w_0,\cdots,w_n)\cdot\left(\prod_{
i=1}^{n-1}\dfrac{||w_i||}{t_i^2}e^{-\frac{||w_i||^2}{4t_i}}\right)\\
&\hspace{65mm}\cdot\dfrac{||w_1||+\cdots+||w_{n-1}||}{t_n^2}\prod_{i=0}
^{n-1}d^2w_i.
\end{align*} The  inequality follows simply by dropping the term $ e^{-\frac{\rho^2(z_n,z_1)}{4t_n}}$, and the function $\Phi$
arises as the product of absolute value of the following functions or differential forms:
\begin{enumerate}
 \item the functions $\phi_k$ and $\psi_k$ in  (\ref{eqn:change-coordinates-estimate});
 \item The Jacobian of the change from the standard coordinates to the Riemann normal coordinates;
 \item The functions $Q_1,Q_2$ in (\ref{eqn:propagator-leading-term});
 \item The inputs on the tails of the wheel.
\end{enumerate}

From (4) it is clear that  we can choose $\Phi$ with compact support. Thus we only need to show that the
following integral is convergent:
\begin{equation}\label{eqn:estimate-integral-final-form}
 \int_{\epsilon\leqslant
t_1\leqslant\cdots\leqslant
t_n\leqslant L}\prod_{i=1}^{n}dt_i\int_{w_1\cdots,w_{n-1}\in\mathbb{R}^2}\left(\prod_{
i=1}^{n-1}\dfrac{||w_i||}{t_i^2}e^{-\frac{||w_i||^2}{4t_i}}\right)
\cdot\dfrac{||w_1||+\cdots+||w_{n-1}||}{t_n^2}\prod_{i=1}
^{n-1}d^2w_i.
\end{equation}

We can further change the coordinates: let $$\xi_k=w_k\cdot t_k^{-\frac{1}{2}}, \hspace{5mm}1\leqslant k\leqslant n-1.$$ Then
 (\ref{eqn:estimate-integral-final-form}) becomes
\begin{align*}
\int_{\epsilon\leqslant t_1\leqslant\cdots\leqslant
t_n\leqslant L}\prod_{i=1}^{n-1}dt_i\int_{\xi_1\cdots,\xi_{n-1}\in\mathbb{R}^2}\big(\prod_{
i=1}^{n-1}\dfrac{||\xi_i||}{t_i^{1/2}}e^{-\frac{||\xi_i||^2}{4}}\big)\cdot\dfrac{||\xi_1||t_1^{1/2}+\cdots+||\xi_{n-1}||t_{n-1}^{
1/2 }
} { t_n^2 } \prod_ { i=1 }
^{n-1}d^2\xi_i,
\end{align*}
which is bounded above by
$$\left(\int_{\epsilon\leqslant
t_1\leqslant\cdots\leqslant t_n\leqslant L}\big(\prod_{i=1}^{n-1}t_i^{-\frac{1}{2}}\big)t_n^{-\frac{3}{2}}\prod_{i=1}^{n}
dt_i\right)\cdot\left(\int_{\xi_1,\cdots,\xi_{n-1}\in\mathbb{R}^2}P(||\xi_i||)e^{-\sum_{i=1}^{n-1}||\xi_i||^2/4}\right),$$ where
$P(||\xi_i||)$ is a polynomial of $||\xi_i||$'s. It is not difficult to see that the first integral converges as
$\epsilon\rightarrow0$ when
$n\geqslant 3$, and that the second integral is finite.

(3) $n=2$:  Plugging the leading term of
(\ref{eqn:propagator-leading-term}) into the integral (\ref{eqn:Feynman-weight-H}) for $n=2$, we can see that the integral of
the leading term is of the following form:
\begin{equation}\label{eqn:integral}
 \begin{aligned}
\dfrac{1}{t_1^2t_2^2}\int_{t_1,t_2=\epsilon}^Ldt_1dt_2\int_{(u_0,v_0)\in\mathbb{H}}\int_{(u_1,v_1)\in\mathbb{R}^2}&\Phi(u_0,v_0,u_1,
v_1)(x_1-x_2)(y_1-y_2)\\
&\exp\left(-(u_1^2+v_1^2)(\frac{1}{t_1}+\frac{1}{t_2})\right)du_0dv_0du_1dv_1,
 \end{aligned}
\end{equation}
where $\Phi$ is similar to that in the case where $n\geq 3$. The fact that the functions $(x_1-x_2)^2$ and $(y_1-y_2)^2$ do not show up in
equation (\ref{eqn:integral}) follows from the trivial observation that $(dy_1-dy_2)^2=(\frac{dx_1}{y_1}-\frac{dx_2}{y_2})^2=0$. This simple fact, together
with the derivatives of $x_1-x_2$ and $y_1-y_2$ in (\ref{eqn:change-coordinates-derivative}) implies that the leading term in  Wick's expansion of
the integral
$$\dfrac{1}{t_1^2t_2^2}
\int_{(u_1,v_1)\in\mathbb{R}^2}\Phi(u_0,v_0,u_1,v_1)(x_1-x_2)(y_1-y_2)\exp\left(-(u_1^2+v_1^2)(\frac{1}{t_1}+\frac{1}{t_2}
)\right)du_1dv_1$$
 is given by a multiple of
$$\dfrac{1}{t_1^2t_2^2}
\int_{(u_1,v_1)\in\mathbb{R}^2}u_1^2 v_1^2 \exp\left(-(u_1^2+v_1^2)(\frac{1}{t_1}+\frac{1}{t_2}
)\right)du_1dv_1\propto \dfrac{t_1t_2}{(t_1+t_2)^3}.
$$
%$$\left(\dfrac{\partial\Phi}{\partial u_1}\dfrac{\partial\Phi}{\partial
%v_1}\dfrac{1}{y_1^2}\right)\Bigg|_{u_1=v_1=0}\cdot\dfrac{1}{t_1^2t_2^2}\cdot\left(\dfrac{t_1t_2}{t_1+t_2}\right)^3=\left(\dfrac{
%\partial\Phi}{\partial u_1}\dfrac{\partial\Phi}{\partial
%v_1}\dfrac{1}{y_1^2}\right)\Bigg|_{u_1=v_1=0}\cdot\dfrac{
%t_1t_2 } { (t_1+t_2)^3 }.
%$$

The integral of $\dfrac{t_1t_2}{(t_1+t_2)^3}$
on $[\epsilon,L]\times[\epsilon,L]$ clearly converges as $\epsilon\rightarrow 0$. Furthermore, since $\Phi$ has compact support on
$\mathbb{H}\times\mathbb{R}^2$, it is clear that  (\ref{eqn:integral}) converges as $\epsilon\rightarrow 0$.
\end{proof}

\subsubsection{Obstruction analysis} By construction, the naive quantization $\{I_{naive}[L]|L\in\R_+\}$ satisfies all requirements of a quantization except for
the quantum master equation. In general, there exist potential obstructions to solving quantum master equation known as the
anomaly. The analysis of such obstructions is usually very difficult. In \cite{Kevin-book}, Costello has developed a convenient
deformation theory to deal with this problem, which we will follow to compute the obstruction space of the B-twisted
$\sigma$-model.

Recall that $Ob=\bracket{\Ol^+(\E), Q+\fbracket{I_{cl},-}}$ is the deformation-obstruction complex of our theory. Costello's deformation method says that
$$
   H^1(Ob)
$$
is the obstruction space for solving quantum master equation, and
$$
  H^0(Ob)
$$
parametrizes the deformation space. Both cohomology groups can be computed via $D$-module techniques. In our case, we can restrict to a subcomplex of $Ob$,
thanks to the $\C^\times$-symmetry.

\begin{defn} We define $\tilde{\E}\subset \E$ to be the subspace
$$
  \tilde{\E}:=\A_{\Sigma_g}\otimes \g_X[1]
$$
and $\widetilde{Ob}$ to be the reduced deformation-obstruction complex
$$
   \widetilde{Ob}:=\bracket{\Ol^+\bracket{\tilde{\E}}, Q+\fbracket{I_{cl},-}}.
$$
\end{defn}

\begin{prop} The obstruction space for solving quantum master equation with prescribed $\C^\times$-symmetry is $H^1\bracket{\widetilde{Ob}}$.
\end{prop}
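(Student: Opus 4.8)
The plan is to deduce the statement from the general obstruction--deformation theory of \cite{Kevin-book}, which identifies $H^1(Ob)$ as the obstruction space and $H^0(Ob)$ as the deformation space for the (curved) quantum master equation, and then to restrict the complex $Ob$ to the $\C^\times$-weight summand relevant to the symmetric problem, which I claim is precisely $\widetilde{Ob}$. The whole quantization machinery is $\C^\times$-equivariant: by Lemma \ref{lem:C-action} the renormalization group flow, the quantized differential $Q_L+\hbar\Delta_L+\hbar^{-1}F_{l_1}$, and the BV bracket $\fbracket{-,-}_L$ are all equivariant, and $I_{cl}/\hbar$ is invariant. Demanding that the quantization preserve the $\C^\times$-symmetry therefore forces, at each order, the obstruction cocycle and the admissible counterterms to sit in a single $\C^\times$-weight. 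Since $\C^\times$ is reductive, $Ob$ splits as a direct sum over $\C^\times$-weights and cohomology commutes with projection onto a weight summand, so the symmetric obstruction is computed by $H^1$ of the relevant summand of $Ob$.

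It then remains to identify that summand with $\widetilde{Ob}=\bracket{\Ol^+(\tilde{\E}),\,Q+\fbracket{I_{cl},-}}$. First I would verify that $\widetilde{Ob}$ is a subcomplex of $Ob$. The operator $Q=d_{\Sigma_g}+l_1$ preserves $\Ol^+(\tilde{\E})$ because $l_1$ does not mix $\g_X$ with $\g_X^\vee$. For the bracket, a functional in $\Ol^+(\tilde{\E})$ depends only on the component $\alpha\in\g_X[1]$, while $I_{cl}$ is assembled from densities $\abracket{l_k(\alpha^{\otimes k}),\beta}$, each carrying exactly one leg in $\g_X^\vee$. As the Poisson bracket contracts a $\g_X[1]$-leg against a $\g_X^\vee$-leg, $\fbracket{I_{cl},\Phi}$ for $\Phi\in\Ol^+(\tilde{\E})$ is forced to contract the unique $\beta$-leg of $I_{cl}$ against an $\alpha$-leg of $\Phi$, and the outcome again involves only $\g_X[1]$. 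Hence $\fbracket{I_{cl},-}$ preserves $\Ol^+(\tilde{\E})$ and $\widetilde{Ob}$ is a genuine subcomplex.

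Second I would match $\widetilde{Ob}$ with the relevant weight summand by the bookkeeping behind Proposition \ref{prop:invariant-actions}: the only $\C^\times$-weights occurring force an invariant interaction to be supported in $\hbar$-degrees $0$ and $1$, with the $\hbar^1$-part lying in $\OO(\A_{\Sigma_g}\otimes\g_X[1])$. Equivalently, the local functionals of the weight that hosts the one-loop obstruction and its counterterms are exactly those depending only on the $\g_X[1]$-component, namely $\Ol^+(\tilde{\E})$. Together with the previous step this shows the relevant summand of $Ob$ is $\widetilde{Ob}$, so the obstruction space for the $\C^\times$-symmetric quantum master equation is $H^1\bracket{\widetilde{Ob}}$.

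The main obstacle I anticipate is the precise weight accounting: one must be sure that, order by order in $\hbar$, the symmetric obstruction really lands in $\Ol^+(\tilde{\E})$ and not in functionals involving $\g_X^\vee$, and that the differential induced on this summand carries no cross-terms beyond $Q+\fbracket{I_{cl},-}$. This requires tracking the fact that the symplectic pairing itself has nonzero $\C^\times$-weight --- which is exactly what the compensating powers of $\lambda$ and $\hbar$ in the action on $\OO(\E)((\hbar))$ are designed to correct. A secondary check is that the curved quantum master equation built from $Q_L+\hbar\Delta_L+\hbar^{-1}F_{l_1}$ (Lemmas \ref{QME-RG} and \ref{quantum-BRST}) still fits Costello's equivariant deformation formalism, so that the identification of $H^1(Ob)$ as the obstruction space survives the presence of the curving.
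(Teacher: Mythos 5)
Your proposal is correct and is essentially the paper's own approach written out in full: the paper's proof consists of the single citation ``This is the same as the holomorphic Chern-Simons theory in \cite{Kevin-CS}'', and the argument behind that citation is precisely the $\C^\times$-equivariance and weight-decomposition argument you give (equivariance of the RG flow, the quantized differential and the bracket; vanishing of invariant interactions above one loop via Proposition \ref{prop:invariant-actions}; identification of the weight-zero summand of $Ob$ with $\widetilde{Ob}$). Your explicit check that $Q+\fbracket{I_{cl},-}$ preserves $\Ol^+\bracket{\tilde{\E}}$ --- each $\tilde{l}_k$ carries exactly one $\g_X^\vee$-leg, which the bracket must contract against an $\alpha$-leg, so no cross-terms into functionals of $\g_X^\vee$ can appear --- is exactly the detail the paper leaves to the reference.
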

\begin{proof} This is the same as the holomorphic Chern-Simons theory in \cite{Kevin-CS}.
\end{proof}

To describe the complex $\widetilde{Ob}$, we first introduce some notations. Let
$$
  \Jet_{\Sigma_g}\bracket{\tilde{\E}}:=\Jet_{\Sigma_g}\bracket{\A_{\Sigma_g}}\otimes \g_X[1]
$$
be the sheaf of smooth jets of differential forms on $\Sigma_g$ valued in $\g_X[1]$, and let $D_{\Sigma_g}$ be the sheaf of smooth differential operators on
$\Sigma_g$. $\Jet_{\Sigma_g}\bracket{\tilde{\E}}$ is naturally a $D_{\Sigma_g}$-module, and we define its dual
$$
  \Jet_{\Sigma_g}\bracket{\tilde{\E}}^\vee:=\Hom_{\cinfty(\Sigma_g)\otimes \A_{X}^{\sharp}}\bracket{\Jet_{\Sigma_g}\bracket{\tilde{\E}},
\cinfty(\Sigma_g)\otimes \A_X^{\sharp}}.
$$
Equivalently,
$$
\Jet_{\Sigma_g}\bracket{\tilde{\E}}^\vee= \Jet_{\Sigma_g}\bracket{\A_{\Sigma_g}}^\vee\otimes \g_X[1]^\vee,
$$
where $\Jet_{\Sigma_g}\bracket{\A_{\Sigma_g}}^\vee$ is the complex of dual $D_{\Sigma_g}$-module of $\Jet_{\Sigma_g}\bracket{\A_{\Sigma_g}}$, with induced differential which we still denote by $d_{\Sigma_g}$. There is a natural identification between complexes of $D_{\Sigma_g}$-modules
$$
\Jet_{\Sigma_g}\bracket{\A_{\Sigma_g}}^\vee \iso D_{\Sigma_g}\otimes \wedge^* T_{\Sigma_g},
$$
where $T_{\Sigma_g}$ is the smooth tangent bundle, and the right hand side is the usual complex of Spencer's resolution. In particular, we have the quasi-isomorphism
\begin{align}\label{spencer resolution}
  (\Jet_{\Sigma_g}\bracket{\A_{\Sigma_g}}^\vee, d_{\Sigma_g})\simeq \cinfty(\Sigma_g)
\end{align}

$\Jet_{\Sigma_g}\bracket{\tilde{\E}}^\vee$ is a locally free $D_{\Sigma_g}$-module. We will let $\A_{\Sigma_g}^{top}$ denote the right $D_{\Sigma_g}$-module of
top differential forms on $\Sigma_g$. According to the definition of local functionals, $\Ol^+(\tilde{\E})$ is isomorphic to
the global sections of the following complex of sheaves on $\Sigma_g$:
\begin{equation}\label{eqn:deformation-obstruction-jet}
\begin{aligned}
\A_{\Sigma_g}^{top}\otimes_{D_{\Sigma_g}} \prod_{k\geq 1}\Sym^k_{D_{\Sigma_g}\otimes \A_X^{\sharp}}\bracket{\Jet_{\Sigma_g}\bracket{\tilde{\E}}^\vee}
\end{aligned}
\end{equation}
with the differential induced from $Q+\fbracket{I_{cl},-}$. All the sheaves here, including the sheaf of jets, are sheaves over
smooth functions on $\Sigma_g$. Thus these sheaves are all fine sheaves, a fact which implies that the cohomology we want to compute
is nothing but the hypercohomology of the complex (\ref{eqn:deformation-obstruction-jet}) with respect to the global section functor.

\begin{prop}
The cohomology of the deformation-obstruction complex of B-twisted $\sigma$-model is
$$
H^k(\widetilde{Ob})=\sum_{p+q=k+2}H^p_{dR}(\Sigma_g)\otimes H^q(X,\Omega_{cl}^1),
$$
where $\Omega_{cl}^1$ is the sheaf of closed holomorphic 1-forms on $X$. In particular, the obstruction space for the quantization at one-loop is given by
\begin{align*}
H^1(\widetilde{Ob})= \bracket{H^0_{dR}(\Sigma_g)\otimes H^3(X,\Omega_{cl}^1)} \oplus \bracket{H^1_{dR}(\Sigma_g)\otimes
H^2(X,\Omega_{cl}^1)}
\oplus \bracket{H^2_{dR}(\Sigma_g) \otimes H^1(X,\Omega_{cl}^1)}.
\end{align*}

\end{prop}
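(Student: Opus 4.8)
The plan is to compute $H^*(\widetilde{Ob})$ by exploiting the product structure $\tilde{\E}=\A_{\Sigma_g}\otimes\g_X[1]$ and separating the contribution of the source surface from that of the target. The first thing I would record is that, on functionals depending only on $\alpha$, the operator $Q+\fbracket{I_{cl},-}$ splits into two commuting pieces. Since $I_{cl}$ is linear in $\beta$, the bracket $\fbracket{I_{cl},-}$ contracts the $\beta$-slot and reproduces the higher products $l_{\geq 2}$ as a coderivation, so $l_1$ together with $\fbracket{I_{cl},-}$ assemble into the full \CE differential $d_{CE}$ of the curved $L_\infty$-algebra $\g_X$, acting purely in the $X$-direction, while $d_{\Sigma_g}$ acts purely in the $\Sigma_g$-direction. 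Thus $\widetilde{Ob}$ is the total complex of a double complex with differentials $d_{\Sigma_g}$ and $d_{CE}$, and the strategy is to run the spectral sequence that takes $d_{\Sigma_g}$-cohomology first and identifies the two tensor factors in the answer.

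Next I would pass to the jet presentation \eqref{eqn:deformation-obstruction-jet}. Because every sheaf appearing there is a module over $\cinfty(\Sigma_g)$ and hence fine, the cohomology I want is the hypercohomology computed by global sections, and I may work with the $D_{\Sigma_g}$- and $D_X^{hol}$-module structures throughout. For the $\Sigma_g$-direction I would invoke the Spencer identification $\Jet_{\Sigma_g}\bracket{\A_{\Sigma_g}}^\vee\iso D_{\Sigma_g}\otimes\wedge^*T_{\Sigma_g}$ together with
\[
\A_{\Sigma_g}^{top}\otimes_{D_{\Sigma_g}}\bracket{D_{\Sigma_g}\otimes\wedge^*T_{\Sigma_g}}\simeq\bracket{\A_{\Sigma_g}^*,d_{\Sigma_g}},
\]
so that taking $d_{\Sigma_g}$-cohomology collapses the Spencer resolution to de Rham cohomology and produces the factor $H^p_{dR}(\Sigma_g)$. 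The content to be checked here is that the $\Sym^k$ over $D_{\Sigma_g}\otimes\A_X^\sharp$, which a priori mixes the two factors by Schur–Weyl, contributes on the $E_1$-page only its $\Sigma_g$-cohomology, leaving behind the \CE cochain algebra $\prod_{k\geq 1}\Sym^k\bracket{\g_X[1]^\vee}$ of $\g_X$ in the $X$-direction.

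The crux is the $X$-direction. After the first page, I am left with the \CE complex of $\g_X$ equipped with the cyclic (invariant) pairing descending from the symplectic structure and the $\A_{\Sigma_g}^{top}$ factor. The naive cohomology of $C^*\bracket{\g_X}$ is just $\OO_X$, so the key point — and the main obstacle — is to show that the cyclic/trace structure upgrades functions to \emph{forms}, and that the resulting $X$-factor complex is quasi-isomorphic to the sheaf $\Omega^1_{cl}$ of closed holomorphic $1$-forms, whose hypercohomology over $X$ is $H^q(X,\Omega^1_{cl})$. Concretely I would carry out the $D_X^{hol}$-module computation exactly as in the holomorphic Chern–Simons analysis of \cite{Kevin-CS}: the invariant local functionals of $\g_X$ correspond under the jet identification \eqref{eqn:identification-CE-jet} to the holomorphic de Rham complex of $X$, and the residual differential (the part of $d_{CE}$ not absorbed in resolving $\OO_X$) is precisely the holomorphic de Rham $d$, which both produces the extra $1$-form degree and imposes the closedness condition singling out $\Omega^1_{cl}$. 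Getting the interplay of the cyclic pairing with the $L_\infty$-coderivation right, so that the output is $\Omega^1_{cl}$ rather than $\OO_X$ or $\wedge^*T_X$, is the delicate step.

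Finally I would assemble the two factors by a Künneth argument on the collapsing spectral sequence, tracking the degree bookkeeping: the shift $[1]$ on $\g_X$, the de Rham degree $2$ carried by the right $D_{\Sigma_g}$-module $\A_{\Sigma_g}^{top}$, and the cohomological conventions combine into the total shift by $2$, yielding $H^k(\widetilde{Ob})=\sum_{p+q=k+2}H^p_{dR}(\Sigma_g)\otimes H^q(X,\Omega^1_{cl})$. Specializing to $k=1$ gives the three summands of the obstruction space, where $H^2_{dR}(\Sigma_g)$ supplies the Euler-characteristic factor $2-2g$ and $H^1(X,\Omega^1_{cl})$ is the home of $c_1(X)$, consistent with the claimed anomaly $(2-2g)c_1(X)$. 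Besides the $\Omega^1_{cl}$ identification, the secondary technical points to manage are the convergence of the spectral sequence over the infinite product in $k$ and the justification of the variable separation in Step two.
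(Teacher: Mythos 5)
Your skeleton (separate the $\Sigma_g$- and $X$-directions, collapse the $\Sigma_g$-jets with a total shift by $2$, then apply a K\"unneth argument) is the same as the paper's, and your degree bookkeeping is right; but the step you yourself single out as the crux contains a genuine gap, and the mechanism you propose to close it is the wrong one. The reason the $X$-factor is $\Omega^1_{cl}$ rather than $\OO_X$ has nothing to do with any cyclic or trace structure: $\widetilde{Ob}$ consists of functionals on $\tilde{\E}=\A_{\Sigma_g}\otimes\g_X[1]$ only, so the symplectic pairing never enters this complex (it enters only through $\fbracket{I_{cl},-}$, which, as you correctly say, merely assembles $d_{CE}$). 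What actually produces the $1$-forms is that $\Ol^+$ consists of local functionals \emph{modulo constants}: the symmetric product in \eqref{eqn:deformation-obstruction-jet} starts at $k\geq 1$, so after collapsing the $\Sigma_g$-direction one lands in the \emph{reduced} Chevalley--Eilenberg complex $C^*_{red}(\g_X)$, not in $C^*(\g_X)$. The paper then concludes with the short exact sequence $0\to\A_X\to C^*(\g_X)\to C^*_{red}(\g_X)\to 0$: since $C^*(\g_X)\simeq \OO_X$ and $\A_X\simeq \C$, one gets $C^*_{red}(\g_X)\simeq \bbracket{\C\to\OO_X}\simeq \Omega^1_{X,cl}$, the last identification being the holomorphic Poincar\'e lemma $\partial:\OO_X/\C\iso\Omega^1_{X,cl}$. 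Since you explicitly leave ``why $\Omega^1_{cl}$ and not $\OO_X$'' unresolved and attribute it to a structure that is absent from the problem, this is a missing idea rather than a difference of presentation.

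There is also a secondary gap in your $\Sigma_g$-step, which you flag but do not close: a priori \eqref{eqn:deformation-obstruction-jet} is not a bicomplex whose $d_{\Sigma_g}$-cohomology can be taken factorwise, because $\Sym^k$ is taken over $D_{\Sigma_g}\otimes\A_X^\sharp$ and the plain tensor product with $\A_{\Sigma_g}^{top}$ over $D_{\Sigma_g}$ is not manifestly homotopy-invariant. The paper closes exactly this point: the $D_{\Sigma_g}$-module $\prod_{k\geq 1}\Sym^k_{D_{\Sigma_g}\otimes\A_X^\sharp}\bracket{\Jet_{\Sigma_g}(\tilde{\E})^\vee}$ is flat, so the underived tensor product computes the derived one, and replacing $\A_{\Sigma_g}^{top}$ by its Koszul resolution $\A_{\Sigma_g}(D_{\Sigma_g})[2]$ and contracting $D_{\Sigma_g}$ against the jets yields directly $\A_{\Sigma_g}\otimes_{\C} C^*_{red}(\g_X)[2]$ with differential $d_{\Sigma_g}+d_{CE}$ --- an honest tensor product of complexes, to which K\"unneth applies and from which the shift by $2$ you wanted falls out. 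If you substitute this flatness-plus-Koszul argument for your ``to be checked'' $E_1$-page claim, and the reduced-complex argument above for the cyclic-pairing step, your outline becomes the paper's proof.
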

\begin{proof} We follow the strategy developed in \cite{Kevin-CS}. The Koszul resolution gives a resolution of
the $D_{\Sigma_g}$-module
$$
      \A_{\Sigma_g}(D_{\Sigma_g})[2]\to \A_{\Sigma_g}^{top},
$$
where $\A_{\Sigma_g}\bracket{D_{\Sigma_g}}[2]$ is the de Rham complex of $D_{\Sigma_g}$. Together with the quasi-isomorphism \eqref{spencer resolution} and the fact that the
$D_{\Sigma_g}$-module $\prod\limits_{k\geq
1}\Sym^k_{D_{\Sigma_g}\otimes \A_X^{\sharp}}\bracket{\Jet_{\Sigma_g}\bracket{\tilde{\E}}^\vee}$ is  flat,   we find
quasi-isomorphisms
\begin{align*}
  \widetilde{Ob}&\iso \A_{\Sigma_g}^{top}\otimes^{L}_{D_{\Sigma_g}} \bracket{\prod_{k\geq 1}\Sym^k_{D_{\Sigma_g}\otimes
\A_X^{\sharp}}\bracket{\Jet_{\Sigma_g}\bracket{\tilde{\E}}^\vee}}\\
  &\iso \A_{\Sigma_g}(D_{\Sigma_g})\otimes_{D_{\Sigma_g}}^L\bracket{ \prod_{k\geq 1}\Sym^k_{D_{\Sigma_g}\otimes
\A_X^{\sharp}}\bracket{\Jet_{\Sigma_g}\bracket{\tilde{\E}}^\vee}}[2]\\
&\iso \A_{\Sigma_g}\otimes_{\C}\bracket{\prod_{k\geq 1}
\Sym^k_{\A_{X}^{\sharp}}\bracket{\g_X[1]^\vee}}[2]=\A_{\Sigma_g}\otimes_{\C} C^*_{red}\bracket{\g_X}[2].
\end{align*}
The differential on the last complex is $d_{\Sigma_g}+l_1+\{I_{cl},-\}=d_{\Sigma_g}+d_{CE}$, where $d_{\Sigma_g}$ is the de Rham differential on
$\A_{\Sigma_g}$ and $d_{CE}$ is the \CE
differential on  the reduced \CE complex of $\mathfrak{g}_X$. Therefore
$$
   H^k\bracket{\widetilde{Ob}}=\sum_{p+q=k+2} H^p_{dR}\bracket{\Sigma_g}\otimes H^q\bracket{C^*_{red}(\g_X), d_{CE}}.
$$
Finally, from the following short exact sequence
$$
  0\to \A_X\to C^*\bracket{\g_X}\to C^*_{red}\bracket{\g_X}\to 0,
$$
we have the quasi-isomorphism of complexes of sheaves
$$
C^*_{red}\bracket{\g_X} \simeq \bbracket{\A_X\to C^*\bracket{\g_X}}\simeq \bbracket{\C\to \OO_X}\simeq \Omega^1_{X, cl},
$$
which implies
$$
   H^q\bracket{C^*_{red}(\g_X), d_{CE}}\cong H^q\bracket{X, \Omega_{X,cl}^1}.
$$
\end{proof}

\subsubsection{Computation of the obstruction}\label{subsection:genuine-quantization}
We now compute the obstruction to the quantization of B-twisted  $\sigma$-model.
In section \ref{subsection:naive-quantization}, we have seen that the $\mathbb{C}^\times$-invariance of $I_{cl}/\hbar$ and the RG flow
operator guarantees that the the naive quantization $\{I_{naive}[L]|L>0\}$ only contains the constant term and linear term in the power
expansion of $\hbar$. The naive quantization automatically satisfies the quantum master equation modulo $\hbar$ since $I_{cl}$
satisfies the classical master equation. Thus, we only need to take care of the one-loop anomaly. We have the following explicit
graphical expression  of one-loop anomaly for general perturbative QFT's:
 \begin{thm}\label{theorem:one-loop-anomaly} The  one-loop obstruction $O_1$ to quantizing a classical field theory with classical interaction $I_{cl}$ is
given graphically by
\begin{equation}\label{graph:one-loop anomaly}
O_1=\lim_{\epsilon\rightarrow0}\left(\figbox{0.18}{one-loop-edge-K-ep-K0}\right)+\lim_{\epsilon\rightarrow
0}\Big(\figbox{0.18}{one-loop-self}\Big)
\end{equation}
\end{thm}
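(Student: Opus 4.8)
The plan is to follow Costello's deformation-theoretic recipe: realize $O_1$ as the $\hbar^{1}$-part of the failure of the naive quantization $I_{naive}[L]$ to satisfy the quantum master equation, after checking that the $\hbar^{0}$ (classical) part of the failure vanishes. Writing $I=I_{naive}[L]=\sum_k\hbar^k I^{(k)}[L]$ and expanding the defining operator I would compute
\begin{align*}
e^{-I/\hbar}\Big(Q_L+\hbar\Delta_L+\tfrac{F_{l_1}}{\hbar}\Big)e^{I/\hbar}
&=\tfrac{1}{\hbar}\Big(Q_L I^{(0)}+\tfrac12\{I^{(0)},I^{(0)}\}_L+F_{l_1}\Big)\\
&\quad+\Big(Q_L I^{(1)}+\{I^{(0)},I^{(1)}\}_L+\Delta_L I^{(0)}\Big)+\cdots.
\end{align*}
The $\hbar^{-1}$ term is the scale-$L$ classical master equation for the tree-level theory; it vanishes because $I_{cl}$ satisfies the classical master equation (Lemma \ref{lem:classical-master-equation}) and the renormalization group flow preserves it. Hence the first obstruction is the $\hbar^{0}$ term, represented by $O_1=\lim_{L\to0}\big(Q_L I^{(1)}[L]+\{I^{(0)}[L],I^{(1)}[L]\}_L+\Delta_L I^{(0)}[L]\big)$; note that only $I^{(0)}$ and $I^{(1)}$ enter, so higher $\hbar$-orders are irrelevant to this computation.

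Next I would trade the scale-$L$ expression for a scale-$\epsilon$ one using the renormalization group compatibility of the quantized differential (Lemma \ref{QME-RG}),
$$
\Big(Q_L+\hbar\Delta_L+\tfrac{F_{l_1}}{\hbar}\Big)e^{I_{naive}[L]/\hbar}=\lim_{\epsilon\to0}e^{\hbar {\pa\over \pa \mathbb{P}_\epsilon^L}}\Big(Q_\epsilon+\hbar\Delta_\epsilon+\tfrac{F_{l_1}}{\hbar}\Big)e^{I_{cl}/\hbar},
$$
which is the exponentiated form of the propagator identity $Q\,\mathbb{P}_\epsilon^L=\mathbb{K}_\epsilon-\mathbb{K}_L$. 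A direct computation on the local vertex gives
$$
\Big(Q_\epsilon+\hbar\Delta_\epsilon+\tfrac{F_{l_1}}{\hbar}\Big)e^{I_{cl}/\hbar}=\Big[\tfrac1\hbar\big(Q_\epsilon I_{cl}+\tfrac12\{I_{cl},I_{cl}\}_\epsilon+F_{l_1}\big)+\Delta_\epsilon I_{cl}\Big]e^{I_{cl}/\hbar}.
$$

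Then I would identify the two diagrams in \eqref{graph:one-loop anomaly}. The $\hbar^{0}$ piece $\Delta_\epsilon I_{cl}$ is the self-contraction of the vertex $I_{cl}$ against $\mathbb{K}_\epsilon$, which after dressing by tree propagators yields the tadpole, i.e.\ the \emph{one-loop-self} term. For the $\hbar^{-1}$ piece I would subtract the classical master equation of Lemma \ref{lem:classical-master-equation} (taken with the scale-$0$ bracket, where it vanishes), reducing it to the difference between scale-$\epsilon$ and scale-$0$ Batalin--Vilkovisky data; this difference is controlled by $\mathbb{K}_\epsilon-\mathbb{K}_0$ through $Q\,\mathbb{P}_\epsilon^L=\mathbb{K}_\epsilon-\mathbb{K}_L$, and after pushforward produces the wheel carrying the distinguished heat-kernel edge, i.e.\ the \emph{one-loop-edge-K-ep-K0} term. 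Isolating the $\hbar^{0}$ part of the pushforward and sending $\epsilon\to0$ (and $L\to0$) then leaves exactly these two genus-$1$ graphs; in the $\sigma$-model application the $\mathbb{C}^\times$-symmetry (Proposition \ref{prop:invariant-actions}) further guarantees that no higher-genus contributions appear.

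The hard part will be the $\hbar^{-1}$ term. Although $Q_\epsilon I_{cl}+\tfrac12\{I_{cl},I_{cl}\}_\epsilon+F_{l_1}\to0$ as $\epsilon\to0$, the extra loop propagator attached to it is divergent, so their product has a finite limit that must be extracted with care. Justifying the interchange of the $\epsilon\to0$ limit with the loop integration, and showing that the finite remainder localizes precisely onto the heat-kernel edge $\mathbb{K}_\epsilon-\mathbb{K}_0$ rather than spreading into other graphs, is the main technical obstacle; I expect to control it using the asymptotic and convergence estimates already established for the naive quantization (Lemma/Definition \ref{lem:naive quantization} and Lemma \ref{lem:asymp-propagator}).
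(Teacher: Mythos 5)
Your skeleton does match the paper's proof in Appendix \ref{appendix:one-loop-anomaly}: define $O_1[L]$ as the $\hbar^0$-coefficient of the failure of the quantum master equation for the naive quantization, use the compatibility of the quantized differential with the RG flow (Lemma \ref{QME-RG}) to push the computation to scale $\epsilon$, subtract the classical master equation of Lemma \ref{lem:classical-master-equation}, and read off the tadpole $\Delta_\epsilon I_{cl}$ and the $\mathbb{K}_\epsilon-\mathbb{K}_0$ wheel. But there is a genuine gap: you take $I_{naive}[L]=\lim_{\epsilon\to 0}W(\mathbb{P}_\epsilon^L,I_{cl})$, i.e.\ you quantize with no counter-terms. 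Theorem \ref{theorem:one-loop-anomaly} is stated for an \emph{arbitrary} classical field theory, where that limit generally does not exist; the naive quantization is $\lim_{\epsilon\to 0}W(\mathbb{P}_\epsilon^L,I_{cl}-I^{CT}(\epsilon))$, and indeed the very meaning of the graphs in \eqref{graph:one-loop anomaly} (the ``smooth part'' inside the dashed circle, per the remark following the theorem) presupposes a renormalization scheme and the decomposition of each wheel integral into smooth and singular parts. Your displayed flow identity $\bracket{Q_L+\hbar\Delta_L+F_{l_1}/\hbar}e^{I_{naive}[L]/\hbar}=\lim_{\epsilon\to0}e^{\hbar\pa/\pa\mathbb{P}_\epsilon^L}\bracket{Q_\epsilon+\hbar\Delta_\epsilon+F_{l_1}/\hbar}e^{I_{cl}/\hbar}$ is therefore not available in general.

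Concretely, what your argument omits is the counter-term bookkeeping that constitutes the core of the paper's proof: at scale $\epsilon$ the extra terms $-QI^{CT}(\epsilon)-\{I_{cl},I^{CT}(\epsilon)\}_\epsilon$ appear; Lemma \ref{lemma-Q-CT} rewrites $QI^{CT}(\epsilon)$ as $-\{I_{cl},I^{CT}(\epsilon)\}_0$ plus the \emph{singular} parts of the wheel-with-$(\mathbb{K}_\epsilon-\mathbb{K}_0)$-edge weights and of $\Delta_\epsilon I_{cl}$, and it is exactly this cancellation of singular parts that leaves the smooth parts in \eqref{graph:one-loop anomaly}. Relatedly, after applying $e^{\hbar\pa/\pa\mathbb{P}_\epsilon^L}$ the distinguished $\mathbb{K}_\epsilon-\mathbb{K}_0$ edge can sit either on the wheel or on a tree attached to it; the tree configuration is an indeterminate product (divergent wheel)$\times$(edge tending to $0$), and in the paper it is disposed of only by pairing it with $\{I_{cl},I^{CT}(\epsilon)\}_0-\{I_{cl},I^{CT}(\epsilon)\}_\epsilon$ and then letting $L\to 0$ kill external trees. (Also note the $\mathbb{K}_\epsilon-\mathbb{K}_0$ edge does not come from the identity $Q\mathbb{P}_\epsilon^L=\mathbb{K}_\epsilon-\mathbb{K}_L$ as you suggest --- it is by definition the kernel of $\{-,-\}_\epsilon-\{-,-\}_0$; the propagator identity is what proves Lemma \ref{lemma-Q-CT}.) Your plan to control the hard part via the estimates of Lemma \ref{lem:naive quantization} and Lemma \ref{lem:asymp-propagator} is precisely the statement that the counter-terms of this particular B-model vanish, so at best your argument proves the special case $I^{CT}(\epsilon)=0$, not the general theorem.
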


\begin{rmk}
After fixing a renormalization scheme, we can define the smooth part of a Feynman weight $W_\gamma(P_\epsilon^L, I_{cl})$ for any graph
$\gamma$.  We take the smooth part of the term in the dashed red circle.
%whose tails can be either a tail of the whole graph which stretches out the circle, or attached to a tail of a tree
%which become solid lines labeled by $P_\epsilon^L$ outside the circle. These $P_\epsilon^L$ are not part of the function of
%$\epsilon$ of which we take smooth part. This is the reason we put such $P_\epsilon^L$ outside the dotted red circle in the above
%picture.
\end{rmk}
The proof of this theorem is given in Appendix \ref{appendix:one-loop-anomaly}. For B-twisted $\sigma$-model, the following two
lemmas imply that the first term in (\ref{graph:one-loop anomaly}) vanishes as $\epsilon\rightarrow 0$. We defer the proof of these two
lemmas to Appendix \ref{appendix:Feynman-graph-computation}.
\begin{lem}\label{lem:two-vertex-wheels}
Let $\gamma$ be a genus $1$ graph containing a  wheel with $2$ vertices. Then the following Feynman weight vanishes:
\begin{equation}\label{eqn:two-vertex-vanish-susy}
\figbox{0.18}{wheelwithtwovertices}
\end{equation}
\end{lem}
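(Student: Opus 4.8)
The plan is to reduce the statement to the vanishing of the purely analytic part of the weight and to exhibit there the factor $\omega\wedge\omega=0$, with $\omega$ the single propagator $1$-form of Lemma \ref{lem:asymp-propagator}. A genus-$1$ graph containing a two-vertex wheel is that wheel with trees attached, and the trees together with the combinatorial contraction around the loop (of copies of $\mathrm{Id}_{\g_X}+\mathrm{Id}_{\g_X^\vee}$ against the vertices) only multiply the analytic factor carried by the wheel; hence it suffices to show that this analytic factor vanishes. Following the reduction in the proof of Lemma/Definition \ref{lem:naive quantization} I would first combine the inputs meeting at a common vertex, so that it is enough to treat a trivalent wheel: two vertices $z_1,z_2\in\Sigma_g$ joined by two internal edges, with a single tail at each vertex.

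The key point is that, by Lemma \ref{lem:asymp-propagator}, the analytic part of the propagator is a single $1$-form: up to a scalar function of the geodesic distance and of the cut-offs, every internal edge contributes the $1$-form of \eqref{eqn:propagator},
\[
\omega=\frac{2(x_1-x_2)}{y_1y_2}(dy_1-dy_2)-\frac{(y_1-y_2)(y_1+y_2)}{y_1y_2}\left(\frac{dx_1}{y_1}-\frac{dx_2}{y_2}\right),
\]
on $\H\times\H$. A direct check shows that $\omega$ is invariant under $z_1\leftrightarrow z_2$, so the two edges of the wheel, carrying $\omega(z_1,z_2)$ and $\omega(z_2,z_1)$, contribute the \emph{same} $1$-form, and the integrand therefore contains the factor $\omega\wedge\omega$. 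Since $\omega$ is a $1$-form this vanishes: concretely, the two diagonal terms are zero because $(dy_1-dy_2)\wedge(dy_1-dy_2)=\bigl(\tfrac{dx_1}{y_1}-\tfrac{dx_2}{y_2}\bigr)\wedge\bigl(\tfrac{dx_1}{y_1}-\tfrac{dx_2}{y_2}\bigr)=0$, and the two cross terms cancel by graded antisymmetry. This is exactly the mechanism underlying the remark in case $(3)$ of the proof of Lemma/Definition \ref{lem:naive quantization}.

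The step I expect to be the main obstacle is to guarantee that the two edges genuinely contribute \emph{proportional} $1$-forms, so that $\omega\wedge\omega=0$ applies rather than a generally nonzero $\omega_1\wedge\omega_2$, the latter being a nonzero multiple of $(dy_1-dy_2)\wedge\bigl(\tfrac{dx_1}{y_1}-\tfrac{dx_2}{y_2}\bigr)$ whenever the two $1$-forms are not proportional. Two points must be checked. First, the analytic part of the anomaly \eqref{graph:one-loop anomaly} is assembled from the \emph{smooth part} of the scale-$\epsilon$ propagator, and one must verify that extracting the smooth part preserves the common factor $\omega$; this holds because the entire scale-dependence of $P_\epsilon^L$ resides in the scalar prefactor $\int_\epsilon^L f(\rho,t)\,dt$, while $\omega$ is itself scale-independent. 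Second, one works on $\Sigma_g=\H/\Gamma$ rather than on $\H$, so one lifts to the hyperbolic model as in the proof of Lemma/Definition \ref{lem:naive quantization} and observes that, near the diagonal, only the identity element of $\Gamma$ contributes, for which the local formula \eqref{eqn:propagator} is valid. Once both edges are known to be proportional to the same $\omega$, the pointwise identity $\omega\wedge\omega=0$ forces the weight to vanish for all tail inputs, and the case of an arbitrary genus-$1$ graph containing the wheel follows immediately, the vanishing sub-weight of the wheel factoring out.
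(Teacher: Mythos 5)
Your proposal misreads which kernels sit on the two edges of the wheel, and this breaks the whole argument. This lemma is the two-vertex case of the first term of the one-loop anomaly \eqref{graph:one-loop anomaly} (compare the explicit formula in Lemma \ref{lem:more-vertex-wheels}): one edge carries the propagator $\mathbb{P}_\epsilon^L$, but the other carries the difference of BV kernels $\mathbb{K}_\epsilon-\mathbb{K}_0$. The analytic part of $\mathbb{K}_\epsilon-\mathbb{K}_0$ is of \emph{even} form degree (it is a difference of heat kernels, not an image under $Q^{GF}$) and is in no way proportional to the propagator $1$-form $\omega$ of \eqref{eqn:propagator}; so the factor $\omega\wedge\omega$ never appears, and the pointwise vanishing mechanism you propose is unavailable. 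Worse, your reduction ``it suffices to kill the analytic factor'' aims at something false: the analytic factor of this weight is
\begin{equation*}
\int_{\Sigma_g\times\Sigma_g}P_\epsilon^L(z_1,z_2)\,\bigl(K_\epsilon(z_1,z_2)-K_0(z_1,z_2)\bigr)\,\alpha_1\,\alpha_2,
\end{equation*}
which by degree count ($1+2+1=4$, a top form on $\Sigma_g\times\Sigma_g$) is generically a nonzero number for every $\epsilon$, and nothing forces it to vanish even as $\epsilon\to 0$; indeed the reason the paper treats the two-vertex wheel separately from Lemma \ref{lem:more-vertex-wheels} is precisely that the analytic estimates used there require at least three vertices.

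What actually makes the weight vanish is the combinatorial part, which your proposal dismisses as a spectator multiplier. The paper's proof factors the weight as (analytic scalar) times (contraction of the combinatorial kernels $\mathrm{Id}_{\g_X}+\mathrm{Id}_{\g_X^\vee}=\sum_i(X_i\otimes X^i+X^i\otimes X_i)$ around the loop against $\langle l_2(-),-\rangle\otimes\langle l_2(-),-\rangle$), and the combinatorial factor equals
\begin{equation*}
\sum_{i,j}\Bigl(-\langle l_2(X_i\otimes g_1),X^j\rangle\,\langle l_2(X_j\otimes g_2),X^i\rangle
+\langle l_2(X_j\otimes g_1),X^i\rangle\,\langle l_2(X_i\otimes g_2),X^j\rangle\Bigr),
\end{equation*}
which vanishes upon relabelling $i\leftrightarrow j$: the Koszul sign coming from permuting the odd kernel factors makes the two sums cancel identically. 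This ``supersymmetric'' cancellation between $\g_X$ and $\g_X^\vee$ running around the loop is exactly what is special about wheels with an \emph{even} number of vertices (it is reused for the elliptic curve in Lemma \ref{lem:one-loop-naive-interaction-harmonic-fields}), and it is the real content of the lemma; no argument that only sees the analytic part of the weight can prove it. (Your subsidiary observations --- reduction to trivalent wheels, lifting to $\H$, and the symmetry of $\omega$ under $z_1\leftrightarrow z_2$ --- are fine, but they are not where the difficulty lies.)
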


\begin{lem}\label{lem:more-vertex-wheels}
Let $\gamma$ be a genus $1$ graph containing a wheel with $n$ vertices, and let $e$ be an edge of $\gamma$ which is
part of the wheel.
Assume
that $n\geq 3$, then we have $$\lim_{\epsilon\rightarrow 0}
W_{\gamma,e}(\mathbb{P}_{\epsilon}^L,\mathbb{K}_\epsilon-\mathbb{K}_0,I_{cl})=\lim_{\epsilon\rightarrow
0}\left(\figbox{0.18}{anomaly}\right)=0.$$
\end{lem}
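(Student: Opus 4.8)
The plan is to follow the same reduction-and-estimation scheme used in the proof of Lemma/Definition \ref{lem:naive quantization}, now keeping careful track of the extra powers of $\epsilon$ produced by the distinguished edge $e$. First I would perform the standard reductions. Attached trees carry no divergence and factor out, so it suffices to treat the wheel itself; and since combining the analytic inputs sitting at a common vertex does not change the convergence behaviour, I may assume the wheel is trivalent with $n$ vertices $z_1,\dots,z_n$ arranged cyclically. Exactly as in Lemma/Definition \ref{lem:naive quantization}, the representation $K_t(\pi(x_1),\pi(x_2))=\sum_{g\in\Gamma}k_t(x_1,g\cdot x_2)$ lets me replace $\Sigma_g$ by the hyperbolic plane $\H$ without affecting the $\epsilon\to 0$ behaviour, the inputs becoming compactly supported forms on $\H$. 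In this normal form the edge $e$, say the one joining $z_n$ to $z_1$, effectively carries the analytic heat kernel $K_\epsilon$ at the fixed scale $\epsilon$ (the $\mathbb K_0$ term contributing only the local counterterm singled out by the renormalization scheme, cf. the remark following Theorem \ref{theorem:one-loop-anomaly}), while the remaining $n-1$ edges carry $P_\epsilon^L=\int_\epsilon^L(Q^{GF}\otimes 1)K_t\,dt$.

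Next I would write the weight as an iterated integral over $\H^n$ against the time variables $t_1,\dots,t_{n-1}\in[\epsilon,L]$ of the propagator edges, insert the leading terms of the asymptotic expansions from Lemma \ref{lem:asymp-propagator} for the $P_\epsilon^L$-edges and from \eqref{eqn:asymp-heat-kernel} for the $K_\epsilon$-edge, and argue via Wick's lemma that the higher terms are strictly more convergent so that only the leading term matters. As in the $n=2$ analysis of Lemma/Definition \ref{lem:naive quantization}, the relations $(dy_k-dy_{k+1})^2=0$ and $(\tfrac{dx_k}{y_k}-\tfrac{dx_{k+1}}{y_{k+1}})^2=0$ kill the diagonal contributions and force each propagator edge to contribute a genuine cross term, i.e. an extra coordinate-difference scalar factor. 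Passing to Riemann normal coordinates $w_k$ centred along the chain $z_k\to z_{k+1}$ and using the estimates \eqref{eqn:change-coordinates-estimate}, each propagator edge is then bounded by $\tfrac{\|w_k\|}{t_k^{2}}e^{-\|w_k\|^2/4t_k}$ against a compactly supported smooth density, while the $K_\epsilon$-edge contributes $\tfrac{1}{\epsilon}e^{-\rho^2(z_n,z_1)/4\epsilon}$ with $\rho(z_n,z_1)^2\approx\|\sum_k w_k\|^2$.

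The decisive step is the rescaling $\xi_k=w_k\,t_k^{-1/2}$ followed by the Gaussian integration in the $\xi$-directions. After rescaling, the propagator edges become $t$-independent Gaussians in $\xi_k$ times $\prod_k t_k^{-1/2}$, and the $K_\epsilon$-edge contributes the Gaussian $e^{-\|\sum_k\xi_k\sqrt{t_k}\|^2/4\epsilon}$. Integrating this last factor against the $\xi$-Gaussians, the planar quadratic form being $I+\epsilon^{-1}ss^{\top}$ with $s=(\sqrt{t_1},\dots,\sqrt{t_{n-1}})$, produces by the matrix determinant lemma a compensating factor $\big(1+\epsilon^{-1}\sum_k t_k\big)^{-1}\sim \epsilon/\sum_k t_k$. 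I would then check that the leftover $t$-integral, schematically $\int_{[\epsilon,L]^{n-1}}\big(\prod_k t_k^{-1/2}\big)\big(\sum_k t_k\big)^{-1}\prod_k dt_k$ together with the overall $\tfrac1\epsilon$ from the heat-kernel normalization, leaves a strictly positive power of $\epsilon$ once the extra coordinate-difference factors forced by the antisymmetry of the propagator $1$-forms are included, so that the whole weight tends to $0$ as $\epsilon\to 0$.

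The main obstacle is precisely this last power count: the concentrated edge $K_\epsilon$ supplies a naive $\epsilon^{-1}$ divergence, and the $\epsilon$ gained from localizing it against the $n-1$ propagators is only marginally sufficient. A blunt estimate keeping one coordinate-difference factor per propagator edge already produces a logarithmic divergence at $n=3$; the borderline count is closed only by the additional factors that the antisymmetry relations force into the integrand, after which the net power of $\epsilon$ is strictly positive. This is exactly where the hypothesis $n\ge 3$ enters, the borderline case $n=2$ genuinely failing this count and being disposed of separately in Lemma \ref{lem:two-vertex-wheels}; the residual non-vanishing one-loop anomaly, eventually identified with $(2-2g)c_1(X)$, arises not from these wheels but from the self-contraction term in \eqref{graph:one-loop anomaly}.
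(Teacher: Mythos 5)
Your preliminary reductions (trees factor out, trivalent wheels suffice, passage from $\Sigma_g$ to $\H$) and your Gaussian computation (the quadratic form $I+\epsilon^{-1}ss^{\top}$ is exactly the paper's matrix $M(t,\epsilon)$ after the rescaling $\xi_k=w_k t_k^{-1/2}$, with the same determinant $1+\epsilon^{-1}\sum_k t_k$) all match the paper. But the core logic of your argument fails. You discard the $\mathbb{K}_0$ term at the outset, calling it ``the local counterterm singled out by the renormalization scheme'' --- it is not a counterterm (the paper in fact proves that the counterterms of this theory vanish identically), and it is half of the quantity the lemma is about: the lemma asserts the vanishing of the \emph{difference} of the $\mathbb{K}_\epsilon$- and $\mathbb{K}_0$-weights, not of either one separately. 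The statement you then set out to prove, namely $W_{\gamma,e}(\mathbb{P}_\epsilon^L,\mathbb{K}_\epsilon,I_{cl})\to 0$, is false: as $\epsilon\to 0$ this weight converges to the collapsed-wheel weight in which the distinguished edge carries $K_0$ (identifying $z_n=z_1$), which is in general nonzero --- the paper's own manipulation of configuration (2) in its $n=3$ analysis exhibits this limit as $\pm\int P_0^L(z_2,z_3)P_0^L(z_3,z_2)f\,\alpha_2\,\alpha_3$. Your own power count confirms that no vanishing can come out of the estimate: the $\epsilon^{-1}$ from the heat-kernel normalization is cancelled \emph{exactly} by the determinant factor $\epsilon/(\epsilon+\sum_k t_k)$, and the additional coordinate-difference factors forced by antisymmetry produce powers of the $t_k$'s (which only improve $t$-integrability), never a power of $\epsilon$. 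So your estimate yields a finite limit, not zero, and the claimed ``strictly positive power of $\epsilon$'' does not exist.

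The missing idea is how these Gaussian estimates are actually to be used. In the paper they serve as a dominating function, uniform in $\epsilon$ and integrable in $(t_1,\dots,t_{n-1})$ on $[0,L]^{n-1}$, so that the dominated convergence theorem lets one pass the limit $\epsilon\to 0$ inside the $t$-integral; for each fixed $t_i>0$ the inner $z$-integral with $K_\epsilon$ converges to the one with $K_0$, whence the two limits in \eqref{eqn:one-loop-anomaly-K_ep-K_0} exist and are \emph{equal}, and their difference --- the quantity in the lemma --- vanishes. Read this way, your $n\geq 4$ estimates are essentially the paper's (the precise form of your inverse-matrix bound is Lemma \ref{lem:estimate-inverse-matrtix}). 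The case $n=3$, however, cannot be repaired by invoking antisymmetry alone, as your own count shows it is logarithmically borderline: the paper needs the separate observations that the weight is nontrivial only when some input is a $0$-form $f$, that the piece with $f(z_2)$ vanishes identically by $(d^*)^2=0$, and that the remaining piece carries the Taylor-remainder factor $f(z_1)-f(z_2)$, which restores integrability --- and even then one concludes by matching the $K_\epsilon$- and $K_0$-limits via dominated convergence, not by extracting a positive power of $\epsilon$.
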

Hence the scale $L$ one-loop obstruction is given by
\begin{equation}\label{eqn:obstruction}
O_1[L]=\sum_{\gamma:tree}\lim_{\epsilon\rightarrow
0}\dfrac{1}{|\text{Aut}(\gamma)|}W_\gamma\Big(\mathbb{P}_\epsilon^L,\figbox{0.18}{one-loop-self}\Big).
\end{equation}
By the fact that $\lim_{L\rightarrow 0}(I_{naive}^{(0)}[L])=I_{cl}$, we have:
\begin{equation}\label{eqn:scale-0-obstruction}
O_1=\lim_{L\rightarrow 0}O_1[L]=\hspace{1mm}\lim_{\epsilon\rightarrow 0}\left(\figbox{0.18}{one-loop-self} \right).
\end{equation}

The obstruction $O_1$ contains an analytic part and a combinatorial part. It is clear that the analytic part is given by the limit of the super trace
of the heat kernel along the diagonal in $\Sigma_g\times \Sigma_g$:
$$
\lim_{\epsilon\rightarrow 0}Str(K_\epsilon(z,z))=(2-2g)\text{dvol}_{\Sigma_g},
$$ where
$\text{dvol}_{\Sigma_g}$ is the normalized volume form on $\Sigma_g$ with respect to the constant curvature metric, and the identity follows from local
index theorem. Similar to the holomorphic Chern-Simons theory \cite{Kevin-CS}, the combinatorial factor
of (\ref{eqn:scale-0-obstruction}) gives the first Chern class of the target manifold $X$. Thus, we can conclude this section by:
\begin{thm}\label{thm:obstruction-quantization}
The obstruction to quantizing B-twisted $\sigma$-model is given by $$[(2-2g)\text{dvol}_{\Sigma_g}]\otimes c_1(X)=c_1(\Sigma_g)\otimes
c_1(X)\in H^2_{dR}(\Sigma_g)\otimes
H^1(X,\Omega_{cl}^1)\subset H^1(\widetilde{Ob}),$$ and the topological B-twisted $\sigma$-model can be quantized (on any Riemann
surface $\Sigma_g$) if and only if the target $X$ is Calabi-Yau.
\end{thm}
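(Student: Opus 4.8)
The plan is to read off the one-loop anomaly $O_1$ as a single explicit class in $H^1(\widetilde{Ob})$ and to match it with $c_1(\Sigma_g)\otimes c_1(X)$. The key structural input is that the $\C^\times$-symmetry of $I_{cl}/\hbar$ (Proposition~\ref{prop:invariant-actions}) forces any symmetric quantization to be supported in $\hbar$-degrees $0$ and $1$; there are therefore no higher-loop anomalies, and the quantum master equation is solvable to all orders exactly when this one-loop class vanishes. So the entire theorem reduces to computing one cohomology class, which I would extract from the general one-loop obstruction formula of Theorem~\ref{theorem:one-loop-anomaly}.

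First I would dispose of the first of the two graphical terms in that formula, which ranges over wheels with at least two internal edges. Lemma~\ref{lem:two-vertex-wheels} kills the two-vertex wheels and Lemma~\ref{lem:more-vertex-wheels} kills the $n\geq 3$ wheels in the $\epsilon\to 0$ limit. Only the self-loop term then survives, giving the scale-$L$ obstruction \eqref{eqn:obstruction}; letting $L\to 0$ and using $\lim_{L\to 0}I_{naive}^{(0)}[L]=I_{cl}$ produces the clean representative \eqref{eqn:scale-0-obstruction}, a single vertex carrying a self-contraction decorated by the coincidence-limit heat kernel on $\Sigma_g\times\Sigma_g$.

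Next I would factor this representative along the splitting $\mathbb{K}_t=K_t\otimes(\mathrm{Id}_{\g_X}+\mathrm{Id}_{\g_X^\vee})$ into an analytic factor on $\Sigma_g$ and a combinatorial factor on $X$. The analytic factor is the diagonal supertrace $\lim_{\epsilon\to 0}\mathrm{Str}\,K_\epsilon(z,z)$; by the short-time asymptotics \eqref{eqn:asymp-heat-kernel} and the two-dimensional local index theorem this equals $(2-2g)\,\mathrm{dvol}_{\Sigma_g}$, whose class is $c_1(\Sigma_g)$ by Gauss--Bonnet. The combinatorial factor arises from contracting $\mathrm{Id}_{\g_X}+\mathrm{Id}_{\g_X^\vee}$ against the quadratic part of $I_{cl}$; following the holomorphic Chern--Simons computation of \cite{Kevin-CS}, this trace over the jet/$L_\infty$ data of $\g_X$ reproduces $c_1(X)\in H^1(X,\Omega_{cl}^1)$. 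Combining, $O_1=[(2-2g)\mathrm{dvol}_{\Sigma_g}]\otimes c_1(X)$, which under the decomposition of $H^1(\widetilde{Ob})$ established above lies in the summand $H^2_{dR}(\Sigma_g)\otimes H^1(X,\Omega_{cl}^1)$, giving the asserted class.

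Finally the equivalence follows by unwinding $O_1=0$. For any fixed $g\neq 1$ the scalar $2-2g$ is nonzero, so vanishing of $O_1$ forces $c_1(X)=0$; requiring quantizability on every $\Sigma_g$ thus forces $X$ to be Calabi--Yau, while conversely $c_1(X)=0$ makes $O_1$ vanish for all $g$, and the one-loop reduction above upgrades this to an honest quantization. I expect the main obstacle to be the combinatorial identification: extracting $c_1(X)$ from the supertrace of the contraction with $\mathrm{Id}_{\g_X}+\mathrm{Id}_{\g_X^\vee}$ requires tracking how the curved $L_\infty$-products $l_k$ encode the curvature of $X$ through the jet bundle and matching the resulting class to the factor $H^1(X,\Omega_{cl}^1)$ of the obstruction group; by comparison the analytic Gauss--Bonnet/index input is routine.
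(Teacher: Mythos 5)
Your proposal is correct and follows essentially the same route as the paper: the $\C^\times$-symmetry reduction to one loop, the general anomaly formula of Theorem~\ref{theorem:one-loop-anomaly}, the vanishing Lemmas~\ref{lem:two-vertex-wheels} and~\ref{lem:more-vertex-wheels} isolating the self-loop term, the local index theorem giving $(2-2g)\mathrm{dvol}_{\Sigma_g}$ for the analytic factor, and the holomorphic Chern--Simons identification of the combinatorial factor with $c_1(X)$. The only imprecision is your phrase that the combinatorial factor comes from contracting against ``the quadratic part of $I_{cl}$'' --- it is the contraction $\Delta L$ against all the vertices $\tilde l_k$ --- but this does not affect the argument.
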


\subsubsection{One-loop quantum correction}\label{subsection:one-loop-correction} Now let us assume that $X$ is a Calabi-Yau
manifold  with a holomorphic volume form $\Omega_X$. By Theorem \ref{thm:obstruction-quantization}, the quantization of our
topological B-twisted $\sigma$-model is unobstructed. This
means that there exists some quantum correction $I_{qc}[L]$ to the naive quantization $I_{naive}[L]$ such that $I_{naive}[L]+\hbar I_{qc}[L]$ solves the quantum
master equation. In this section we give an explicit description of the one-loop quantum correction which will be used in the next section to compute the
quantum correlation functions.

We first have the following lemma:
\begin{lem}\label{lem:obstruction-local-effective}
Let $I_{qc}\in\mathcal{O}_{loc}(\mathcal{E})$ be a local functional on $\mathcal{E}$ satisfying the equation
\begin{equation}\label{eqn:local-version-eqn-one-loop-correction}
QI_{qc}+\{I_{cl},I_{qc}\}=O_1,
\end{equation}
where $O_1$ is the one-loop anomaly described in section \ref{subsection:genuine-quantization}.
 Then the effective functionals $$I_{qc}[L]:=\lim_{\epsilon\rightarrow
0}\sum_{\gamma\in\text{trees}, v\in V(\gamma)}W_{\gamma,v}(P_\epsilon^L, I_{cl},I_{qc})$$ satisfy the equation
$$
QI_{qc}[L]+\{I^{(0)}_{naive}[L],I_{qc}[L]\}_L=O_1[L],
$$
where $W_{\gamma,v}(P_\epsilon^L, I_{cl},I_{qc})$ is the Feynman weight associated to the graph $\gamma$ with the vertex $v$
labeled by $I_{qc}$ and all other vertices labeled by $I_{cl}$. In particular, $I_{naive}[L]+\hbar I_{qc}[L]$ solves the quantum
master equation.
\end{lem}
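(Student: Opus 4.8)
The plan is to deduce the scale-$L$ equation from the scale-$0$ equation (\ref{eqn:local-version-eqn-one-loop-correction}) by transporting it along the tree-level renormalization group flow, and then to read off the quantum master equation as a corollary. The anchoring observation is that both $I_{qc}[L]$ and the effective anomaly $O_1[L]$ are instances of one and the same operation: the \emph{linearized} tree-level RG flow in the background $I_{cl}$. Writing $W^T$ for the tree-level part of $W(\mathbb{P}_\epsilon^L,-)$, for any local functional $J\in\Ol^+(\E)$ one has
$$
   \lim_{\epsilon\to 0}\sum_{\gamma\in\text{trees},\,v\in V(\gamma)}W_{\gamma,v}(P_\epsilon^L,I_{cl},J)=\frac{d}{ds}\bigg|_{s=0}\lim_{\epsilon\to 0}W^T(P_\epsilon^L,I_{cl}+sJ)=:J[L],
$$
the $\epsilon\to 0$ limit existing because trees carry no divergence. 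By definition $I_{qc}[L]=J[L]$ for $J=I_{qc}$, while by (\ref{eqn:obstruction}) together with (\ref{eqn:scale-0-obstruction}) the effective anomaly is $O_1[L]=J[L]$ for $J=O_1$, the local functional $O_1$ being the contracted one-loop vertex supplied by Theorem \ref{theorem:one-loop-anomaly}.

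First I would establish that $J\mapsto J[L]$ is a cochain map
$$
   \bracket{\Ol^+(\E),\,Q+\fbracket{I_{cl},-}}\longrightarrow\bracket{\OO^+(\E),\,Q+\fbracket{I^{(0)}_{naive}[L],-}_L},
$$
that is, $(QJ+\fbracket{I_{cl},J})[L]=Q\,J[L]+\fbracket{I^{(0)}_{naive}[L],J[L]}_L$. This is the infinitesimal form of the statement that the tree-level RG flow carries solutions of the classical master equation to solutions of the classical master equation \cite{Kevin-book,Kevin-CS}. Concretely, $I_{cl}$ solves the scale-$0$ classical master equation (Lemma \ref{lem:classical-master-equation}) and its tree-level flow $I^{(0)}_{naive}[L]$ solves the scale-$L$ version $Q I^{(0)}_{naive}[L]+\tfrac12\fbracket{I^{(0)}_{naive}[L],I^{(0)}_{naive}[L]}_L+F_{l_1}=0$; differentiating the covariance of the operator $\Xi(I):=QI+\tfrac12\fbracket{I,I}+F_{l_1}$ under $W^T$ along the family $I_{cl}+sJ$ and extracting the linear term in $s$ produces exactly the displayed cochain identity, the scale-$0$ bracket becoming $\fbracket{-,-}_L$ and $I_{cl}$ becoming $I^{(0)}_{naive}[L]$.

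With the cochain map in hand, applying it to the hypothesis (\ref{eqn:local-version-eqn-one-loop-correction}) gives at once
$$
   Q\,I_{qc}[L]+\fbracket{I^{(0)}_{naive}[L],I_{qc}[L]}_L=\bracket{QI_{qc}+\fbracket{I_{cl},I_{qc}}}[L]=(O_1)[L]=O_1[L],
$$
which is the asserted equation. For the final clause I would set $I[L]:=I_{naive}[L]+\hbar I_{qc}[L]$ and note that, by the anchoring observation, $I[L]=W(\mathbb{P}_\epsilon^L,I_{cl}+\hbar I_{qc})\ (\mathrm{mod}\ \hbar^2)$ in the $\epsilon\to 0$ limit, so $I[L]$ satisfies the renormalization group equation to the relevant order. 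Since $O_1[L]$ is by construction the order-$\hbar$ quantum-master-equation defect of the naive quantization, the equation just derived shows that the contribution of $\hbar I_{qc}[L]$ cancels this defect up to the admissible constant $R\in\A_X^\sharp[[\hbar]]$ of Definition \ref{defn-quantization}; by Lemma \ref{QME-RG} the quantum master equation then propagates to all scales.

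The step I expect to be the main obstacle is the cochain-map property, and within it the bookkeeping of the curving. Convergence of the $\epsilon\to 0$ limits is not the issue, since the relevant graphs are genuine trees (the loop in $O_1$ has already been contracted to the local vertex $O_1$). The delicate point is that the genuine quantum master equation is phrased with the curved differential $Q_L$ and the term $F_{l_1}/\hbar$, whereas the transported equation above carries the naive $Q$; one must verify that the discrepancy $(Q_L-Q)I_{qc}[L]=l_1^2\int_0^L Q^{GF}e^{-tH}dt\,(I_{qc}[L])$, together with the $F_{l_1}$-contribution, is precisely what is absorbed into the definition of $O_1[L]$, so that the order-$\hbar$ cancellation is complete modulo $R$. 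This is the curving analogue of the anomaly analysis of \cite{Kevin-CS}, and it is exactly here that the compatibility packaged in Lemma \ref{QME-RG} is indispensable.
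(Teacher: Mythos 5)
Your skeleton is the right one, and it is worth saying that the paper's own proof of this lemma is a single sentence deferring to a Feynman-graph computation in \cite{Kevin-book}; your argument is a legitimate unpacking of exactly the computation being cited. The two anchoring observations are correct: $I_{qc}[L]$ and $O_1[L]$ are both linearized tree-level flows (for $O_1[L]$ this is precisely equation (\ref{eqn:obstruction})), and the lemma amounts to the statement that the linearized tree flow intertwines the classical BRST operators at scale $0$ and scale $L$, obtained by differentiating an RG-covariance statement along $I_{cl}+sJ$ and using the classical master equation to kill the second-order term.

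The concrete flaw is in \emph{which} covariance you invoke. The operator $\Xi(I)=QI+\tfrac12\{I,I\}+F_{l_1}$ with the flat $Q$ is \emph{not} covariant under tree-level flow: the tree-level shadow of Lemma \ref{QME-RG} is covariance of the curved operators $Q_\epsilon(-)+\tfrac12\{-,-\}_\epsilon+F_{l_1}$ and $Q_L(-)+\tfrac12\{-,-\}_L+F_{l_1}$, while with the flat $Q$ one picks up correction terms given by trees carrying an $F_{l_1}$-insertion together with a $J$-insertion (equivalently: the linearized tree flow does not fix $F_{l_1}$, and since $QI_{cl}+\tfrac12\{I_{cl},I_{cl}\}=-F_{l_1}\neq 0$ the second-order term in your differentiation does not vanish). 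Consequently both your scale-$L$ classical master equation for $I^{(0)}_{naive}[L]$ and your cochain identity come out with the curved differential in place of $Q$:
$$
Q_L\,J[L]+\{I^{(0)}_{naive}[L],J[L]\}_L=\bracket{QJ+\{I_{cl},J\}}[L].
$$
This correction works in your favor rather than against you: the order-$\hbar$ part of the quantum master equation (\ref{eqn:qunatum-master-equation}) is itself phrased with $Q_L$, so with the curved covariance the deduction of the QME for $I_{naive}[L]+\hbar I_{qc}[L]$ is immediate, and the discrepancy $(Q_L-Q)I_{qc}[L]$ you worry about in your final paragraph never arises — there is nothing to ``absorb into $O_1[L]$''. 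The only residual mismatch is with the lemma's literal wording, which writes $Q$ where the argument (and the paper's own application of the lemma) produces $Q_L$; that imprecision is the paper's, not yours, and is harmless for the way the lemma is used.
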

\begin{proof}
The proof of the lemma is a simple Feynman graph calculation. See \cite{Kevin-book}.
\end{proof}

The objective is to find a local functional $I_{qc}$ satisfying equation (\ref{eqn:local-version-eqn-one-loop-correction}). Let
$\Delta$ be the operator on $\text{Sym}^*(\mathfrak{g}_X)\otimes\text{Sym}^*(\mathfrak{g}_X[1]^\vee)$ given by  contraction
with the identity in
$\text{End}_{\mathcal{A}_X}(\g_X\oplus\g_X^\vee)$, and let $L$ denote the functional on
$\mathfrak{g}_X[1]\oplus\mathfrak{g}_X^\vee$ given by $$L(\alpha+\beta):=\dfrac{1}{(n+1)!}\sum_{n\geqslant 0}\langle
l_n(\alpha^{\otimes n}),\beta\rangle,\hspace{3mm} \alpha\in\mathfrak{g}_X[1],\beta\in\mathfrak{g}_X^\vee.$$ From the graphical expression of $O_1$
in equation (\ref{eqn:scale-0-obstruction}), it is not difficult to see that $O_1$ is only a functional on
$C^\infty(\Sigma_g)\otimes\mathfrak{g}_X[1]$ of the following form: $$(O_1)_k((f_1\otimes
g_1)\otimes\cdots\otimes (f_k\otimes g_k))=(2-2g)(\Delta L)_k(g_1\otimes\cdots\otimes g_k) \int_{\Sigma_g}f_1\cdots f_k\ \text{dvol}_{\Sigma_g},
$$
where $(O_1)_k$ denotes the k-component of $O_1$ in $\OO^{(k)}(\E)$, and similarly for $(\Delta L)_k$.  We are looking for an $I_{qc}$ which is only a functional on $C^\infty(\Sigma_g)\otimes\g_X[1]$ of the form
\begin{align}\label{correction-form}(I_{qc})_k((f_1\otimes
g_1)\otimes\cdots\otimes (f_k\otimes g_k))=B_k(g_1\otimes\cdots\otimes g_k)\int_{\Sigma_g}f_1\cdots f_k\ \text{dvol}_{\Sigma_g},
\end{align}
where  $B_k\in \text{Sym}^k(\mathfrak{g}_X[1]^\vee)$. With this ansatz, we have $QI_{qc}=l_1 I_{qc}$ by type reason and
equation (\ref{eqn:local-version-eqn-one-loop-correction}) is reduced to
\begin{equation}\label{eqn:quantum-correction}
l_1 I_{qc}+\{I_{cl}, I_{qc}\}=O_1.
\end{equation}
Let $B=\sum_{k\geq 0} B_k$, it is clear that
$$
\left(l_1I_{qc}+\{I_{cl},I_{qc}\}\right)((f_1\otimes g_1)\otimes\cdots\otimes (f_k\otimes g_k))=(d_{CE}B)(g_1\otimes\cdots\otimes
g_k) \int_{\Sigma_g}f_1\cdots f_k\ \text{dvol}_{\Sigma_g}.
$$
Equation (\ref{eqn:quantum-correction}) is then reduced to
\begin{equation*}
d_{CE}B=(2-2g)\Delta L
\end{equation*}
which, since the Chevalley-Eilenberg differential $d_{CE}$ is the same as the bracket
$\{L,-\}$, can be further reduced to
\begin{equation}\label{eqn:one-loop-correction-equation}
(2-2g)\Delta L-\{L, B\}=0.
\end{equation}
 Since we only need to solve the equation modulo constant functionals, equation (\ref{eqn:one-loop-correction-equation}) is
equivalent to the vanishing of the operator $\{(2-2g)\Delta L-\{L,B\},-\}$.

\begin{lem}\label{lem:one-loop-correction-brackets}
We have the following two identities for any $B$:
\begin{equation*}
 \begin{aligned}
 \{\{L,B\},-\}&=[\{L,-\},\{B,-\}],\\
\{\Delta L,-\}&=[\Delta,\{L,-\}].
 \end{aligned}
\end{equation*}

\end{lem}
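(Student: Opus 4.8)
The plan is to recognize that, despite appearances, this lemma is a purely algebraic statement about the finite-rank (over $\A_X^\sharp$) odd-symplectic space $\g_X[1]\oplus\g_X^\vee$, with no analytic or regularization content. The bracket $\{-,-\}$ is the constant-coefficient odd Poisson bracket attached to the symplectic pairing, and $\Delta$ is the associated BV Laplacian, i.e. contraction with the Poisson bivector $\text{Id}_{\g_X}+\text{Id}_{\g_X^\vee}$. Together with the commutative product, these make the functionals $\Sym^*(\g_X)\otimes\Sym^*(\g_X[1]^\vee)$ into a \BV algebra over the ground ring $\A_X^\sharp$, and both asserted equalities are instances of the standard structural identities of such an algebra. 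Thus the whole lemma reduces to graded linear algebra, and I would organize the proof around two standard facts: the graded Jacobi identity, and the fact that $\Delta$ generates and differentiates the bracket.

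For the first identity I would invoke the graded Jacobi identity for $\{-,-\}$, which holds because the Poisson structure is constant in the basis $\{X_i\}$ of $\g_X[1]$ and its dual basis $\{X^i\}$ of $\g_X^\vee$. In its derivation form it reads $\{L,\{B,\Phi\}\}=\{\{L,B\},\Phi\}+(-1)^{(|L|+1)(|B|+1)}\{B,\{L,\Phi\}\}$, and rearranging this as an identity of operators acting on $\Phi$ gives precisely $\{\{L,B\},-\}=\{L,-\}\circ\{B,-\}-(-1)^{(|L|+1)(|B|+1)}\{B,-\}\circ\{L,-\}=[\{L,-\},\{B,-\}]$. The point worth checking is that the Koszul sign built into the graded commutator of the degree-$(|L|+1)$ and degree-$(|B|+1)$ operators is exactly the sign produced by Jacobi, so no correction is needed.

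For the second identity I would use the defining property of $\Delta$ as the generator of the bracket, namely that $\{a,b\}$ measures the failure of $\Delta$ to be a derivation of the product, together with $\Delta^2=0$. Applying $\Delta$ to the relation $\{a,b\}=\Delta(ab)-(\Delta a)b-(-1)^{|a|}a(\Delta b)$ and re-expanding $\Delta((\Delta a)b)$ and $\Delta(a\,\Delta b)$ by the same relation, the spurious $(\Delta a)(\Delta b)$ contributions cancel on account of $\Delta^2=0$, leaving $\Delta$ acting as a signed derivation of the bracket, $\Delta\{a,b\}=\pm\{\Delta a,b\}\pm\{a,\Delta b\}$. Specializing $a=L$, letting $b=\Phi$ range over the functionals, and rearranging the two surviving terms then yields $[\Delta,\{L,-\}]=\{\Delta L,-\}$.

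I expect the main obstacle to be purely sign bookkeeping. Since both $\{-,-\}$ and $\Delta$ raise degree by one and every tensor factor is $\Z$-graded, the Koszul signs in $[\{L,-\},\{B,-\}]$ and $[\Delta,\{L,-\}]$ must be matched carefully against those produced by Jacobi and by the second-order cancellation; a stray sign would turn a commutator into an anticommutator. The cleanest way to eliminate this risk is to carry out both computations once in the coordinates $\{X_i\},\{X^i\}$, where $\Delta$ is the constant second-order operator $\sum_i\partial_{X_i}\partial_{X^i}$ (up to sign) and $\{-,-\}$ is the associated first-order bidifferential operator; each identity then becomes a short, explicit comparison of differential operators whose signs can be read off directly.
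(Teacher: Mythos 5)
Your proposal is correct and takes essentially the same route as the paper, which likewise dispatches the first identity by the graded Jacobi identity and the second by $\Delta^2=0$, stated there in the operator form $\bbracket{\Delta,\bbracket{\Delta,L}}=0$ --- exactly the content of your ``$\Delta$ is a derivation of the bracket'' computation once one writes $\{L,-\}=[\Delta,L]-(\Delta L)\cdot$. The sign bookkeeping you flag is a genuine but harmless issue (the paper ignores it too, and any residual sign is absorbed into the conventions when solving $(2-2g)\Delta L-\{L,B\}=0$).
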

\begin{proof}
 The first identity follows directly from the Jacobi identity. The second identity follows from the identity
 $$
   \bbracket{\Delta, \bbracket{\Delta, L}}=0.
 $$
\end{proof}
By Lemma \ref{lem:one-loop-correction-brackets}, to solve equation (\ref{eqn:one-loop-correction-equation}), we only need to find
$B\in C^*(\mathfrak{g}_X)$ such that the operator $\Delta+\{B,-\}$ commutes with the Chevalley-Eilenberg differential
$d_{CE}=\fbracket{L,-}$. The following technical
proposition transfers the problem to a  geometric context:
\begin{prop}\label{prop:iso-CE-jet}\cite{Kevin-CS}
There is a natural isomorphism of cochain complexes of $\mathcal{A}_X$-modules
\begin{equation}\label{eqn:iso-CE-Jet}
\tilde{K}:\left(C^*(\mathfrak{g}_X,\text{Sym}^*\mathfrak{g}_X),d_{CE}\right)\overset{\sim}{\rightarrow}\left(\mathcal{A}_X\otimes_{
\mathcal{O}_X}\text{Jet}^{hol}_X(\wedge^*T_X),d_{D_X}\right),
\end{equation}
where $d_{CE}$ on the left hand side is the Chevalley-Eilenberg differential of the $\mathfrak{g}_X$-module
$\text{Sym}^*\mathfrak{g}_X$, and $d_{D_X}$ is the differential of the de Rham complex of the holomorphic jet bundle.
\end{prop}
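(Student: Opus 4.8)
The plan is to obtain $\tilde K$ as the coefficient-bundle version of the trivial-coefficient identification \eqref{eqn:identification-CE-jet}: the same fiberwise-holomorphic diffeomorphism $\rho$ of Section~\ref{section-classical} that was used to \emph{define} $d_{CE}$ should transport the $D_X^{hol}$-module differential on jets into the \CE differential with values in $\Sym^*\g_X$. So rather than constructing something genuinely new, I would argue that the existing construction is natural in the coefficient bundle, and then check compatibility of the two differentials.

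First I would line up the underlying graded $\A_X$-modules. Since $\g_X=\A_X^\sharp\otimes_{\OO_X}T_X[-1]$ is concentrated in degree $1$, the Koszul sign rule gives $\Sym^k\g_X\iso\bracket{\wedge^k\g_X[1]}[-k]=\A_X^\sharp\otimes_{\OO_X}\wedge^kT_X[-k]$. Hence the $\g_X$-module $\Sym^*\g_X$ is precisely the induced module $\bracket{\wedge^*T_X}_{\dbar}$, whose \CE complex is by definition $C^*(\g_X,\Sym^*\g_X)=\A_X^\sharp\otimes_{\OO_X}\Jet_X^{hol}(\wedge^*T_X)$. This identifies the two sides as graded $\A_X$-modules; what remains is to match the differentials.

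Next I would build the map. For any holomorphic bundle $E$ the diffeomorphism $\rho$ induces, by applying the fiberwise Taylor expansion of $\rho_x(-)$ to holomorphic jets, an $\A_X^\sharp$-linear isomorphism $\rho_E^*\colon\A_X^\sharp\otimes_{\OO_X}\Jet_X^{hol}(E)\overset{\sim}{\to}C^*(\g_X,E_{\dbar})$, exactly as in the case $E=\OO_X$ treated in \eqref{eqn:identification-CE-jet}. Since $\rho_E^*$ is natural in $E$ and intertwines the symmetric and exterior algebra structures, taking $E=\wedge^*T_X$ produces a candidate $\tilde K:=(\rho_E^*)^{-1}$ that respects the module actions.

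The heart of the argument -- and the step I expect to be the main obstacle -- is to verify that $\rho_E^*$ intertwines the de Rham differential $d_{D_X}$ of the canonical (Grothendieck) flat connection on $\Jet_X^{hol}(E)$ with the \CE differential $d_{CE}$ of $\Sym^*\g_X$. On the algebra of functions this is the defining relation $d_{CE}=\rho^*\circ d_{D_X}\circ\rho^{*-1}$; the real content is that it persists on the module, i.e. that the $\g_X$-action encoding $d_{CE}$ is exactly the transport under $\rho_E^*$ of the $D_X^{hol}$-action on jets. The difficulty is the bookkeeping: the non-holomorphic dependence of $\rho_x$ on the base point $x$ is what manufactures the higher brackets $l_k$ of $\g_X$, and one must check that the Taylor coefficients of $\rho$ assemble the flat connections on the two sides into the same operator with coefficients in $E$. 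This is Kapranov's computation \cite{Kapranov} for the shifted tangent $L_\infty$-algebra, now carried along the module $E=\wedge^*T_X$ as in \cite{Kevin-CS}. Once this compatibility is established, $\tilde K$ is an isomorphism of cochain complexes of $\A_X$-modules, as claimed.
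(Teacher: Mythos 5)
Your plan is the right circle of ideas, and the map you describe actually coincides with the paper's $\tilde K$: unwinding \eqref{eqn:definition-T} and \eqref{compare-coordinate}, the paper's $K$ sends $v\in T_X$ to the pushforward of the constant vector field $v$ under the fiberwise map $\rho_x$, which is precisely your ``transport of coefficients along $\rho$''. But as a proof your proposal has a genuine gap: the statement you yourself call the heart of the argument --- that the transported map intertwines $d_{D_X}$ with the \CE differential of the naturally induced $\g_X$-module structure on $\Sym^*\g_X$ --- is never verified; it is deferred to \cite{Kapranov} and \cite{Kevin-CS}. That compatibility \emph{is} the proposition: matching the underlying graded modules is routine, and since the proposition is itself quoted from \cite{Kevin-CS}, citing that reference for the key step reduces the attempt to a restatement. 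There is also a smaller unaddressed point in your construction of $\rho_E^*$: pulling back a jet of a section of $E$ along $\rho_x$ produces a formal section of $\rho_x^*E$, not an element of $\widehat{\Sym}(T_x^\vee X)\otimes E_x$; for $E\neq\OO_X$ one must additionally choose a formal trivialization of $\rho_x^*E$ (for $E=\wedge^*T_X$, the one induced by the fiberwise derivative of $\rho_x$), and this choice is exactly where the module structure enters, so your appeal to ``naturality in $E$'' is doing real, unproven work.

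The reason the gap matters --- and the actual content of the paper's appendix --- is that the verification can be carried out \emph{formally}, with none of the Taylor-coefficient bookkeeping of the $l_k$'s that you anticipate as ``the main obstacle''. The paper argues in two dual steps. First (Proposition \ref{proposition:T-dual-to-jet}), for the coadjoint module $\g_X^\vee[-1]$: the de Rham differential $d_{dR}\colon C^*(\g_X)\to C^*(\g_X)\otimes\g_X^\vee[-1]$ is surjective and intertwines the \CE differentials by the very definition of that module structure, while the internal de Rham differential $\pa_{dR}$ on jets commutes with $d_{D_X}$; a diagram chase starting from the defining relation $d_{CE}=\rho^*\circ d_{D_X}\circ\rho^{*-1}$ on functions then forces $\tilde T$, the extension of $\pa_{dR}\circ T$ from \eqref{eqn:splitting-T}, to be a cochain map. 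Second (Proposition \ref{proposition:T_X-to-jet}), the tangent module is handled by pure duality: $K$ is \emph{defined} by pairing against $\pa_{dR}\circ T$ as in \eqref{eqn:definition-T} and \eqref{compare-coordinate}, and since $d_{D_X}$ is a derivation with respect to the jet pairing, the intertwining for $\tilde K$ follows from that for $\tilde T$ in a few lines. Wedge powers then give \eqref{eqn:iso-CE-Jet}. If you replace your citation of the key compatibility with this two-step duality argument, your outline closes into a complete proof; as written, it assumes precisely what is to be shown.
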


The explicit formula of the above isomorphism is given in Appendix \ref{appendix:L_infty}.

There is a natural second order differential operator on the right hand side of equation (\ref{eqn:iso-CE-Jet}) which commutes
with the differential $d_{D_X}$: let $\Omega_X$ be a holomorphic volume form on $X$ which induces an isomorphism between
holomorphic polyvector fields and holomorphic differential forms via the contraction map:
\begin{equation*}
 \begin{aligned}
 \wedge^* T_X& \overset{\sim}{\rightarrow} \Omega_X^*\\
  \alpha&\mapsto\alpha\lrcorner\ \Omega_X.
 \end{aligned}
\end{equation*}

This isomorphism transfers the holomorphic de Rham differential $\pa$ on $\Omega_X^*$ to an operator on polyvector fields:
 $$\pa_{\Omega_X}:\Gamma(\wedge^*T_X)\rightarrow
\Gamma(\wedge^{*-1}T_X),$$
which naturally induces a second order operator (denoted by the same symbol)
$$
\partial_{\Omega_X}: \mathcal{A}_X\otimes_{\mathcal{O}_X}\text{Jet}_X^{hol}(\wedge^{*}T_X)\to \mathcal{A}_X\otimes_{\mathcal{O}_X}\text{Jet}_X^{hol}(\wedge^{*-1}T_X),
$$
that  commutes with
$d_{D_X}$.

To solve equation (\ref{eqn:one-loop-correction-equation}), we need to transfer the operator $\Delta$ to the de Rham complex of
jet bundle in (\ref{eqn:iso-CE-Jet}). For simplicity, we still denote this operator by $\Delta$.
\begin{claim}
The two second order differential operators $\Delta$ and $\partial_{\Omega_X}$ on
$\mathcal{A}_X\otimes_{\mathcal{O}_X}\text{Jet}_X^{hol}(\wedge^*T_X)$ have the same symbol.
\end{claim}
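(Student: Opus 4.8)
The plan is to prove not that the two operators are equal — they are not, and their difference is exactly the order-one data that the quantum correction $B$ is designed to absorb — but that their \emph{principal symbols} agree, and for this I would compare the graded-commutative brackets that each operator generates. Recall that an operator $D$ of order $\leq 2$ on a graded-commutative algebra determines a bracket
\begin{equation*}
\{a,b\}_D := D(ab)-(Da)b-(-1)^{|a|}a(Db),
\end{equation*}
where I have used $D(1)=0$, valid for both $\Delta$ and $\partial_{\Omega_X}$ since each strictly lowers the $\wedge^*T_X$-degree. The operation $\{-,-\}_D$ is a biderivation, and it vanishes identically precisely when $D$ has order $\leq 1$; consequently two order-$\leq 2$ operators have the same principal symbol if and only if the brackets they generate coincide. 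Thus the Claim is equivalent to the identity $\{-,-\}_\Delta=\{-,-\}_{\partial_{\Omega_X}}$, and since both sides are biderivations over $\mathcal{A}_X$ it suffices to check this on a set of algebra generators of $\mathcal{A}_X\otimes_{\mathcal{O}_X}\text{Jet}^{hol}_X(\wedge^* T_X)$.

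Over $\mathcal{A}_X$ this algebra is generated by two families: the formal jet coordinates coming from $\text{Jet}^{hol}_X(\mathcal{O}_X)=\widehat{\Sym}_{\mathcal{O}_X}(T_X^\vee)$, which I denote $\{y^i\}$, and the polyvector generators $\{\partial_{z^i}\}$ spanning $T_X\subset\wedge^* T_X$. I would then evaluate both brackets on the three types of pairs of generators. For $\partial_{\Omega_X}$, the standard divergence operator, which is well known to generate the Schouten--Nijenhuis bracket, one finds $\{\partial_{z^i},\partial_{z^j}\}_{\partial_{\Omega_X}}=[\partial_{z^i},\partial_{z^j}]=0$, $\{y^i,y^j\}_{\partial_{\Omega_X}}=0$, and $\{\partial_{z^i},y^j\}_{\partial_{\Omega_X}}=\iota_{dy^j}\partial_{z^i}=\delta^j_i$; crucially, the dependence on $\Omega_X=\rho\,dz^1\wedge\cdots\wedge dz^n$ enters only through the first-order piece $\sum_i\partial_{z^i}(\log\rho)\,\iota_{dz^i}$ of $\partial_{\Omega_X}$, which contributes nothing to the bracket. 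For $\Delta$, its definition as contraction with the identity $\sum_i(X_i\otimes X^i+X^i\otimes X_i)\in\End_{\mathcal{A}_X}(\g_X\oplus\g_X^\vee)$ means that on $C^*(\g_X,\Sym^*\g_X)$ it is the operator $\sum_i\frac{\partial}{\partial\widetilde{\partial_{z^i}}}\frac{\partial}{\partial\widetilde{dz^i}}$, removing one polyvector generator and one \CE generator with matched index; under the identifications $\widetilde{\partial_{z^i}}\leftrightarrow\partial_{z^i}$ and $\widetilde{dz^i}\leftrightarrow y^i$ this gives exactly $\{\partial_{z^i},y^j\}_\Delta=\delta^j_i$ and zero on the other two types of pairs. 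The brackets therefore agree on generators, hence everywhere.

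The one point that needs genuine care — and the step I expect to be the main obstacle — is that the operator called $\Delta$ on the right-hand side of \eqref{eqn:iso-CE-Jet} is the transport of the fiberwise contraction along the isomorphism $\tilde K$ of Proposition \ref{prop:iso-CE-jet}, and $\tilde K$ is not the naive identification: it is built from the smooth splitting $\rho$ (the K\"ahler normal coordinates) and carries curvature and connection corrections, so a priori the transported $\Delta$ could pick up extra lower-order terms. I would dispose of this by observing that $\tilde K$ is compatible with the jet filtration $F^\bullet\text{Jet}^{hol}_X$ and induces the identity on the associated graded $\bigoplus_k\Sym^k(T_X^\vee)$; since the principal symbol of a second-order operator depends only on the associated graded of its source and target, the symbol of the transported $\Delta$ equals that of the tautological contraction, and the generator computation above may be carried out in $\mathrm{gr}$, where $\tilde K$ is literally the identity. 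Combined with the matching of the generated brackets, this shows that $\Delta$ and $\partial_{\Omega_X}$ have the same symbol.
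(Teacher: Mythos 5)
Your overall framework is sound: Koszul's criterion (two order-$\leq 2$ operators annihilating $1$ have the same principal symbol if and only if the brackets $\{a,b\}_D = D(ab)-(Da)b-(-1)^{|a|}a(Db)$ they generate coincide), the reduction to generators, and the computation of the bracket of $\partial_{\Omega_X}$ are all correct, and you correctly identify the crux: the identification \eqref{eqn:iso-CE-Jet} is $\tilde K$, not the naive one. The gap is in how you dispose of that crux. The principle you invoke --- that the principal symbol of a second-order operator ``depends only on the associated graded,'' so that conjugation by a filtration-preserving automorphism which is the identity on $\mathrm{gr}$ may be ignored --- is false. Take $A=\C[[x,\xi]]$ with $x$ even, $\xi$ odd, $D=\partial_x\partial_\xi$, and the automorphism $\phi(x)=x+x^2$, $\phi(\xi)=\xi$, which preserves the filtration and is the identity on $\mathrm{gr}$. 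Then
\[
\{x,\xi\}_{\phi D\phi^{-1}}=\phi\fbracket{\phi^{-1}x,\phi^{-1}\xi}_D = 1-2x+\cdots \neq 1 = \{x,\xi\}_D,
\]
so the transported operator has a \emph{different} symbol even though it agrees with $D$ on the associated graded. The failure is exactly of the type you need to rule out: $\phi$ moves the ``function'' coordinate without moving the ``vector'' generator covariantly, i.e.\ $\xi$ is no longer the derivation dual to $d(\phi(x))$. So filtration compatibility of $\tilde K$ alone cannot close your argument.

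What actually controls the transported $\Delta$ --- and what the paper's proof uses --- is not the filtration but the duality built into the definition of $K$ in Proposition \ref{proposition:T_X-to-jet}: equation \eqref{eqn:definition-T}, $v(\alpha)=\langle K(v),T(\alpha)\rangle$, which yields \eqref{compare-coordinate}, $\abracket{\pa_{dR}\circ T(dz^i),\tilde K(\widetilde{\pa_{z^j}})}=\delta^i_j$. This says that $\tilde K(\widetilde{\pa_{z^j}})$ is precisely the vertical derivation dual to the transported coordinates $\tilde T(\widetilde{dz^i})$, i.e.\ the pair $(\tilde T,\tilde K)$ is a genuine formal change of coordinates acting on jet polyvector fields --- the property my counterexample's $\phi$ lacks. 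With this input, both $\Delta$ and $\partial_{\Omega_X}$ are constant-coefficient contractions in coordinate systems that are Darboux for the same canonical pairing, and the paper's local computation, using \eqref{eqn:quantum-correction-exp}, exhibits the difference explicitly as the first-order operator $\{\log P,-\}$ in \eqref{eqn:comparison-operator}. Note this explicitness is not incidental: the discrepancy $\log P$ is exactly the quantum correction $B$ of Proposition \ref{correction-formula}, so even a corrected symbol-only argument would prove the Claim but not produce the formula the quantization needs afterwards. If you replace your $\mathrm{gr}$ step by an appeal to \eqref{compare-coordinate}, your bracket comparison becomes, in substance, the paper's proof.
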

\begin{proof}
We prove the claim by some local calculation from which we can also find an explicit expression of the functional $B\in
C^*(\mathfrak{g}_X)$.

Let $\{z^1,\cdots, z^n\}$ be local holomorphic coordinates on $U\subset X$ where $n=\dim_{\mathbb{C}}X$, such that the holomorphic volume can be expressed as
$\Omega_X|_U=dz^1\wedge\cdots\wedge dz^n$, and let $\delta z^1,\cdots,\delta z^n$ be the corresponding jet coordinates.
% It is clear that $\{dz^1,\cdots,dz^n,\partial_{z^1},\cdots,\partial_{z^n}\}$ generates the
%algebra $C^*(\mathfrak{g}_X,\text{Sym}^*(\mathfrak{g}_X))$ over $\mathcal{A}_X^{*,*}$.
The isomorphism $\tilde{K}$ in equation (\ref{eqn:iso-CE-Jet}) gives rise to (recall Notation \ref{notation-basis}):
\begin{align}\label{local-identification}
 \A_X(U)[[\delta z^i, \pi_2^*(\pa_{z^i})]] =\mathcal{A}_X\otimes_{\mathcal{O}_X}\text{Jet}^{hol}_X(\wedge^*T_X)(U)\cong
\mathcal{A}_X(U)[[\tilde{K}(\widetilde{dz^i}),\tilde{K}(\widetilde{\partial_{z^j}})]].
\end{align}
 Let $T$ denote the restriction of  ${\rho^*}^{-1}$ in (\ref{eqn:identification-CE-jet}) to
$\Omega_X^1$:
$$
   T: \Omega_X^1 \to \cinfty(X)\otimes_{\OO_X}\Jet^{hol}_X(\OO_X).
$$
Let $\pa_{dR}$ be the internal de Rham differential
$$
  \pa_{dR}: \Jet^{hol}_X(\Omega^*_X)\to \Jet^{hol}_X(\Omega^{*+1}_X),
$$
and let $\pa_{dR}\circ T$ be the composition
\begin{equation}\label{eqn:splitting-T}
  \pa_{dR}\circ T: \Omega_X^1\to  \cinfty(X)\otimes_{\OO_X}\Jet^{hol}_X(\Omega_X^1).
\end{equation}
Let
$$
 \abracket{-,-}:\bracket{\cinfty(X)\otimes_{\OO_X}\Jet^{hol}_X(T_X)}\otimes_{\cinfty(X)} \bracket{\cinfty(X)\otimes_{\OO_X}\Jet^{hol}_X(\Omega_X^1)}\to
\cinfty(X)\otimes_{\OO_X}\Jet^{hol}_X(\OO_X)
$$
be the natural pairing induced from that between $T_X$ and $\Omega_X^1$. By our convention, $T(dz^i)=\tilde T(\widetilde{dz^i})$, and
\begin{align}\label{compare-coordinate}
   \abracket{\pa_{dR}\circ T(dz^i), \tilde K(\widetilde{\pa_{z^j}})}=\delta_{j}^i, \quad \abracket{\pa_{dR}(\delta z^i), \pi_2^*(\pa_{z^j})}=\delta^i_j.
\end{align}

By construction, there exists an invertible $P\in \cinfty(X)\otimes_{\OO_X} \text{Jet}_X^{hol}(\mathcal{O}_X)(U)$ such that
\begin{equation}\label{eqn:quantum-correction-exp}
\pi_2^*(dz^1\wedge\cdots \wedge dz^n)=P\cdot\left((\partial_{dR}\circ T)(dz^1)\wedge\cdots\wedge (\partial_{dR}\circ T)(dz^n)\right)\in\text{Jet}_X^{hol}(\Omega_X^*)(U).
\end{equation}

Under the identification \eqref{local-identification},
$$
  \Delta=\sum_i\dfrac{\partial}{\partial(\tilde{T}(\widetilde{dz^i})}\dfrac{\partial}{\partial(\tilde{K}(\widetilde{\partial_{z^i}}))}, \quad
\pa_{\Omega_X}=\sum_i\dfrac{\partial}{\partial(\delta z^i)}\dfrac{\partial}{\partial(\pi_2^*(\partial_{z^i}))}.
$$
By \eqref{compare-coordinate}, \eqref{eqn:quantum-correction-exp},
it is not difficult to see that
\begin{equation}\label{eqn:comparison-operator}
\begin{aligned}
\pa=\Delta+\sum_i \abracket{\pa_{dR}\circ T(dz^i), \log P}\dfrac{\partial}{\partial(\tilde{K}(\widetilde{\partial_{z^i}}))}=\partial+\{\log P,-\}.
\end{aligned}
\end{equation}
This proves the claim.
\end{proof}

We conclude this section with the following theorems:
\begin{thm}\label{thm:one-loop-correction} Any pair $(X, \Omega_X)$ leads to a canonical quantization of topological B-twisted $\sigma$-model, whose one-loop quantum correction,
which will be denoted by $I_{qc}$, is of the form (\ref{correction-form}).
\end{thm}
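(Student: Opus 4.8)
The plan is to reduce the theorem, via Lemma \ref{lem:obstruction-local-effective}, to the purely algebraic problem of producing a single \emph{local} functional $I_{qc}$ solving the cochain-level equation (\ref{eqn:local-version-eqn-one-loop-correction}), namely $QI_{qc}+\fbracket{I_{cl},I_{qc}}=O_1$; once such an $I_{qc}$ is found, that lemma promotes it to a family $\{I_{qc}[L]\}$ for which $I_{naive}[L]+\hbar I_{qc}[L]$ satisfies the scale-$L$ quantum master equation, which is exactly a quantization in the sense of Definition \ref{defn-quantization}. First I would impose the ansatz (\ref{correction-form}), so that $I_{qc}$ is encoded by a single element $B=\sum_k B_k\in C^*(\g_X)$; since $O_1$ is itself supported on $\cinfty(\Sigma_g)\otimes\g_X[1]$, this ansatz is consistent, and by the type argument already recorded one has $QI_{qc}=l_1 I_{qc}$, so the equation collapses to (\ref{eqn:one-loop-correction-equation}), i.e. $(2-2g)\Delta L-\fbracket{L,B}=0$ modulo constants. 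Using the two commutator identities of Lemma \ref{lem:one-loop-correction-brackets}, solving this modulo constants is equivalent to arranging that the operator $(2-2g)\Delta+\fbracket{B,-}$ commute with the \CE differential $d_{CE}=\fbracket{L,-}$. The overall factor $2-2g$ is harmless: for $g\neq 1$ I absorb it and look instead for $\tilde B$ with $\Delta+\fbracket{\tilde B,-}$ commuting with $d_{CE}$, setting $B=(2-2g)\tilde B$; for $g=1$ the right-hand side vanishes and $B=0$ works.

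The next step is to transport this commutation problem to geometry through the isomorphism $\tilde K$ of Proposition \ref{prop:iso-CE-jet}, under which $d_{CE}$ becomes the jet-bundle de Rham differential $d_{D_X}$ on $\A_X\otimes_{\OO_X}\Jet^{hol}_X(\wedge^* T_X)$. On this side the holomorphic volume form $\Omega_X$ furnishes, via contraction and the holomorphic de Rham differential, the canonical second-order operator $\partial_{\Omega_X}$ that already commutes with $d_{D_X}$. I would then invoke the Claim, whose local computation culminating in (\ref{eqn:comparison-operator}) shows not merely that $\Delta$ and $\partial_{\Omega_X}$ share a symbol, but the sharper identity $\partial_{\Omega_X}=\Delta+\fbracket{\log P,-}$, with $P$ the invertible jet attached to $\Omega_X$ through (\ref{eqn:quantum-correction-exp}). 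Thus $\tilde B=\log P$ makes $\Delta+\fbracket{\tilde B,-}=\partial_{\Omega_X}$ commute with $d_{CE}$, so that $B=(2-2g)\log P$ solves (\ref{eqn:one-loop-correction-equation}); pulling $B$ back through $\tilde K$ and feeding it into the ansatz (\ref{correction-form}) produces the sought $I_{qc}$. Because every ingredient---$\Omega_X$, the divergence operator $\partial_{\Omega_X}$, and $\tilde K$---is canonically attached to the pair $(X,\Omega_X)$, the resulting quantization $I_{naive}[L]+\hbar I_{qc}[L]$ is canonical. That a solution exists at all is guaranteed a priori by Theorem \ref{thm:obstruction-quantization}, since $[O_1]=(2-2g)c_1(X)=0$ in $H^1(\widetilde{Ob})$ when $X$ is Calabi-Yau; the construction above simply exhibits the distinguished representative.

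The step I expect to be the genuine obstacle is \emph{globalization}: the function $\log P$ in (\ref{eqn:comparison-operator}) is written in coordinates adapted to $\Omega_X=dz^1\wedge\cdots\wedge dz^n$, and I must check that these local primitives patch into a single global $B\in C^*(\g_X)$. The clean way to see this is to argue with the operators rather than their local generators: $\partial_{\Omega_X}$ and $\Delta$ are each globally defined odd second-order operators on $\A_X\otimes_{\OO_X}\Jet^{hol}_X(\wedge^* T_X)$, and the Claim shows their symbols agree globally, so the difference $\partial_{\Omega_X}-\Delta$ is a globally defined first-order derivation. It then remains to verify that this derivation is Hamiltonian for the Schouten/BV structure, i.e. of the form $\fbracket{B,-}$ for a $B$ well defined globally modulo constants; this is precisely where the compatibility of the local $\log P$ under volume-preserving coordinate changes must be confirmed, and where I would spend the most care. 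Granting it, $B$ and hence $I_{qc}$ are global and of the form (\ref{correction-form}), and Lemma \ref{lem:obstruction-local-effective} closes the argument. Working only modulo constants is exactly enough here, since perturbative quantization in this framework is defined modulo constant terms.
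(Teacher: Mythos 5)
Your proposal follows the paper's proof essentially step for step: reduction via Lemma \ref{lem:obstruction-local-effective}, the ansatz (\ref{correction-form}) collapsing the equation to (\ref{eqn:one-loop-correction-equation}), Lemma \ref{lem:one-loop-correction-brackets} converting this to a commutation problem, transport through Proposition \ref{prop:iso-CE-jet}, and the identity $\partial_{\Omega_X}=\Delta+\fbracket{\log P,-}$ of (\ref{eqn:comparison-operator}). The one point you flag as the ``genuine obstacle'' --- globalization of $\log P$ --- is in fact not an obstacle, and the paper closes it with no patching argument at all: this is the content of Proposition \ref{correction-formula}, which defines $B=(2-2g)\log\bracket{\pi_2^*(\Omega_X)\big/\wedge^n\bracket{\pa_{dR}\circ T}(\Omega_X)}$ intrinsically. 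Both $\pi_2^*(\Omega_X)$ and $\wedge^n\bracket{\pa_{dR}\circ T}(\Omega_X)$ are globally defined, nowhere-vanishing sections of $\cinfty(X)\otimes_{\OO_X}\Jet^{hol}_X(\Omega_X^n)$ --- the maps $\pi_2^*$ and $\wedge^n\bracket{\pa_{dR}\circ T}$ are global, the latter because the splitting $T$ comes from the globally defined diffeomorphism $\rho$ of (\ref{eqn:identification-CE-jet}) --- so their ratio $P$ is a global invertible element of $\cinfty(X)\otimes_{\OO_X}\Jet^{hol}_X(\OO_X)$; moreover both sections restrict to $\Omega_X$ along the diagonal, so $P\equiv 1$ modulo the filtration ideal $F^1$ of Definition \ref{def:jet-bundle}, and $\log P$ is a well-defined global element of $C^*(\g_X)$. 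The coordinates adapted to $\Omega_X$ enter only in verifying the operator identity (\ref{eqn:comparison-operator}); they play no role in defining $B$, so there is no compatibility under volume-preserving coordinate changes to check. With that single observation your argument is complete and coincides with the paper's.
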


The theorem follows from the following explicit description of $B$ in \eqref{correction-form}.  By taking the top wedge product of $\pa_{dR}\circ T$, we define
$$
   \wedge^n\bracket{\pa_{dR}\circ T}: \Omega_X^n\to \cinfty(X)\otimes_{\OO_X}\Jet^{hol}_X(\Omega_X^n).
$$

\begin{prop}\label{correction-formula}  The quantum correction associated to the canonical quantization of the pair $(X, \Omega_X)$ has the combinatorial part
$$
   B=(2-2g)\log \bracket{{\pi_2^*(\Omega_X) \over \wedge^n\bracket{\pa_{dR}\circ T}(\Omega_X)}}\in
\cinfty(X)\otimes_{\OO_X}\Jet_X^{hol}(\OO_X)\subset C^*(\g_X),
$$
where $\pi_2$ is the same as in Definition \ref{def:jet-bundle}.
\end{prop}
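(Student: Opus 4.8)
The plan is to read off $B$ directly from the identity \eqref{eqn:comparison-operator} established in the proof of the Claim, and then rewrite the locally-defined factor $P$ in the intrinsic form appearing in the statement. Recall that, after the reduction via Lemma \ref{lem:one-loop-correction-brackets}, solving \eqref{eqn:one-loop-correction-equation} amounts to producing $B\in C^*(\g_X)$ for which the operator $(2-2g)\Delta+\fbracket{B,-}$ commutes with the \CE differential $d_{CE}=\fbracket{L,-}$.

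First I would invoke the Claim: transported through the isomorphism $\tilde K$ of Proposition \ref{prop:iso-CE-jet}, the operator $\pa_{\Omega_X}$ commutes with $d_{CE}$, since it commutes with $d_{D_X}$, and \eqref{eqn:comparison-operator} exhibits it as $\pa_{\Omega_X}=\Delta+\fbracket{\log P,-}$. Rescaling by the Euler factor, $(2-2g)\pa_{\Omega_X}=(2-2g)\Delta+\fbracket{(2-2g)\log P,-}$ still commutes with $d_{CE}$, so $B:=(2-2g)\log P$ solves \eqref{eqn:one-loop-correction-equation} up to constants. This pins down the combinatorial part of the quantum correction as $(2-2g)\log P$, which is exactly the content required for Theorem \ref{thm:one-loop-correction}.

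The remaining step is to replace the local description \eqref{eqn:quantum-correction-exp} of $P$ by the intrinsic ratio in the statement. Choosing coordinates so that $\Omega_X|_U=dz^1\wedge\cdots\wedge dz^n$, equation \eqref{eqn:quantum-correction-exp} reads $\pi_2^*(\Omega_X)=P\cdot\wedge^n\bracket{\pa_{dR}\circ T}(\Omega_X)$ inside $\Jet_X^{hol}(\Omega_X^n)$, whence $P=\pi_2^*(\Omega_X)/\wedge^n\bracket{\pa_{dR}\circ T}(\Omega_X)$. Both numerator and denominator are sections of the rank-one jet bundle $\Jet_X^{hol}(\Omega_X^n)$, so their ratio is a well-defined element of $\cinfty(X)\otimes_{\OO_X}\Jet_X^{hol}(\OO_X)$, and substituting gives the asserted formula for $B$.

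The main point that needs care is globality and the meaning of the logarithm. I would verify that $\wedge^n\bracket{\pa_{dR}\circ T}(\Omega_X)$ is an invertible section of $\Jet_X^{hol}(\Omega_X^n)$: by \eqref{compare-coordinate} the jet one-forms $\pa_{dR}\circ T(dz^i)$ agree with $\pi_2^*(dz^i)$ to leading order in the jet filtration, so their top wedge agrees with $\pi_2^*(\Omega_X)$ modulo $F^1$; hence $P$ is a unit and $\log P$ is a well-defined formal jet. Coordinate independence is then automatic from the intrinsic ratio, since under a holomorphic change of coordinates both $\pi_2^*(\Omega_X)$ and $\wedge^n\bracket{\pa_{dR}\circ T}(\Omega_X)$ transform by the same Jacobian factor—naturality of $\pi_2^*$ and of the top wedge of the splitting $T$—so $P$, and therefore $B$, is unchanged and patches to a global section. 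The genuinely delicate part, which I expect to demand the most attention, is checking that the operator-level rescaling by $(2-2g)$ is legitimate modulo the constant functionals that \eqref{eqn:one-loop-correction-equation} is solved up to, and that the resulting $B$ indeed lies in the subspace $\cinfty(X)\otimes_{\OO_X}\Jet_X^{hol}(\OO_X)\subset C^*(\g_X)$ compatible with the ansatz \eqref{correction-form}.
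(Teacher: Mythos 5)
Your proposal is correct and follows essentially the same route as the paper, whose proof is just the one-line observation that the formula follows from \eqref{eqn:one-loop-correction-equation} together with the local calculation \eqref{eqn:comparison-operator}: you identify $\log P$ as the solution via $\pa_{\Omega_X}=\Delta+\fbracket{\log P,-}$, rescale by $(2-2g)$, and rewrite $P$ as the intrinsic ratio using \eqref{eqn:quantum-correction-exp}. Your extra checks (invertibility of $\wedge^n\bracket{\pa_{dR}\circ T}(\Omega_X)$ so that the logarithm makes sense, and coordinate independence so $B$ globalizes) are details the paper leaves implicit, not a different argument.
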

\begin{proof}
This follows from \eqref{eqn:one-loop-correction-equation} and the local calculation \eqref{eqn:comparison-operator}.
\end{proof}

\begin{rmk}The existence of the quantum correction is due to the fact that the curved $L_\infty$ structure $\g_X$ requires the choice of a splitting \cite{Kevin-CS}, although
different choices lead to homotopic equivalent theory. The quantum correction $I_{qc}$ precisely compensates such choice and link the effective \BV geometry to the canonical \BV
structure of polyvector fields associated to the Calabi-Yau structure.

\end{rmk}

With the one-loop quantum correction term $I_{qc}$, we can give an explicit formula of the constant term $R\in\A_X$ in quantum
master equation (\ref{eqn:qunatum-master-equation}), which will be used later in observable theory:
\begin{lem}\label{lem:S_1-contant term}
Let $(I_{qc})_1$ denote the linear term in the one-loop correction $I_{qc}$, and let $\tilde{l}_0$ denote the functional on $\E$
given by
$$
\tilde{l}_0(\alpha+\beta)=\langle l_0,\beta\rangle.
$$
Then the constant term $R$ is given by:
$$
R=\{(I_{qc})_1,\tilde{l}_0\}.
$$
\end{lem}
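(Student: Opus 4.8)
The plan is to establish the identity $R=\{(I_{qc})_1,\tilde l_0\}$ by tracking the constant ($0$-homogeneous) part of the scale-$L$ quantum master equation and then taking the $L\to 0$ limit, where everything localizes. Recall from Definition \ref{defn-quantization} that the genuine quantization $I[L]=I_{naive}[L]+\hbar I_{qc}[L]$ satisfies
\begin{equation*}
\bracket{Q_L+\hbar\Delta_L+{F_{l_1}\over\hbar}}e^{I[L]/\hbar}=Re^{I[L]/\hbar},
\end{equation*}
and that $R\in\A_X^\sharp[[\hbar]]$ is independent of $L$. Since $R$ does not depend on $L$, I may compute it in the limit $L\to 0$, where the effective functionals become local and the effective BV Laplacian $\Delta_L$ degenerates onto the classical Poisson structure. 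The $\C^\times$-symmetry (Proposition \ref{prop:invariant-actions}) guarantees that $I[L]$ has only a constant and a linear term in $\hbar$, so the QME organizes into a finite collection of equations in powers of $\hbar$; the term $R$ is extracted from the $0$-homogeneous, order-$\hbar^0$ piece.

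First I would expand the QME order by order. The $O(\hbar^0)$ part reproduces the classical master equation together with the one-loop correction equation $QI_{qc}+\{I_{cl},I_{qc}\}=O_1$ of Lemma \ref{lem:obstruction-local-effective}. The constant term $R$ arises from those contributions of $\bracket{Q_L+\hbar\Delta_L+F_{l_1}/\hbar}e^{I[L]/\hbar}$ that land in $\OO^{(0)}(\E)=\A_X^\sharp$ rather than in $\OO^+(\E)$. The key observation is that the \emph{only} way to produce a $0$-homogeneous functional is to saturate all field-legs: the curving term $l_0$ (encoded in $\tilde l_0$, the $k=0$ piece of $I_{cl}$ in the notation of \eqref{def: tilde-l_k}) supplies $\beta$-legs with no $\alpha$-inputs, and these must be paired against the legs of $(I_{qc})_1$, the linear-in-fields part of the quantum correction, via the classical Poisson bracket. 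All higher terms $(I_{qc})_k$ with $k\ge 2$ cannot close up to a constant against $\tilde l_0$ by a counting of field-legs, and the quadratic action $F_{l_1}/\hbar$ contributes only to the anomaly already accounted for in $O_1$. This isolates $R=\{(I_{qc})_1,\tilde l_0\}$, where the bracket is the classical Poisson bracket of Lemma/Definition \ref{Poisson-bracket}.

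The main obstacle will be controlling the $L\to 0$ limit of the contraction $\Delta_L\bracket{(I_{qc})_1\cdot\tilde l_0}$ and showing that its $0$-homogeneous part converges precisely to the classical bracket $\{(I_{qc})_1,\tilde l_0\}$ with no extraneous contributions from the analytic (heat-kernel) part of $\mathbb K_L$. Here I would use that $(I_{qc})_k$ is of the special form \eqref{correction-form}, namely a product $B_k(g_1\otimes\cdots\otimes g_k)\int_{\Sigma_g}f_1\cdots f_k\,\text{dvol}_{\Sigma_g}$, so that pairing against the curving $\tilde l_0$ reduces the analytic factor to $\int_{\Sigma_g}\text{dvol}_{\Sigma_g}=1$ with the chosen normalization, and the surviving combinatorial pairing is exactly the contraction defining the classical bracket. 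Since $R$ is $L$-independent, establishing the identity at any single value—most conveniently the $L\to 0$ limit—suffices, and the convergence issues are tamed by the locality of $(I_{qc})$ together with the regularity of the heat kernel along the diagonal already used in Lemma/Definition \ref{lem:naive quantization}. This completes the identification.
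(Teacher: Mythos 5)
Your proposal is correct and follows essentially the same route as the paper's proof: expand the scale-$L$ quantum master equation, observe by homogeneity/type reasons that the only constant (field-degree zero) term besides $\hbar R$ arises from the bracket of the two linear functionals $\tilde{l}_0$ and $(I_{qc})_1$, and use the $L$-independence of $R$ to identify this bracket with the classical one. The paper simply works at a fixed scale $L$ instead of passing to $L\to 0$ first, and makes explicit (``by type reason'') the two checks your leg/form-degree counting subsumes — that $\hbar\Delta_L I[L]$ produces no constant term, and that the wheel-generated linear terms in $I^{(1)}[L]$ pair to zero against $\tilde{l}_0$, leaving only $(I_{qc})_1$.
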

\begin{proof}
Let $I[L]=I^{(0)}[L]+\hbar I^{(1)}[L]$ be the scale
$L$ effective interaction. Then the quantum master equation (\ref{eqn:qunatum-master-equation}) can be expanded as
\begin{equation}\label{eqn:QME-constant-term}
Q_{L}I[L]+\frac{1}{2}\{I^{(0)}[L]+\hbar I^{(1)}[L],I^{(0)}[L]+\hbar I^{(1)}[L]\}_L+\hbar\Delta_L
I[L]+\hbar R+F_{l_1}=0.
\end{equation}
It is clear by type reason that the  constant term in (\ref{eqn:QME-constant-term}) other than $\hbar R$ can only live in the
bracket $\{I^{(0)}[L],\hbar I^{(1)}[L]\}_L$.
Thus we only need to find the linear terms in both $I^{(0)}[L]$ and $I^{(1)}[L]$. On one hand, it is obvious that the
only linear term in $I^{(0)}[L]$ is $\tilde{l}_0$ since $\tilde{l}_0$ does not propagate by the type reason. Therefore the only linear term in
$I^{(1)}[L]$ that contributes $\{I^{(1)}[L], \tilde{l}_0\}_L$ is $(I_{qc})_1$.  It follows that
$$
 R=\{(I_{qc})_1, \tilde{l}_0\}_L=\{(I_{qc})_1, \tilde{l}_0\}
$$
since $R$ does not depend on $L$.
\end{proof}

\section{Observable theory}
The objective of this section is to study the quantum observables of B-twisted topological $\sigma$-model following the general
theory developed by Costello and Gwilliam \cite{Kevin-Owen}. In section
\ref{section:local-observable}, we show that classical and quantum local observables are  given by the cohomology of polyvector
fields. In section \ref{section:global-observable}, we study global topological quantum observables on Riemann
surfaces of any genus $g$. Using the local to global factorization map, we define the topological correlation functions of quantum observables. In section \ref{section:correlation-function}, we show that the correlation
functions on $\mathbb{P}^1$ are given by the trace map on Calabi-Yau manifold, and the partition function on the elliptic curve reproduces the Euler
characteristic of the target manifold. This is in complete agreement with the physics prediction.

\subsection{Classical observables}\label{section:local-observable}

We first recall that classical observables are given by the derived critical locus of the classical action functional \cite{Kevin-Owen}.

\begin{defn} The classical observables of the B-twisted $\sigma$-model is the graded commutative factorization algebra on
$\Sigma_g$ whose value on an open subset $U\subset
\Sigma_g$ is the cochain complex
\begin{equation}\label{eqn:classical-observable}
 \text{Obs}^{cl}(U):=\left(\mathcal{O}(\mathcal{E}_U), Q+\{I_{cl},-\}\right).
\end{equation}
Here $I_{cl}$ is the classical interaction functional and
$\mathcal{E}_U=\mathcal{A}_{\Sigma_g}(U)\otimes(\mathfrak{g}_X[1]\oplus\mathfrak{g}_X^\vee)$.
\end{defn}

By definition,
$$
\mathcal{O}(\mathcal{E}_U)=\widehat{\Sym}\bracket{\mathcal{E}_U^\vee}=\prod_{k\geq 0}\Sym^k \bracket{\mathcal{E}_U^\vee}.
$$
With the help of the symplectic pairing, we have the following identification:
$$
\quad \mathcal{E}_U^\vee\iso \overline{\mathcal{A}}_c(U)[2]\otimes(\mathfrak{g}_X^\vee[-1]\oplus\mathfrak{g}_X),
$$
where $\overline{\A}_c(U)$ is the space of compactly supported distribution-valued differential forms on $U$. Thus we have
\begin{equation*}
\begin{aligned}
\text{Sym}^n(\mathcal{E}_U^\vee)&=\text{Sym}^n\left(\bracket{\mathcal{A}(U)\otimes\bracket{\g_X[1]\oplus \g_X^\vee}}^\vee\right)\\
&\cong\text{Sym}^n\bracket{\overline{\mathcal{A}}_c(U)[2]\otimes \bracket{\g_X^\vee[-1]\oplus \g_X}}.\\
\end{aligned}
\end{equation*}

We would like to consider local observables in a small disk on $\Sigma_g$ and define their correlation functions. This can be viewed as the mirror
consideration of observables associated to marked points in Gromov-Witten theory. At the classical level, we have

\begin{prop}\label{prop-classical-ob}
Let $U\subset \Sigma_g$ be a  disk. The cohomology of classical local observables of B-twisted topological $\sigma$-model on
$U$ is given by the cohomology
of polyvector fields:
$$
H^k(\text{Obs}^{cl}(U))\iso \bigoplus_{p+q=k}H^p(X,\wedge^qT_X).
$$
\end{prop}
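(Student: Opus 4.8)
The plan is to reduce $\text{Obs}^{cl}(U)$ to the \CE complex of $\g_X$ and then invoke the jet-bundle description already used for the deformation-obstruction complex $\widetilde{Ob}$. First I would rewrite the complex through the symplectic identification $\E_U^\vee \iso \overline{\A}_c(U)[2]\otimes(\g_X^\vee[-1]\oplus \g_X)$ recorded above, so that $\OO(\E_U)=\widehat{\Sym}(\E_U^\vee)$ is built from the two graded pieces $\g_X^\vee[-1]$ (in degree $0$, and canonically $\g_X[1]^\vee$) and $\g_X$ (in degree $1$), tensored with the analytic factor $\overline{\A}_c(U)[2]$. The differential $Q+\{I_{cl},-\}$ splits into the analytic piece $d_{\Sigma_g}$, which acts only on the $U$-directions, and the piece $l_1+\{I_{cl},-\}$, which acts on the $\g_X$-directions and is precisely the \CE differential $d_{CE}$ — the same identification $l_1+\{I_{cl},-\}=d_{CE}$ that appeared in the computation of $\widetilde{Ob}$.

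The crucial step is to eliminate the analytic directions. Since $U$ is a disk, the de Rham complex $\A_{\Sigma_g}(U)$ is contractible, so by the smooth Poincaré lemma the inclusion of the constant (zero-mode) fields $\g_X[1]\oplus\g_X^\vee\into \E_U$ is a cochain homotopy equivalence. I would transfer this equivalence to the functionals — this is the standard localization-to-zero-modes argument for local observables of Costello-Gwilliam — to conclude that $\text{Obs}^{cl}(U)$ is quasi-isomorphic to $\widehat{\Sym}(\g_X[1]^\vee\oplus\g_X)=C^*(\g_X,\Sym^*\g_X)$ with the transferred differential $d_{CE}$. On the dual side this is visible from the fact that the analytic factor $(\overline{\A}_c(U)[2],d_{\Sigma_g})$ has cohomology $\C$ concentrated in degree $0$ (compactly supported de Rham cohomology of the $2$-disk, shifted by $[2]$), so the completed symmetric algebra contributes no extra analytic grading and the reduced complex is exactly $C^*(\g_X,\Sym^*\g_X)$.

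Finally I would identify the cohomology of $(C^*(\g_X,\Sym^*\g_X),d_{CE})$. By Proposition \ref{prop:iso-CE-jet} this complex is isomorphic to $(\A_X\otimes_{\OO_X}\Jet^{hol}_X(\wedge^* T_X),d_{D_X})$, which is a fine resolution of the sheaf $\wedge^* T_X$ of holomorphic polyvector fields, exactly as $C^*(\g_X)\simeq\OO_X$ and as exploited in the $\widetilde{Ob}$ computation. Taking global sections (hypercohomology) therefore yields $\bigoplus_q H^p(X,\wedge^q T_X)$. Matching degrees — the Dolbeault degree $p$ from $H^p$ together with the polyvector degree $q$, which sits in cohomological degree $q$ because $\g_X$ is concentrated in degree $1$ — gives $H^k(\text{Obs}^{cl}(U))\iso\bigoplus_{p+q=k}H^p(X,\wedge^q T_X)$, as claimed.

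The main obstacle is the analytic reduction of the second step: one must make the passage to zero modes rigorous at the level of the completed symmetric algebra $\widehat{\Sym}(\E_U^\vee)$, whose linear generators are compactly supported distributional forms on $U$. This requires the continuous form of the Poincaré lemma (a continuous cochain homotopy on $\overline{\A}_c(U)$) together with control of the completed tensor products, so that the homotopy extends multiplicatively and the resulting homological perturbation converges. Once this localization is established, the identification with $C^*(\g_X,\Sym^*\g_X)$ and the final computation are formal consequences of Proposition \ref{prop:iso-CE-jet} and the resolution properties already proved.
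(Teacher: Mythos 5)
Your proposal is correct, and its skeleton coincides with the paper's: the same dual identification $\E_U^\vee\iso\overline{\A}_c(U)[2]\otimes(\g_X^\vee[-1]\oplus\g_X)$, the same collapse of the analytic factor to $H^2_c(U)\iso\C$, the same identification of the reduced complex with $\bracket{C^*(\g_X,\Sym^*\g_X),d_{CE}}$ (this is the paper's Lemma \ref{lem:local observable-iso-jet}), and the same final appeal to Proposition \ref{prop:iso-CE-jet} together with the jet-bundle resolution of $\wedge^*T_X$. The genuine difference is the mechanism used to eliminate the $U$-directions: the paper filters $\text{Obs}^{cl}(U)$ by the form degree on $X$, computes the $E_1$-page via the Atiyah--Bott lemma (currents are quasi-isomorphic to compactly supported forms), and asserts degeneration at $E_2$; you instead run homological perturbation onto the zero modes. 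These are interchangeable formalisms, but be aware that your claim that the transferred differential is \emph{exactly} $d_{CE}$, with no higher corrections, is precisely the counterpart of the paper's $E_2$-degeneration, and the reason you offer --- that the analytic cohomology sits in a single degree --- does not rule it out by itself: perturbation terms of the form $p\,\delta_1 h\,\delta_1\, i$, with $\delta_1=l_1+\{I_{cl},-\}$ and $h$ the analytic homotopy, are maps of the zero-mode complex to itself and could a priori survive. What kills them (and, equally, all $d_r$ for $r\geq 2$ in the paper's spectral sequence) is the degree bookkeeping the paper makes explicit at the start of its proof: $l_1+\{I_{cl},-\}$ raises the $\A_X$-form degree by exactly one and carries analytic degree zero, while $h$ has analytic degree $-1$, so the $n$-th order correction has analytic degree $1-n$ and must vanish between complexes concentrated in analytic degree zero. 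You state the needed fact only implicitly (``$l_1+\{I_{cl},-\}$ acts on the $\g_X$-directions''); invoking it explicitly at this step closes the one soft spot, after which your transfer argument is a complete, slightly more hands-on substitute for the paper's spectral sequence.
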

\begin{proof} Recall that $\text{Obs}^{cl}(U)$ is a dg-algebra over $\A_X$. Let $\mathcal{A}_X^k$ denote the smooth $k$-forms on $X$. We filter $\text{Obs}^{cl}(U)$  by defining
$$
F^k\text{Obs}^{cl}(U):= \mathcal{A}^k_X\text{Obs}^{cl}(U).
$$
Since the operator $l_1+\{I_{cl},-\}$ increases the
degree of differential forms on $X$ by one while $d_{\Sigma_g}$ preserves it, it is clear that the $E_1$-page of the spectral sequence is obtained by taking the cohomology
with respect to $d_{\Sigma_g}$. By Atiyah-Bott's lemma, the chain complex of currents on $U$ is quasi-isomorphic to the chain complex of compactly supported differential forms.
Thus we have:
$$
E_1=\bracket{\widehat{\Sym}\bracket{H_c^2(U)\otimes\bracket{\g_X[1]^\vee\oplus \g_X}}, l_1+\fbracket{I_{cl},-}}.
$$
The next lemma identifies the $E_1$-page of the spectral sequence with the de Rham complex of certain jet bundle on $X$. It is clear that the spectral sequence
degenerates at the $E_2$-page. Thus we have the quasi-isomorphism
\begin{equation*}
\begin{aligned}
\text{Obs}^{cl}(U)&\cong\left(\mathcal{A}_X\otimes_{\mathcal{O}_X}\text{Jet}^{hol}_X(\wedge^*T_X),d_{D_X}\right)\cong(\mathcal{A}_X^{0,*}\otimes_{
\mathcal{O}
_X}\wedge^*T_X,\bar{\partial}).
\end{aligned}
\end{equation*}
The proposition follows by taking the cohomology of the rightmost cochain complex.
\end{proof}

\begin{lem}\label{lem:local observable-iso-jet}
We have the following isomorphism of cochain complexes over the dga $\A_X$:
\begin{equation*}
\begin{aligned}
&\bracket{\widehat{\Sym}\bracket{H_c^2(U)\otimes\bracket{\g_X[1]^\vee\oplus \g_X}}, l_1+\fbracket{I_{cl},-}}
\cong&\left(\mathcal{A}_{X}\otimes_{\mathcal{O}_X}\text{Jet}^{hol}_X(\wedge^*T_X),d_{D_X}\right),
\end{aligned}
\end{equation*} where $d_{D_X}$ denotes the differential of the de Rham complex of
$\text{Jet}^{hol}_X(\wedge^*T_X)$.
\end{lem}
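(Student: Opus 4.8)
The plan is to establish the isomorphism in three stages: identify both sides as graded $\A_X$-modules, match the differential with the \CE differential $d_{CE}$, and then transport everything to the jet bundle by invoking Proposition \ref{prop:iso-CE-jet}.

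First I would trivialize the analytic factor. Since $U$ is a disk, $H^2_c(U)\iso \C$, and with the shift $[2]$ recorded in the identification $\E_U^\vee\iso \overline{\A}_c(U)[2]\otimes\bracket{\g_X^\vee[-1]\oplus\g_X}$ it sits in cohomological degree $0$; note also that $\g_X^\vee[-1]=\g_X[1]^\vee$. Hence the left-hand side reduces to $\widehat{\Sym}_{\A_X^\sharp}\bracket{\g_X[1]^\vee\oplus\g_X}$ as a graded $\A_X^\sharp$-module. Using multiplicativity of $\widehat{\Sym}$ over direct sums, I would factor
\begin{equation*}
\widehat{\Sym}_{\A_X^\sharp}\bracket{\g_X[1]^\vee\oplus\g_X}\iso \widehat{\Sym}_{\A_X^\sharp}\bracket{\g_X[1]^\vee}\otimes_{\A_X^\sharp}\Sym^*_{\A_X^\sharp}\bracket{\g_X}=C^*\bracket{\g_X,\Sym^*\g_X},
\end{equation*}
the \CE cochains of $\g_X$ with coefficients in the module $\Sym^*\g_X$. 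Because $\g_X$ is concentrated in degree $1$, the graded symmetric powers $\Sym^*\g_X$ are the exterior powers $\wedge^*T_X$, which supplies the polyvector content of the target complex.

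Next I would match differentials. The induced differential on this $E_1$-page is $l_1+\fbracket{I_{cl},-}$, a derivation of $\widehat{\Sym}\bracket{\g_X[1]^\vee\oplus\g_X}$, so it suffices to evaluate it on the two kinds of generators. On the degree-zero generators $\g_X[1]^\vee$ (dual to the base fields $\alpha$), the term $\fbracket{I_{cl},-}$ pairs the $\beta$-derivative of $I_{cl}$, namely $l_k(\alpha^{\otimes k})$, against the generator, reproducing the higher \CE brackets; together with the linear part $l_1$ (which carries the de Rham differential $d_X$ of the base $\A_X$ and comes from $Q$ rather than from $I_{cl}$) this is exactly the \CE differential of $C^*(\g_X)$. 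On the degree-one generators $\g_X$ (identified with the $\beta$-directions through the symplectic pairing), differentiating $I_{cl}$ in the $\beta$-slot feeds $\alpha$'s through the maps $l_k$ into the generator, giving the action of $\g_X$ on $\Sym^*\g_X$, i.e. the module part of $d_{CE}$. Hence $l_1+\fbracket{I_{cl},-}=d_{CE}$ on $C^*\bracket{\g_X,\Sym^*\g_X}$, exactly as in the analogous computation of \cite{Kevin-CS}.

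Finally, Proposition \ref{prop:iso-CE-jet} provides an $\A_X$-linear isomorphism of cochain complexes $\bracket{C^*\bracket{\g_X,\Sym^*\g_X},d_{CE}}\iso \bracket{\A_X\otimes_{\OO_X}\Jet^{hol}_X(\wedge^*T_X),d_{D_X}}$, which is the asserted identification; $\A_X$-linearity is automatic since every map above is $\A_X^\sharp$-linear and intertwines $d_X$. I expect the main obstacle to be the differential-matching step: carefully unwinding the Poisson bracket $\fbracket{I_{cl},-}$ in terms of the curved $L_\infty$ products $l_k$ and verifying that it reproduces both the algebra and the module parts of $d_{CE}$, while correctly tracking the degree and sign conventions forced by placing $\g_X$ in degree $1$. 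The graded identification and the appeal to Proposition \ref{prop:iso-CE-jet} are then routine.
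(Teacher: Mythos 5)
Your proof is correct and takes essentially the same route as the paper's: reduce the analytic factor via $H^2_c(U)\cong\C$, identify the resulting complex with $\bracket{C^*(\g_X,\Sym^*\g_X),d_{CE}}$, and conclude by Proposition \ref{prop:iso-CE-jet}. The only difference is one of detail: the paper declares the middle identification ``clear,'' whereas you spell out the graded-module factorization and the generator-by-generator matching of $l_1+\fbracket{I_{cl},-}$ with $d_{CE}$, which is a faithful expansion of that step.
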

\begin{proof}
Since $U$ is a disk in $\Sigma_g$, we have the canonical isomorphism $H_c^2(U)\cong\C$ induced by the integration of $2$-forms. And the following isomorphism is clear:
$$
\bracket{\widehat{\Sym}\bracket{H_c^2(U)\otimes\bracket{\g_X[1]^\vee\oplus \g_X}}, l_1+\fbracket{I_{cl},-}}\cong
(C^*(\g_X,\text{Sym}^*\g_X),d_{CE}),
$$
thus the Lemma follows from Proposition \ref{prop:iso-CE-jet}.
\end{proof}

\subsection{Quantum observables} \label{section:global-observable}
Quantum observables are the quantization of  classical observables. Let $I[L]$ be a quantization of the classical interaction
$I_{cl}$. The operator $Q_L+\{I[L],-\}_L+\hbar
\Delta_L$ sqaures zero (Lemma \ref{quantum-BRST}) and defines a quantization of the classical  operator $Q+\{I_{cl},-\}$.

\begin{defn} The quantum observables on $\Sigma_g$ at scale $L$ is defined as the cochain complex
$$\text{Obs}^q(\Sigma_g)[L]:=\bracket{\mathcal{O}(\mathcal{E})[[\hbar]],Q_L+\{I[L],-\}_L+\hbar\Delta_L}.$$
\end{defn}
The definition is independent of the scale $L$ since quantum observables at different scales are homotopic equivalent via renormalization group flow
(See \cite[Chatper 5, Section 9]{Kevin-book}). Therefore
we will also use $\text{Obs}^q(\Sigma_g)$ to denote quantum observables when the scale is not specified.

The quantum observables form a factorization algebra on $\Sigma_g$ \cite{Kevin-Owen}. To define the quantum observables on an arbitrary open subset $U\subset \Sigma_g$, we need
the concept of parametrices.

\begin{defn} A parametrix $\Phi$ is a distributional section
$$
   \Phi \in \Sym^2\bracket{\overline{\E}}
$$
with the following properties:
\begin{enumerate}
\item $\Phi$ is of cohomological degree $1$ and $(Q\otimes 1+1\otimes Q)\Phi=0$,
\item ${1\over 2}\bracket{H\otimes 1+1\otimes H}\Phi- K_{0} \in \Sym^2\bracket{\E}$ is smooth, where $H=[Q,Q^{GF}]$ is the Laplacian and $K_{0}=\lim\limits_{L\to 0}K_L$ is the kernel of the
identity operator.
\end{enumerate}

\end{defn}

\begin{rmk} We have dropped the "proper" condition as in \cite{Kevin-Owen}. This is automatic here since we are working with compact Riemann surface
$\Sigma_g$. We have also symmetrized $(H\otimes 1)\Phi$ used in \cite{Kevin-Owen}.
\end{rmk}

\begin{defn} We define the propagator $P(\Phi)$ and BV kernel $K_{\Phi}$ associated to a parametrix $\Phi$ by
$$
  P(\Phi):={1\over 2}\bracket{Q^{GF}\otimes 1+1\otimes Q^{GF}}\Phi \in \Sym^2\bracket{\overline{\E}}, \quad K_{\Phi}:=K_0-{1\over 2}\bracket{H\otimes 1+1\otimes H}\Phi.
$$
The effective BV operator $\Delta_\Phi:={\pa \over \pa K_\Phi}$ induces a BV bracket $\{-,-\}_\Phi$ on $\OO\bracket{\E}$ in a similar way as the scale $L$ BV bracket $\{-,-\}_L$.
\end{defn}

The following identity describes the relation between the propagator $P(\Phi)$ and BV kernel $K_{\Phi}$:
$$
(Q\otimes 1+1\otimes Q)P(\Phi)=K_0-K_{\Phi},
$$
i.e., $P(\Phi)$ gives a homotopy between the singular kernel $K_0$ and the regularized kernel $K_{\Phi}$.

\begin{eg} $\Phi=\int_0^L \mathbb K_t dt$ is the parametrix we have used to define quantization. There
$$
   P(\Phi)={1\over 2}\int_0^L \bracket{Q^{GF}\otimes 1+1\otimes Q^{GF}}\mathbb K_t dt =\int_0^L \bracket{Q^{GF}\otimes 1}\mathbb K_t dt = \mathbb P_0^L, \quad K_\Phi=\mathbb K_L, \quad \Delta_\Phi=\Delta_L.
$$
\end{eg}

The basic reason we use arbitrary parametrix here is that the usual renormalization group flow $W\bracket{\mathbb P_\epsilon^L,-}$ of observables using  length scales does not
preserve the property of being supported in an open subset $U$. Instead, there exist parametrices whose supports are arbitrarily close to the diagonal
$\Delta\subset\Sigma_g\times\Sigma_g$ that we can use to achieve this.

\begin{defn} Let $I[L]$ be a given quantization of $I_{cl}$, and let $\Phi$ be a parametrix. We define the effective quantization $I[\Phi]$ at the parametrix $\Phi$ by
$$
  I[\Phi]:=W\bracket{P(\Phi)-\mathbb P_0^L, I[L]}.
$$
\end{defn}
Note that $P(\Phi)-\mathbb P_0^L\in \Sym^2(\E)$ is a smooth kernel since
$$
  (H\otimes 1+1\otimes H)(P(\Phi)-\mathbb P_0^L)=(Q^{GF}\otimes 1+1\otimes Q^{GF})({1\over 2}(H\otimes 1+1\otimes
H)\Phi-\mathbb{K}_0+\mathbb{K}_L)
$$
is smooth and $H$ is an elliptic operator.

$I[\Phi]$ satisfies a version of quantum master equation described by the parametrix $\Phi$ as in \cite{Kevin-Owen} (with a slight modification to include $F_{l_1}$),  and defines
the corresponding cochain complex of quantum observables. We leave the details to the readers since we will not use its form for later discussions. Furthermore, different
parametrices $\Phi, \Phi^\prime$ lead to homotopic equivalent cochain complexes which are linked by the renormalization group flow $W(P(\Phi)-P(\Phi^\prime),-)$.

\begin{defn}[\cite{Kevin-Owen}] Given a quantum observable $O[L]$ at scale $L$, we define its value $O[\Phi]$ at the parametrix $\Phi$ by requiring that
$$
  I[\Phi]+\delta O[\Phi]:=W\bracket{P(\Phi)-\mathbb P_0^L, I[L]+\delta O[L]},
$$
where $\delta$ is a square-zero parameter. The map $O[L]\mapsto O[\Phi]$ defines a homotopy between the corresponding cochain complexes of observables.
\end{defn}

\iffalse
It is straightforward to prove the following lemma:
\begin{lem}
$I[\Phi]$ satisfies the following $\Phi$-quantum master equation
$$
  \bracket{Q_{\Phi}+\hbar \Delta_{\Phi}+{F_{l_1}\over \hbar}}e^{I[\Phi]/\hbar}=R e^{I[\Phi]/\hbar}.
$$
Here $Q_{\Phi}:=Q+l_1^2 \widehat{P(\Phi)}$, where $\widehat{P(\Phi)}$ denotes the linear operator on $\E$ whose kernel is $P(\Phi)$.  Moreover, the evaluation map
$$
   O[L] \to O[\Phi]
$$
gives an isomorphism of complexes
$$
      \bracket{\mathcal{O}(\mathcal{E})[[\hbar]],Q_L+\{I[L],-\}_L+\hbar\Delta_L}\to
\bracket{\mathcal{O}(\mathcal{E})[[\hbar]],Q_\Phi+\{I[\Phi],-\}_\Phi+\hbar\Delta_\Phi}.
$$
\end{lem}
\fi

\begin{defn} Given $O\in \OO(\E)=\prod\limits_{k,i\geq 0}\Sym^i\bracket{\E^\vee} \hbar^k$, we will let $O_{i}^{(k)}$ denote the
corresponding component, i.e.
$$
  O=\sum_{k,i\geq 0}O_{i}^{(k)}\hbar^k.
$$
\end{defn}

\begin{defn}[\cite{Kevin-Owen}] We say that a quantum observable $O[L]$ has support in $U$, if for any $k,i\geq 0$, there exists a parametrix $\Phi$ such that
$$
   \text{Supp}\bracket{O[\Phi]_{i}^{(k)}}\subset U.
$$
\end{defn}

As shown in \cite{Kevin-Owen}, the subspace of quantum observables supported in $U$ forms a sub-cochain complex of $\text{Obs}^q(\Sigma_g)$, which will be denoted by $\text{Obs}^q(U)$.

\subsubsection{Local quantum observable} Let $U$ be a disk on $\Sigma_g$. As shown in \cite{Kevin-Owen} with great generality, the cohomology of the local quantum observables
$$
  H^*\bracket{\text{Obs}^q(U)}
$$
defines a deformation of $H^*\bracket{\text{Obs}^{cl}(U)}$:
\begin{align}\label{local-quantum-classical}
H^*\bracket{\text{Obs}^q(U)}\otimes_{\C[[\hbar]]}\C \iso H^*\bracket{\text{Obs}^{cl}(U)}.
\end{align}
We will construct a splitting map in this subsection, reflecting the vanishing of quantum corrections for observables in our B-model.

 Let $\eta\in H^2_c(U)$ be a fixed generator with $\int_U\eta=1$. By the proof of Propostion \ref{prop-classical-ob}, it induces a quasi-isomorphic embedding
 $$
 \left(\mathcal{A}_X\otimes_{\mathcal{O}_X}\text{Jet}^{hol}_X(\wedge^*T_X),d_{D_X}\right)\into \text{Obs}^{cl}(U),
 $$
and different choices of $\eta$ are homotopic equivalent. Let $\mu\in \mathcal{A}_X\otimes_{\mathcal{O}_X}\text{Jet}^{hol}_X(\wedge^*T_X)$, and we will denote by $O_\mu$
the corresponding local classical observable. Let $\{O_\mu[L]|L>0\}$ denote the RG flow of the classical observable $O_\mu$.  More
explicitly, we define $O_\mu[L]$ by requiring that
$$
I[L]+\delta O_\mu[L]=\lim_{\epsilon\to 0}W(\mathbb{P}_\epsilon^L,I_{cl}+\hbar I_{qc}+\delta O_\mu),
$$
where $\delta^2=0$,  and $I_{qc}$ denotes the one-loop quantum correction in
equation (\ref{eqn:local-version-eqn-one-loop-correction}). The existence of the limit follows from Lemma/Definition
\ref{lem:naive quantization} and the observation that the distribution $O_\mu$ is in fact smooth (tensor products of $\eta$'s). By
construction, $O_\mu[L]$ is a local quantum observable supported in $U$. We denote the above map by
$$
     \Psi: \mathcal{A}_X\otimes_{\mathcal{O}_X}\text{Jet}^{hol}_X(\wedge^*T_X)\to \text{Obs}^{q}(U), \quad \mu\mapsto O_\mu[L].
$$

\begin{prop}\label{prop:local-observable} $\Psi$ is a cochain map.
\end{prop}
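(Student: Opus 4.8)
The plan is to prove that $\Psi$ intertwines the jet-complex differential $d_{D_X}$ with the quantum differential $U_L := Q_L + \{I[L],-\}_L + \hbar\Delta_L$ on $\text{Obs}^q(U)$, i.e. that $U_L\,O_\mu[L] = O_{d_{D_X}\mu}[L]$ for every $\mu$. The strategy is to transport this identity to scale $0$ through the renormalization group flow, where it decouples into the classical cochain-map property already contained in Proposition \ref{prop-classical-ob} and a separate vanishing of quantum corrections for the smooth observable $O_\mu$.

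First I would rewrite the defining relation for $O_\mu[L]$ in exponential form. Setting $I_q := I_{cl} + \hbar I_{qc}$ and $D_L := Q_L + \hbar\Delta_L + F_{l_1}/\hbar$, the definition of $O_\mu[L]$ reads
\[
e^{(I[L]+\delta O_\mu[L])/\hbar} = \lim_{\epsilon\to 0} e^{\hbar\,\partial/\partial\mathbb{P}_\epsilon^L}\, e^{(I_q+\delta O_\mu)/\hbar}, \qquad \delta^2=0 .
\]
Applying $D_L$ and pushing it through the flow by the compatibility in Lemma \ref{QME-RG} gives
\[
D_L\, e^{(I[L]+\delta O_\mu[L])/\hbar} = \lim_{\epsilon\to 0} e^{\hbar\,\partial/\partial\mathbb{P}_\epsilon^L}\, D_\epsilon\, e^{(I_q+\delta O_\mu)/\hbar},
\]
with $D_\epsilon := Q_\epsilon + \hbar\Delta_\epsilon + F_{l_1}/\hbar$. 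The $\delta^0$-component is the scale-$L$ quantum master equation $D_L e^{I[L]/\hbar} = R\,e^{I[L]/\hbar}$, already established for the canonical quantization; the content lies in the $\delta^1$-component.

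Extracting the $\delta$-linear part, the left-hand side becomes $\hbar^{-1}\big(U_L O_\mu[L] + R\,O_\mu[L]\big)e^{I[L]/\hbar}$ by the conjugation computation of Lemma \ref{quantum-BRST}. For the right-hand side I would compute $D_\epsilon$ acting on $e^{I_q/\hbar}$ times the $\delta$-linear factor $O_\mu/\hbar$; as $\epsilon\to 0$ one has $Q_\epsilon\to Q$ and the regularized bracket with the local functional $I_{cl}$ converges to the classical Poisson bracket, so that the $\hbar^0$-part of this quantity is $(Q+\{I_{cl},-\})O_\mu = O_{d_{D_X}\mu}$, which is precisely the classical cochain-map property of the $\eta$-embedding (Proposition \ref{prop-classical-ob}, Lemma \ref{lem:local observable-iso-jet}). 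Running the resulting scale-$0$ functional back through the flow reproduces $\hbar^{-1}O_{d_{D_X}\mu}[L]$, together with the same $R\,O_\mu[L]$ term produced by the interaction. Cancelling $e^{I[L]/\hbar}$ and the common $R\,O_\mu[L]$ contributions yields $U_L O_\mu[L] = O_{d_{D_X}\mu}[L]$, as desired.

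The main obstacle is the $\hbar^1$-part of the scale-$0$ limit, namely showing that the quantum correction $\hbar\big(\Delta_\epsilon + \{I_{qc},-\}_\epsilon\big)O_\mu$ contributes nothing as $\epsilon\to 0$; since $O_{d_{D_X}\mu}$ is $\hbar$-independent, this term must vanish for the identity to close up. This is the observable analogue of the one-loop anomaly computation of Theorem \ref{thm:obstruction-quantization}: the self-contractions produced by $\Delta_\epsilon$ on $O_\mu\cdot e^{I_q/\hbar}$ are governed by wheel diagrams carrying the insertion $O_\mu$, which could a priori diverge or leave a residual anomaly. Here the crucial input is that $O_\mu$ is a \emph{smooth} functional, built from products of the smooth compactly supported generator $\eta\in H^2_c(U)$, so these wheels are exactly of the type treated in Lemmas \ref{lem:two-vertex-wheels} and \ref{lem:more-vertex-wheels}, and their $\epsilon\to 0$ limits vanish by the same geodesic-distance estimates used in Lemma/Definition \ref{lem:naive quantization}. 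Establishing this vanishing is what makes the classical cocycle structure lift verbatim to $U_L$, and I expect it to be the technical heart of the proof.
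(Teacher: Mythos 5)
Your skeleton matches the paper's proof: write $O_\mu[L]$ in exponential form, push the quantized differential $Q_\epsilon+\hbar\Delta_\epsilon+F_{l_1}/\hbar$ through the renormalization group flow (Lemma \ref{QME-RG}), identify the scale-$L$ side via Lemma \ref{quantum-BRST}, extract the classical cochain-map property to produce $O_{d_{D_X}\mu}$, and cancel the $R\,O_\mu[L]$ contributions using the quantum master equation. However, there is a gap at exactly the point you call the ``technical heart.'' The terms $\hbar\,\Delta_\epsilon O_\mu$ and $\hbar\{O_\mu,I_{qc}\}_\epsilon$ require no anomaly-style analysis and no $\epsilon\to 0$ limit at all: they vanish \emph{identically}, for every $\epsilon>0$, by a type reason. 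Both $O_\mu$ (built from products of the $2$-form $\eta\in H^2_c(U)$) and $I_{qc}$ (which has the form \eqref{correction-form}) accept only $0$-form inputs on $\Sigma_g$, while the kernel $\mathbb{K}_\epsilon$ has analytic part a $2$-form on $\Sigma_g\times\Sigma_g$; contracting it into two $0$-form slots would use its $(0,0)$-form component, which is zero. This one-line observation is what the paper uses, and your proposal is missing it.

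Moreover, the mechanism you propose in its place would not apply as stated: Lemmas \ref{lem:two-vertex-wheels} and \ref{lem:more-vertex-wheels} concern wheels all of whose vertices carry $I_{cl}$, with $\mathbb{K}_\epsilon-\mathbb{K}_0$ on an internal edge of the wheel; they say nothing about self-contractions of $O_\mu$ or about contractions between $O_\mu$ and $I_{qc}$, so deferring the vanishing to ``the same geodesic-distance estimates'' does not actually establish it. Where the smoothness of $O_\mu$ and those analytic lemmas genuinely enter is a different step, which your write-up passes over as routine: one must justify that the $\epsilon\to 0$ limits of the RG flows \emph{with insertions} exist, i.e.\ that $e^{\hbar\,\partial/\partial\mathbb{P}_\epsilon^L}$ applied to $\bigl(Q_\epsilon O_\mu+\{I_{cl},O_\mu\}_\epsilon\bigr)e^{I_{cl}/\hbar+I_{qc}}$ and to $O_{d_{D_X}\mu}\,e^{I_{cl}/\hbar+I_{qc}}$ converges, so that the scale-$0$ identity can be transported to scale $L$; this is where the paper invokes Lemma/Definition \ref{lem:naive quantization} and the two wheel lemmas, using that $O_\mu$ is a smooth (rather than merely distributional) functional. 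Once you replace the anomaly-style argument by the type-reason vanishing and attach the convergence lemmas to the correct step, your proof closes and coincides with the paper's.
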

\begin{proof} Let $U_L=Q_L+\hbar\Delta_L+\{I[L],-\}_L$ be the differential on quantum observables. By construction,
$$
   O_\mu[L]e^{I[L]/\hbar}=\lim_{\epsilon\to 0}e^{\hbar {\pa \over \pa \mathbb{P}_\epsilon^L}} \left(O_\mu e^{I_{cl}/\hbar+I_{qc}}\right).
$$
By Lemma \ref{quantum-BRST},
\begin{align*}
   &(U_L(O_\mu[L])+O_\mu[L]R)e^{I[L]/\hbar}
   =(Q_L+\hbar\Delta_L+F_{l_1}/\hbar)\bracket{O_\mu[L]e^{I[L]/\hbar}}\\
   =&\lim_{\epsilon\to 0}e^{\hbar {\pa \over \pa \mathbb{P}_\epsilon^L}} (Q_\epsilon+\hbar\Delta_\epsilon+F_{l_1}/\hbar)\bracket{O_\mu e^{I_{cl}/\hbar+I_{qc}}}\\
   =&\lim_{\epsilon\to 0}e^{\hbar {\pa \over \pa \mathbb{P}_\epsilon^L}}\bracket{\bracket{Q_\epsilon O_\mu+ \{I_{cl},O_\mu\}_\epsilon}
e^{I_{cl}/\hbar+I_{qc}}+O_\mu (Q_\epsilon+\hbar\Delta_\epsilon+F_{l_1}/\hbar)e^{I_{cl}/\hbar+I_{qc}}}
%   =&\lim_{\epsilon\to 0}e^{\hbar {\pa \over \pa \mathbb{P}_\epsilon^L}}\bracket{\bracket{Q_\epsilon O_\mu+
%\{I_{cl},O_\mu\}_\epsilon} e^{I_{cl}/\hbar+I_{qc}}+O_\mu
%(\{I_{cl},I_{cl}\}_\epsilon-\{I_{cl},I_{cl}\}_0+R)e^{I_{cl}/\hbar+I_{qc}}},
\end{align*}
where we have used the fact that both $O_\mu$ and $I_{qc}$ can only have non-trivial inputs for $0$-forms on $\Sigma_g$, hence
$$
\hbar\Delta_\epsilon O_\mu=\{O_\mu, I_{qc}\}_\epsilon=0
$$
by the type reason. Since the distribution $O_\mu$ is in fact smooth,  we are safe to take $\epsilon\to 0$ by a similar argument as Lemma \ref{lem:naive quantization}, Lemma \ref{lem:two-vertex-wheels} and Lemma \ref{lem:more-vertex-wheels}. The first term above gives
$$
\lim_{\epsilon\to 0}e^{\hbar {\pa \over \pa \mathbb{P}_\epsilon^L}}\bracket{\bracket{Q_\epsilon O_\mu+ \{I_{cl},O_\mu\}_\epsilon} e^{I_{cl}/\hbar+I_{qc}}}=e^{\hbar {\pa \over \pa \mathbb{P}_0^L}} (O_{d_{D_X}\mu} e^{I_{cl}/\hbar+I_{qc}}).
$$
The quantum master equation implies that the second term is
$$
\lim_{\epsilon\to 0}e^{\hbar {\pa \over \pa \mathbb{P}_\epsilon^L}}\bracket{O_\mu (Q_\epsilon+\hbar\Delta_\epsilon+F_{l_1}/\hbar)e^{I_{cl}/\hbar+I_{qc}}}=e^{\hbar {\pa \over \pa \mathbb{P}_0^L}} (O_\mu R e^{I_{cl}/\hbar+I_{qc}})=O_\mu[L]Re^{I[L]/\hbar}.
$$
It follows that
\begin{align*}
U_L(O_\mu[L])e^{I[L]/\hbar}=&e^{\hbar {\pa \over \pa \mathbb{P}_0^L}} (O_{d_{D_X}\mu}) e^{I_{cl}/\hbar+I_{qc}}\\
                           =&O_{d_{D_X}\mu}[L]e^{I[L]/\hbar},
\end{align*}
i.e., $
 U_L(\Psi(\mu))=\Psi(d_{D_X}\mu)
$
as desired.
\end{proof}

\begin{cor}\label{cohomology-quantum-local}
The cohomology of local quantum observables on a disk $U$ is given by
$$
  H^*\bracket{\text{Obs}^q(U)}\iso H^*\bracket{X, \wedge^*T_X}[[\hbar]].
$$
\end{cor}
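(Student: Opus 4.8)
The plan is to promote the cochain map $\Psi$ of Proposition \ref{prop:local-observable} to a $\C[[\hbar]]$-linear quasi-isomorphism. Extending $\Psi$ $\C[[\hbar]]$-linearly gives a cochain map
$$
\Psi: \bracket{\mathcal{A}_X\otimes_{\mathcal{O}_X}\Jet^{hol}_X(\wedge^*T_X)}[[\hbar]]\to \text{Obs}^q(U),
$$
where the source carries the $\hbar$-independent differential $d_{D_X}$. Since $d_{D_X}$ acts termwise in $\hbar$ and is $\hbar$-independent, the cohomology of the source is $H^*\bracket{\mathcal{A}_X\otimes_{\mathcal{O}_X}\Jet^{hol}_X(\wedge^*T_X),d_{D_X}}[[\hbar]]$, which by the quasi-isomorphisms established in the proof of Proposition \ref{prop-classical-ob} equals $H^*(X,\wedge^*T_X)[[\hbar]]$. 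It therefore suffices to prove that this extended $\Psi$ is a quasi-isomorphism.

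First I would analyze $\Psi$ modulo $\hbar$. By construction $O_\mu[L]$ is the renormalization group flow of the classical observable $O_\mu$, so reducing modulo $\hbar$ discards all loop contributions, and $\Psi \bmod \hbar$ is the classical embedding $\mu\mapsto O_\mu$ of Proposition \ref{prop-classical-ob}. That embedding is a quasi-isomorphism onto $\text{Obs}^{cl}(U)$, and under the general deformation identity \eqref{local-quantum-classical} the reduction $H^*\bracket{\text{Obs}^q(U)}\otimes_{\C[[\hbar]]}\C \iso H^*\bracket{\text{Obs}^{cl}(U)}$ intertwines $\Psi\bmod\hbar$ with this classical embedding. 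Hence $\Psi$ induces an isomorphism on cohomology after setting $\hbar=0$.

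The final step is a standard completeness argument. Both the source and $\text{Obs}^q(U)$ are complexes of $\C[[\hbar]]$-modules that are separated and complete for the $\hbar$-adic filtration and are $\hbar$-torsion-free (the latter being a $\C[[\hbar]]$-submodule of the torsion-free $\OO(\E)[[\hbar]]$), with differentials preserving the filtration. A morphism of such complexes inducing a quasi-isomorphism modulo $\hbar$ is automatically a quasi-isomorphism: this follows from the spectral sequence of the $\hbar$-adic filtration, or inductively via the snake lemma applied to $0\to C\xrightarrow{\hbar^{n}} C\to C/\hbar^{n}C\to 0$, together with the Mittag-Leffler condition ensuring the inverse limit over $n$ is exact. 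Combining this with the previous paragraph yields $H^*\bracket{\text{Obs}^q(U)}\iso H^*(X,\wedge^*T_X)[[\hbar]]$.

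The main obstacle I expect lies not in the formal completeness lemma but in justifying the reduction step, namely that there are genuinely no quantum corrections deforming the classical embedding in cohomology. This is exactly what the type and degree considerations in the proof of Proposition \ref{prop:local-observable} secure: the identities $\hbar\Delta_\epsilon O_\mu=\{O_\mu,I_{qc}\}_\epsilon=0$ hold because $O_\mu$ pairs only with $0$-forms on $\Sigma_g$, so that $\Psi$ is a strict cochain map carrying no higher $\hbar$-order anomaly. The convergence of the $\epsilon\to0$ limit defining $O_\mu[L]$, and hence the well-definedness of $\Psi$, is controlled by the same Feynman-weight estimates used in Lemma/Definition \ref{lem:naive quantization} and Lemmas \ref{lem:two-vertex-wheels} and \ref{lem:more-vertex-wheels}, applied to the smooth observable $O_\mu$.
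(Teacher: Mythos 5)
Your proposal is correct and takes essentially the same route as the paper: its one-line proof, that the cochain map $\Psi$ of Proposition \ref{prop:local-observable} defines a splitting of \eqref{local-quantum-classical}, is exactly your argument with the mod-$\hbar$ identification and the $\hbar$-adic completeness/spectral-sequence step written out in detail.
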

\begin{proof} In fact, the map $\Psi$ defines a splitting of  \eqref{local-quantum-classical}.

\end{proof}

This says that the local observables do not receive quantum corrections via perturbative quantization, which is a very special property of B-model.

\subsubsection{Global quantum observable}
Now we consider global observables on the Riemann surface $\Sigma_g$. The cochain complex of
global quantum observables on $\Sigma_g$ at scale $L$ is defined as
\begin{equation}\label{eqn:global-quantum-observable}
\text{Obs}^q(\Sigma_g)[L]:=(\mathcal{O}(\mathcal{E})[[\hbar]],Q_L+\{I[L],-\}_L+\hbar\Delta_L).
\end{equation}
Since the complexes of quantum observables are homotopic equivalent for different length scales, we only need to
compute the cohomology of global observables at scale $L=\infty$.  By considering the $d_{\Sigma_g}$-cohomology first, the
complex (\ref{eqn:global-quantum-observable}) at $L=\infty$ is quasi-isomorphic to the following complex:
$$
 \bracket{\OO\bracket{\mathbb{H}^*(\Sigma_g)\otimes \bracket{\g_X[1]\oplus \g_X^\vee}}[[\hbar]],
l_1+\{I^{(0)}[\infty]|_{\H},-\}_{\infty}+\hbar(\{I^{(1)}[\infty]|_{\H},-\}_\infty+\Delta_\infty)},
$$
where $\H^*(\Sigma_g)$ denotes the space of harmonic forms on $\Sigma_g$. $I^{(0)}[\infty]|_{\H}$ and $I^{(1)}[\infty]|_{\H}$ are the
restrictions of the tree-level and one-loop effective interactions to the space of harmonic fields:
$$
\H:=\H^*(\Sigma_g)\otimes \bracket{\g_X[1]\oplus \g_X^\vee}.
$$

\begin{lem}\label{effective-infinity} Restricted to the harmonic fields at scale $L=\infty$, we have
$$
I^{(0)}[\infty]|_{\H}=I_{cl}|_{\H}, \quad I^{(1)}[\infty]|_{\H}=I_{qc}|_{\H}+I^{(1)}_{naive}[\infty]|_{\H}.
$$
\end{lem}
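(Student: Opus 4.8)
The plan is to first unwind the definitions so that both equalities collapse to a single statement about tree-level graphs. Recall that the full quantization splits as $I[L]=I_{naive}[L]+\hbar\, I_{qc}[L]$, where by the $\C^\times$-invariance of $I_{cl}/\hbar$ (Proposition \ref{prop:invariant-actions}) the naive piece carries only two $\hbar$-orders, $I_{naive}[L]=I^{(0)}_{naive}[L]+\hbar\, I^{(1)}_{naive}[L]$, and $I_{qc}[L]$ is the tree-level flow of $I_{qc}$ from Lemma \ref{lem:obstruction-local-effective}. Hence $I^{(0)}[\infty]=I^{(0)}_{naive}[\infty]$ and $I^{(1)}[\infty]=I^{(1)}_{naive}[\infty]+I_{qc}[\infty]$. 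The term $I^{(1)}_{naive}[\infty]|_{\H}$ occurs on both sides of the second claimed identity and cancels, so the lemma is equivalent to the two tree-level statements
\[
I^{(0)}_{naive}[\infty]|_{\H}=I_{cl}|_{\H},\qquad I_{qc}[\infty]|_{\H}=I_{qc}|_{\H}.
\]
Both assert that running the tree-level renormalization group flow with propagator $\mathbb{P}_0^\infty$ out to $L=\infty$ and then restricting to harmonic fields returns the seed interaction. I would therefore isolate the unifying claim: for any connected tree $\gamma$ with at least one internal edge, the Feynman weight $W_\gamma(\mathbb{P}_0^\infty,I_{cl})$ (resp. the weight $W_{\gamma,v}(\mathbb{P}_0^\infty,I_{cl},I_{qc})$ with one vertex labelled by $I_{qc}$) vanishes identically on harmonic inputs; granting this, only the single-vertex graph survives and produces $I_{cl}|_{\H}$ (resp. $I_{qc}|_{\H}$).

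To prove the tree-vanishing claim I would first treat the case of one internal edge. The analytic part of the propagator represents the operator $\widehat P=\int_0^\infty d_{\Sigma_g}^* e^{-tH}\,dt=d_{\Sigma_g}^* G$ with $G$ the Green operator; in particular $\widehat P$ annihilates harmonic forms and its image lies in $\im(d_{\Sigma_g}^*)$. Passing the output of one vertex through the edge inserts a factor $d_{\Sigma_g}^* G(\eta)$, where $\eta$ is a wedge of harmonic forms produced by the adjacent vertex. A short form-degree and field-type count (using that $\widehat P$ lowers form degree by one, that $G$ kills harmonic forms, and that on a surface a wedge of harmonic forms is again harmonic except when exactly two of the factors are harmonic $1$-forms) collapses every contribution except those in which the edge carries $d_{\Sigma_g}^* G(\omega_1\wedge\omega_2)$ for harmonic $1$-forms $\omega_1,\omega_2$. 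For these survivors I would integrate by parts at the receiving vertex: by Hodge adjointness the factor $d_{\Sigma_g}^*$ can be transferred onto an adjacent external form, turning $\langle d_{\Sigma_g}^* G\eta,-\rangle$ into $\langle G\eta, d_{\Sigma_g}(-)\rangle$, and since the external fields are harmonic, hence $d_{\Sigma_g}$-closed and $d_{\Sigma_g}^*$-closed, the result vanishes. This is precisely the cancellation that makes the homotopy transfer trivial in our setting.

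For a general tree with several internal edges I would argue by induction on the number of edges, using the homotopy identity $[d_{\Sigma_g},\widehat P]=\mathrm{id}-\Pi$ with $\Pi$ the $L^2$-projection onto harmonic forms. Selecting a leaf edge and commuting its $d_{\Sigma_g}^*$ through the graph by Hodge adjointness either lands a $d_{\Sigma_g}$ on a harmonic external leg, giving zero, or replaces the edge by $\mathrm{id}$ or by the harmonic projector $\Pi$; in either of the latter two cases the graph reduces to one with strictly fewer internal edges (passing, in the $\Pi$ case, through a harmonic intermediate state), to which the inductive hypothesis applies. Because $I_{qc}$ couples only to $0$-forms on $\Sigma_g$ and harmonic $0$-forms are constants, the very same analysis handles the graphs containing an $I_{qc}$ vertex. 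The main obstacle I anticipate is organizing this multi-edge bookkeeping cleanly—tracking which propagator factor is integrated by parts together with the attendant signs and symmetry factors, and confirming that the $\epsilon\to 0$ limit and the $L\to\infty$ behaviour are compatible with the integration by parts—rather than any single analytic estimate; the requisite convergence is already supplied by Lemma/Definition \ref{lem:naive quantization} and the vanishing Lemmas \ref{lem:two-vertex-wheels}--\ref{lem:more-vertex-wheels}, so the real content is the harmonic cancellation described above.
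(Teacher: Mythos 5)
Your reduction of the lemma to the vanishing, on harmonic inputs, of tree weights with at least one internal edge is correct and matches the paper's starting point, and your one-edge case is sound: it rests on exactly the three facts the paper uses ($d^{*}_{\Sigma_g}$ annihilates harmonic forms; on a surface a wedge of harmonic forms fails to be harmonic only when exactly two factors lie in $\H^{1}(\Sigma_g)$; and $\int_{\Sigma_g}(d^{*}a)\wedge b=0$ for harmonic $b$). The gap is in your inductive step for trees with several edges, and it is concrete. The pairing at a vertex is the \emph{wedge} pairing $\int_{\Sigma_g}\alpha\wedge\beta$, and with respect to this pairing the formal adjoint of $d^{*}$ is (up to sign) $d^{*}$ again, not $d$; if you instead pass to the $L^{2}$ pairing so as to write $\langle d^{*}x,y\rangle=\langle x,dy\rangle$, the second slot acquires a Hodge star. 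Since neither $d^{*}$ nor $d\circ *$ is a derivation, the transferred operator cannot be distributed over the individual legs (harmonic externals and other propagators) attached to the receiving vertex, so the $d$ you need in order to invoke the homotopy identity $[d,\widehat{P}]=\mathrm{id}-\Pi$ never materializes by the manipulation you describe.

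Even granting that identity, your conclusion that ``the graph reduces to one with strictly fewer internal edges, to which the inductive hypothesis applies'' fails: replacing an edge by $\mathrm{id}$ (the kernel $K_{0}$) contracts it and fuses two $I_{cl}$-vertices into a composite multiplication vertex, which lies outside your stated hypothesis --- and it must, because contracted graphs do \emph{not} vanish on harmonic inputs (a fully contracted tree is a single vertex, whose weight contains terms like $\int_{\Sigma_g}\omega_{1}\wedge\omega_{2}\neq 0$; this is precisely the surviving contribution $I_{cl}|_{\H}$, so an induction covering contracted graphs would ``prove'' $I_{cl}|_{\H}=0$). Similarly the $\Pi$-term disconnects the tree and one component can be edgeless, hence nonvanishing. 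Finally, your sketch never treats the configuration, unavoidable once the tree has three or more vertices, in which two propagator legs meet at the same vertex (e.g.\ the middle vertex of a path with constants as its externals); its vanishing needs the further identities $\int_{\Sigma_g}(d^{*}a)\wedge(d^{*}b)=0$ and $\widehat{P}^{2}=d^{*}G\,d^{*}G=0$, neither of which appears in your proposal. These ingredients would let you repair the induction (strengthen the hypothesis to allow multiplication-type vertices and coexact $1$-form inputs), but note that the paper avoids induction altogether: it roots the tree, orients edges toward the root, kills every leaf whose output is harmonic, argues by degree bookkeeping that each surviving propagator contributes its $1$-form leg root-wards, and applies $\int_{\Sigma_g}(d^{*}a)\wedge b=0$ once, at the root.
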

\begin{proof}
We only prove the first identity, and the second one can be  proved similarly. Let $\Gamma$ be a tree diagram with at least two vertices. We show that the Feynman weight
$W_{\Gamma}(\mathbb{P}_0^\infty, I_{cl})$ associated to $\Gamma$ vanishes when restricted to harmonic fields,
$$
 W_{\Gamma}(\mathbb{P}_0^\infty, I_{cl})|_{\H}=0.
$$
 We choose an orientation of the internal edges of $\Gamma$ such that every vertex is connected by a unique oriented path to a vertex
$v_\bullet$ in $\Gamma$, where $v_\bullet$ has only one edge which is oriented toward $v_\bullet$.  The vertex $v_\bullet$ will be
called the root. A vertex which has only one edge oriented outward will be called a leaf. By assumption, $\Gamma$ has at least
one leaf which is distinct from the root.

If a leaf has only $\H^0(\Sigma_g)$ and  $\H^2(\Sigma_g)$ inputs on its tails,  then the propagator $\mathbb{P}_0^\infty$ attached to its edge
will annihilates $ W_{\Gamma}(\mathbb{P}_0^\infty, I_{cl})|_{\H}$ since wedge products of harmonic $0$-forms and $2$-forms are still
harmonic, and
$$
{d^*}=0 \quad \mbox{on}\hspace{3mm} \H^*(\Sigma_g).
$$
Similarly, if a leaf has only one input of type $\H^1(\Sigma_g)$, then $W_{\Gamma}(\mathbb{P}_0^\infty, I_{cl})|_{\H}=0$. So we can assume that all leaves have at least two inputs of type
$\H^1(\Sigma_g)$ on their tails (possibly other inputs of type $\H^0(\Sigma_g)$). Since $\mathbb{P}_0^\infty$ is a $1$-form on $\Sigma_g\times\Sigma_g$, it is easy to see by tracing the
path that
the incoming edge of the root $v_\bullet$ has to contribute a $1$-form to the copy of $\Sigma_g$ corresponding to $v_\bullet$
which is $d^*$-exact, and by the type reason there is exactly one extra input of type $\H^1(\Sigma_g)$ on one tail of $v_\bullet$.
Since
$$
  \int_{\Sigma_g} d^*(a) \wedge b=0, \quad \forall a\in \A(\Sigma_g), b\in \H^*(\Sigma_g).
$$
This again implies that $W_{\Gamma}(\mathbb{P}_0^\infty, I_{cl})|_{\H}=0$.
\end{proof}
For later discussions on correlation functions of observables, we also need some description of the one-loop naive interaction as in the following Lemma:
\begin{lem}\label{lem:one-loop-naive-interaction-harmonic-fields}
For Riemann surfaces $\Sigma_g$ of genus $g=0$ and $g=1$, the infinity scale one-loop naive interaction vanishes when restricted to $\H$:
$$
I^{(1)}_{naive}[\infty]|_\H=0.
$$
\end{lem}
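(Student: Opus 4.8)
The plan is to expand $I^{(1)}_{naive}[\infty]=\sum_\gamma \frac{1}{|\mathrm{Aut}(\gamma)|}W_\gamma(\mathbb{P}_0^\infty,I_{cl})$ over connected genus-$1$ graphs and to show that every weight vanishes once all external tails carry harmonic fields. Such a $\gamma$ is a wheel with trees attached. The self-loop ($n=1$) wheel vanishes exactly as in case (1) of the proof of Lemma/Definition \ref{lem:naive quantization}, since $d_{\Sigma_g}^* K_t$ is regular on the diagonal. For $g=0,1$ the wedge of harmonic forms on $\Sigma_g$ is again harmonic (constants together with the volume class on $\mathbb{P}^1$; constant-coefficient forms on the flat torus), and the propagator operator $G:=\int_0^\infty Q^{GF}e^{-tH}dt$ with kernel $\mathbb{P}_0^\infty$ annihilates harmonic forms because $Q^{GF}=d_{\Sigma_g}^*$ does. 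Hence any leaf of an attached tree feeds $G$ the wedge of its harmonic inputs, contributing $G(\text{harmonic})=0$; this kills every graph with a nontrivial tree and reduces us to pure wheels, each vertex carrying only harmonic tails whose wedge $a_i$ is a harmonic form of definite degree. The requirement that each of the $n$ vertices integrate a top form against its two $1$-form propagator legs gives the constraint $\sum_i\deg a_i=n$, and the analytic part of the weight becomes, up to the combinatorial $\g_X$-factor, the graded trace around the loop
\[
W_\gamma(\mathbb{P}_0^\infty,I_{cl})\big|_{\H}\ \propto\ \op{Str}_{\A_{\Sigma_g}}\!\bigl(G\,m_{a_1}\cdots G\,m_{a_n}\bigr),
\]
where $m_{a_i}$ is exterior multiplication by $a_i$ and the super-trace sign records the de Rham degree.

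The decisive elementary fact is $G^2=0$, which follows from $(d_{\Sigma_g}^*)^2=0$. Thus whenever some vertex carries a $0$-form (i.e. a constant) input, $m_{a_i}$ acts as a scalar, the two adjacent propagators combine into $G^2$, and the weight vanishes. For $g=0$ the only harmonic degrees available are $0$ and $2$; since $\sum_i\deg a_i=n<2n$, not all $a_i$ can be $2$-forms, so at least one vertex carries a constant input and every genus-$0$ weight vanishes.

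For $g=1$ the same $G^2=0$ argument removes all wheels with a constant vertex-input, so each surviving $a_i$ has degree $\geq 1$; since there are $n$ of them and $\sum_i\deg a_i=n$, every $a_i$ must be a harmonic $1$-form, i.e. $a_i\in\{dx,dy\}$ on the flat torus. Here I would use flatness: $G$, $H$ and the $m_{a_i}$ are all translation invariant, hence simultaneously block-diagonalized by the Fourier modes $e_k$, $k\in\Z^2$. The mode $k=0$ is the harmonic subspace, on which $G=0$. For each $k\neq 0$ the relevant space is the four-dimensional complex $V_k=\langle e_k,\ e_k\,dx,\ e_k\,dy,\ e_k\,dx\wedge dy\rangle$, which is acyclic (its $d$-cohomology would be harmonic, forcing $k=0$) and on which $G=\tfrac{1}{4\pi^2|k|^2}d_{\Sigma_g}^*$ is a contracting homotopy. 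A direct computation of the $4\times4$ blocks of $G\,m_{a_1}\cdots G\,m_{a_n}$ shows that its degree-$0$ and degree-$1$ diagonal contributions coincide while the degree-$2$ contribution vanishes, so the per-mode super-trace is zero; summing over $k$ gives $\op{Str}_{\A_{\Sigma_g}}=0$ and hence $I^{(1)}_{naive}[\infty]|_{\H}=0$.

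I expect the genus-$1$ super-trace cancellation to be the main obstacle. Unlike the genus-$0$ case it is not a pointwise degeneration of the operator but a cancellation between the even and odd de Rham sectors, reflecting the acyclicity of the positive Fourier modes and the role of $\mathbb{P}_0^\infty$ as a homotopy between the identity and the harmonic projection. The flat, constant-curvature structure—which makes the harmonic forms parallel and all the relevant operators translation invariant—is precisely what renders this cancellation explicit and mode-by-mode; without it the naive Hodge degeneration used for $g=0$ fails, because the harmonic $1$-forms $dx,dy$ bridge the co-exact and exact sectors and can sustain the loop.
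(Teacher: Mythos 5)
Your reduction to wheels and your genus-$0$ argument coincide with the paper's own proof: attached trees die because wedges of harmonic forms stay harmonic and $Q^{GF}$ annihilates them, and at genus $0$ the degree count forces some vertex to carry only $0$-form inputs, which is then killed by $(d_{\Sigma_g}^*)^2=0$. At genus $1$ you genuinely diverge. The paper splits wheels by parity: for an even number of vertices the weight vanishes for a purely combinatorial reason --- the two orientations of the Poisson kernel $\sum_i(X_i\otimes X^i+X^i\otimes X_i)$ contribute with opposite signs, exactly as in Lemma \ref{lem:two-vertex-wheels} --- while for an odd number $m$ of vertices the analytic factor is a linear combination of sums $\sum_{(a,b)\neq(0,0)}(a\tau+b)^{-k}(a\bar\tau+b)^{-(m-k)}$ of odd total weight, which vanish under $(a,b)\mapsto(-a,-b)$. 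You instead prove a single parity-independent statement: once $G^2=0$ forces every vertex input to be a harmonic $1$-form, the analytic factor is $\op{Str}\bigl(G\,m_{a_1}\cdots G\,m_{a_n}\bigr)$, which vanishes Fourier-mode-by-mode. Your asserted block computation is in fact correct: on each mode $k\neq 0$ one finds $\Tr_0=\Tr_1=\prod_i c_{a_i}$ with $c_a=-2\pi i\,(k\cdot a)/\lambda_k$, and the degree-$2$ block is zero since wedging a $1$-form into a $2$-form vanishes, so the super-trace $\Tr_0-\Tr_1+\Tr_2$ is zero. Your route buys uniformity (no parity split, no Eisenstein-series input) and a structural explanation via acyclicity of the nonzero modes with $G$ as contracting homotopy; the paper's route buys insensitivity to sign conventions, since the even case never touches the analytic factor and the odd case uses a term-by-term symmetry of the mode sum. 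That difference locates your one real soft spot: the identification of the Feynman weight with the super-trace --- in particular the relative minus sign between the two ways form degree can route around the wheel, together with the claim that the combinatorial $\g_X$-factor cleanly factors out --- is asserted rather than proved, and the entire cancellation hinges on that sign: with the opposite sign the two routings add, and for even wheels the analytic factor becomes a nonvanishing Eisenstein-type series, leaving only the paper's combinatorial cancellation to rescue the statement. The identification is correct (it is the same Koszul-sign mechanism by which the paper's anomaly computation produces $\op{Str}(K_\epsilon(z,z))$ rather than an ordinary trace, and it matches the factorization used in the paper's Appendix B), but a complete write-up must verify it against the paper's conventions rather than take it on faith.
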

\begin{proof}
For both genus $0$ and genus $1$ Riemann surfaces with constant curvature metric, the product of harmonic forms remains harmonic. Thus by the same
argument as in the proof of Lemma \ref{effective-infinity}, if a one-loop graph $\gamma$ is a wheel with nontrivial trees attached to it, then
$$
W_\gamma(\mathbb{P}_0^\infty,I_{cl})|_\H=0.
$$
Hence we only need to deal with wheels. For the genus $0$ Riemann surface $\mathbb{P}^1$, since there are no harmonic $1$-forms, there must be at least one vertex on the wheel,
attached to which all inputs are harmonic $0$-forms by the type reason. The corresponding Feynman integral vanishes since the
composite of two propagators $\mathbb{P}_0^\infty$ on that vertex is zero by $(d^*)^2=0$.

For an elliptic curve $\Sigma_1=\mathbb{C}/(\mathbb{Z}+\mathbb{Z}\tau)$, if the number of vertices on a wheel is even, then the vanishing of the associated Feynman weight can be
proved by the same argument as in Lemma \ref{lem:two-vertex-wheels}. For a wheel with odd number of vertices, a $\mathbb{Z}_2$-symmetry of the analytic propagator
$P_0^\infty$ results in the vanishing of the Feynman weights:  Let $dw$ be a harmonic $1$-form on $\Sigma_1$, and we assume without lost of generality that all the vertices of the
wheel are trivalent, and all the inputs are
of the form $dw\otimes g_i$, where $g_i\in\g_X$.
$$
\figbox{0.2}{one-loop-odd-vertices}
$$
Similar to \cite[Lemma 17.4.4]{Kevin-CS}, the analytic part of the corresponding Feynman weight $ W(P_0^\infty,I_{cl})(dw)$ will be a linear combination of
$$
\sum_{(a,b)\in\mathbb{Z}^2\setminus\{0\}}\dfrac{1}{(a\tau+b)^{k}(a\bar\tau+b)^{2n+1-k}},
$$ which clearly vanishes.
\end{proof}

Now let us compute the cohomology of the global quantum observables.
\begin{rmk}
In the following discussion, the harmonic forms $\H^k(\Sigma_g)$ sit at degree $k$, and $\Omega_X^1[1]\cong T_X^\vee[1]$ sits at degree $-1$.
\end{rmk}
%Let
%$$
%  \H^*(\Sigma_g):= \bigoplus\limits_{k=0}^2 \H^k[-k],
%$$
%where $\H^k:= \H^k(\Sigma_g)$ is concentrated at degree $k$.
\begin{lem}\label{splitting-isom}
There is a natural isomorphism of $\A_X$-modules
\begin{equation}\label{eqn:structure-sheaf-ringed-space-T-g}
\begin{aligned}
 &\OO\bracket{\mathbb{H}^*(\Sigma_g)\otimes \bracket{\g_X[1]\oplus \g_X^\vee}}
\iso\\
&\A_X\otimes_{\OO_X}
 \Jet_X^{hol}\bracket{\widehat{\Sym}\bracket{T_X\otimes \H^1(\Sigma_g)}^\vee \otimes \widehat{\Sym}\bracket{T_X\otimes \H^2(\Sigma_g)}^\vee
\otimes \widehat{\Sym}(\Omega^1_X[1]\otimes\H^*(\Sigma_g))^\vee}
\end{aligned}
\end{equation}
\end{lem}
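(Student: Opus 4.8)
The plan is to read off both sides as completed symmetric algebras over $\A_X^\sharp$ and to reduce the statement to the Chevalley--Eilenberg--to--jet identification already supplied by the $\rho^*$ isomorphism \eqref{eqn:identification-CE-jet} and Proposition \ref{prop:iso-CE-jet}. First I would unwind the left-hand side: by definition $\OO\bracket{\H}=\widehat{\Sym}_{\A_X^\sharp}\bracket{\H^\vee}$ with $\H=\H^*\bracket{\Sigma_g}\otimes_\C\bracket{\g_X[1]\oplus\g_X^\vee}$. Since $\H^*\bracket{\Sigma_g}$ is a finite-dimensional graded $\C$-vector space (by Hodge theory it splits as $\H^0\oplus\H^1\oplus\H^2$ according to form-degree) while $\g_X[1]$ and $\g_X^\vee$ are $\A_X^\sharp$-modules, the $\A_X^\sharp$-linear dual factors as $\H^\vee=\H^*\bracket{\Sigma_g}^\vee\otimes_\C\bracket{\g_X[1]^\vee\oplus\g_X}$, where I use $\bracket{\g_X^\vee}^\vee\iso\g_X$ coming from $\g_X^\vee=\A_X^\sharp\otimes_{\OO_X}T_X^\vee[1]$ and $\g_X=\A_X^\sharp\otimes_{\OO_X}T_X[-1]$.

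The key structural step is to single out the summand $\H^0\bracket{\Sigma_g}\otimes\g_X[1]\iso\g_X[1]$ as the unique copy carrying the curved $L_\infty$-structure of $\g_X$; its contribution to $\OO\bracket{\H}$ is exactly $\widehat{\Sym}_{\A_X^\sharp}\bracket{\g_X[1]^\vee}=C^*\bracket{\g_X}$. All remaining generators I would assemble into the symmetric algebra of a pro-bundle, using $\g_X[1]\leftrightarrow T_X$ and $\g_X^\vee\leftrightarrow\Omega^1_X[1]$: the $\A_X^\sharp$-linear dual of $\H^k\otimes\g_X[1]$ is literally $\bracket{T_X\otimes\H^k}^\vee$ for $k=1,2$, while the dual of $\H^*\otimes\g_X^\vee$ is $\bracket{\Omega^1_X[1]\otimes\H^*}^\vee$. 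Thus these three blocks assemble into the $\g_X$-module $V_{\dbar}$ associated to the pro-bundle $V:=\widehat{\Sym}\bracket{T_X\otimes\H^1}^\vee\otimes\widehat{\Sym}\bracket{T_X\otimes\H^2}^\vee\otimes\widehat{\Sym}\bracket{\Omega^1_X[1]\otimes\H^*}^\vee$, equipped with its constant (induced flat) structure, and the whole of $\OO\bracket{\H}$ is realized as the Chevalley--Eilenberg cochains $C^*\bracket{\g_X,V_{\dbar}}$.

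Finally I would apply the jet-bundle form of the CE identification, namely the $\rho^*$ isomorphism \eqref{eqn:identification-CE-jet} together with its module extension $C^*\bracket{\g_X,E_{\dbar}}\iso\A_X\otimes_{\OO_X}\Jet_X^{hol}(E)$ underlying Proposition \ref{prop:iso-CE-jet}, to the pro-bundle $E=V$. This produces precisely the right-hand side of \eqref{eqn:structure-sheaf-ringed-space-T-g}, and naturality follows because, as in \cite{Kevin-CS}, the identification is independent of the auxiliary splitting up to coherent homotopy. The main obstacle is the bookkeeping rather than any deep new input: one must track the degree shifts carefully so that $\g_X^\vee$ genuinely yields the shift $\Omega^1_X[1]$ and so that the form-degrees of the $\H^k$ factors land correctly; verify that the $\g_X$-module structure on $V_{\dbar}$ really is the constant one, so that the extra harmonic-form directions enter only as flat coefficients and the jet identification applies verbatim; and check that all of this is compatible with the completions $\widehat{\Sym}$, i.e.\ that one may pass the jet functor through the relevant inverse limits by working with pro-objects throughout.
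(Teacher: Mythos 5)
Your proposal is correct and follows essentially the same route as the paper: both split off the $\H^0(\Sigma_g)\otimes\g_X[1]$ factor to produce $C^*(\g_X)$, and then identify the symmetric algebra on the remaining dual generators with jets via the Chevalley--Eilenberg-to-jet module isomorphisms. The only difference is cosmetic: the paper applies the two appendix isomorphisms $\tilde{T}$ (Proposition \ref{proposition:T-dual-to-jet}) and $\tilde{K}$ (Proposition \ref{proposition:T_X-to-jet}) factor by factor and takes their tensor product, whereas you package all the remaining generators into one pro-bundle and invoke the identification once; note that for the $\g_X[1]^\vee$-generators the precise input is $\tilde{T}$ rather than Proposition \ref{prop:iso-CE-jet} itself, which covers only the polyvector side.
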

\begin{proof}
We have the following isomorphisms:
\begin{align*}
\mathcal{O}(\H^*(\Sigma_g)\otimes(\g_X[1]\oplus\g_X^\vee))&\cong\mathcal{O}(\H^0(\Sigma_g)\otimes\g_X[1])\otimes_{\A_X}\mathcal{O}
\left(\bigoplus_{k=1}^{2}\H^k(\Sigma_g)\otimes\g_X[1]\oplus\bigoplus_{k=0}^2\H^k(\Sigma_g)\otimes\g_X^\vee\right)\\
&\cong C^*(\g_X)\otimes_{\A_X}\widehat{\Sym}
\left(\bigoplus_{k=1}^{2}\H^k(\Sigma_g)\otimes\g_X[1]\oplus\bigoplus_{k=0}^2\H^k(\Sigma_g)\otimes\g_X^\vee\right)^\vee.\\
\end{align*}
It is clear that the tensor products of the isomorphisms $\tilde{T}$ and $\tilde{K}$ in Propositions
\ref{proposition:T-dual-to-jet} and
\ref{proposition:T_X-to-jet} respectively give the desired isomorphism.

\end{proof}

\begin{defn}
Let $\pi_2^*\bracket{\Omega_X^{2g-2}}$ denote the canonical flat section of the jet bundle
$$
 \Jet_X^{hol}\bracket{{\Sym}^{2g\cdot \dim_{\C}X}\bracket{T_X\otimes \H^1(\Sigma_g)}^\vee \otimes{\Sym}^{\dim_{\C}X}(\Omega^1_X[1]\otimes \H^0(\Sigma_g))^\vee
\otimes{\Sym}^{\dim_{\C}X}(\Omega^1_X[1]\otimes \H^2(\Sigma_g))^\vee}
$$
induced by the holomorphic volume form $\Omega_X$. Here we use the notation $\pi_2^*$  to be consistent with Definition \ref{def:jet-bundle}.
\end{defn}

We can view $\pi_2^*\bracket{\Omega_X^{2g-2}}$ as a quantum observable via the identification in Lemma \ref{splitting-isom}.  The general philosophy in \cite{Kevin-CS} says that
the quantization gives rise to a projective volume form, and the next proposition says that the volume form is exactly given by $\pi_2^*\bracket{\Omega_X^{2g-2}}$.

\begin{prop}\label{trivial-system}
The following embedding
$$
  \iota: \A_X((\hbar)) \into  \OO\bracket{\mathbb{H}^*(\Sigma_g)\otimes \bracket{\g_X[1]\oplus \g_X^\vee}}((\hbar))
$$
defined by
$$
   A\mapsto \iota(A):= \hbar^{-2\dim_{\C}X}A\otimes_{\mathcal{O}_X} {\pi_2^*\bracket{\Omega_X^{2g-2}}}, \quad \forall A\in \A_X
$$
is a quasi-isomorphism which is equivariant with respect to the $\C^\times$-symmetry defined in Section \ref{section:classical-symmetry}.
\end{prop}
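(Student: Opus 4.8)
The plan is to compute the cohomology of the target complex explicitly and recognize $\iota$ as producing a generator. First I would make the differential explicit. By Lemma~\ref{splitting-isom} the underlying graded space is $\A_X\otimes_{\OO_X}\Jet_X^{hol}(V)$, where $V$ is the completed symmetric bundle assembled from $T_X\otimes\H^1(\Sigma_g)$, $T_X\otimes\H^2(\Sigma_g)$ and $\Omega^1_X[1]\otimes\H^*(\Sigma_g)$. The scale $L=\infty$ differential is $l_1+\{I^{(0)}[\infty]|_\H,-\}_\infty+\hbar(\{I^{(1)}[\infty]|_\H,-\}_\infty+\Delta_\infty)$; by Lemma~\ref{effective-infinity} (together with Lemma~\ref{lem:one-loop-naive-interaction-harmonic-fields} when $g=0,1$, which kills the naive one-loop term on harmonics) this reduces to $l_1+\{I_{cl}|_\H,-\}+\hbar(\{I_{qc}|_\H,-\}+\Delta_\infty)$. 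The classical part $l_1+\{I_{cl},-\}$ is the Chevalley--Eilenberg differential, which under Lemma~\ref{splitting-isom} becomes $d_{D_X}$; the order-$\hbar$ part is, by the local computation \eqref{eqn:comparison-operator} and Proposition~\ref{correction-formula}, the BV operator built from $\partial_{\Omega_X}$ (the holomorphic de Rham differential on polyvector fields transported by $\lrcorner\,\Omega_X$). Thus the target becomes $(\A_X\otimes_{\OO_X}\Jet_X^{hol}(V)((\hbar)),\ d_{D_X}+\hbar\,\partial_{\Omega_X})$, a bicomplex since $d_{D_X}$ and $\partial_{\Omega_X}$ commute.

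Next I would dispose of the two formal properties. For the cochain condition, $\pi_2^*(\Omega_X^{2g-2})$ is a flat section of the jet bundle, so $d_{D_X}$ acts on $\iota(A)$ only through $d_X$ on the $\A_X$-factor, giving $d_{D_X}\iota(A)=\iota(d_XA)$; and $\partial_{\Omega_X}$ annihilates it because $\Omega_X^{2g-2}\lrcorner\,\Omega_X$ is a closed holomorphic form of top type. Hence $(d_{D_X}+\hbar\,\partial_{\Omega_X})\iota(A)=\iota(d_XA)$ and $\iota$ is a map of complexes from $(\A_X((\hbar)),d_X)$, shifted into cohomological degree $(2g-2)\dim_\C X$ by the degree of the volume class. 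For $\C^\times$-equivariance I would track weights: the $2gn$ factors dual to $\g_X[1]\otimes\H^1$ carry weight $0$, while each of the $2n$ factors dual to $\g_X^\vee$ carries weight $+1$, so $\pi_2^*(\Omega_X^{2g-2})$ has weight $+2n$, exactly cancelled by $\hbar^{-2n}$; thus $\iota$ preserves the weight.

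The substance is the quasi-isomorphism, which I would obtain from the spectral sequence of the bicomplex. Taking $d_{D_X}$-cohomology first, the de Rham complex of the jet bundle is a fine resolution exactly as in Proposition~\ref{prop-classical-ob}, so $(\A_X\otimes_{\OO_X}\Jet_X^{hol}(V),d_{D_X})$ is quasi-isomorphic to the Dolbeault complex $(\A_X^{0,*}\otimes_{\OO_X}\underline V,\bar\partial)$ of the underlying holomorphic bundle $\underline V$. On the next page the induced differential is the fiberwise part of $\hbar\,\partial_{\Omega_X}$, a Koszul/BV operator on the shifted symplectic fiber $\H^*(\Sigma_g)\otimes(\g_X[1]\oplus\g_X^\vee)$; its cohomology is the Berezinian (volume) line, concentrated in degree $(2g-2)\dim_\C X$ and spanned by the class of $\pi_2^*(\Omega_X^{2g-2})$. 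The remaining base-direction part of $\partial_{\Omega_X}$ descends, via $\lrcorner\,\Omega_X$, to the holomorphic de Rham differential $\partial$ on $X$, which together with $\bar\partial$ assembles the full de Rham complex. After rescaling the formal parameter (legitimate over $\C((\hbar))$), the abutment is $H^*_{dR}(X)((\hbar))$ placed in degrees $(2g-2)\dim_\C X$ onward, with $\iota$ realizing its generators; this is the desired quasi-isomorphism.

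The hard part will be the fiberwise concentration statement: proving that the $\partial_{\Omega_X}$-cohomology of the completed symmetric algebra $\underline V$ is one-dimensional over $\A_X$ and sits in the single degree $(2g-2)\dim_\C X$. The two genuine difficulties are the infinite-dimensionality of $\widehat{\Sym}$, so the Koszul/Poincaré vanishing must be organized by the $\C^\times$-weight grading, which makes each weight-piece finite and guarantees convergence of the spectral sequence, and verifying that the residual operator on the volume line is precisely the holomorphic de Rham differential $\partial$ with no anomalous correction, so that the two spectral-sequence differentials glue to $d_X$ and the sequence degenerates. Establishing these two points is what pins down the abutment to be exactly $(\A_X((\hbar)),d_X)$.
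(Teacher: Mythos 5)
Your reduction of the differential to the bicomplex $d_{D_X}+\hbar\,\partial_{\Omega_X}$ is only justified for $g=0,1$, but the proposition is stated for every genus, and this is where your proof has a genuine gap. Lemma \ref{lem:one-loop-naive-interaction-harmonic-fields} (which you cite parenthetically) kills $I^{(1)}_{naive}[\infty]|_{\H}$ only on $\mathbb{P}^1$ and on elliptic curves; for $g\geq 2$ this term does not vanish, so the scale-$\infty$ differential contains the extra piece $\hbar\{I^{(1)}_{naive}[\infty]|_{\H},-\}_\infty$ that your argument never addresses. The paper deals with it twice, in two different ways: for the cochain-map property it proves a separate Claim that $\{I^{(1)}_{naive}[\infty],\pi_2^*(\Omega_X^{2g-2})\}_\infty=0$, because the bracket necessarily produces factors in $(\H^1(\Sigma_g)\otimes\g_X[1])^\vee$ while $\pi_2^*(\Omega_X^{2g-2})$ already saturates the top wedge of those generators; and for the quasi-isomorphism it filters by the differential-form degree on $X$, on whose associated graded this term (and the whole classical part $l_1+\{I_{cl},-\}$) disappears, since every one-loop Feynman weight carries positive form degree. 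Without one of these two mechanisms your proof simply does not apply to $g\geq 2$.

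Separately, your spectral sequence runs in the opposite direction from the paper's, and the key step in it is not correct as stated. To make $d_{D_X}$ the zeroth differential you must filter by powers of $\hbar$, which over $\C((\hbar))$ is unbounded in both directions, so convergence is a real issue (you acknowledge this but only sketch a fix); the paper instead filters by form degree on $X$, a bounded filtration, so convergence is automatic and the first-page differential is already the conjugated BV operator $\hbar\, e^{-I_{qc}}\Delta_\infty e^{I_{qc}}$, whose cohomology is exactly $\Im(\iota)$ by the Poincar\'e lemma — no later pages need to be computed, because $\iota$ is a filtered map inducing an isomorphism on $E_1$. Moreover, your assertion that ``the remaining base-direction part of $\partial_{\Omega_X}$ descends to the holomorphic de Rham differential $\partial$'' is confused: transported to the jet bundle, $\partial_{\Omega_X}$ (equivalently $\Delta_\infty+\{I_{qc},-\}_\infty$) is an $\A_X$-linear, purely fiberwise operator with no base-direction component; the de Rham differential on the abutment emerges from the flat connection $d_{D_X}$ interacting with the trivializing section, not as a summand of $\partial_{\Omega_X}$. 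This is precisely why the paper avoids computing the abutment independently and instead compares the two filtered complexes directly through $\iota$, which also makes the flatness of $\pi_2^*(\Omega_X^{2g-2})$ and the identity $\pi_2^*(\Omega_X^{2g-2})=e^{-I_{qc}}T'(\Omega_X^{2g-2})$, $\Delta_\infty T'(\Omega_X^{2g-2})=0$ (Proposition \ref{correction-formula}) carry the entire cochain-map verification. Your $\C^\times$-equivariance weight count, on the other hand, is correct and is actually more explicit than anything in the paper's proof.
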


\begin{proof} We first show that $\iota$ respects the differential. This is equivalent to showing that
$\pi_2^*\bracket{\Omega_X^{2g-2}}$ is closed under
$l_1+\fbracket{I^{(0)}[\infty]|_{\H},-}_{\infty}+\hbar(\{I^{(1)}[\infty]|_{\H},-\}_\infty+\Delta_\infty)$.  By Lemma
\ref{effective-infinity},
$$
\bracket{l_1+\fbracket{I^{(0)}[\infty]|_{\H},-}_{\infty}} \bracket{\pi_2^*\bracket{\Omega_X^{2g-2}}}=d_{D_X}
\bracket{\pi_2^*\bracket{\Omega_X^{2g-2}}}=0,
  $$
since ${\pi_2^*\bracket{\Omega_X^{2g-2}}}$ is flat. Here $d_{D_X}$ is the de Rham differential of the $D_X$-module.
\begin{claim}
$
  \fbracket{I^{(1)}_{naive}[\infty], \bracket{\pi_2^*\bracket{\Omega_X^{2g-2}}}}_\infty=0.
$
\end{claim}
\begin{proof}
It is straightforward to check (similar to the proof of Lemma \ref{effective-infinity}) that for any one-loop graph $\gamma$, either the Feynman weight $W_\gamma(\mathbb{P}_0^\infty, I_{cl})$
vanishes, or the operator $\{W_\gamma(\mathbb{P}_0^\infty, I_{cl}),-\}_\infty$ applied to $ \pi_2^*\bracket{\Omega_X^{2g-2}}$will generate
new terms in $(\H^1(\Sigma_g)\otimes\g_X[1])^\vee$. These terms force the bracket $\fbracket{W_\gamma(\mathbb{P}_0^\infty,
I_{cl}),\pi_2^*\bracket{\Omega_X^{2g-2}}}_\infty$ to vanish since $\pi_2^*\bracket{\Omega_X^{2g-2}}$ already contains the
highest wedge product of $(\H^1(\Sigma_g)\otimes\g_X[1])^\vee$.
\end{proof}

Hence we only need to consider the operator $\Delta_\infty+\{I_{qc},-\}_\infty$. Let $n=\dim_\C X$, the map
$$
   \wedge^n\bracket{\pa_{dR}\circ T}: \Omega_X^n\to \cinfty(X)\otimes_{\OO_X}\Jet^{hol}_X(\Omega_X^n)
$$
in Proposition \ref{correction-formula} induces a natural embedding
\begin{align*}
 T':&\bracket{\Omega_X^n}^{\otimes(2g-2)}\hookrightarrow\\
 &\hspace{3mm}\Jet_X^{hol}\bracket{{\Sym}^{2g\cdot n}\bracket{T_X\otimes \H^1(\Sigma_g)}^\vee \otimes{\Sym}^{n}\bracket{\Omega^1_X[1]\otimes
\H^0(\Sigma_g)}^\vee\otimes{\Sym}^{n}\bracket{\Omega^1_X[1]\otimes \H^2(\Sigma_g)}^\vee}.
\end{align*}
Let $T'\bracket{\Omega_X^{2g-2}}$ denote the image of the section $(\Omega_X)^{\otimes(2g-2)}$, where $\Omega_X$ denotes the volume form and its negative power denotes its dual.
By construction,
$$
 \Delta_\infty T'\bracket{\Omega_X^{2g-2}}=0
$$
and by Proposition \ref{correction-formula}, we have
$$
   \pi_2^*\bracket{\Omega_X^{2g-2}}=e^{-I_{qc}}T'\bracket{\Omega_X^{2g-2}}.
$$
It follows that
$$
  \Delta_\infty  \pi_2^*\bracket{\Omega_X^{2g-2}}=\Delta_\infty
\bracket{e^{-I_{qc}}T'\bracket{\Omega_X^{2g-2}}}=-e^{-I_{qc}}\fbracket{I_{qc},T'\bracket{\Omega_X^{2g-2}}}_\infty=-\fbracket{I_{qc}, \pi_2^*\bracket{\Omega_X^{2g-2}}}_\infty,
$$
as desired.

Now we show that $\iota$ is a quasi-isomorphism. We consider the filtration on $ \OO\bracket{\mathbb{H}^*(\Sigma_g)\otimes
\bracket{\g_X[1]\oplus \g_X^\vee}}((\hbar))$ by the degree of the differential forms on $X$:
$$
 F^k   \OO\bracket{\mathbb{H}^*(\Sigma_g)\otimes \bracket{\g_X[1]\oplus \g_X^\vee}}((\hbar)):=\A_X^k\cdot  \OO\bracket{\mathbb{H}^*(\Sigma_g)\otimes
\bracket{\g_X[1]\oplus
\g_X^\vee}}((\hbar)).
$$
The differential of the graded complex given by
$$
   d_1= \hbar \bracket{\Delta_\infty+\fbracket{I_{qc},-}_\infty}=\hbar e^{-I_{qc}}\Delta_\infty e^{I_{qc}}.
$$
By the Poincare lemma below, the $d_1$-cohomology is precisely given by $\Im(\iota)$.  It follows that $\iota$ is a quasi-isomorphism.
\end{proof}

Recall the following Poincare lemma:
\begin{lem}\label{lem:Poincare-lemma}Let $\{x^i\}$ be even elements and let $\{\xi_i\}$ be odd elements, then we have
$$
H^*\left(\mathbb{C}[[x^i,\xi_i]],\Delta=\dfrac{\partial}{\partial
x^i}\dfrac{\partial}{\partial\xi_i}\right)=\mathbb{C}\xi_1\wedge\cdots\wedge\xi_n.
$$
\end{lem}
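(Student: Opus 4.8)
The plan is to recognize this statement as an algebraic (formal) Poincar\'e lemma for the odd operator $\Delta$, and to prove it by reducing to the one-variable case via a K\"unneth argument, the one-variable case being handled by an explicit contracting homotopy. Conceptually, if one identifies each $\xi_i$ with $\partial_{x^i}$, then $\Delta=\sum_i\partial_{x^i}\partial_{\xi_i}$ is the divergence operator on formal polyvector fields, which under contraction with the standard volume form becomes the formal de Rham differential; the class $\xi_1\wedge\cdots\wedge\xi_n$ corresponds to the constant function, so the lemma is the formal Poincar\'e lemma in disguise. I will, however, give a self-contained argument.

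First I would record that $\Delta$ is an odd operator with $\Delta^2=0$: since the $\partial_{\xi_i}$ are odd super-derivations with $\partial_{\xi_i}^2=0$ and $\partial_{\xi_i}\partial_{\xi_j}=-\partial_{\xi_j}\partial_{\xi_i}$, the diagonal terms of $\Delta^2$ vanish while the off-diagonal terms cancel pairwise by antisymmetry. Next I would address the completion, which is the only genuinely delicate point. I grade $\mathbb{C}[[x^i,\xi_i]]$ by $s:=\deg_x-\deg_\xi$; since $\Delta$ lowers both $\deg_x$ and $\deg_\xi$ by one, it preserves $s$, and for each fixed $s$ the graded piece is \emph{finite dimensional}, because $0\le\deg_\xi\le n$ forces $\deg_x$ into a bounded range. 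Hence $\left(\mathbb{C}[[x^i,\xi_i]],\Delta\right)$ is the product over $s$ of finite-dimensional $\Delta$-stable complexes, so its cohomology is the product of the cohomologies of these pieces and all completion issues disappear; in particular the ordinary K\"unneth theorem over the field $\mathbb{C}$ applies with no $\mathrm{Tor}$ terms.

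With completion under control, I would write $\mathbb{C}[[x^i,\xi_i]]=\widehat{\bigotimes}_{i=1}^n\left(\mathbb{C}[[x^i]]\otimes\Lambda[\xi_i]\right)$ and $\Delta=\sum_i\Delta_i$ with $\Delta_i=\partial_{x^i}\partial_{\xi_i}$ acting on the $i$-th tensor factor, the Koszul signs being exactly those built into the super-derivations $\partial_{\xi_i}$. By K\"unneth it then suffices to treat $n=1$. For a single pair $(x,\xi)$ the complex is $\mathbb{C}[[x]]\xi\xrightarrow{\ \Delta\ }\mathbb{C}[[x]]$ with $\Delta(f\xi)=f'$, and I would exhibit the explicit odd homotopy $h$ defined by $h(g)=\xi\int_0^x g$ on the even part and $h(f\xi)=0$ on the odd part. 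A direct check gives $\Delta h+h\Delta=\mathrm{id}-\Pi$, where $\Pi(f\xi)=f(0)\,\xi$ is the projection onto $\mathbb{C}\xi$ and $\Pi=0$ on the even part, so $H^*=\mathbb{C}\xi$. Assembling the $n$ factors, the K\"unneth isomorphism yields $H^*=\bigotimes_i\mathbb{C}\xi_i=\mathbb{C}\,\xi_1\wedge\cdots\wedge\xi_n$, as claimed.

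The one-variable computation and the cancellation in $\Delta^2=0$ are routine; the step that needs the most care is the passage from finitely many variables to the power-series completion, and the main point is precisely the observation that the grading $s=\deg_x-\deg_\xi$ splits the completed algebra into finite-dimensional $\Delta$-stable blocks, which both legitimizes K\"unneth and removes any question about convergence of the homotopy.
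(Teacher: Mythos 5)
Your proof is correct. Note that the paper itself offers no argument here at all: the lemma is introduced with ``Recall the following Poincare lemma'' and treated as a standard fact, so there is nothing in the text to compare your proof against --- what you have done is supply the missing justification. Your argument is sound at every step: $\Delta^2=0$ holds by the symmetric/antisymmetric cancellation you describe; the grading $s=\deg_x-\deg_\xi$ is preserved by $\Delta$ and has finite-dimensional graded pieces (since $0\le\deg_\xi\le n$ pins $\deg_x$ to a finite range), so the completed algebra is an honest direct product of finite-dimensional $\Delta$-stable subcomplexes, cohomology commutes with that product, and the K\"unneth theorem over $\mathbb{C}$ applies blockwise with no $\mathrm{Tor}$ terms; and the one-variable homotopy $h(g)=\xi\int_0^x g$, $h(f\xi)=0$ does satisfy $\Delta h+h\Delta=\mathrm{id}-\Pi$ with $\Pi$ the projection onto $\mathbb{C}\xi$, giving $H^*=\mathbb{C}\xi$ and hence, after reassembling the factors, $\mathbb{C}\,\xi_1\wedge\cdots\wedge\xi_n$ concentrated in the single block $s=-n$. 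Your conceptual remark is also the ``standard'' reason the lemma is true: contracting with $dx^1\wedge\cdots\wedge dx^n$ turns $\Delta$ into the formal de Rham differential and $\xi_1\wedge\cdots\wedge\xi_n$ into the constant $1$, so one could alternatively quote the formal Poincar\'e lemma directly (with its usual Euler-vector-field homotopy); your K\"unneth route is an equally rigorous and self-contained substitute, and its one delicate point --- legitimizing the completed tensor product --- is exactly the one you isolate and resolve with the finite-dimensional grading.
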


\begin{cor} \label{quantum-obs-cohomology}
 The top cohomology of $\text{Obs}^q(\Sigma_g)[\hbar^{-1}]$ is at degree $(2-2g)\dim_\C X$, given by
$$
   H^{(2-2g)\dim_\C X}\bracket{\text{Obs}^q(\Sigma_g)[\hbar^{-1}]}\iso \C((\hbar)).
$$
\end{cor}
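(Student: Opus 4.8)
The plan is to derive this corollary directly from Proposition \ref{trivial-system} together with a cohomological degree count, so the real work is bookkeeping rather than new analysis. First I would recall the chain of quasi-isomorphisms already assembled above: the complex $\text{Obs}^q(\Sigma_g)[L]$ is independent of $L$ up to homotopy, so I may work at $L=\infty$, where taking $d_{\Sigma_g}$-cohomology first reduces it to the harmonic model $\OO(\H^*(\Sigma_g)\otimes(\g_X[1]\oplus\g_X^\vee))((\hbar))$ with differential $l_1+\{I^{(0)}[\infty]|_\H,-\}_\infty+\hbar(\{I^{(1)}[\infty]|_\H,-\}_\infty+\Delta_\infty)$. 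Proposition \ref{trivial-system} then provides a $\C^\times$-equivariant quasi-isomorphism $\iota:\A_X((\hbar))\to\text{Obs}^q(\Sigma_g)[\hbar^{-1}]$, $A\mapsto \hbar^{-2\dim_\C X}A\otimes\pi_2^*(\Omega_X^{2g-2})$, identifying the observable complex with the de Rham complex of $X$ with $\C((\hbar))$-coefficients. Consequently $H^j(\text{Obs}^q(\Sigma_g)[\hbar^{-1}])\cong H^{j+s}_{dR}(X)\otimes_\C\C((\hbar))$ for a single overall shift $s$, and it remains only to pin down $s$ and read off the top piece.

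Second, I would compute the cohomological degree of $\pi_2^*(\Omega_X^{2g-2})$ by adding the degrees of its three constituent factors, using the conventions that $\H^k(\Sigma_g)$ sits in degree $k$, that the fiber $T_X$ of $\g_X[1]$ sits in degree $0$, and that $\Omega_X^1[1]$, the fiber of $\g_X^\vee$, sits in degree $-1$. Passing to duals flips each sign, so a generator of $(T_X\otimes\H^1(\Sigma_g))^\vee$ has degree $-1$, a generator of $(\Omega_X^1[1]\otimes\H^0(\Sigma_g))^\vee$ has degree $+1$, and a generator of $(\Omega_X^1[1]\otimes\H^2(\Sigma_g))^\vee$ has degree $-1$. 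Since $\H^1(\Sigma_g)$ is odd and $(2g)$-dimensional, the factor $\Sym^{2g\dim_\C X}(T_X\otimes\H^1(\Sigma_g))^\vee$ is really a top exterior (determinant) line and contributes degree $-2g\dim_\C X$, while the two $\Sym^{\dim_\C X}$ factors contribute $+\dim_\C X$ and $-\dim_\C X$. With $\hbar$ of cohomological degree $0$, these combine to
$$
\deg\bigl(\pi_2^*(\Omega_X^{2g-2})\bigr)=-2g\dim_\C X,
$$
so $\iota$ carries $\A_X^k$ into cohomological degree $k-2g\dim_\C X$, i.e. $s=2g\dim_\C X$.

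Finally, transporting $H^*_{dR}(X)$ through $\iota$, the group $H^k_{dR}(X)$ lands in degree $k-2g\dim_\C X$. Since $X$ is compact, connected and of complex dimension $\dim_\C X$, its de Rham cohomology is supported in degrees $0\le k\le 2\dim_\C X$, with $H^{2\dim_\C X}_{dR}(X)\cong\C$ the unique top class. The highest occupied degree is therefore $2\dim_\C X-2g\dim_\C X=(2-2g)\dim_\C X$, where the cohomology is $H^{2\dim_\C X}_{dR}(X)\otimes_\C\C((\hbar))\cong\C((\hbar))$, which is exactly the assertion.

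I expect the only genuine obstacle to be the degree bookkeeping in the second step: getting the parity of the $\H^1(\Sigma_g)$-directions right, so that the large symmetric power collapses to a single determinant line rather than an honest symmetric algebra, and confirming that $\hbar$ carries no cohomological degree, since an error in either place would dislocate the top class. Everything else is either quoted from Proposition \ref{trivial-system} or is the standard fact that a compact connected oriented real $2\dim_\C X$-manifold has one-dimensional top de Rham cohomology.
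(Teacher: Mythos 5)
Your proposal is correct and takes essentially the same route as the paper: the paper states this corollary as an immediate consequence of Proposition \ref{trivial-system}, and your degree bookkeeping—computing $\deg\bigl(\pi_2^*(\Omega_X^{2g-2})\bigr)=-2g\dim_\C X$ from the three odd determinant-line factors, so that $\iota$ shifts $H^k_{dR}(X)$ to degree $k-2g\dim_\C X$, and then using $H^{2\dim_\C X}_{dR}(X)\cong\C$ for compact connected $X$—is exactly the computation the paper leaves implicit.
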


\subsection{Correlation function}\label{section:correlation-function}
Proposition \ref{trivial-system} implies that a quantization $I[L]$ defines an integrable projective volume form in the sense of
\cite{Kevin-CS}, which allows us to define correlation functions for quantum observables.
\begin{defn}
Let $O\in \text{Obs}^q(\Sigma_g)$ be a closed element, representing a cohomology class $[O]\in H^k\bracket{\text{Obs}^q(\Sigma_g)}$. We define its correlation
function (via Corollary \ref{quantum-obs-cohomology}) by
$$
  \abracket{O}_{\Sigma_g}:=\begin{cases} 0 & \text{if}\ k\neq (2-2g)\dim_\C X\\  [O]\in \C((\hbar)) & \text{if}\ k=  (2-2g)\dim_{\C} X \end{cases}
$$
\end{defn}

Recall that the local quantum observables form a factorization algebra on $\Sigma_g$. This structure enables us to define
correlation functions for local observables, for which let us first introduce some notations.
\begin{defn}\label{definition:factorization-product}
Let $U_1, \cdots, U_n$ be disjoint open subsets of $\Sigma_g$. The factorization product
$$
  \text{Obs}^q(U_1)\times \cdots \times \text{Obs}^q(U_n)\to \text{Obs}^q(\Sigma_g)
$$
of local observables $O_i\in \text{Obs}^q(U_i)$ will be denoted by $O_1\star\cdots\star O_n$. This product descends to cohomologies.
\end{defn}

\begin{defn}  Let $U_1, \cdots, U_n$ be disjoint open subsets of $\Sigma_g$, and let $O_{i} \in \text{Obs}^q(U_i)$ be closed
local quantum observables supported on $U_i$. We define their correlation function by
$$
 \abracket{O_1,\cdots, O_n}_{\Sigma_g}:=\abracket{O_1\star\cdots \star O_n }_{\Sigma_g}\in \C((\hbar)).
$$
\end{defn}

We would like to compute the correlation functions for  B-twisted topological $\sigma$-model. By the degree reason, the only
nontrivial cases are $g=0,1$.  We show that they coincide with the prediction from physics.

For later computation, we give an equivalent definition of correlation functions via the BV integration point of view. Let
$\widehat{T^*\bracket{{T^{\Sigma_g}X}}}[-1]$ denote the ringed space with underlying topological space $X$ and structure sheaf as that of
(\ref{eqn:structure-sheaf-ringed-space-T-g}):
$$
\widehat{T^*\bracket{{T^{\Sigma_g}X}}}[-1]=\left(X,\OO\bracket{\mathbb{H}^*(\Sigma_g)\otimes \bracket{\g_X[1]\oplus
\g_X^\vee}}\right).
$$
It is clear that the intersection pairing on $\H^*(\Sigma_g)$, together with the canonical pairing between $\g_X[1]$ and
$\g_X^\vee$, induces an odd symplectic structure on $\widehat{T^*\bracket{{T^{\Sigma_g}X}}}[-1]$ . Let $\mathcal L$ be
the ringed space with underlying topological space $X$ and structure sheaf being generated by the odd generators over
$\A_X$  in $\OO\bracket{\mathbb{H}^*(\Sigma_g)\otimes \bracket{\g_X[1]\oplus\g_X^\vee}}$. Thus
$\mathcal L$ can be viewed as a Lagrangian subspace of $\widehat{T^*\bracket{{T^{\Sigma_g}X}}}[-1]$.

It is clear from the form of the jet bundle in \eqref{eqn:structure-sheaf-ringed-space-T-g} that there is a canonical
projection of functions on $\widehat{T^*\bracket{{T^{\Sigma_g}X}}}[-1]$ to the subspace
$$
\A_X\otimes_{\OO_X}
 \Jet_X^{hol}\bracket{ \widehat{\Sym}\bracket{T_X\otimes \H^2(\Sigma_g)}^\vee \otimes \widehat{\Sym}\bracket{\Omega^1_X[1]\otimes
\H^1(\Sigma_g)}^\vee}\pi_2^*(\Omega_X^{2g-2})
$$
generated by $\pi_2^*(\Omega_X^{2g-2})$. We denote by $(f)^{TF}$ the projection of $f$, where $TF$ is short for ``top fermions''.
\begin{prop}
The map
\begin{align*}
i_L^*:\text{Obs}^q(\Sigma_g)&\rightarrow \A_X((\hbar))[(2g-2)\dim_{\C} X]\\
    O&\mapsto \hbar^{2\dim_{\C}X} \left((e^{I[\infty]/\hbar}\cdot O|_{\H})^{TF}\big/\pi_2^*(\Omega_X^{2g-2})\right)\Big|_{\mathcal{L}}
\end{align*} is a $\C^\times$-equivariant cochain map. Here the differential on the right hand side is the de Rham differential $d_X$, and
$$
  \Big|_{\mathcal{L}}: \A_X\otimes_{\OO_X}
 \Jet_X^{hol}\bracket{ \widehat{\Sym}^*\bracket{T_X\otimes \H^2(\Sigma_g)}^\vee \otimes \widehat{\Sym}^*\bracket{\Omega^1_X[1]\otimes
\H^1(\Sigma_g)}^\vee} \to \A_X
$$
denotes the map which sets all the jet coordinates and that of $T_X\otimes \H^2(\Sigma_g),\Omega^1_X[1]\otimes\H^1(\Sigma_g)$ to 	be zero.
\end{prop}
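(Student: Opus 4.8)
The plan is to verify the cochain condition $i_L^*(U_\infty O)=d_X(i_L^*O)$ directly, where $U_\infty=Q_\infty+\{I[\infty],-\}_\infty+\hbar\Delta_\infty$ is the differential on $\text{Obs}^q(\Sigma_g)[\infty]$. Working in the harmonic-field model, I first note that on $\H$ both $d_{\Sigma_g}$ and the gauge fixing $Q^{GF}=d_{\Sigma_g}^*$ vanish, so $Q_\infty|_{\H}=l_1$ and, using Lemma \ref{effective-infinity}, the differential becomes $l_1+\{I_{cl}|_{\H},-\}_\infty+\hbar\bracket{\{I^{(1)}[\infty]|_{\H},-\}_\infty+\Delta_\infty}$. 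The map $i_L^*$ itself factors as multiplication by the functional $e^{I[\infty]|_{\H}/\hbar}$ followed by the \emph{BV Lagrangian integration} $\int_{\mathcal{L}}[\chi]:=\bracket{\chi^{TF}\big/\pi_2^*(\Omega_X^{2g-2})}\big|_{\mathcal{L}}$, together with the normalization $\hbar^{2\dim_\C X}$; write $J(O):=\int_{\mathcal{L}}[O\,e^{I[\infty]/\hbar}]$, so that $i_L^*O=\hbar^{2\dim_\C X}J(O)$.

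First I would translate the observable differential through the factor $e^{I[\infty]/\hbar}$ using the quantum master equation. By the computation in the proof of Lemma \ref{quantum-BRST}, specialized to $L=\infty$ and restricted to $\H$,
\begin{equation*}
(U_\infty O)\,e^{I[\infty]/\hbar}=\nabla\bracket{O\,e^{I[\infty]/\hbar}}-R\cdot O\,e^{I[\infty]/\hbar},\qquad \nabla:=Q_\infty+\hbar\Delta_\infty+\frac{F_{l_1}}{\hbar},
\end{equation*}
where $R\in\A_X^1((\hbar))$ is the QME constant. Since $R$ lies in the base ring $\A_X$ and $\int_{\mathcal{L}}$ is $\A_X$-linear, applying $\int_{\mathcal{L}}$ gives $J(U_\infty O)=\int_{\mathcal{L}}\nabla\bracket{O\,e^{I[\infty]/\hbar}}-R\cdot J(O)$. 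Everything thus reduces to the key identity $\int_{\mathcal{L}}\circ\nabla=(d_X+R)\circ\int_{\mathcal{L}}$ on functionals of the relevant form. As a consistency check this is forced at the level of squares: $\nabla^2=C=d_XR$ by Lemma \ref{quantum-BRST}, while a short computation gives $(d_X+R)^2=(d_XR)\wedge(-)=C$ as well.

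To establish the key identity I would split $\nabla$ into its second-order part $\hbar\Delta_\infty$ and its first-order part $Q_\infty+F_{l_1}/\hbar$. For the former, $\int_{\mathcal{L}}\circ\hbar\Delta_\infty=0$ is the BV analogue of Stokes' theorem: Berezin integration against the volume $\pi_2^*(\Omega_X^{2g-2})$ annihilates $\Delta_\infty$-exact functionals, exactly as in the proof that $\pi_2^*(\Omega_X^{2g-2})$ is $\bracket{\Delta_\infty+\{I_{qc},-\}_\infty}$-closed in Proposition \ref{trivial-system}. For the first-order part, the reconstructed classical operator $l_1+\{I_{cl}|_{\H},-\}=d_{CE}$ is carried by Proposition \ref{prop:iso-CE-jet} to the $D_X$-module de Rham differential $d_{D_X}$; since $\pi_2^*(\Omega_X^{2g-2})$ is a flat section, $d_{D_X}$ descends under $|_{\mathcal{L}}$ to the honest de Rham differential $d_X$ on $\A_X$. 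The residual quantum contributions — those produced by $F_{l_1}$ and by the first-order operator acting through the quantum part $e^{\hbar(I_{qc}+I^{(1)}_{naive})}$ of $e^{I[\infty]/\hbar}$, after discarding the pieces that already vanish on the volume section by the Claim in Proposition \ref{trivial-system} — reassemble into multiplication by the connection $1$-form $R$, in agreement with the formula $R=\{(I_{qc})_1,\tilde{l}_0\}$ of Lemma \ref{lem:S_1-contant term}. Combining the pieces yields $\hbar^{2\dim_\C X}\int_{\mathcal{L}}\nabla\bracket{O\,e^{I[\infty]/\hbar}}=(d_X+R)\,i_L^*O$, and substituting into the formula from the previous paragraph cancels the $R$-term, leaving $i_L^*(U_\infty O)=d_X(i_L^*O)$.

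Finally, $\C^\times$-equivariance follows because every ingredient is equivariant: restriction to $\H$ and the operator $\int_{\mathcal{L}}$ preserve the $\C^\times$-grading, $e^{I[\infty]/\hbar}$ is $\C^\times$-invariant by Lemma \ref{lem:C-action}, and the prefactor $\hbar^{2\dim_\C X}$ together with the degree shift $[(2g-2)\dim_\C X]$ absorbs the weight, compatibly with Corollary \ref{quantum-obs-cohomology}. The hard part will be step three: making the BV Stokes identity $\int_{\mathcal{L}}\Delta_\infty=0$ precise in this infinite-dimensional jet setting, and verifying that the first-order part of $\nabla$ contributes \emph{exactly} $d_X+R$ — that the quantum terms produce the $1$-form $R$ and nothing more — so that the $R$ generated by the Lagrangian integration cancels the $R$ generated by the quantum master equation on the nose.
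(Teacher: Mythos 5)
Your plan follows the paper's proof at the structural level: you make the same QME rearrangement $(U_\infty O)e^{I[\infty]/\hbar}=\bracket{Q_\infty+\hbar\Delta_\infty+F_{l_1}/\hbar-R}\bracket{O\,e^{I[\infty]/\hbar}}$, kill the $\hbar\Delta_\infty$ term through the top-fermion projection (the paper's observation that $\Delta_\infty$ removes an odd generator), and aim to show that the remaining first-order part contributes exactly $d_X+R$ so that the two $R$'s cancel. So the approach is the right one. But the step you defer as ``the hard part'' is not a technicality --- it \emph{is} the proof --- and two of the mechanisms needed to carry it out are missing or misstated in your sketch.

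First, $F_{l_1}$ does not ``reassemble into $R$'': it contributes nothing at all, because $F_{l_1}$ contains nontrivial bosonic generators and therefore dies under $\big|_{\mathcal{L}}$. Second, and more seriously, you need to explain why the first-order operator produces \emph{exactly} $R$ beyond $d_X$. Note that $\nabla$ contains only $Q_\infty=l_1$, not $d_{CE}=l_1+\{I_{cl},-\}$, so Proposition \ref{prop:iso-CE-jet} cannot be applied to it directly; the bracket $\{I[\infty],-\}$ was already consumed in the QME rearrangement. The paper's route is: write $(e^{I[\infty]/\hbar}O|_{\H})^{TF}=B\cdot\pi_2^*(\Omega_X^{2g-2})$ with $B\in\A_X$, so that $Q_\infty$ gives $d_X(B)\cdot\pi_2^*(\Omega_X^{2g-2})+B\cdot l_1\bracket{\pi_2^*(\Omega_X^{2g-2})}$; then the flatness equation $l_1(\pi_2^*(\Omega_X^{2g-2}))=-\{I_{cl},\pi_2^*(\Omega_X^{2g-2})\}$ converts $l_1$ into brackets; the terms $\{\tilde l_k,-\}$ with $k\geq 2$ die on $\mathcal{L}$ because they carry jet coordinates; and the surviving term $-\{\tilde l_0,\pi_2^*(\Omega_X^{2g-2})\}\big|_{\mathcal{L}}$ is evaluated using $\pi_2^*(\Omega_X^{2g-2})=e^{-I_{qc}}T'(\Omega_X^{2g-2})$, the type-reason vanishing $\{\tilde l_0,T'(\Omega_X^{2g-2})\}=0$, and Lemma \ref{lem:S_1-contant term}, yielding precisely $R\cdot\pi_2^*(\Omega_X^{2g-2})$. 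You cite the flatness and Lemma \ref{lem:S_1-contant term}, but without the decomposition into the $\tilde l_k$'s and these two vanishing statements, ``the quantum terms produce $R$ and nothing more'' remains an assertion rather than a proof; the Claim in Proposition \ref{trivial-system} that you invoke concerns $I^{(1)}_{naive}[\infty]$ and does not supply them.
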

\begin{proof} Let $O\in \text{Obs}^q(\Sigma_g)$. Recall that from QME, we have
$$
(Q_\infty O+\hbar\Delta_\infty O+\{I[\infty],O\}_\infty)\cdot
e^{I[\infty]/\hbar}=(Q_\infty+\hbar\Delta_\infty+\frac{F_{l_1}}{\hbar}-R)(e^{I[\infty]/\hbar}\cdot O).
$$
We have the following two simple observations:
\begin{enumerate}
 \item $\left(\hbar\Delta_\infty(e^{I[\infty]/\hbar}O)\right)^{TF}=0$ since $\Delta_\infty$ annihilates one odd generator,
 \item $\dfrac{F_{l_1}}{\hbar}(e^{I[\infty]/\hbar}O)$ vanishes when restricted to the Lagrangian $\mathcal{L}$, since
$F_{l_1}$ contains non-trivial bosonic generators.
\item When restricted to $\H$, $Q_\infty=l_1$.
\end{enumerate}
Thus, we only need to show that
$$
\left(\left((Q_\infty-R)(e^{I[\infty]/\hbar}\cdot
O\big|_{\H})\right)^{TF}/\pi_2^*(\Omega_X^{2g-2})\right)\bigg|_{\mathcal{L}}=d_X\left((e^{I[\infty]/\hbar} \cdot
O\big|_{\H})^{TF}/\pi_2^*(\Omega_X^{2g-2})\right)\Big|_\mathcal{L}.
$$
Since $l_1$ commutes with $ \Big|_{\mathcal{L}}$, we can assume that $(e^{I[\infty]/\hbar}O\big|_{\H})^{TF}$ is of the form
\begin{align*}
(e^{I[\infty]/\hbar}O\big|_{\H})^{TF}=B\cdot \pi_2^*(\Omega^{2g-2})
\overset{(1)}{=}B\cdot e^{-I_{qc}} T'(\Omega_X^{2g-2}),
\end{align*}
 where $B\in\A_X$, and identity $(1)$ and the map $T'$ is explained in the proof of proposition \ref{trivial-system}. Then
\begin{align*}
Q_\infty(B\cdot \pi_2^*(\Omega_X^{2g-2}))
=&d_X(B)\cdot \pi_2^*(\Omega_X^{2g-2})+B\cdot l_1(\pi_2^*(\Omega_X^{2g-2})).	
\end{align*}
The fact that $\pi_2^*(\Omega_X^{2g-2})$ is a flat section of the jet bundle is translated to
$$
l_1(\pi_2^*(\Omega_X^{2g-2}))+\{I_{cl},\pi_2^*(\Omega_X^{2g-2})\}=0,
$$
where $I_{cl}=\tilde{l}_0+\sum_{k\geq 2}\tilde{l}_k$ is the classical interaction functional and $\tilde{l}_k$ is defined in equation \eqref{def:
tilde-l_k}.

The functionals $\{\tilde{l}_k,\pi_2^*(\Omega_X^{2g-2})\}$ for $k\geq 2$ vanish when
restricted to the Lagrangian $\mathcal{L}$ since they contain jet coordinates. Thus
\begin{align*}
\left(l_1(\pi_2^*(\Omega_X^{2g-2}))\right)\Big|_\mathcal{L} &=-\{\tilde{l}_0,\pi_2^*(\Omega_X^{2g-2})\}\Big|_\mathcal{L}\\
                                                  &=-\{\tilde{l}_0,e^{-I_{qc}}T'(\Omega_X^{2g-2})\}\Big|_\mathcal{L}\\
                                                  &=-\left(-\{\tilde{l}_0,I_{qc}\}\cdot
 e^{-I_{qc}}T'(\Omega_X^{2g-2})+e^{-I_{qc}}\cdot\{\tilde{l}_0,T'(\Omega_X^{2g-2})\}\right)\Big|_{\mathcal{L}}\\
                                                  &\overset{(1)}{=}\left(\{\tilde{l}_0,I_{qc}\}\cdot
e^{-I_{qc}}T'(\Omega_X^{2g-2})\right)\Big|_{\mathcal{L}}\\
                                                  &\overset{(2)}{=}R\cdot\pi_2^*(\Omega_X^{2g-2}).
\end{align*}
The identity $(1)$ follows from the fact that $\{\tilde{l}_0,T'(\Omega_X^{2g-2})\}=0$ by type reason, and identity $(2)$
follows from Lemma \ref{lem:S_1-contant term}. Thus we have
\begin{align*}
&\left(\left((Q-R)(e^{I[\infty]/\hbar}\cdot O)\big|_{\H}\right)^{TF}/\pi_2^*(\Omega_X^{2g-2})\right)\bigg|_{\L}\\
=&\left((Q-R)(B\cdot \pi_2^*(\Omega^{2g-2}))/\pi_2^*(\Omega_X^{2g-2})\right)\Big|_{\L}\\
=&\left((dB+B\cdot R-B\cdot R)\cdot \pi_2^*(\Omega^{2g-2}))/\pi_2^*(\Omega_X^{2g-2})\right)\Big|_{\L}\\
=& dB.
\end{align*}
\end{proof}

It is clear that the cochain map $i_L^*$ induces an isomorphism on the degree $(2-2g)\dim_{\mathbb{C}} X$ component. Thus we have the following
corollary:
\begin{cor}\label{cor:correlation-function-BV-def}
Let $O$ be a global quantum observable which is closed, then the correlation function of $O$ is the same as the integral of $i_L^*(O)$
on $X$: $$\langle O\rangle_{\Sigma_g}=\hbar^{2\dim_{\C}X}\int_X\left((e^{I[\infty]/\hbar}\cdot O\big|_{\H})^{TF}/\pi_2^*(\Omega_X^{2g-2})\right)\Big|_{\L}.$$
\end{cor}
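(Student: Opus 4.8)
The plan is to deduce the corollary from the preceding Proposition together with the identification of the top cohomology of quantum observables established in Corollary \ref{quantum-obs-cohomology}. By the Proposition, $i_L^*$ is a $\C^\times$-equivariant cochain map from $\text{Obs}^q(\Sigma_g)$ to the shifted de Rham complex $(\A_X((\hbar))[(2g-2)\dim_\C X], d_X)$, so it descends to cohomology. Composing with integration $\int_X\colon H^{2\dim_\C X}_{dR}(X)\to\C$ gives a map on the top cohomology $H^{(2-2g)\dim_\C X}(\text{Obs}^q(\Sigma_g))$ sending $[O]$ to $\int_X i_L^*(O)$, and the content of the corollary is precisely that this composite computes the correlation function $\abracket{-}_{\Sigma_g}$.

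First I would recall that $\abracket{-}_{\Sigma_g}$ was defined through the isomorphism $H^{(2-2g)\dim_\C X}(\text{Obs}^q(\Sigma_g))\iso\C((\hbar))$ of Corollary \ref{quantum-obs-cohomology}, which is itself built from the quasi-isomorphism $\iota$ of Proposition \ref{trivial-system} followed by the realization of top de Rham cohomology $H^{2\dim_\C X}_{dR}(X)\iso\C$ via $\int_X$. It therefore suffices to show that, on the top cohomology, $i_L^*$ is inverse to $\iota$; since $\iota$ is a quasi-isomorphism, any left inverse on cohomology is automatically two-sided, so $i_L^*$ would then realize exactly the identification used to define $\abracket{-}_{\Sigma_g}$.

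The key computation is to evaluate $i_L^*$ on the image of $\iota$. Because $\iota(A)=\hbar^{-2\dim_\C X}A\otimes\pi_2^*(\Omega_X^{2g-2})$ already lies in the harmonic-field complex, I would substitute it into the formula for $i_L^*$ and argue that the only surviving contribution is $\hbar^{-2\dim_\C X}A\cdot\pi_2^*(\Omega_X^{2g-2})$. Indeed, the extra factors produced by $e^{I[\infty]/\hbar}$ (including the quantum correction $I_{qc}$ implicit in $I[\infty]$) either introduce bosonic jet generators, which are killed by the restriction $\big|_{\L}$, or introduce further fermionic generators of type $(\H^1(\Sigma_g)\otimes\g_X[1])^\vee$, which are annihilated by the top-fermion projection $(-)^{TF}$ since $\pi_2^*(\Omega_X^{2g-2})$ already saturates the highest wedge power. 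This is the same top-fermion saturation mechanism used in the proof of Proposition \ref{trivial-system}. Dividing by $\pi_2^*(\Omega_X^{2g-2})$, restricting to $\L$, and multiplying by $\hbar^{2\dim_\C X}$ then returns $A$, so $i_L^*\circ\iota=\mathrm{id}$ on $\A_X((\hbar))$, with the opposite powers of $\hbar$ in $\iota$ and $i_L^*$ cancelling exactly.

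Granting this, for a closed top-degree observable $O$ we may write $[O|_\H]=[\iota(A)]$ with $A=[i_L^*(O)]\in H^{2\dim_\C X}_{dR}(X)$, whence $\abracket{O}_{\Sigma_g}=\int_X A=\hbar^{2\dim_\C X}\int_X\bracket{(e^{I[\infty]/\hbar}\cdot O|_\H)^{TF}/\pi_2^*(\Omega_X^{2g-2})}\big|_{\L}$, as claimed. The main obstacle I anticipate is the bookkeeping in the key computation: one must check carefully that $(-)^{TF}$ together with $\big|_{\L}$ discards all cross terms generated by $e^{I[\infty]/\hbar}$, leaving precisely the leading term. The degree count guaranteeing that the top cohomology is one-dimensional is exactly what makes this clean, and the $\C^\times$-equivariance of $i_L^*$ keeps the $\hbar$-weights aligned throughout.
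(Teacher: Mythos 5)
Your proposal is correct and takes essentially the same route as the paper: the paper derives the corollary from the preceding Proposition by asserting that the cochain map $i_L^*$ induces an isomorphism on the degree $(2-2g)\dim_{\C}X$ cohomology, which is exactly your claim that $[i_L^*]$ inverts the quasi-isomorphism $[\iota]$ defining the correlation function. Your explicit check that $i_L^*\circ\iota=\mathrm{id}$ (odd generators killed by saturation against the top fermion $\pi_2^*(\Omega_X^{2g-2})$, even generators killed by $\big|_{\L}$, and the powers $\hbar^{\mp 2\dim_{\C}X}$ cancelling) is precisely the content of the step the paper dismisses as ``clear.''
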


We are ready to compute the topological correlation functions on $\mathbb{P}^1$ and elliptic curves.

\subsubsection{$g=0$}
\begin{lem}\label{lem:factorization-product-genus-0}
Let $\{U_i\}$ be disjoint union of disks contained in a larger disk $U\subset \Sigma_g$, and let $[O_{\mu_i, U_i}] \in H^*\bracket{\text{Obs}^q(U_i)}$
be the local quantum observable associated to
$\mu_i \in H^*(X, \wedge^*T_X)$ on $U_i$. Then
$$
    [O_{\mu_1, U_1}\star\cdots\star O_{\mu_m, U_m}]=[O_{\mu_1\cdots\mu_m, U}]\in H^*\bracket{\text{Obs}^q(U)}.
$$
\end{lem}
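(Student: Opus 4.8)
The plan is to reduce the factorization product to an ordinary multiplication of effective observables and then to identify that multiplication, through the jet dictionary of Lemma \ref{lem:local observable-iso-jet}, with the wedge product of polyvector fields. First I would invoke the Costello--Gwilliam description of the factorization structure map for disjoint opens: choosing a parametrix $\Phi$ whose support is close enough to the diagonal of $\Sigma_g\times\Sigma_g$ that each $O_{\mu_i,U_i}[\Phi]$ is still supported in $U_i$, the product is represented by the plain multiplication
$$
O_{\mu_1,U_1}\star\cdots\star O_{\mu_m,U_m}=O_{\mu_1,U_1}[\Phi]\cdots O_{\mu_m,U_m}[\Phi],
$$
which is a quantum observable supported in $U$ because the $U_i$ are disjoint and contained in $U$. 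It then remains to compute the cohomology class of this product in $H^*\bracket{\text{Obs}^q(U)}$.

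The crucial input is the type constraint already used in the proof of Proposition \ref{prop:local-observable}: each $O_{\mu_i}$ pairs its field insertions against the top form $\eta_i\in H^2_c(U_i)$ via $\int_{U_i}\eta_i\wedge(-)$, so it detects only the $0$-form component of inputs on $\Sigma_g$. On the other hand the propagator $P(\Phi)$ carries a factor $Q^{GF}=d^*$ and is a total degree-one form, i.e. a sum of a $1$-form in one variable times a $0$-form in the other. Hence any contraction by $P(\Phi)$ joining two legs --- whether internal to a single $O_{\mu_i}$ or connecting distinct $O_{\mu_i},O_{\mu_j}$ --- feeds a $1$-form into at least one $0$-form detector and therefore vanishes; likewise $\hbar\Delta_\Phi$ annihilates these observables by the same type reason, exactly as in Proposition \ref{prop:local-observable}. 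Consequently the effective product $\prod_i O_{\mu_i}[\Phi]$ is cohomologous to the classical product $\prod_i O_{\mu_i}$, with no surviving quantum corrections.

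Finally I would pass to the cohomology reduction of Proposition \ref{prop-classical-ob}: computing $d_{\Sigma_g}$-cohomology first replaces the compactly supported forms on each disk by $H^2_c\iso\mathbb{C}$, and the isomorphisms $H^2_c(U_i)\overset{\sim}{\rightarrow}H^2_c(U)$ identify each generator $\eta_i$ with the common generator $\eta$ of $H^2_c(U)$. Under the identification
$$
\widehat{\Sym}\bracket{H^2_c(U)\otimes\bracket{\g_X[1]^\vee\oplus\g_X}}\iso\A_X\otimes_{\OO_X}\Jet^{hol}_X(\wedge^*T_X)
$$
of Lemma \ref{lem:local observable-iso-jet} (through Proposition \ref{prop:iso-CE-jet}), the commutative multiplication of functionals becomes the fibrewise wedge product on the polyvector factor $\Sym^*\g_X=\wedge^*T_X$ together with the product of jets and the wedge of Dolbeault forms on $X$. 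On cohomology this is precisely the cup/wedge product computing $\mu_1\cdots\mu_m\in H^*(X,\wedge^*T_X)$, so the product observable represents $\mu_1\cdots\mu_m$ and equals the class of $O_{\mu_1\cdots\mu_m,U}$.

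The step I expect to be the main obstacle is the vanishing of \emph{all} propagator contractions, in particular those that would couple observables sitting on different disks, rather than merely the self-contractions of a single observable. This is exactly where the interplay between the $0$-form detection of the $O_{\mu_i}$ and the degree-one propagator $P(\Phi)$ is indispensable: the mechanism that trivializes the quantum correction of a single local observable in Proposition \ref{prop:local-observable} simultaneously kills every cross term, so the factorization product is genuinely classical and the jet-level wedge-product identification is exact.
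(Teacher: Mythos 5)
Your overall strategy --- reduce the factorization product to a product of classical observables and then identify that product with $\mu_1\cdots\mu_m$ through the jet dictionary of Lemma \ref{lem:local observable-iso-jet} --- is the same as the paper's, and your final classical identification step is fine. The gap is in the middle step, exactly at the point you yourself flag as the crucial one. The type argument (``$0$-form detectors'' against a degree-one propagator) kills only those contractions in which \emph{both} ends of a propagator land on observable legs, together with the $\Delta_\Phi$-contractions; it does \emph{not} kill connected diagrams in which $O_{\mu_i}$ and $O_{\mu_j}$ are joined through interaction vertices. Take the simplest such diagram: $O_{\mu_1}$---$P$---$v$---$P$---$O_{\mu_2}$, where $v$ is a trivalent vertex $\tilde{l}_2$ carrying one external leg. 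Each propagator contributes its $(0,1)$-component: its $0$-form end sits on the observable (allowed), while its $1$-form end sits on $v$, and the two $1$-forms at $v$ supply exactly the $2$-form needed for the $\Sigma_g$-integration once the external input is a $0$-form; the combinatorial factor is generically nonzero as well. So your assertion that the mechanism of Proposition \ref{prop:local-observable} ``simultaneously kills every cross term'' is false, and the identity you need --- that $\prod_i O_{\mu_i}[\Phi]$ agrees (up to exact terms) with the value at $\Phi$ of the flow of the classical product $\prod_i O_{\mu_i}$ --- is not established by your argument.

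What actually disposes of these interaction-mediated cross terms is locality, not type: they involve a chain of propagators running from $U_i$ to $U_j$, so they vanish in the limit of parametrices supported arbitrarily close to the diagonal (in the heat-kernel cutoff they are exponentially suppressed as $L\to 0$), and that limit is built into the very definition of the factorization product. The paper's proof is arranged so that the issue never arises: by Proposition \ref{prop:local-observable}, $O_{\mu_i}[L]$ is the RG flow of the classical observable $O_{\mu_i}$, hence $O_{\mu_i}[L]\to O_{\mu_i}$ as $L\to 0$, and in the defining formula
$$
(O_{\mu_1,U_1}\star O_{\mu_2,U_2})[\Phi]=\lim_{L\to 0}W\bracket{P(\Phi)-\mathbb{P}_0^L,\ I[L],\ O_{\mu_1,U_1}[L]\cdot O_{\mu_2,U_2}[L]}
$$
the multiplication is performed at scale $L\to 0$, where the factors are honest classical observables whose product is already a classical representative of $\mu_1\mu_2$ on $U$; only then is the product flowed, giving $W(P(\Phi),I_{cl}+\hbar I_{qc},O_{\mu_1}\cdot O_{\mu_2})=O_{\mu_1\mu_2,U}[\Phi]$. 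In that ordering the cross diagrams through interaction vertices are simply part of the flow of the product observable --- there is nothing to kill. To repair your write-up, either adopt this order of operations (multiply first at scale zero, then flow), or supplement your type argument with the disjoint-support/shrinking-parametrix argument for the cross terms your mechanism misses.
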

\begin{proof}
 For any parametrix $\Phi$, we have
\begin{align*}
(O_{\mu_1, U_1}\star O_{\mu_2, U_2})[\Phi]&\overset{(1)}{=}\lim_{L\rightarrow 0}W(P(\Phi)-\mathbb{P}_0^L,I[L],O_{\mu_1,U_1}[L]\star
O_{\mu_2,U_2}[L])\\
&\overset{(2)}{=}W(P(\Phi),I_{cl}+\hbar I_{qc}, O_{\mu_1,U_1}\cdot O_{\mu_2,U_2})\\
&=O_{\mu_1\mu_2,U}[\Phi].
\end{align*}
Here identity $(1)$ is the definition of factorization product of observables, and identity $(2)$ follows from Proposition
\ref{prop:local-observable}.
\end{proof}

\begin{thm}\label{correlation-P1}
 Let $\Sigma_g=\mathbb{P}^1$, and $\{U_i\}$ be disjoint union of disks on $\mathbb P^1$. Let $O_{\mu_i, U_i} \in
H^*\bracket{\text{Obs}^q(U_i)}$ be a local quantum
observable associated to $\mu_i \in H^*(X,\wedge^*T_X)$ supported in $U_i$. Then
$$
\abracket{O_{\mu_1, U_1}, \cdots, O_{\mu_m, U_m}}_{\mathbb{P}^1}=\hbar^{\dim_{\C}X}\int_X \bracket{\mu_1\cdots \mu_m\vdash \Omega_X}\wedge \Omega_X.
$$
\end{thm}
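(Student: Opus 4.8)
The plan is to combine the factorization property of local observables with the Batalin--Vilkovisky integration formula, reducing the correlation function to a finite-dimensional integral over $X$ that computes the Calabi--Yau trace.

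First I would reduce to a single observable. Since the $U_i$ are disjoint and $\mathbb{P}^1$ with a point removed is a disk, I can pick a point of $\mathbb{P}^1$ lying in none of the $U_i$ and set $U:=\mathbb{P}^1\setminus\{pt\}$, a disk containing all the $U_i$. Lemma \ref{lem:factorization-product-genus-0} then gives
\[
[O_{\mu_1,U_1}\star\cdots\star O_{\mu_m,U_m}]=[O_{\mu,U}],\qquad \mu:=\mu_1\cdots\mu_m\in H^*(X,\wedge^*T_X),
\]
so by definition of the correlation function it suffices to compute $\abracket{O_{\mu}}_{\mathbb{P}^1}$. By the degree shift in Corollary \ref{quantum-obs-cohomology} this vanishes unless $\mu\in H^n(X,\wedge^nT_X)$ with $n=\dim_{\C}X$, matching the support $(2-2g)n=2n$ of the pairing; I would treat this top-degree case.

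Next I would apply the explicit Batalin--Vilkovisky description of Corollary \ref{cor:correlation-function-BV-def} with $g=0$,
\[
\abracket{O_\mu}_{\mathbb{P}^1}=\hbar^{2n}\int_X\left((e^{I[\infty]/\hbar}\cdot O_\mu|_{\H})^{TF}\big/\pi_2^*(\Omega_X^{-2})\right)\Big|_{\L}.
\]
The crucial simplification on $\mathbb{P}^1$ is that $\H^1(\mathbb{P}^1)=0$, so the harmonic field space is $\H=(\H^0\oplus\H^2)\otimes(\g_X[1]\oplus\g_X^\vee)$ with no middle-degree piece; together with Lemma \ref{lem:one-loop-naive-interaction-harmonic-fields} (which gives $I^{(1)}_{naive}[\infty]|_{\H}=0$) and Lemma \ref{effective-infinity}, this yields the clean restriction $I[\infty]|_{\H}=I_{cl}|_{\H}+\hbar\,I_{qc}|_{\H}$. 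Hence the integrand is a genuinely finite-dimensional Berezinian expression in the zero modes.

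The heart of the computation is to evaluate this integrand. Following the proof of Proposition \ref{trivial-system}, I would use Proposition \ref{correction-formula} to write $\pi_2^*(\Omega_X^{-2})=e^{-I_{qc}}T'(\Omega_X^{-2})$; since $I[\infty]|_{\H}/\hbar=I_{cl}|_{\H}/\hbar+I_{qc}|_{\H}$, the factor $e^{I_{qc}}$ coming from $e^{I[\infty]/\hbar}$ cancels the $e^{-I_{qc}}$ in the reference section, leaving $T'(\Omega_X^{-2})$. As $T'(\Omega_X^{-2})$ is built directly from $(\Omega_X)^{\otimes(-2)}$ via the splitting $\pa_{dR}\circ T$, dividing by it contributes exactly the two powers $\Omega_X^{2}$ that appear in $(\mu\vdash\Omega_X)\wedge\Omega_X$. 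What then remains is to match the top-fermion component of $e^{I_{cl}/\hbar}\cdot O_\mu|_{\H}$ against $T'(\Omega_X^{-2})$: the observable $O_\mu$ supplies the class $\mu\in\wedge^nT_X$ along the $\H^2\otimes\g_X^\vee$ directions, while saturating the complementary fermionic directions forces exactly $n$ insertions of the vertices of $I_{cl}/\hbar$, producing the prefactor $\hbar^{-n}$ together with the interior contraction $\mu\vdash\Omega_X$. Restricting to $\L$ then leaves precisely $(\mu\vdash\Omega_X)\wedge\Omega_X\in\A_X$, and the overall power is $\hbar^{2n}\cdot\hbar^{-n}=\hbar^{n}$, giving the claimed formula.

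The main obstacle will be this last bookkeeping. Concretely, one must determine exactly which pairings among the fermionic zero modes in $\H^2\otimes\g_X^\vee$, $\H^0\otimes\g_X^\vee$ and the interaction vertices survive both the top-fermion projection and the restriction to $\L$, and verify that their combinatorial weights assemble into the contraction map $\mu\mapsto\mu\vdash\Omega_X$ with the correct normalization rather than a nonzero multiple of it. The flatness of $\pi_2^*(\Omega_X^{-2})$ and the cup-product pairing $\H^0(\mathbb{P}^1)\otimes\H^2(\mathbb{P}^1)\to\C$ are what guarantee that only the Calabi--Yau trace against $\Omega_X$ remains after these reductions.
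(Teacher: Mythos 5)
Your proposal follows the paper's own route almost step for step: reduction to a single top-degree observable via Lemma \ref{lem:factorization-product-genus-0}, the BV integration formula of Corollary \ref{cor:correlation-function-BV-def}, the simplification $I[\infty]|_{\H}=I_{cl}|_{\H}+\hbar I_{qc}|_{\H}$ from Lemma \ref{effective-infinity} and Lemma \ref{lem:one-loop-naive-interaction-harmonic-fields}, and a final zero-mode count in which $n=\dim_{\C}X$ insertions of the curving vertex $\tilde{l}_0/\hbar$ saturate the remaining fermionic directions and produce $\hbar^{-n}\,(\mu\vdash\Omega_X)\wedge\Omega_X$, hence the net power $\hbar^{2n}\cdot\hbar^{-n}=\hbar^{n}$. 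However, two steps as written need repair.

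First, Corollary \ref{cor:correlation-function-BV-def} is applied to the \emph{quantum} observable, so the integrand involves $O_{\mu}[\infty]|_{\H}$, and your displayed formula silently replaces this by the classical $O_{\mu}$. This substitution is exactly what the paper proves separately: since $\H^1(\mathbb{P}^1)=0$, a Lemma \ref{effective-infinity}-type argument applied to tree diagrams with one vertex labeled by $O_{\mu}$ shows $O_{\mu}[\infty]|_{\H}=O_{\mu}$. You invoke $\H^1(\mathbb{P}^1)=0$ only to simplify the interaction, not the observable, so this step is missing. Second, your cancellation is backwards: dividing the numerator's factor $e^{I_{qc}}$ by $\pi_2^*(\Omega_X^{-2})=e^{-I_{qc}}T'(\Omega_X^{-2})$ produces $e^{2I_{qc}}$ times the ratio against $T'(\Omega_X^{-2})$, not a clean cancellation. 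The $I_{qc}$-dependence in fact drops out for a different reason: restricted to $\H$, $I_{qc}$ is a functional of the jet coordinates alone with vanishing constant term, so it is killed by the restriction $\big|_{\L}$ --- this is precisely the paper's ``type reason'' argument, by which \emph{all} vertices of $I_{cl}$ other than $\tilde{l}_0$, and all of $I_{qc}$, are discarded. Finally, the bookkeeping you defer as the ``main obstacle'' (which pairings survive and with what weights) is not a side issue but the substance of the paper's proof: it is resolved there by choosing local coordinates with $\Omega_X|_U=dz^1\wedge\cdots\wedge dz^n$, writing an explicit jet representative of $O_\mu$, and computing the top wedge component of $e^{\tilde{l}_0/\hbar}$; without this the normalization of the contraction $\mu\mapsto\mu\vdash\Omega_X$ and the factor $\hbar^{-n}$ are not established.
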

\begin{proof}
We compute the correlation function at the scale $L=\infty$. By the degree reason and the previous lemma, we can assume $m=1$, and $\mu=\mu_1\in H^{\dim_{\C}X}(X, \wedge^{\dim_{\C}X}T_X)$. Let $O_\mu$ be the classical observable represented by $\mu$. By Proposition \ref{prop:local-observable}, the corresponding quantum observable is described by
$$
O_{\mu}[\infty] e^{I[\infty]/\hbar}=\lim_{L\to\infty}\lim_{\epsilon\to 0}e^{\hbar {\pa\over \pa \mathbb P_\epsilon^L}}\bracket{O_\mu
e^{I_{cl}/\hbar+I_{qc}}}.
$$

Since $\H^1(\mathbb{P}^1)=0$, a similar argument as in Lemma \ref{effective-infinity} implies
$$
   O_{\mu}[\infty] \big|_{\mathbb H}=O_{\mu}
$$
when restricted to harmonic fields. By Corollary \ref{cor:correlation-function-BV-def},
$$
\abracket{O_\mu[\infty]}_{\mathbb{P}^1}=\hbar^{2\dim_{\C}X}\int_X\left((e^{I[\infty]/\hbar}\cdot O_\mu\big|_{\H})^{TF}/\pi_2^*(\Omega_X^{-2})\right)\Big|_{\L}.
$$
By Lemma \ref{lem:one-loop-naive-interaction-harmonic-fields} and Lemma \ref{effective-infinity},
$$
(e^{I[\infty]/\hbar}\cdot O_\mu\big|_{\H})^{TF}=(e^{I_{cl}/\hbar+I_{qc}}\cdot O_\mu\big|_{\H})^{TF}.
$$
By the type reason, the only terms in $e^{I_{cl}/\hbar+I_{qc}}$ that will contribute after $\big|_{\L}$ are products of $\tilde{l}_0$:
$$
\figbox{0.26}{genus-0}
$$

Let $\{z^i\}$ be holomorphic  coordinates on $U\subset X$ such that locally $\Omega_X|_U=dz^1\wedge\cdots\wedge dz^n$. Let
$$
\mu=A d\bar z^1\wedge\cdots \wedge d\bar z^n\otimes \pa_{z^1}\wedge\cdots \wedge \pa_{z^n},
$$
where $n=\dim_{\C} X$. We can choose the following element  in the jet bundle representing $\mu$:
$$
  O_\mu=Ad\bar z^1\wedge\cdots \wedge d\bar z^n\otimes_{\OO_X} \left((\pi_2^*(dz^{1})\otimes
1)^\vee\wedge\cdots\wedge(\pi_2^*(dz^{n})\otimes 1)^\vee\otimes\mathbf{1}\right),
$$
where $\mathbf{1}$ denotes the other component in the jet bundle. On the other hand,
$$
   e^{\tilde l_0/\hbar}= \hbar^{-n}\cdot dz^1\wedge\cdots\wedge dz^n\otimes_{\OO_X}\bracket{T(\pa_{z^1})\wedge\cdots\wedge T(\pa_{z^n})} +\text{lower
wedge products}.
$$
It follows easily that
$$
\left((e^{I[\infty]/\hbar}\cdot O_\mu\big|_{\H})^{TF}/\pi_2^*(\Omega_X^{-2})\right)\Big|_{\L}=\hbar^{-n}\cdot(\mu\vdash\Omega_X)\wedge \Omega_X.
$$
\end{proof}

\subsubsection{$g=1$} On elliptic curves, the only nontrivial input is at cohomology degree $0$.
\begin{thm}\label{correlation-elliptic} Let $E=\Sigma_1$ be an elliptic curve. Then $
   \abracket{1}_E=\chi(X)
$ is the Euler characteristic of $X$.
\end{thm}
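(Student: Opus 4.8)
The plan is to evaluate $\abracket{1}_E$ directly through the BV integration formula of Corollary \ref{cor:correlation-function-BV-def}, exactly as in the genus-zero computation of Theorem \ref{correlation-P1}, and to show that the resulting finite-dimensional integral over $X$ computes the Euler form of $T_X$. Writing $n=\dim_\C X$ and taking $O=1$, $g=1$, the formula reads
$$
  \abracket{1}_E=\hbar^{2n}\int_X\left(\bigl(e^{I[\infty]/\hbar}\big|_{\H}\bigr)^{TF}\big/\pi_2^*(\Omega_X^{2g-2})\right)\Big|_{\L}.
$$
Since $2g-2=0$, the reference section $\pi_2^*(\Omega_X^{2g-2})=\pi_2^*(\Omega_X^{0})$ is the unit, so no division occurs.

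First I would reduce the integrand to a purely classical object. By Proposition \ref{correction-formula} the combinatorial part of the one-loop correction is $B=(2-2g)\log(\cdots)$, which vanishes identically at $g=1$; hence $I_{qc}=0$. Combined with Lemma \ref{lem:one-loop-naive-interaction-harmonic-fields}, which gives $I^{(1)}_{naive}[\infty]|_\H=0$, and with Lemma \ref{effective-infinity}, we obtain $I[\infty]|_\H=I_{cl}|_\H$. Thus $\abracket{1}_E=\hbar^{2n}\int_X\bigl((e^{I_{cl}/\hbar}|_\H)^{TF}\bigr)\big|_\L$, a classical BV/Berezin integral over the space of harmonic fields $\H=\H^*(E)\otimes(\g_X[1]\oplus\g_X^\vee)$.

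Next I would carry out the Berezin integral. Using the decomposition of Lemma \ref{splitting-isom}, the harmonic one-forms $dw,d\bar w\in \H^1(E)$ contribute the odd, $\g_X[1]$-valued variables, while the $\H^0$- and $\H^2$-components of $\g_X^\vee$ supply the remaining odd directions; the operations $(-)^{TF}$ and $|_\L$ precisely implement the pushforward onto these fermionic directions. By type and degree counting, namely that the total $E$-degree must be $2$ and the output must be a top form on $X$, only the $L_\infty$-products $l_k$ ($k\ge 2$) survive the saturation, each insertion feeding in a pair $dw,d\bar w$ and producing a factor of the curvature of $T_X$ in Kapranov's jet description; saturating all the fermionic directions assembles these into the top Chern form of $T_X$. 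This is the step I expect to be the main obstacle: it is a local-index-theorem style computation, and one must match the powers of $\hbar$ and fix all Berezin signs so as to land exactly on the Euler form rather than $\pm$ it.

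Finally, collecting the curvature factors gives $\abracket{1}_E=\int_X c_n(T_X)=\chi(X)$ by the Chern--Gauss--Bonnet theorem. As a heuristic cross-check I would compare this with the categorical-trace interpretation: the genus-one partition function should be the supertrace of the identity on the space of states, which by Corollary \ref{cohomology-quantum-local} is $H^*(X,\wedge^*T_X)$. Indeed, by Hirzebruch--Riemann--Roch one computes $\sum_{p,q}(-1)^{p+q}\dim H^p(X,\wedge^qT_X)=(-1)^n\chi(X)$, which matches $\chi(X)$ up to the standard orientation sign that the explicit BV integral pins down.
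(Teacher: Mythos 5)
Your first half is exactly the paper's argument: the BV formula of Corollary \ref{cor:correlation-function-BV-def}, the vanishing of $I_{qc}$ at $g=1$ (via $B=(2-2g)\log(\cdots)$ in Proposition \ref{correction-formula}), and $I[\infty]|_{\H}=I_{cl}|_{\H}$ from Lemmas \ref{lem:one-loop-naive-interaction-harmonic-fields} and \ref{effective-infinity}. The gap is in the evaluation of the resulting Berezin integral, which is the heart of the proof, and as you describe it the integral would come out to be $0$, not $c_n(X)$. Two things go wrong. First, $\pi_2^*\bracket{\Omega_X^{2g-2}}$ at $g=1$ is \emph{not} the unit: the net power of $\Omega_X$ is zero, but by its definition it is still the canonical generator of the top wedge of all $4n$ odd directions in Lemma \ref{splitting-isom} --- $2n$ from $(T_X\otimes \H^1(E))^\vee$, $n$ from $(\Omega^1_X[1]\otimes\H^0(E))^\vee$, and $n$ from $(\Omega^1_X[1]\otimes\H^2(E))^\vee$ --- and dividing by it is precisely how one extracts the top-fermion coefficient. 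Second, your vertex accounting is wrong in both directions. Any $\tilde{l}_k$ with $k\geq 3$ necessarily carries even variables (jet coordinates $\delta z^i$, or generators from $(T_X\otimes\H^2(E))^\vee$ or $(\Omega^1_X[1]\otimes\H^1(E))^\vee$), all of which are set to zero by $\big|_{\L}$; so among the products $l_k$, $k\geq 2$, only $\tilde{l}_2$ survives, in the single configuration where its two $\g_X[1]$-inputs come from $\H^1(E)$ and its one $\g_X^\vee$-input comes from $\H^0(E)$. More importantly, you omit the curving term $\tilde{l}_0$. Since every $\tilde{l}_k$ has exactly one $\g_X^\vee$-input, the surviving $\tilde{l}_2$-vertices can only saturate the $\H^0(E)$-fermions; the $n$ odd generators from $(\Omega^1_X[1]\otimes\H^2(E))^\vee$ can be produced only by $\tilde{l}_0$ evaluated on $\H^2(E)\otimes\g_X^\vee$. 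In your scheme they are never saturated, so the top-fermion coefficient, and hence the correlator, vanishes identically.

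What the paper actually does is take $n$ insertions of each of these two vertices. The $\tilde{l}_2$-vertices contribute the $(0,1)$-form coefficients $A^k_{ij}$, which by \cite{Kevin-CS} assemble into the Dolbeault representative $(A^k_{ij}dz^i)\otimes(dz^j\otimes\pa_{z^k})$ of the Atiyah class of $T_X$; the $\tilde{l}_0$-vertices contribute the holomorphic $(1,0)$-forms $dz^l$ together with the $\H^2(E)$-fermions. Only the product of the two yields an $(n,n)$-form, namely $\Tr\bracket{At(T_X)}^n=c_n(X)$, whose integral over $X$ is $\chi(X)$. With $\tilde{l}_2$-insertions alone (the only ones your sketch allows), the coefficient would at best be a $(0,n)$-form, which integrates to zero --- so the step you flag as ``the main obstacle'' is not merely a matter of fixing signs and powers of $\hbar$; the mechanism itself is missing half of its ingredients. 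Relatedly, the $(-1)^n$ mismatch you observe in your Hirzebruch--Riemann--Roch cross-check should be treated as a warning sign rather than a ``standard orientation sign'': it is only the explicit bookkeeping of all four families of odd generators, including those supplied by $\tilde{l}_0$, that pins the answer to $+\chi(X)$.
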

\begin{proof}
By Corollary \ref{cor:correlation-function-BV-def}, we only need to look at the term $e^{I[\infty]/\hbar}$. For the case $g=1$, we have shown in Lemma
\ref{lem:one-loop-naive-interaction-harmonic-fields}  that $I_{naive}^{(1)}[\infty]\big|_{\H}$ vanishes, and the quantum correction $I_{qc}$ also vanishies. Hence
$I[\infty]=I^{(0)}[\infty]=I_{cl}$. Let $w$ denote the normalized holomorphic coordinate on the elliptic curve $E$ such that
$$
\sqrt{-1}\int_{E}dwd\bar{w}=1.
$$
It is not difficult to see that by type reason, the only terms in  $e^{I_{cl}/\hbar}|_{\H}$ that can survive under $\big|_{\L}$ are the following:
\begin{equation}\label{eqn:genus-1-correlation-survival-term}
 (1)\hspace{5mm}\figbox{0.26}{genus-1-1},\hspace{20mm}(2)\hspace{5mm}\figbox{0.26}{genus-1-2}
\end{equation}
Let $\{z^i\}$ be local holomorphic coordinates on $X$ as we chose before, then term $(1)$ in (\ref{eqn:genus-1-correlation-survival-term}) can be expressed explicitly as:
\begin{equation}\label{eqn:genus-term1}
A_{ij}^k\otimes_{\OO_X}((dw)^\vee\otimes \tilde{T}(\widetilde{dz^i}))\otimes ((d\bar{w})^\vee\otimes
\tilde{T}(\widetilde{dz^j}))\otimes(1^\vee\otimes\tilde{K}(\widetilde{\partial_{z^k}})).
\end{equation}
And term (2) can be expressed as
\begin{equation}\label{eqn:genus-term2}
dz^l\otimes\left((\sqrt{-1}dwd\bar{w})^\vee\otimes  \tilde{K}(\widetilde{\partial_{z^l}})\right).
\end{equation}

By the discussion in \cite{Kevin-CS}, the following differential form valued in the bundle $End(T_X)=T_X\otimes_{\mathcal{O}_X}T_X^\vee$
$$
(A_{ij}^kdz^i)\otimes_{\OO_X}(dz^j\otimes\frac{\partial}{\partial z^k})
$$
is exactly the Dolbeault representative of the Atiyah class of the holomorphic tangent bundle $T_X$. It is straightforward to
check that
\begin{align*}
\left(\left(\exp(I[\infty]/\hbar)|_{\H}\right)^{TF}/\pi_2^*(\Omega_X^{2-2g})\right)\Big|_\mathcal{L}&=\Tr\left((A_{ij}
^kdz^i)\otimes(dz^j\otimes\frac { \partial } { \partial z^k})\right)^{n}\\
                                                        &=\Tr(At(T_X))^n\\
                                                        &=c_n(X).
\end{align*}
It then follows easily that
\begin{align*}
\langle 1\rangle_E&=\int_X c_n(X)\\
                &=\chi(X).
\end{align*}

\end{proof}

\section{Landau-Ginzburg Twisting}
In this section we discuss the Landau-Ginzburg twisting of the B-twisted $\sigma$-model and establish the topological Landau-Ginzburg
B-model via the renormalization method.
\begin{rmk} To avoid confusion,  ``twisted'' and ``untwisted'' in this section are always concerned with the twist that arises from the superpotential $W$, instead of
the $B$-twist.
\end{rmk}

\subsection{Classical theory} Let $X$ be a smooth variety with a holomorphic function
$$
  W: X\to \C.	
$$
Recall that the B-twisted $\sigma$-model describes maps
$$
  \bracket{\Sigma_g}_{dR}\to T^*X_{\dbar}[1].
$$
 Let
$$
  dW\lrcorner: \wedge^* T_X\to \wedge^* T_X
$$
be the contraction with the 1-form $dW$. It induces a differential on $\OO\bracket{T^*X_{\dbar}[1]}$ of degree $-1$, commuting
with $d_{D_X}$. By abuse of notations, we still denote this differential by $dW\lrcorner$.

\begin{defn} We define $ T^*X_{\dbar}^W[1]$ to be the $L_\infty$-space with underlying space $X$, and  sheaf of functions the
$\Z_2$-graded complex
$$
   \OO\bracket{ T^*X_{\dbar}^W[1]}:=\A_X\otimes_{\OO_X}\Jet^{hol}_X\bracket{\wedge^* T_X}
$$
with the twisted differential $d_{D_X}+{dW\lrcorner}$.
\end{defn}

\begin{rmk} When $X=\C^n$, and $W$ being a weighted homogeneous polynomial, i.e.,
$$
  W(\lambda^{q_i}z^i)=\lambda W(z^i), \quad \forall \lambda\in \C^*
$$
where $q_i$'s are positive rational numbers called the weights, then there emerges a $\Q$-grading by assigning the weights:
$
  wt(z^i)=q_i,  wt\bracket{\pa_{z^i}}=1-q_i,  wt(\bar z^i)= -q_i,  wt(d\bar z^i)=-q_i
$. However, we will not explore further on this in the current paper.
\end{rmk}

Note that there is a quasi-isomorphism of $\Z_2$-graded complexes of sheaves
$$
 \OO\bracket{ T^*X_{\dbar}^W[1]}\iso \bracket{\A_X^{0,*} \bracket{\wedge^* T_X}, \dbar_W}, \quad \dbar_W=\dbar+dW\lrcorner.
$$
Therefore $ T^*X_{\dbar}^W[1]$ can be viewed as the derived critical locus of $W$. The Landau-Ginzburg $B$-model describes
the space of maps
$$
   \bracket{\Sigma_g}_{dR}\to  T^*X_{\dbar}^W[1].
$$
As in the untwisted case, we consider those maps in the formal neighborhood of constant maps. This corresponds to the physics
statement that path integrals in B-twisted
Landau-Ginzburg model are localized around the neighborhood of constant maps valued in the critical locus of $W$.

Recall that there exists a Poisson bracket (Schouten-Nijenhuis bracket): $\wedge^*T_X\otimes_{\C} \wedge^*T_X\to \wedge^*T_X$.
Viewed as a bi-differential operator, it induces a bracket on the jet bundles, which we denote by
$$
  \{-,-\}_{T^*X_{\dbar}[1]}:   \OO\bracket{ T^*X_{\dbar}[1]} \otimes_{\A_X}   \OO\bracket{ T^*X_{\dbar}[1]} \to   \OO\bracket{
T^*X_{\dbar}[1]}.
$$

\begin{lem-defn} Let $\widetilde{W} \in \OO\bracket{ T^*X_{\dbar}[1]}$ be the image of $W$ under the natural embedding
$$
  \OO_X\into \Jet^{hol}_X(\OO_X) \into \A_X\otimes_{\OO_X}\Jet^{hol}_X\bracket{\wedge^* T_X}.
$$
Then $\{\widetilde{W},-\}_{T^*X_{\dbar}[1]}=dW\lrcorner$ as operators on $\OO\bracket{ T^*X_{\dbar}[1]}$.
\end{lem-defn}
\begin{proof} This follows from the corresponding statement on $\wedge^*T_X$.
\end{proof}
\iffalse
\begin{proof}
Let $\{\widetilde{\partial_{z^i}}\}$ and $\{\widetilde{dz^i}\}$ denote the basis of the $L_\infty$ algebra $\g_X$ and the dual
$\g_X^\vee$ over $\A_X$, and let $\{\widehat{dz^i}\}$ denote the basis of the $\g_X$-module $C^*(\g_X,\g_X^\vee) $. Let
$\tilde{K}$ denote the smooth splitting
$$\tilde{K}: \Omega_X^1\rightarrow \cinfty(X)\otimes_{\OO_X}\Jet_X^{hol}(\Omega_X^1) $$
as before. Let $f\in\OO\bracket{ T^*X_{\dbar}[1]}$, then
we have
\begin{align*}
\{\widetilde{W},f\}=&\sum_i\dfrac{\partial\widetilde{W}}{\partial(\widetilde{dz^i})}\dfrac{\partial f}{
\partial(\widetilde{\partial_{z^i}})}
 \overset{(1)}{=}\sum_i\dfrac{\partial(d_R(\widetilde{W}))}{\partial(\widehat{dz^i})}\dfrac{\partial
f}{\partial(\widetilde{\partial_{z^i}})}\\
=&\sum_{i}\sum_{j,k}\dfrac{\partial(d_R(\widetilde{W}))}{\partial(\tilde{K}(dz^j)}\dfrac{\partial(\tilde{K}(dz^j)}{
\partial(\widehat{dz^i})}\dfrac{ \partial f}{\partial(K(\partial_{z^k}))}\dfrac{\partial(K(\partial_{z^k}))}{
\partial(\widetilde{\partial_{z^i}})}\\
=&\sum_{j,k}\dfrac{\partial(d_R(\widetilde{W}))}{\partial(\tilde{K}(dz^j)}\dfrac{ \partial
f}{\partial(K(\partial_{z^k}))}\cdot\langle\tilde{K}(dz^j),K(\partial_{z^k}))\rangle\\
                   \overset{(2)}{=}&\sum_i\dfrac{\partial(d_R(\widetilde{W}))}{\partial(\tilde{K}(dz^i)}\dfrac{\partial
f}{\partial(K(\partial_{z^i}))}
=dW\lrcorner f.
\end{align*}
Here identity $(1)$ follows from the commutative diagram (\ref{eqn:right-de rham-CE}) and identity $(2)$ follows from Equation
(\ref{eqn:definition-T}).
\end{proof}
\fi

\begin{defn}\label{LG-definition}
The space of fields of  topological Landau-Ginzburg $B$-model is
$$
   \E:= \A_{\Sigma_g}\otimes_{\C}\bracket{\g_{X}[1]\oplus \g_{X}^\vee},
$$
and the classical action functional $S^W$ is defined by
$$
   S^W=S+ I_{W},
$$
where $S$ is the classical action functional of the untwisted case, and $I_W$ is the local functional on $\A_{\Sigma_g}\otimes
\g_X[1]$ defined by
$$
   I_W\bracket{\alpha}:=\int_{\Sigma_g} \widetilde{W}(\alpha), \quad \alpha \in \A_{\Sigma_g}\otimes \g_X[1].
$$
Here $\widetilde W$ is extended linearly in $\A_{\Sigma_g}$ to $\A_{\Sigma_g}\otimes \g_X[1]$. The
LG-twisted interaction is
$$
  I^{W}_{cl}=I_{cl}+I_W.
$$
\end{defn}

\begin{rmk} The $\C^\times$-symmetry of the untwisted B-model is broken by the term $I_W$. In particular, the LG-twisted theory is
no longer a cotangent theory in the sense of \cite{Kevin-CS}.

\end{rmk}

\begin{lem}\label{lem:Classical-ME-LG} The classical interaction $I_{cl}^W$ of Landau-Ginzburg $B$-model satisfies the classical master equation $QI^W_{cl}+{1\over
2}\fbracket{I^W_{cl},
I^W_{cl}}+{F_{l_1}}=0$.
\end{lem}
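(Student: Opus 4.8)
The plan is to exploit the additivity $I^W_{cl}=I_{cl}+I_W$ together with the classical master equation for the untwisted interaction already recorded in Lemma \ref{lem:classical-master-equation}. Expanding the left-hand side and using bilinearity together with the graded symmetry of the antibracket on even functionals,
\[
QI^W_{cl}+\tfrac12\{I^W_{cl},I^W_{cl}\}+F_{l_1}
=\Big(QI_{cl}+\tfrac12\{I_{cl},I_{cl}\}+F_{l_1}\Big)+\Big(QI_W+\{I_{cl},I_W\}+\tfrac12\{I_W,I_W\}\Big).
\]
The first parenthesis vanishes by Lemma \ref{lem:classical-master-equation}, so the whole problem reduces to proving the single identity $QI_W+\{I_{cl},I_W\}+\tfrac12\{I_W,I_W\}=0$.

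First I would dispose of the self-bracket $\tfrac12\{I_W,I_W\}$. By Definition \ref{LG-definition}, $I_W$ is a functional of $\alpha\in\A_{\Sigma_g}\otimes\g_X[1]$ alone, with no dependence on the $\g_X^\vee$-component $\beta$; hence its variational derivative in the $\beta$-direction vanishes identically. Since the odd Poisson bracket is computed by contracting an $\alpha$-derivative of one argument against a $\beta$-derivative of the other through the Poisson kernel $\text{Id}_{\g_X}+\text{Id}_{\g_X^\vee}$, every term of $\{I_W,I_W\}$ carries a factor of the $\beta$-derivative of $I_W$ and therefore drops out.

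It then remains to establish $(Q+\{I_{cl},-\})I_W=0$, for which I would split $Q=d_{\Sigma_g}+l_1$. The $d_{\Sigma_g}$-contribution is $\int_{\Sigma_g}d_{\Sigma_g}\big(\widetilde{W}(\alpha)\big)$, the integral over the closed surface $\Sigma_g$ of a total de Rham differential, which vanishes by Stokes' theorem because the Taylor coefficients of $\widetilde{W}$ are pulled back from $X$ and carry no $\Sigma_g$-form degree. The remaining piece $l_1I_W+\{I_{cl},I_W\}$ is precisely the operator $l_1+\{I_{cl},-\}$ applied to the density $\widetilde{W}$: contracting the $\beta$-derivative of $I_{cl}$, which collects the curving $l_0$ together with the higher products $l_k$ ($k\ge 2$), against the $\alpha$-derivative of $I_W$ reproduces all the nonlinear terms of the Chevalley--Eilenberg differential, while $l_1$ supplies the linear term. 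As in the untwisted computation of the deformation-obstruction complex, where $l_1+\{I_{cl},-\}$ was identified with $d_{CE}$, this combination equals $\int_{\Sigma_g}(d_{CE}\widetilde{W})(\alpha)$.

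The crux, and the step requiring the most care, is the vanishing $d_{CE}\widetilde{W}=0$. By Lemma/Definition in Section 4.1, $\widetilde{W}$ is the image of the holomorphic function $W\in\OO_X$ under $\OO_X\hookrightarrow\Jet^{hol}_X(\OO_X)$, i.e. the flat section obtained from the holomorphic Taylor expansion of $W$; such sections are annihilated by the de Rham differential $d_{D_X}$ of the $D_X^{hol}$-module, which is identified with $d_{CE}$ under the isomorphism \eqref{eqn:identification-CE-jet}. Thus $d_{CE}\widetilde{W}=0$ and the integrand vanishes identically. The main technical obstacle I anticipate is the sign and $\Sigma_g$-form-degree bookkeeping needed to match $l_1I_W+\{I_{cl},I_W\}$ with $\int_{\Sigma_g}d_{CE}\widetilde{W}$ term by term---in particular checking that the curving contribution coming from $\langle l_0,\beta\rangle\subset I_{cl}$ aligns with the $l_0$-part of $d_{CE}$ in the correct form degree---but no geometric input beyond the flatness of $\widetilde{W}$ is required.
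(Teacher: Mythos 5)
Your proposal is correct and follows essentially the same route as the paper's proof: decompose via bilinearity, kill the untwisted part by Lemma \ref{lem:classical-master-equation}, kill $\{I_W,I_W\}$ by type reason, and reduce $QI_W+\{I_{cl},I_W\}$ to $\int_{\Sigma_g}d_{D_X}(\widetilde{W})(\alpha)$, which vanishes because $\widetilde{W}$ is a flat section of the jet bundle. The only difference is that you spell out details (the Stokes argument for the $d_{\Sigma_g}$-piece, the identification $l_1+\{I_{cl},-\}=d_{CE}$) that the paper leaves implicit.
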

\begin{proof}
\begin{align*}
 QI^W_{cl}+{1\over 2}\fbracket{I^W_{cl},I^W_{cl}}+F_{l_1}=&QI_{cl}+{1\over 2}\fbracket{I_{cl},I_{cl}}+F_{l_1}+QI_W+{1\over
2}\fbracket{I_W,I_W}+\fbracket{I_{cl},I_W}\\
                        \overset{(1)}{=}&QI_W+\fbracket{I_{cl},I_W},
\end{align*}
where $(1)$ follows from the classical master equation of $I_{cl}$ in the untwisted case and the vanishing of $\{I_W,I_W\}$ by type reason. It is not
difficult to see that for $\alpha\in\A_{\Sigma_g}$,
\begin{align*}
(QI_W+\{I_{cl},I_W\})(\alpha)=\int_{\Sigma_g}d_{D_X}(\widetilde{W})(\alpha)
=0,
\end{align*}
since $\widetilde{W}$ is flat.
\end{proof}

\subsection{Quantization} We assume that $X$ is Calabi-Yau with holomorphic volume form $\Omega_X$. Since $X$ is non-compact, the
choice of $\Omega_X$ will not be unique, and we will always fix one such choice.

Let $I_{qc}$ be the one-loop quantum correction of B-twisted $\sigma$-model associated to $(X, \Omega_X)$, with which  the
quantization of the untwisted theory
 is defined as
$$
  I[L]=W(\mathbb P_0^L, I_{cl}+\hbar I_{qc}):=\lim_{\epsilon\to 0}W(\mathbb P_\epsilon^L, I_{cl}+\hbar I_{qc}).
$$

\begin{defn} We define the Landau-Ginzburg twisting of $I[L]$ by
$$
   I^W[L]=W(\mathbb P_0^L, I_{cl}+I_{W}+\hbar I_{qc}):=\lim_{\epsilon\to 0}W(\mathbb P_\epsilon^L, I_{cl}+I_{W}+\hbar I_{qc}).
$$
\end{defn}
Since $I_W$ is only a functional on $\A_{\Sigma_g}^*\otimes\g_X[1]$, it is easy to see by the type reason that
$$
I^W[L]=I[L]+W_{tree}(\mathbb P_0^L,I_{cl},I_W).
$$

\begin{prop}\label{prop:QME-LG}
 $I^W[L]$  defines a quantization of B-twisted topological Landau-Ginzburg model $S^W$ in the sense of Definition
\ref{defn-quantization}.
\end{prop}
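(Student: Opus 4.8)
The plan is to verify the four defining conditions of a quantization for the collection $\{I^W[L]\}$: the renormalization group equation, the locality axiom, the classical limit, and---most importantly---the quantum master equation of Definition \ref{defn-quantization}. The renormalization group equation holds by construction, since $I^W[L]$ is defined as the scale-$0$-to-scale-$L$ flow $W(\mathbb{P}_0^L, I_{cl}+I_W+\hbar I_{qc})$; the locality axiom and the classical limit (that $I^W[L]\bmod\hbar$ tends to $I^W_{cl}$ as $L\to 0$) follow the same way. The only point requiring care here is the existence of the $\epsilon\to 0$ limit defining $I^W[L]$: this follows from the decomposition $I^W[L]=I[L]+W_{tree}(\mathbb{P}_0^L,I_{cl},I_W)$ recorded above, since $I[L]$ already exists (Theorem \ref{thm:one-loop-correction}) and tree-level Feynman weights converge without renormalization. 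Note that, because $I_W$ has legs only in $\g_X[1]$ while each $I_{cl}$-vertex carries exactly one $\g_X^\vee$-leg and each internal edge consumes one such leg, a $\g_X^\vee$-leg count shows that any connected graph containing an $I_W$-vertex must be a tree. Hence $I_W$ contributes only through trees, and $I^W[L]$ carries the same powers of $\hbar$ as $I[L]$, namely $\hbar^0$ and $\hbar^1$, with one-loop part still equal to $I_{qc}[L]$.

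The heart of the proof is the quantum master equation. Using the compatibility of the quantized differential with the renormalization group flow (Lemma \ref{QME-RG}), I would write
\[
\bracket{Q_L+\hbar\Delta_L+\tfrac{F_{l_1}}{\hbar}}e^{I^W[L]/\hbar}=\lim_{\epsilon\to 0}\, e^{\hbar\partial/\partial \mathbb{P}_\epsilon^L}\bracket{Q_\epsilon+\hbar\Delta_\epsilon+\tfrac{F_{l_1}}{\hbar}}e^{(I^W_{cl}+\hbar I_{qc})/\hbar},
\]
so that it suffices to compute $\bracket{Q_\epsilon+\hbar\Delta_\epsilon+F_{l_1}/\hbar}e^{(I^W_{cl}+\hbar I_{qc})/\hbar}$ and show that its $\epsilon\to 0$ limit is a constant in $\A_X^\sharp[[\hbar]]$ times $e^{(I^W_{cl}+\hbar I_{qc})/\hbar}$. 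Expanding the left side as $\bracket{\tfrac1\hbar(Q_\epsilon J+\tfrac12\{J,J\}_\epsilon+F_{l_1})+\Delta_\epsilon J}$ with $J=I^W_{cl}+\hbar I_{qc}$ and collecting powers of $\hbar$ organizes the verification.

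I would then treat the expansion order by order. At order $\hbar^{-1}$ the term $Q_\epsilon I^W_{cl}+\tfrac12\{I^W_{cl},I^W_{cl}\}_\epsilon+F_{l_1}$ converges, as $\epsilon\to 0$, to the classical master equation of the Landau--Ginzburg interaction, which vanishes by Lemma \ref{lem:Classical-ME-LG}. At order $\hbar^0$ the expression is $Q_\epsilon I_{qc}+\{I^W_{cl},I_{qc}\}_\epsilon+\Delta_\epsilon I^W_{cl}$; the only contributions not already present in the untwisted theory are $\{I_W,I_{qc}\}$ and $\Delta_\epsilon I_W$, and both vanish by type reason, since $I_W$ and $I_{qc}$ depend only on the $\g_X[1]$-component of the fields and so carry no $\g_X^\vee$-leg for the BV Laplacian or bracket to contract (the same mechanism used in Proposition \ref{prop:local-observable}). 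Hence this term reduces to the untwisted one-loop data $Q I_{qc}+\{I_{cl},I_{qc}\}=O_1$ together with $\lim_{\epsilon\to 0}\Delta_\epsilon I_{cl}$, producing the same constant $R=\{(I_{qc})_1,\tilde l_0\}$ of Lemma \ref{lem:S_1-contant term}. Finally, at order $\hbar^{\geq 1}$ the remaining contributions $\tfrac12\{I_{qc},I_{qc}\}$ and $\Delta_\epsilon I_{qc}$ vanish, again by type reason. This shows the scale-$L$ QME holds with $R^W=R$, independent of $L$.

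The main obstacle, and the conceptual content, is precisely this collection of type-reason vanishings $\Delta I_W=0$, $\{I_W,I_{qc}\}=0$, together with the confinement of $I_W$ to trees. These are what guarantee that, although the superpotential term $I_W$ breaks the $\C^\times$-symmetry that organized the untwisted quantization, it neither generates a new one-loop anomaly nor disturbs the quantum correction $I_{qc}$ or the constant $R$ inherited from the Calabi--Yau model. I expect the most delicate bookkeeping to be in justifying the interchange of the $\epsilon\to 0$ limit with the application of the quantized differential, which rests on the tree-level convergence already established in Lemma/Definition \ref{lem:naive quantization}.
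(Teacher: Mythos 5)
Your structural observations are the right ones---the decomposition $I^W[L]=I[L]+W_{tree}(\mathbb{P}_0^L,I_{cl},I_W)$, the $\g_X^\vee$-leg count confining $I_W$ to trees with no external $\g_X^\vee$-legs, and the type-reason vanishings $\Delta I_W=0$, $\fbracket{I_W,I_{qc}}=0$---and they are exactly what the paper uses. But your verification of the quantum master equation has a genuine gap. The reduction ``it suffices to compute $\bracket{Q_\epsilon+\hbar\Delta_\epsilon+F_{l_1}/\hbar}e^{(I^W_{cl}+\hbar I_{qc})/\hbar}$ and show its $\epsilon\to 0$ limit is a constant multiple of the exponential'' is not valid: the operator $e^{\hbar\partial/\partial\mathbb{P}_\epsilon^L}$ is itself $\epsilon$-dependent and singular as $\epsilon\to 0$, so it does not commute with the limit. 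Concretely, the order-$\hbar^{-1}$ term, i.e.\ the scale-$\epsilon$ failure of the classical master equation (essentially $\tfrac12\bracket{\fbracket{I^W_{cl},I^W_{cl}}_\epsilon-\fbracket{I^W_{cl},I^W_{cl}}_0}$ plus the $Q_\epsilon-Q$ piece), does tend to zero \emph{pre-flow}, but after applying $e^{\hbar\partial/\partial\mathbb{P}_\epsilon^L}$ it generates one-loop wheel diagrams with $\mathbb{K}_\epsilon-\mathbb{K}_0$ on an edge, which can have a finite nonzero limit. This is precisely the first term of the anomaly in Theorem \ref{theorem:one-loop-anomaly}, and its vanishing for this theory is the hard analytic content of Lemmas \ref{lem:two-vertex-wheels} and \ref{lem:more-vertex-wheels} (Appendix \ref{appendix:Feynman-graph-computation}), not a consequence of pre-flow convergence. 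Indeed, if your reduction were legitimate, the whole obstruction analysis of Section \ref{section-quantization}---and with it the Calabi-Yau condition---would be vacuous, since the same argument would ``prove'' the QME over an arbitrary complex manifold. Your closing remark locates the delicate point in tree-level convergence, but trees are harmless; the danger sits entirely in the one-loop wheels, i.e.\ in re-deriving the \emph{untwisted} QME, which your expansion implicitly attempts.

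The repair, which is the paper's proof, is to never re-derive the untwisted QME: take it as given (Theorem \ref{thm:one-loop-correction}) at scale $L$. Setting $\delta_W[L]=W_{tree}(\mathbb{P}_0^L,I_{cl},I_W)$ and using your type-reason identities at scale $L$, namely $\Delta_L\delta_W[L]=0$ and $\fbracket{\delta_W[L],\delta_W[L]}_L=0$, one finds that after cancelling the untwisted QME the entire defect is $\hbar^{-1}\bracket{Q_L\delta_W[L]+\fbracket{I[L],\delta_W[L]}_L}e^{I^W[L]/\hbar}$; moreover $\fbracket{I^{(1)}[L],\delta_W[L]}_L=0$ by the same type reason (both functionals have only $\g_X[1]$-inputs, by Proposition \ref{prop:invariant-actions} and your leg count), so this defect is a purely tree-level, $\hbar$-independent quantity. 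Tree-level quantities satisfy the classical RG flow equation with no regularization or limit-interchange issue, so the defect vanishes at every scale because its scale-$0$ limit is $QI_W+\fbracket{I_{cl},I_W}=0$, which is the flatness computation in Lemma \ref{lem:Classical-ME-LG}. This bypasses all order-by-order analysis at scale $\epsilon$ and uses only what you correctly identified as the conceptual content.
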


\begin{proof} Let $\delta_W[L]=W_{tree}(\mathbb P_0^L,I_{cl},I_W)$. By the type reason, $\Delta_L \delta_W[L]=\{\delta_W[L],\delta_W[L]\}_L=0$.
Since
$I[L]$ satisfies the QME, we have
\begin{align*}
  \bracket{Q_L+\hbar \Delta_L+{F_{l_1}\over \hbar}-R}e^{I^W[L]/\hbar}
  =\hbar^{-1}\bracket{Q_L \delta_W[L]+\{I[L], \delta_W[L]\}_L}e^{I^W[L]/\hbar}.
\end{align*}
Since $\delta_W[L]$ is given by sum over trees, it satisfies the 		 classical RG flow equation. The vanishing of $Q_L
\delta_W[L]+\{I[L], \delta_W[L]\}_L$ then follows from its vanishing in the classical limit
$$
  Q I_W+\{I_{cl}, I_W\}=0.
$$
\end{proof}

\subsection{Observable theory and correlation functions}
We would like to explore the correlation functions of local quantum observables of our Landau-Ginzburg theory. One
essential difference with the untwisted case is that it is no longer a cotangent theory due to the term
$I_W$, hence the interpretation of quantization as projective volume forms \cite{Kevin-CS} does not work directly in this case.
However, the BV integration interpretation in Corollary \ref{cor:correlation-function-BV-def} still makes sense in the LG-twisted
case, which we will use to define the correlation functions.

For simplicity, we will assume $X$ to be a Stein domain in $\C^n$, and that $Crit(W)$ is finite. We choose the
holomorphic volume form $\Omega_X=dz^1\wedge\cdots\wedge dz^n$, where $\{z^i\}$ are the linear coordinates on $\C^n$.

\begin{defn} The quantum observables on $\Sigma_g$ at scale $L$ is defined as the cochain complex
$$
\text{Obs}^q(\Sigma_g)[L]:=\bracket{\mathcal{O}(\mathcal{E})[[\hbar]],Q_L+\{I^W[L],-\}_L+\hbar\Delta_L}.
$$
Observables $\text{Obs}^q(U)$ with support in $U\subset \Sigma_g$ are defined in a similar fashion as in Section \ref{section:global-observable}.
\end{defn}

Correlation functions are defined for a proper subspace of quantum observables which are "integrable" in some good sense, since we
are working with non-compact space $X$. We consider the following simplest class:

\begin{defn}
We define the sub-complex $\text{Obs}_c^q(\Sigma_g)[L]\subset \text{Obs}^q(\Sigma_g)[L]$ by
$$
\text{Obs}^q_c(\Sigma_g)[L]:=\bracket{\mathcal{O}_c(\mathcal{E})[[\hbar]],Q_L+\{I^W[L],-\}_L+\hbar\Delta_L},
$$
where $\OO_c(\E):=\OO(\E)\otimes_{\A(X)}\A_c(X)$ and $\A_c(X)$ is the space of compactly supported differential forms on $X$. The
corresponding local observables supported in $U\subset \Sigma_g$ is denoted by $\text{Obs}^q_c(U)$.
\end{defn}

\begin{prop}\label{LG-local-observable}
 Let $U\subset \Sigma_g$ be a disk. Then
$$
  H^*(\text{Obs}^q(U))\iso H^*(\text{Obs}^q_c(U))\iso \Jac(W)[[\hbar]],
$$
where $\Jac(W)$ is the Jacobian ring of $W$.
\end{prop}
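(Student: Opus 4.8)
The plan is to reduce the quantum computation to a classical one, exactly as in Proposition \ref{prop:local-observable} and Corollary \ref{cohomology-quantum-local}, and then to identify the classical local observables with a Koszul-type complex computing $\Jac(W)$. First I would extend the cochain map $\Psi$ of Proposition \ref{prop:local-observable} to the Landau-Ginzburg setting. Given a (compactly supported, for the $\text{Obs}^q_c$ statement) jet section $\mu\in\A_X\otimes_{\OO_X}\Jet^{hol}_X(\wedge^*T_X)$ representing a classical observable $O_\mu$ supported in $U$, define $O_\mu[L]$ by the renormalization group flow of $O_\mu$ with vertices labelled by $I^W_{cl}+\hbar I_{qc}$; the limit exists by the argument of Lemma/Definition \ref{lem:naive quantization}, since $O_\mu$ and $I_W$ pair only with $0$-forms on $\Sigma_g$. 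Repeating the computation in the proof of Proposition \ref{prop:local-observable} with the differential $U_L=Q_L+\{I^W[L],-\}_L+\hbar\Delta_L$, the one new contribution is the bracket $\{I_W,O_\mu\}$, which by the Lemma/Definition asserting $\{\widetilde W,-\}_{T^*X_{\dbar}[1]}=dW\lrcorner$ produces $O_{dW\lrcorner\,\mu}$. Hence $\Psi\colon\mu\mapsto O_\mu[L]$ intertwines the twisted differential $d_{D_X}+dW\lrcorner$ with $U_L$, and defines a cochain map into both $\text{Obs}^q(U)$ and $\text{Obs}^q_c(U)$.

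Since the general comparison \eqref{local-quantum-classical} of Costello--Gwilliam exhibits the quantum observable cohomology as a flat $\C[[\hbar]]$-deformation of its classical limit, the splitting $\Psi$ shows the deformation is trivial, so no quantum corrections arise and both problems reduce modulo $\hbar$ to the cohomology of the classical twisted complex. Repeating the proof of Proposition \ref{prop-classical-ob} with the extra interaction $I_W$ (whose bracket contributes $dW\lrcorner$), the classical local complex is quasi-isomorphic to the twisted jet complex $(\A_X\otimes_{\OO_X}\Jet^{hol}_X(\wedge^*T_X),\,d_{D_X}+dW\lrcorner)=\OO(T^*X_{\dbar}^W[1])$, and hence, via the quasi-isomorphism recorded before Definition \ref{LG-definition}, to the Dolbeault--Koszul complex $(\A_X^{0,*}(\wedge^*T_X),\,\dbar_W)$ with $\dbar_W=\dbar+dW\lrcorner$; the same holds for the compactly supported variant.

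It then remains to compute the cohomology of $(\A_X^{0,*}(\wedge^*T_X),\,\dbar_W)$ in its full and compactly supported forms. Filtering by polyvector degree and taking $\dbar$-cohomology first, the full complex yields, because $X$ is Stein, the holomorphic polyvector fields $\Gamma(X,\wedge^*T_X)$ equipped with the Koszul differential $\iota_{dW}$, i.e. contraction against $(\pa_1W,\dots,\pa_nW)$. As $Crit(W)$ is finite, $(\pa_1W,\dots,\pa_nW)$ is a regular sequence, so the Koszul complex resolves $\OO(X)/(\pa_iW)=\Jac(W)$ concentrated in degree $0$, giving $H^*(\text{Obs}^q(U))\iso\Jac(W)[[\hbar]]$. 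For the compactly supported variant I would argue by localization: the operator $\iota_{dW}$ is exact on the locus $dW\neq0$, with an explicit contraction homotopy built from the vector field $\overline{\pa W}^\sharp/|\pa W|^2$, so $\dbar_W$ is acyclic there and the cohomology is supported on the finite set $Crit(W)$. Both complexes are therefore quasi-isomorphic to their restriction to a small neighborhood of $Crit(W)$, where the local Koszul computation again produces $\Jac(W)$, yielding $H^*(\text{Obs}^q_c(U))\iso\Jac(W)[[\hbar]]$ and the full chain of isomorphisms.

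The main obstacle will be this last localization step for the compactly supported observables: taking $\dbar$-cohomology first makes the compactly supported Dolbeault groups concentrate in top degree rather than in holomorphic sections, so one cannot directly reuse the Stein argument. One must instead exploit the finiteness of $Crit(W)$ to show the $\dbar_W$-cohomology is supported at the critical points and matches the full computation, either through the homotopy $\overline{\pa W}^\sharp/|\pa W|^2$ or through Serre duality together with the residue (Grothendieck) pairing on $\Jac(W)$. By contrast, the extension of the no-quantum-correction argument is comparatively routine, relying only on the facts already used in Proposition \ref{prop:QME-LG} that $I_W$ pairs solely with $0$-forms and that $\{\widetilde W,-\}=dW\lrcorner$.
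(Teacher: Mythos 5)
Your proposal is correct and follows essentially the same route as the paper: you reduce the quantum statement to the classical one by extending the splitting map $\Psi$ of Proposition \ref{prop:local-observable} (the paper's proof states exactly that the strategy is parallel to Corollary \ref{cohomology-quantum-local}), and then identify the classical local observables with the twisted complexes $(\A^{0,*}(X,\wedge^*T_X),\dbar+dW\lrcorner)$ and $(\A_c^{0,*}(X,\wedge^*T_X),\dbar+dW\lrcorner)$. The only difference is that where the paper simply cites \cite{LLS} for the isomorphism of both cohomologies with $\Jac(W)$, you sketch that computation yourself (Stein property plus the Koszul regular-sequence argument for the full complex, and localization at $Crit(W)$ via the homotopy built from $\overline{\pa W}/|\pa W|^2$ for the compactly supported one), which is precisely the content of the cited reference.
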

\begin{proof} The strategy is completely parallel to Corollatry \ref{cohomology-quantum-local}. We just need to observe that the
cohomology of classical observables in the twisted case is given by
$$
  H^*(\A^{0,*}(X, \wedge^*T_X), \dbar+dW\lrcorner ) \quad \text{or}\hspace{5mm} H^*(\A_c^{0,*}(X, \wedge^*T_X), \dbar+dW\lrcorner
),
$$
both of which are canonically isomorphic to $\Jac(W)$  (See \cite{LLS}).
\end{proof}

Now we define the correlation function of quantum observables. For Landau-Ginzburg model, notice that the ``top fermion''
$\pi_2^*(\Omega_X^{2g-2})$ can be defined in a similar way as in the untwisted case. Thus we can define the correlation function
of quantum observables via the  BV integration in the spirit of Corollary
\ref{cor:correlation-function-BV-def} (see also there for the notations):
\begin{defn}\label{def:correlation-function-LG}
Let $O$ be a quantum observable of Landau-Ginzburg $B$-model which is closed, then the  correlation function of $O$ is
defined as
$$
\abracket{O}^W_{\Sigma_g}:=\int_X\left((e^{I^W[\infty]/\hbar} O\big|_{\H})^{TF}\Big/\pi_2^*(\Omega_{X}^{2g-2})\right)\Big|_\mathcal{L}.
$$
\end{defn}

As a parallel to Theorem \ref{correlation-P1}, we have
\begin{prop}\label{thm-LG-correlation}
Let $\{U_i\}$ be disjoint  disks on $\Sigma_g$. Let $O_{f_i,U_i}\in H^*(Obs^q_c(U_i))$ be local observables associated to
$f_i\in\Jac(W)$ by Proposition \ref{LG-local-observable}. Then
$$
\langle O_{f_1,U_1}\star\cdots\star O_{f_m,U_m}\rangle_{\Sigma_g}^W=\sum_{p\in Crit(W)}\Res_p \bracket{{f_1\cdots f_m
\det(\pa_i\pa_j W)^{g}dz^1\wedge \cdots \wedge dz^n\over \prod_i \pa_i W }},
$$
where $\star$ is the local to global factorization product, and $\Res_p$ is the residue at the critical point $p$ \cite{GH}.
\end{prop}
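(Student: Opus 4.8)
The plan is to follow the strategy used for the untwisted correlators (Theorems \ref{correlation-P1} and \ref{correlation-elliptic}), evaluating the BV integral of Definition \ref{def:correlation-function-LG} and tracking the additional contributions produced by the superpotential term $I_W$. First I would reduce to a single insertion: exactly as in Lemma \ref{lem:factorization-product-genus-0}, the factorization product of the local observables $O_{f_i,U_i}$ collapses to a single local observable representing the product $f:=f_1\cdots f_m\in \Jac(W)$, the argument being identical since it only uses the analogue of Proposition \ref{prop:local-observable} for the twisted quantization $I^W[L]$ together with Proposition \ref{prop:QME-LG}. Thus it suffices to compute $\langle O_{f,\Sigma_g}\rangle^W$ via the BV integral of $\bracket{(e^{I^W[\infty]/\hbar}\,O_f|_{\H})^{TF}/\pi_2^*(\Omega_X^{2g-2})}\big|_{\mathcal L}$ over $X$, where $I^W[\infty]=I[\infty]+W_{tree}(\mathbb P_0^\infty,I_{cl},I_W)$.

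Next I would isolate the terms of $e^{I^W[\infty]/\hbar}\,O_f|_{\H}$ that survive the top-fermion projection followed by restriction to $\mathcal L$. The untwisted part is controlled by Lemma \ref{effective-infinity} and Lemma \ref{lem:one-loop-naive-interaction-harmonic-fields}, contributing only the curving vertices $\tilde l_0$ (as in the genus-zero computation) and, on a genus-$g$ surface, the handle pairings of the $\H^1(\Sigma_g)\otimes\g_X[1]$ fields. The new input is the exponentiated superpotential: by type reasons each surviving $I_W$ vertex contributes, through the jet expansion of $\widetilde W$, either a factor $\pa_i W$ (linear term) contracted against a fiber $\g_X^\vee$-direction, or a factor $\pa_i\pa_j W$ (quadratic term). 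Matching the fermionic degrees forced by $\pi_2^*(\Omega_X^{2g-2})$, one sees that saturating the top power of $T_X\otimes \H^1(\Sigma_g)$, of dimension $2gn$ with $n=\dim_{\C}X$, requires precisely $gn$ Hessian insertions, which assemble into $\det(\pa_i\pa_j W)^{g}$. This is the Landau-Ginzburg analogue of the Atiyah-class factor $c_n(X)$ that produced $\det$ of curvature in the elliptic computation (Theorem \ref{correlation-elliptic}).

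I would then write everything in coordinates $z^i$ with $\Omega_X=dz^1\wedge\cdots\wedge dz^n$, check that the powers of $\hbar$ arising from $e^{I^W/\hbar}$, the loop counting, and $I_{qc}$ cancel so that the result is $\hbar$-independent in agreement with the stated formula, and identify the resulting integrand as a $\dbar_W$-closed representative whose class in $H^*\bracket{\A^{0,*}(X,\wedge^*T_X),\dbar_W}\iso\Jac(W)$ is $f\cdot\det(\pa_i\pa_j W)^{g}$. The final step is to recognize the BV integral $\int_X(\,\cdot\,)\big|_{\mathcal L}$ as the canonical trace on $\Jac(W)$, namely the Grothendieck residue pairing $h\mapsto \sum_{p\in Crit(W)}\Res_p\bracket{h\,dz^1\wedge\cdots\wedge dz^n/\prod_i\pa_i W}$; applied to $h=f\cdot\det(\pa_i\pa_j W)^{g}$ this yields exactly Vafa's formula. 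Working with the compactly supported model $\text{Obs}^q_c$ of Proposition \ref{LG-local-observable}, where the $\dbar_W$-cohomology is canonically $\Jac(W)$, makes the non-compact integral well defined and lets me invoke the identification of the Dolbeault trace with the residue (as in \cite{LLS}).

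The main obstacle I expect is this last identification of the BV integral with the residue sum, together with the precise combinatorial bookkeeping that produces the genus-dependent power $\det(\pa_i\pa_j W)^{g}$ and the denominator $\prod_i\pa_i W$. Concretely, one must verify that the fiber integration over the $\g_X^\vee$-directions, weighted by the $\pa_i W$ couplings from $I_W$, reproduces the $1/\prod_i\pa_i W$ of the residue and that the integral localizes to $Crit(W)$; the delicate part is controlling these non-Gaussian fiber contributions and confirming that no further vertices beyond the $\tilde l_0$, the $gn$ Hessian handles, and the gradient couplings survive after restriction to $\mathcal L$.
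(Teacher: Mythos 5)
Your overall skeleton matches the paper's proof: reduce to a single insertion $f=f_1\cdots f_m$ via the factorization argument, identify by type reasons the surviving terms in $(e^{I^W[\infty]/\hbar}O_f|_{\H})^{TF}\big|_{\mathcal L}$, observe that saturating $T_X\otimes\H^1(\Sigma_g)$ forces exactly $gn$ Hessian insertions assembling into $\det(\pa_i\pa_j W)^g$ while the $\tilde l_0$ vertices reproduce the genus-zero factor $(O_f\vdash\Omega_X)\wedge\Omega_X$, and finally invoke \cite{LLS} to identify $\int_X \det(\pa_i\pa_j W)^g\,(O_f\vdash\Omega_X)\wedge\Omega_X$ with the residue sum. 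However, there is a genuine gap in how you control the untwisted quantum part. You invoke Lemma \ref{lem:one-loop-naive-interaction-harmonic-fields} to conclude that only $\tilde l_0$ vertices and the $W$-vertices survive, but that lemma is stated and proved only for $g=0$ and $g=1$ (its proof uses special features of the round sphere and flat elliptic curve metrics), whereas the proposition covers arbitrary genus. For $g\geq 2$ your argument leaves $I^{(1)}_{naive}[\infty]|_{\H}$, and more generally all higher $l_k$-vertices and their loop diagrams, uncontrolled. The paper's proof removes this problem at the outset by a step you never take: since $X$ is a Stein domain in $\C^n$ with linear coordinates, one may \emph{choose} the curved $L_\infty$ structure on $\g_X$ with $l_i=0$ for all $i\geq 2$ (and $I_{qc}=0$ for $\Omega_X=dz^1\wedge\cdots\wedge dz^n$). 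Then $I_{cl}=\tilde l_0$, the RG flow is purely tree-level, $I[\infty]|_{\H}=I_{cl}|_{\H}$ and $W_{tree}(\mathbb P_0^L,I_{cl},I_W)|_{\H}=I_W|_{\H}$ at every genus, so no one-loop lemma is needed. Without this (or some replacement argument valid for all $g$), your proof is incomplete precisely in the higher-genus cases where the new factor $\det(\pa_i\pa_j W)^g$ matters.

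A secondary, smaller point: your ``main obstacle'' paragraph misplaces the origin of the denominator $\prod_i\pa_i W$. It is not produced by integrating out the $\g_X^\vee$-directions against the linear $\pa_iW$ couplings of $I_W$; no such non-Gaussian fiber integration occurs. The BV integral simply outputs $\int_X \det(\pa_i\pa_j W)^g\,(O_f\vdash\Omega_X)\wedge\Omega_X$ with $O_f$ a compactly supported $\bar\pa_W$-closed representative, and the denominator appears only when this Dolbeault-type trace is rewritten as a Grothendieck residue via \cite[Proposition 2.5]{LLS} — the step you do cite. So that part of your plan is correct once you stop looking for a localization mechanism inside the Feynman calculus itself.
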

\begin{proof}
 As in the non-twisted case, we can assume that $m=1$ and let $f=f_1\in \Jac(W)$. Let $O_f[L]$ denote the corresponding quantum
observable and $O_f=\lim\limits_{L\to 0} O_f[L]$. By the definition of correlation function, we have
\begin{align*}
 \langle O_f[\infty]\rangle_{\Sigma_g}^W
  =\int_X\left((e^{I^W[\infty]/\hbar} O_{f}[\infty]\big|_{\H})^{TF}/\pi_2^*(\Omega_{X}^{2g-2})\right)\Big|_\mathcal{L}.
\end{align*}
Since $X\subset \C^n$, we can choose the $L_\infty$ structure on $\mathfrak{g}_X$ such that $l_i=0$ for all $i\geq 2$. It is then
clear that the RG flow of the classical interaction of B-model $I_{cl}$ has only tree parts. Furthermore, when restricted to the
subspace of harmonic fields, there is
\begin{align*}
I[\infty]|_{\H}&=I_{cl}|_\H,\\
W_{tree}(\mathbb{P}_0^L,I_{cl},I_W)|_\H&=I_W|_\H.
\end{align*}
It is then not difficult to see that the only terms in $e^{I^W[\infty]/\hbar}$ that will contribute non-trivially to
$\left((e^{I^W[\infty]/\hbar} O_{f}[\infty]\big|_{\H})^{TF}/\pi_2^*(\Omega_{\C^n}^{2g-2})\right)\Big|_\mathcal{L}$ are the following:
\begin{equation}\label{pic:correlation-LG}
\figbox{0.26}{LG-W-second-derivative}\hspace{15mm}\figbox{0.26}{genus-g}
\end{equation}
In the first picture, the two harmonic $1$-forms on $\Sigma_g$ attached to the tails must be dual to each
other. Since $\dim_\C(\H^1(\Sigma_g))=2g$, the total contribution of the first terms is
$$
\hbar^{-g\cdot n}\left(\det(\pa_i\pa_j W)\right)^g\otimes\pi_2^*(\Omega_X^{2g}).
$$
And the contribution of the second terms in (\ref{pic:correlation-LG}) together with the observable $O_{f}$
is, as in the computation of correlation functions in non-twisted B-model on $\mathbb{P}^1$, given by
$$
\hbar^{-n}((O_{f}\vdash \Omega_X)\wedge\Omega_X)\otimes \pi_2^*(\Omega_X^{-2}).
$$
All together, we have
\begin{align*}
\langle  O_{f}[\infty]\rangle_{\Sigma_g}^W=&\hbar^{-(g+1)n}\int_X\left(\det(\pa_i\pa_j W)\right)^g(O_{f}\vdash \Omega_X)\wedge\Omega_X\\
=&\hbar^{-(g+1)n}\cdot\sum_{p\in Crit(W)}\Res_p \bracket{{f \det(\pa_i\pa_j W)^{g}dz^1\wedge \cdots \wedge dz^n \over \prod_i \pa_i W}},
\end{align*}
where the last equality follows from \cite[Proposition 2.5]{LLS}.
\end{proof}

\begin{rmk}
This coincides with Vafa's residue formula for topological Landau-Ginzburg models \cite{vafa}.
\end{rmk}

\appendix
\section{Proof of Lemma \ref{lem:asymp-propagator}}\label{appendix:asymp-propagaotr}
The propagator $P_\epsilon^L$ on the upper half plane $\mathbb{H}$ with respect to the hyperbolic metric is a $1$-form
on $\mathbb{H}\times\mathbb{H}$, thus having a decomposition  under the isomorphism
$$
\A^1(\mathbb{H}\times\mathbb{H})\cong\bracket{\A^1(\mathbb{H})\otimes\A^0(\mathbb{H})}\oplus\bracket{\A^0(\mathbb{H}
)\otimes\A^1(\mathbb{H})},
$$
where $\otimes$ denotes  the completed tensor product. Let us call the projection into these two components by the
$(1,0)$ and $(0,1)$ part respectively. There is a similar decomposition of the heat kernel $k_t$ into its $(2,0),
(0,2)$ and $(1,1)$ parts. We will use $z_i=x_i+\sqrt{-1}y_i, i=1,2$ to denote the coordinates on the two copies of $\mathbb{H}$
respectively. The propagator
will be denoted by $P_\epsilon^L(z_1,z_2)$, where we have omitted its anti-holomorphic dependence for simplicity, and similarly
for the heat kernel
$k_t(z_1,z_2)$.

By the fact that $P_\epsilon^L(z_1,z_2)$ is a symmetric tensor in $\A^*(\mathbb{H})\otimes\A^*(\mathbb{H})$, we only need to
compute its $(1,0)$ part. For this, we  apply the gauge fixing operator $d^*$ to the $(2,0)$ part of the heat kernel which
is given explicitly by:
$$
 k_t^{scalar}(z_1,z_2)\dfrac{dx_1dy_1}{y_1^2}=\dfrac{\sqrt{2}}{(4\pi
t)^{\frac{3}{2}}}e^{-\frac{1}{4}t}\int^\infty_\rho\dfrac{se^{-\frac{s^2}{4t}}ds}{(\cosh
s-\cosh\rho)^{\frac{1}{2}}}\dfrac{dx_1dy_1}{y_1^2}.\\
$$
Here $k_t^{scalar}(z_1,z_2)$ is the heat kernel of the Laplacian on smooth functions, and $\rho(z_1,z_2)$ denotes the geodesic
distance between $z_1$ and $z_2$
given explicitly by
$$
\rho(z_1,z_2)=\text{arcosh}(1+\dfrac{(x_1-x_2)^2+(y_1-y_2)^2}{y_1y_2}).
$$
In particular, $k_t^{scalar}(z_1,z_2)=k_t^{scalar}(\rho(z_1,z_2))$ is a function of $\rho$. The $(1,0)$ part of  $P_\epsilon^L$ is
therefore given
by (where $d_{z_1}$ is the de Rham differential, $\star_1$ is the Hodge star on the first copy of $\mathbb{H}$)
\begin{align*}
&\int_\epsilon^L dt\left(\star_1 d_{z_1}\star_1 (k_t^{scalar}(z_1,z_2)\dfrac{dx_1dy_1}{y_1^2})\right)
=\int_\epsilon^L dt\left(\star_1 d_{z_1}(k_t^{scalar}(\rho(z_1,z_2)))\right)\\
=&\int_\epsilon^Ldt f(\rho,t) \bracket{\star_1 d_{z_1} \cosh(\rho(z_1,z_2))}\\
=& \int_\epsilon^Ldt f(\rho,t) \star_1
\left(\dfrac{2(x_1-x_2)}{y_1y_2}dx_1+\dfrac{(y_1-y_2)(y_1+y_2)}{y_1^2y_2}dy_1\right)\\
=&\int_\epsilon^Ldt f(\rho,t) \left(\dfrac{2(x_1-x_2)}{y_1y_2}dy_1-\dfrac{(y_1-y_2)(y_1+y_2)}{y_1^2y_2}dx_1\right)
\end{align*}
for some $f(\rho,t)$ clear from the context. By the symmetry property, the full propagator is given by
$$
P_{\epsilon}^L(z_1,z_2)=\int_{\epsilon}^Lf(\rho,t)dt\cdot\left(\dfrac{2(x_1-x_2)}{y_1y_2}(dy_1-dy_2)-\dfrac{(y_1-y_2)(y_1+y_2)}{
y_1y_2
}
\left(\dfrac{dx_1}{y_1}-\dfrac{dx_2}{y_2}\right)\right).
$$
The asymptotic property of $f(\rho,t)$ in equation (\ref{eqn:asymp-propagator})  follows from the general property of heat
kernels, or an explicit evaluation
of $f(\rho,t)$.

\section{Some Feynman graph computations}\label{appendix:Feynman-graph-computation}
\subsubsection*{Proof of Lemma \ref{lem:two-vertex-wheels}}
It is not difficult to see that the proof of the lemma can be reduced to  wheels with two vertices, and we will show that the
Feynman
weights (\ref{eqn:two-vertex-vanish-susy}) associated to the trivalent wheel vanishes. The proof for other wheels with two
vertices is similar.

Let $\alpha_1\otimes g_1$ and $\alpha_2\otimes g_2$ be the inputs on the tails, the Feynman weight
\begin{equation}\label{eqn:Feynman-weight-two-wheel-trivalent}
\figbox{0.2}{wheelwithtwovertices-trivalent}
\end{equation} is the evaluation of
\begin{align*}
&\mathbb{P}_\epsilon^L\otimes(\mathbb{K}_\epsilon-\mathbb{K}_0)\otimes(\alpha_1\otimes g_1)\otimes(\alpha_2\otimes g_2)\\
=&\left(P_\epsilon^L\otimes\sum_{i=1}^n(X_i\otimes X^i+X^i\otimes
X_i)\right)\otimes\left((K_\epsilon-K_0)\otimes\sum_{j=1}^n(X_j\otimes X^j+X^j\otimes
X_j)\right)\otimes(\alpha_1\otimes g_1)\otimes(\alpha_2\otimes g_2)
\end{align*}
under $I_{cl}\otimes I_{cl}$. Here $\{X_i\}$ denotes a basis of $\g_X$ over $\A_X$ (locally) and $\{X^i\}$ denotes the
corresponding dual basis of $\g_X^\vee$. More explicitly,
equation (\ref{eqn:Feynman-weight-two-wheel-trivalent}) is given by
\begin{equation*}
\begin{aligned}
&\Big(\int_{\Sigma_g\times\Sigma_g}P_\epsilon^L(z_1,z_2)(K_\epsilon(z_1,z_2)-K_0(z_1,z_2))\alpha_1\alpha_2\Big)\\
\cdot&\Big(\langle l_2(-),-\rangle\otimes\langle l_2(-),-\rangle\Big)\Bigg(\sum_{i,j=1}^n(-X_i\otimes g_1\otimes
X^j\otimes X_j\otimes g_2\otimes X^i+X_j\otimes g_1\otimes
X^i\otimes X_i\otimes g_2\otimes X^j)\Bigg)\\
=&\Big(\int_{\Sigma_g\times\Sigma_g}P_\epsilon^L(z_1,z_2)(K_\epsilon(z_1,z_2)-K_0(z_1,z_2))\alpha_1\alpha_2\Big)\\
&\cdot\Bigg(\sum_{i,j=1}^n\Big(-\langle l_2(X_i\otimes g_1),X^j\rangle\cdot \langle l_2(X_j\otimes g_2),
X^i\rangle+\langle l_2(X_j\otimes g_1),X^i\rangle\cdot \langle l_2(X_i\otimes g_2), X^j\rangle\Big)\Bigg)\\
=&0.
\end{aligned}
\end{equation*}

\subsubsection*{Proof of Lemma \ref{lem:more-vertex-wheels}}
We first prove the lemma for those cases where $n>3$. As in the proof of Lemma
\ref{lem:naive quantization}, we can replace $\Sigma_g$ by $\mathbb{H}$ with inputs compactly supported, and assume that $\gamma$
is a trivalent wheel. We still use the
notation $K_t$ for the heat kernel on $\mathbb{H}$ for convenience. Without loss of
generality, let us assume that the edge $e$ connects the vertices $v_1$ and $v_n$. Let $\alpha_i\otimes\mathfrak{g}_i$ be the
input on the vertex $v_i$. We will show that the
following two limits exist and are the same
\begin{equation}\label{eqn:one-loop-anomaly-K_ep-K_0}
\lim_{\epsilon\rightarrow
0}W_{\gamma,e}(P_\epsilon^L,K_\epsilon,I_{cl})=\lim_{\epsilon\rightarrow 0}W_{\gamma,e}(P_\epsilon^L,K_0,I_{cl}).
\end{equation}

The LHS of (\ref{eqn:one-loop-anomaly-K_ep-K_0}) is given  explicitly by
\begin{align*}
&W_{\gamma,e}(P_\epsilon^L,K_\epsilon,I_{cl})(\alpha_1,\cdots,\alpha_n)\\
=&\int_{z_1,\cdots,z_n\in\mathbb{H}}P_{\epsilon}^{L}
(z_1 , z_2)\cdots P_{\epsilon}^L(z_{n-1},z_n)
K_\epsilon(z_n,z_1)\alpha_1(z_1,\bar{z}_1)\cdots\alpha_n(z_n,\bar{z}_n) d^2z_1\cdots d^2z_n\\
=&\int_{t_1,\cdots,t_{n-1}=\epsilon}^Ldt_1\cdots dt_{n-1}\big(\int_{z_1,\cdots,z_n\in\mathbb{H}}d^*_{z_1}K_{t_1}(z_1,z_2)\cdots
d^*_{z_{n-1}}K_{t_{n-1}}(z_{n-1},z_n) K_{\epsilon}(z_n,z_1)\\
&\hspace{58mm}\alpha_1\cdots\alpha_nd^2z_1\cdots d^2z_n\big).
\end{align*}
We claim that the integral
\begin{equation}\label{one-loop anomaly}
\int_{z_1,\cdots,z_n\in\mathbb{H}}d^*K_{t_1}(z_1,z_2)\cdots
d^*K_{t_{n-1}}(z_{n-1},z_n) K_{\epsilon}(z_n,z_1)\alpha_1\cdots\alpha_nd^2z_1\cdots d^2z_n
\end{equation}
 is uniformly
bounded by a function of $t_1,\cdots,t_{n-1}$ which is integrable on $[0,L]^{n-1}$. Then (\ref{eqn:one-loop-anomaly-K_ep-K_0})
follows from dominated
convergence theorem.

Proof of the Claim: By the asymptotic expansion (\ref{eqn:asymp-heat-kernel}) (\ref{eqn:asymp-propagator}) of $K_t$ and
$P_\epsilon^L$ respectively, the leading term of the integral (\ref{one-loop anomaly})  is given by
\begin{equation}\label{eqn: one-loop-n>3}
\begin{aligned}
\dfrac{1}{(4\pi)^{n}}\int_{z_1,\cdots,z_{n}\in\mathbb{H}}&\prod_{k=1}^{n-1}b_0(\rho(z_k,z_{k+1}))\left(2(x_k-x_{k+1})(dy_k-dy_{k+1
}
)-\dfrac{y_k^2-y_{k+1}^2}{y_ky_{k+1}}(\dfrac{dx_k}{y_k}-\dfrac{dx_{k+1}}{y_{k+1}})\right)\\
&\left(\prod_{k=1}^{n-1}\dfrac{1}{t_k^2}e^{-\frac{\rho^2(z_{k},z_{k+1})}{4t_k}}\right)
\dfrac{1}{\epsilon}\cdot a_0(z_n,z_1)e^{-\frac{\rho^2(z_{n},z_{1})}{4\epsilon}}\ \alpha_1\ \cdots\ \alpha_n.
\end{aligned}
\end{equation}
We provide the estimates for the above leading term, while higher order terms furnish better convergence property.

We do the same change of coordinates as in the proof of Lemma/Definition \ref{lem:naive quantization}, a procedure after which the
integral
(\ref{eqn: one-loop-n>3}) becomes a sum of integrals of the following form:
\begin{equation}\label{eqn: one-loop-sum}
\begin{aligned}
\int_{\mathbb{H}}du_0dv_0\int_{\mathbb{R}^{2n-2}} du_1dv_1\cdots du_{n-1}dv_{n-1} \Phi\cdot\dfrac{1}{\epsilon
}\left(\prod_{k=1}^{n-1}\dfrac{u_k^{i_k}v_k^{j_k}}{t_k^2}\right)
\cdot\exp\left(-\sum_{i=1}^{n-1}\frac{u_i^2+v_i^2}{4t_i}-\frac
{\rho^2(z_n,z_1)}{
4\epsilon}\right),
\end{aligned}
\end{equation}
where
\begin{itemize}
 \item  For $1\leqslant k\leqslant n-1$, the functions $u_k^{i_k}v_k^{j_k}$ arises from $x_k-x_{k+1}$ and  $y_k-y_{k+1}$, hence
$i_k+j_k\geq1$.
\item $\Phi$ is a smooth function on $\mathbb{H}\times\mathbb{R}^{2n-2}$ with compact support.
\end{itemize}
Now we only need to show that for each fixed $(u_0,v_0)\in\mathbb{H}$, the following integral is bounded above in absolute value
by an
integrable function of $(t_1,\cdots,t_{n-1})$ on $[0,L]^{n-1}$ independent of $\epsilon$
\begin{equation}\label{eqn:Gaussian-estimate}
\int_{\mathbb{R}^{2n-2}} du_1dv_1\cdots du_{n-1}dv_{n-1}\dfrac{1}{\epsilon
}\left(\prod_{k=1}^{n-1}\dfrac{u_k^{i_k}v_k^{j_k}}{t_k^2}\right)
\cdot\exp\left(-\sum_{i=1}^{n-1}\frac{u_i^2+v_i^2}{4t_i}-\frac{\rho^2(z_n,z_1)}{
4\epsilon}\right).
\end{equation}
We show this for the leading term of its Wick expansion.
Notice that for each fixed $(u_0,v_0)\in\mathbb{H}$, the function
$$-\sum_{i=1}^{n-1}\frac{u_i^2+v_i^2}{4t_i}-\frac{\rho^2(z_n,z_1)}{
4\epsilon} $$ takes its maximal value $0$ at the critical point $(u_1,v_1,\cdots,u_{n-1},v_{n-1})=(0,\cdots,0)$. It is not
difficult to see that the Hessian at the critical point is the same as that of the function
$$-\sum_{i=1}^{n-1}\frac{u_i^2+v_i^2}{4t_i}-\frac{\left(\sum\limits_{i=1}^{n-1}u_i\right)^2+\left(\sum\limits_{i=1}^{n-1}
v_i\right)^2}{4\epsilon}. $$
Thus the leading term in the Wick expansion of (\ref{eqn:Gaussian-estimate}) is the same as that of the following
integral
\begin{equation}\label{eqn:Gaussian-integral}
\begin{aligned}
\int_{\mathbb{R}^{2n-2}} du_1dv_1\cdots
du_{n-1}dv_{n-1}&\dfrac{1}{\epsilon}\cdot\left(\prod_{k=1}^{n-1}\dfrac{u_k^{i_k}v_k^{j_k}}{
t_k^2}\right) \cdot\exp\left(-\sum_{i=1}^{n-1}\frac{u_i^2+v_i^2}{4t_i}-\frac{(\sum_{i=1}^{n-1}u_i)^2+(\sum_{i=1}^{n-1}v_i)^2}{
4\epsilon}\right),
\end{aligned}
\end{equation}
which can be evaluated via Gaussian type integral. We rearrange
the coordinates on $\mathbb{R}^{2n-2}$ as $$(u_1,\cdots,u_{n-1},v_1,\cdots,v_{n-1}),$$ and let $t=(t_1,\cdots,t_{n-1})$. The
matrix of
the quadratic form in the exponential is given by:
\begin{equation*}
M(t,\epsilon)=\frac{1}{4}\begin{pmatrix}
              A(t,\epsilon) & 0\\
               0            & A(t,\epsilon)
              \end{pmatrix},
\end{equation*}
in which \begin{equation*}
A(t,\epsilon)=\begin{pmatrix}
\frac{1}{t_1}+\frac{1}{\epsilon} & \frac{1}{\epsilon} & ... & \frac{1}{\epsilon} \\
\frac{1}{\epsilon} & \frac{1}{t_2}+\frac{1}{\epsilon} & ... & \frac{1}{\epsilon} \\
\vdots & \vdots &  \ddots &\vdots \\
\frac{1}{\epsilon} & \frac{1}{\epsilon} & ... & \frac{1}{t_{n-1}}+\frac{1}{\epsilon}
\end{pmatrix}.
\end{equation*}
For convenience, we will also use $M$ for the matrix $M(t,\epsilon)$. It is straightforward to check that
\begin{equation}\label{eqn:det}
\det(M)=\left(\frac{1}{4}\right)^{2(n-1)}\cdot\left(\dfrac{t_1+\cdots+t_{n-1}+\epsilon}{t_1\cdots t_{n-1}\epsilon}\right)^2.
\end{equation}
The standard trick of Feynman integral implies that (\ref{eqn:Gaussian-integral}) equals
\begin{equation}\label{eqn:Wick-expansion}
\begin{aligned}
&\dfrac{1 }{\sqrt{\det M}}\left(\prod_{i=1}^{n-1}\dfrac{1}{t_i^2}\right)
\cdot\dfrac { 1 } { \epsilon }\sum \left(M^{-1}_{\alpha_1,\beta_1}\cdots
M^{-1}_{\alpha_{N},\beta_{N}}\right)
=\dfrac{4^{n-1}}{t_1\cdots
t_{n-1}(t_1+\cdots+t_{n-1}+\epsilon)}\cdot\sum \left(M^{-1}_{\alpha_1,\beta_1}\cdots
M^{-1}_{\alpha_{N},\beta_{N}}\right),
\end{aligned}
\end{equation} where the sum is over
all pairings of $\prod_ { k=1 }^{n-1}(u_k^{i_k}v_k^{j_k})$, and $M^{-1}_{\alpha,\beta}$'s are entries of the inverse matrix of
$M$. $N$ is an integer
no less than $(n-1)/2$.

We claim that on each region of the form $$\{(t_1,t_2,\cdots,t_{n-1})\in[0,L]^{n-1}:0\leqslant
t_{\sigma(1)}\leqslant\cdots\leqslant
t_{\sigma(n-1)}\leqslant L\},$$ where $\sigma\in S_{n-1}$, (\ref{eqn:Wick-expansion}) is uniformly bounded above in absolute value
by an integrable function. This claim finishes the proof of Lemma \ref{lem:more-vertex-wheels}.

We will prove the claim for $\sigma=\text{id}\in S_{n-1}$, the proof for other $\sigma$'s is similar. The following Lemma provides
an
estimate of the entries of  $M^{-1}$.
\begin{lem}\label{lem:estimate-inverse-matrtix}
$|M_{i,j}^{-1}|\leqslant 4\cdot\min\{t_i,t_j\}$
\end{lem}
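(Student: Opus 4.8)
The plan is to exploit the block and rank-one structure of $M$. Since $M(t,\epsilon)$ is block diagonal with two identical diagonal blocks equal to $\frac14 A(t,\epsilon)$, its inverse is block diagonal with blocks $4A(t,\epsilon)^{-1}$; in particular any entry of $M^{-1}$ linking a $u$-coordinate to a $v$-coordinate vanishes, and it suffices to bound the entries of $A^{-1}$ by $\min\{t_i,t_j\}$. Writing $D=\mathrm{diag}(t_1,\dots,t_{n-1})$ and $\mathbf 1$ for the all-ones vector, we have $A=D^{-1}+\frac1\epsilon\,\mathbf 1\mathbf 1^{T}$, a rank-one perturbation of a diagonal matrix, which is exactly the setting in which the inverse can be written down explicitly.

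First I would produce the inverse. Set $S:=\epsilon+\sum_{k}t_k$ and claim that $(A^{-1})_{ij}=t_i\delta_{ij}-\frac{t_it_j}{S}$. Rather than invoking Sherman--Morrison I would verify this directly: using $A_{ij}=\frac1\epsilon+\frac1{t_i}\delta_{ij}$, a one-line computation gives $\sum_j A_{ij}\big(t_j\delta_{jl}-\frac{t_jt_l}{S}\big)=\frac{t_l}{\epsilon}\big(1-\frac{\sum_j t_j}{S}\big)+\delta_{il}-\frac{t_l}{S}=\delta_{il}$, where the last equality uses $1-\frac{\sum_j t_j}{S}=\frac\epsilon S$. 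This confirms the stated formula for $A^{-1}$.

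Next I would read off the bounds from this formula. For the diagonal entries, $(A^{-1})_{ii}=t_i\big(1-\frac{t_i}{S}\big)=t_i\cdot\frac{\epsilon+\sum_{k\neq i}t_k}{S}$ lies in $(0,t_i]$, hence $|(A^{-1})_{ii}|\le t_i=\min\{t_i,t_i\}$. For $i\neq j$ the entry $(A^{-1})_{ij}=-\frac{t_it_j}{S}$ is negative, and since $S\ge \max\{t_i,t_j\}$ we obtain $|(A^{-1})_{ij}|=\frac{t_it_j}{S}\le\frac{t_it_j}{\max\{t_i,t_j\}}=\min\{t_i,t_j\}$. Multiplying by the factor $4$ coming from the block inversion yields $|M^{-1}_{i,j}|\le 4\min\{t_i,t_j\}$ for the nonzero entries (those with $i,j$ in the same block), while the cross-block entries vanish and the bound is trivial.

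There is no serious obstacle here: the statement is a routine consequence of the rank-one-update structure of $A$. The only points requiring a little care are the bookkeeping that the index $i$ in $M^{-1}_{i,j}$ refers, in either block, to the time variable $t_i$ (so that the two blocks give the same bound), and the elementary but crucial estimate $S\ge\max\{t_i,t_j\}$, which is precisely what converts the explicit inverse into the desired $\min\{t_i,t_j\}$ bound.
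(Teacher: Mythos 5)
Your proof is correct --- I checked the verification $\sum_j A_{ij}\bigl(t_j\delta_{jl}-\tfrac{t_jt_l}{S}\bigr)=\delta_{il}$ and the two bounds, and they hold --- but your route differs in execution from the paper's. The paper proves the lemma by Cramer's rule: using symmetry it reduces to the two entries $M^{-1}_{1,1}$ and $M^{-1}_{1,2}$, computes each as a minor of $A$ divided by $\det A$ (both the minors and $\det A$ being determinants of matrices of the same ``diagonal plus rank one'' family, for which the paper has the closed formula in its equation for $\det M$), and then estimates each quotient separately. You instead write down the full closed-form inverse $(A^{-1})_{ij}=t_i\delta_{ij}-\tfrac{t_it_j}{S}$ (Sherman--Morrison type) and verify it by a one-line multiplication, after which both the diagonal bound $0<(A^{-1})_{ii}\le t_i$ and the off-diagonal bound $\tfrac{t_it_j}{S}\le\min\{t_i,t_j\}$ (via $S\ge\max\{t_i,t_j\}$) fall out uniformly for all entries at once. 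Your approach buys three things: it needs no determinant formula and no case-by-case minor computation; it handles the block structure of $M$ explicitly (the cross-block entries vanish, a point the paper passes over silently); and it gets the sign of the off-diagonal entries right --- the paper's computation of $M^{-1}_{1,2}$ drops the cofactor sign $(-1)^{1+2}$ and reports a positive value where the true entry is $-4t_1t_2/S$, a harmless slip since only absolute values matter for the lemma, but one your derivation avoids.
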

\begin{proof}
There are two possibilities: $i=j$ or $i\neq j$. By symmetry, we only need to consider $M_{1,1}^{-1}$ and $M^{-1}_{1,2}$. We have
\begin{align*}
M^{-1}_{1,1}=&\det\begin{pmatrix}
 \frac{1}{t_2}+\frac{1}{\epsilon} & \frac{1}{\epsilon} & ... & \frac{1}{\epsilon} \\
\frac{1}{\epsilon} & \frac{1}{t_3}+\frac{1}{\epsilon} & ... & \frac{1}{\epsilon} \\
\vdots & \vdots &  \ddots &\vdots \\
\frac{1}{\epsilon} & \frac{1}{\epsilon} & ... & \frac{1}{t_{n-1}}+\frac{1}{\epsilon}
\end{pmatrix}\cdot \left(\dfrac{t_1+\cdots+t_{n-1}+\epsilon}{t_1\cdots t_{n-1}\epsilon}\right)^{-1}\cdot 4\\
=&\dfrac{t_2+\cdots+t_{n-1}+\epsilon}{t_2\cdots
t_{n-1}\epsilon}\cdot\left(\dfrac{t_1+\cdots+t_{n-1}+\epsilon}{t_1\cdots t_{n-1}\epsilon}\right)^{-1}\cdot 4\\
=&t_1\cdot\dfrac{t_2+\cdots+t_{n-1}+\epsilon}{t_1+\cdots+t_{n-1}+\epsilon}\cdot 4\leqslant 4t_1.
\end{align*}
and
\begin{align*}
M^{-1}_{1,2}=&\det\begin{pmatrix}
\frac{1}{\epsilon} & \frac{1}{\epsilon} & ... & \frac{1}{\epsilon} \\
\frac{1}{\epsilon} & \frac{1}{t_3}+\frac{1}{\epsilon} & ... & \frac{1}{\epsilon} \\
\vdots & \vdots &  \ddots &\vdots \\
\frac{1}{\epsilon} & \frac{1}{\epsilon} & ... & \frac{1}{t_{n-1}}+\frac{1}{\epsilon}
\end{pmatrix}\cdot \left(\dfrac{t_1+\cdots+t_{n-1}+\epsilon}{t_1\cdots t_{n-1}\epsilon}\right)^{-1}\cdot 4\\
=&\dfrac{1}{t_3\cdots
t_{n-1}\epsilon}\cdot\left(\dfrac{t_1+\cdots+t_{n-1}+\epsilon}{t_1\cdots t_{n-1}\epsilon}\right)^{-1}\cdot 4\\
=&\dfrac{t_1t_2}{t_1+\cdots+t_{n-1}+\epsilon}\cdot 4
\leqslant 4\cdot\min\{t_1,t_2\}.
\end{align*}
\end{proof}
With Lemma \ref{lem:estimate-inverse-matrtix}, we can give an estimate of $\dfrac{M^{-1}_{\alpha_1,\beta_1}\cdots
M^{-1}_{\alpha_{N},\beta_{N}}}{t_1\cdots
t_{n-1}(t_1+\cdots+t_{n-1}+\epsilon)}$ in (\ref{eqn:Wick-expansion}):
since $1\in\{\alpha_1,\beta_1,\cdots,\alpha_{N},\beta_{N}\}$,
we can always find a subset $\{l_1,l_2,\cdots,l_{\tilde{N}}\}\subset\{2,3,\cdots,n-1\}$, $\tilde{N}\leq (n-1)/2$, such that
\begin{equation*}
\dfrac{\left|M^{-1}_{\alpha_1,\beta_1}\cdots
M^{-1}_{\alpha_{N},\beta_{N}}\right|}{t_1\cdots
t_{n-1}(t_1+\cdots+t_{n-1}+\epsilon)}\leqslant\dfrac{1}{t_{l_1}\cdots t_{l_{\tilde{N}}}}\dfrac{1}{t_1+\cdots+t_{n-1}}.
\end{equation*}
It is straightforward to check that the function
$$\dfrac{1}{t_{l_1}\cdots t_{l_{\tilde{N}}}}\dfrac{1}{t_1+\cdots+t_{n-1}}$$ is integrable on
$\{(t_1,\cdots,t_{n-1})\in[0,L]^{n-1}:0\leqslant t_1\leqslant\cdots\leqslant t_{n-1}\leqslant L\}$ if $n\geqslant 4$.

The only case left is when $n=3$. Notice that the following Feynman weight is non-trivial only if at least one
$\alpha_i$ is a $0$-form.
\begin{equation*}
 \figbox{0.2}{threevertices0}
\end{equation*}

Let $f$ be a compactly-supported function on $\H$. There are the following
two possible configurations of the input on the graph up to automorphisms:
\begin{equation*}
\underbrace{\figbox{0.2}{threevertices1}}_\text{(1)}\hspace{30mm}\underbrace{\figbox{0.2}{threevertices2}}_\text{(2)}
\end{equation*}

For configuration (1), we write the corresponding Feynman weight as the sum of
\begin{equation}\label{first}
\int_{z_1,z_2,z_3\in\mathbb{H}}P_\epsilon^L(z_1,z_2)\ K_\epsilon(z_2,z_3)\
P_{\epsilon}^L(z_3,z_1)\cdot f(z_2)\ \alpha_2\ \alpha_3
\end{equation}
and
\begin{equation}\label{second}
 \int_{z_1,z_2,z_3\in\mathbb{H}}P_\epsilon^L(z_1,z_2)\ K_\epsilon(z_2,z_3)\
P_{\epsilon}^L(z_3,z_1)\cdot(f(z_1)-f(z_2))\ \alpha_2\ \alpha_3.
\end{equation}

Here (\ref{first}) actually vanishes since
\begin{align*}
 \int_{z_1\in \mathbb{H}} P_{\epsilon}^L(z_3,z_1)P_\epsilon^L(z_1,z_2)=0,
\end{align*}
which amounts to $(d^*)^2=0$. The vanishing holds if we replace $K_\epsilon$ by
$K_0$. For (\ref{second}), we claim that
\begin{equation}\label{configuration1}
\begin{aligned}
&\lim_{\epsilon\rightarrow 0}\int_{z_1,z_2,z_3\in\mathbb{H}}P_\epsilon^L(z_1,z_2) K_\epsilon(z_2,z_3)
P_{\epsilon}^L(z_3,z_1)\cdot (f(z_1)-f(z_2)) \alpha_2 \alpha_3 \\
=&\lim_{\epsilon\rightarrow 0}\int_{z_1,z_2,z_3\in\mathbb{H}}P_\epsilon^L(z_1,z_2) K_0(z_2,z_3)
P_{\epsilon}^L(z_3,z_1)\cdot (f(z_1)-f(z_2)) \alpha_2 \alpha_3.
\end{aligned}
\end{equation}
To prove the claim, we apply the same argument for the case of $n\geqslant 4$. The
leading term of (\ref{second}) is similar to (\ref{eqn:Wick-expansion}), except that the function $f(z_1)-f(z_2)$ in
(\ref{second}) contributes one more $u_i$ or $v_i$ than in (\ref{eqn:Wick-expansion}) (so $N\geq 2$ when $n=3$, hence $\tilde
N=0$). Thus the leading
term is bounded above
by a constant times  $$\dfrac{1}{t_1+t_2},$$ which clearly has a finite
integral on $[0,L]\times[0,L]$. All together, we have
\begin{equation*}
 \lim_{\epsilon\rightarrow 0}\left(\figbox{0.2}{threevertices1}\right)=
\lim_{\epsilon\rightarrow 0}\left(\figbox{0.2}{threevertices3}\right)
\end{equation*}

For  configuration (2), a similar argument as above shows
\begin{align*}
\lim_{\epsilon\rightarrow 0}\int_{z_1,z_2,z_3\in\mathbb{H}}K_\epsilon(z_1,z_2) P^L_\epsilon(z_2,z_3)
P_{\epsilon}^L(z_3,z_1)\cdot(f(z_1)-f(z_2))\alpha_2\alpha_3\\
=\lim_{\epsilon\rightarrow 0}\int_{z_1,z_2,z_3\in\mathbb{H}}K_0(z_1,z_2) P^L_\epsilon(z_2,z_3)
P_{\epsilon}^L(z_3,z_1)\cdot(f(z_1)-f(z_2))\alpha_2\alpha_3.
\end{align*}
On the other hand,
\begin{align*}
\int_{z_1,z_2,z_3\in\mathbb{H}}K_\epsilon(z_1,z_2) &P^L_\epsilon(z_2,z_3)
P_{\epsilon}^L(z_3,z_1)\cdot f(z_2)\alpha_2(z_2)\alpha_3(z_3)\\
&=\pm\int_{z_2,z_3\in\mathbb{H}}P_\epsilon^L(z_2,z_3)P_{2\epsilon}^{\epsilon+L}(z_3,z_2)f(z_2)\alpha_2(z_2)\alpha_3(z_3).
\end{align*}
Then similar to Lemma/Definition \ref{lem:naive quantization}, the above limit exists, and
\begin{align*}
&\lim_{\epsilon\rightarrow
0}\int_{z_2,z_3\in\mathbb{H}}P_\epsilon^L(z_2,z_3)P_{2\epsilon}^{\epsilon+L}(z_3,z_2)f(z_2)\alpha_2\alpha_3\\
=&\lim_{\epsilon\rightarrow 0}\int_{z_2,z_3\in\mathbb{H}}P_\epsilon^L(z_2,z_3)P_{\epsilon}^{L}(z_3,z_2)f(z_2)\alpha_2\alpha_3\\
=&\lim_{\epsilon\rightarrow
0}\int_{z_1,z_2,z_3\in\mathbb{H}}P_\epsilon^L(z_2,z_3)P_{\epsilon}^{L}(z_3,z_1)K_0(z_1,z_2)f(z_2)\alpha_2\alpha_3.
\end{align*}
Altogether we have
\begin{equation*}
 \lim_{\epsilon\rightarrow 0}\left(\figbox{0.2}{threevertices2}\right)=
\lim_{\epsilon\rightarrow 0}\left(\figbox{0.2}{threevertices6}\right)
\end{equation*}

\section{One-loop anomaly}\label{appendix:one-loop-anomaly}
In this section, we give a general formula of the one-loop anomaly for perturbative QFT in Costello's formalism. Let $\mathcal{E}$
be
the space of fields of a perturbative QFT whose classical interaction is $I\in\mathcal{O}(\mathcal{E})$. Let $P_\epsilon^L$ denote the regularized propagator.

Let us first give an explicit description of the one-loop naive quantization $I_{naive}^{(1)}[L]$. Let $\Gamma^{Wheel}$ denote the
set of Feynman diagrams given by wheels (without trees attached), which are the essential part of 1-loop diagrams requiring
regularization by counter-terms.
We fix a renormalization scheme which allows us to decompose any graph integral uniquely into its `'smooth
part" and `'singular part" in the sense of \cite{Kevin-book}. Let $\gamma\in \Gamma^{Wheel}$, we will write
\begin{align}\label{smooth-sing-decomp}
  W_\gamma(P_\epsilon^L, I)=W_\gamma(P_\epsilon^L, I)^{sm}+W_{\gamma}(P_\epsilon^L, I)^{sing}
\end{align}
for the corresponding decomposition \cite[Theorem 9.5.1]{Kevin-book}.

\begin{lem} Let $\gamma\in \Gamma^{Wheel}$, then $W_{\gamma}(P_\epsilon^L, I)^{sing}$ is a local functional on $\E$ independent of $L$.
\end{lem}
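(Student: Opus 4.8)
The plan is to treat the two assertions of the lemma---locality and $L$-independence---separately, establishing $L$-independence by differentiating in $L$ and locality by appealing to the concentration of the singular part on the diagonal. The $L$-independence is the more hands-on part, so I would carry it out first.

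For $L$-independence, the key observation is that the only $L$-dependence of $W_\gamma(P_\epsilon^L,I)$ enters through the propagators on the internal edges, and that by Definition \ref{defn:propagator}
$$
\frac{\partial}{\partial L}P_\epsilon^L=(Q^{GF}\otimes 1)K_L,
$$
a kernel which is smooth on $\Sigma_g\times\Sigma_g$ and independent of $\epsilon$ for fixed $L>0$. Differentiating the Feynman weight therefore yields a finite sum
$$
\frac{\partial}{\partial L}W_\gamma(P_\epsilon^L,I)=\sum_{e\in E(\gamma)}W_{\gamma,e}\bigl((Q^{GF}\otimes 1)K_L,\,P_\epsilon^L,\,I\bigr),
$$
in which the propagator on the distinguished loop-edge $e$ has been replaced by the smooth kernel $(Q^{GF}\otimes 1)K_L$. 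First I would argue that each such term admits a finite $\epsilon\to 0$ limit: replacing one edge of the wheel by a fixed smooth bounded kernel cuts the loop into a chain of propagators, so convergence follows \emph{a fortiori} from the estimates already carried out in Lemma/Definition \ref{lem:naive quantization} and Lemma \ref{lem:more-vertex-wheels} (the inserted kernel is strictly less singular near the diagonal than the propagator it replaces). Hence $\partial_L W_\gamma(P_\epsilon^L,I)$ is nonsingular in $\epsilon$.

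Next I would use that the renormalization scheme of \cite{Kevin-book} splits a graph weight into its smooth and singular parts by operating only on the $\epsilon$-dependence, treating $L$ and the fields as parameters; writing $W_\gamma(P_\epsilon^L,I)=\sum_i f_i(\epsilon)\,\Phi_i(L)$ with $f_i$ in the fixed singular class and $\Phi_i\in\OO(\E)$, the operator $\partial/\partial L$ acts only on the coefficients $\Phi_i$ and leaves the $f_i$ untouched, so the decomposition \eqref{smooth-sing-decomp} commutes with $\partial/\partial L$. Combining this with the previous step,
$$
\frac{\partial}{\partial L}\Bigl(W_\gamma(P_\epsilon^L,I)^{sing}\Bigr)=\Bigl(\frac{\partial}{\partial L}W_\gamma(P_\epsilon^L,I)\Bigr)^{sing}=0,
$$
since a functional with a finite $\epsilon\to 0$ limit has vanishing singular part. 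This gives the independence of $W_\gamma(P_\epsilon^L,I)^{sing}$ from $L$. For locality I would invoke Costello's analysis directly: the singular part is governed by the $\epsilon\to 0$ asymptotics of the propagator, which by \eqref{eqn:asymp-heat-kernel} and \eqref{eqn:asymp-propagator} is concentrated along the diagonal of $\Sigma_g\times\Sigma_g$. Taylor-expanding the inputs about the small diagonal of $\Sigma_g^{V(\gamma)}$ rewrites every divergent contribution as a singular function of $\epsilon$ times the integral over $\Sigma_g$ of a density built from jets of the fields, i.e.\ as a local functional; this is precisely the content of the proof of \cite[Theorem 9.5.1]{Kevin-book}, which I would cite.

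The hard part will be the analytic input underlying the $L$-independence argument, namely guaranteeing that the scheme's smooth/singular splitting genuinely commutes with $\partial/\partial L$. This rests on the fact that both $W_\gamma(P_\epsilon^L,I)$ and its $L$-derivative lie, as functions of $\epsilon$, in the class to which the scheme applies, with $L$ entering only as an analytic parameter; verifying membership in this class via the asymptotic expansions above is the one genuinely technical point, the remaining convergence being subsumed by the wheel estimates already established in the text.
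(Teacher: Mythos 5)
Your $L$-independence argument is essentially the paper's own: you both observe that $\tfrac{\pa}{\pa L}\mathbb P_\epsilon^L$ (equivalently $(Q^{GF}\otimes 1)K_L$) is a smooth, $\epsilon$-independent kernel, so that $\tfrac{\pa}{\pa L}W_\gamma(P_\epsilon^L,I)$ behaves like a tree/chain diagram and has an $\epsilon\to 0$ limit, whence the singular part is constant in $L$. Your additional point --- that the renormalization scheme's smooth/singular splitting commutes with $\pa/\pa L$ because $L$ enters only as a parameter in the coefficient functionals --- is a step the paper leaves implicit, and it is correctly identified.

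The gap is in the locality half. You attribute locality of $W_\gamma(P_\epsilon^L,I)^{sing}$ to the proof of \cite[Theorem 9.5.1]{Kevin-book}, but that theorem, as invoked at \eqref{smooth-sing-decomp}, only supplies the decomposition of a graph weight into smooth and singular parts as functions of $\epsilon$; for fixed $L>0$ the coefficient functionals multiplying the singular functions of $\epsilon$ are a priori arbitrary elements of $\OO(\E)$, not local functionals. If the theorem asserted locality of the singular part, the present lemma would have nothing to prove. The paper's actual argument (following Costello) is different and uses the $L$-independence you just established in an essential way: by \cite[Theorem 9.3.1]{Kevin-book} the singular part admits a \emph{small-$L$} asymptotic expansion in terms of local functionals, and a functional that is independent of $L$ and has such an expansion must itself be local. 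Your alternative sketch --- divergences concentrate on the small diagonal, Taylor-expand the inputs there --- could plausibly be made into a direct proof at one loop (wheels have no subdivergences, so the only singular region is where all vertices collide), but as written it is not carried out: you never show that the regions where only some points collide contribute nothing singular, nor that the collision-region coefficients organize into functions of $\epsilon$ times honest local functionals, and the citation you lean on does not contain these statements. The missing ingredient is precisely the combination ``$L$-independence $+$ small-$L$ expansion in local functionals'' from \cite[Theorem 9.3.1]{Kevin-book}.
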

\begin{proof} Since ${\pa \over \pa L}P_\epsilon^L$ is a smooth kernel which does not depend on $\epsilon$,
$
{\pa \over \pa L}W_\gamma(P_\epsilon^L, I)
$
behaves like a tree diagram. Therefore
$$
   \lim_{\epsilon\to 0}{\pa \over \pa L}W_\gamma(P_\epsilon^L, I) \quad \text{exists}.
$$
Hence $W_{\gamma}(P_\epsilon^L, I)^{sing}$ is independent of $L$. By \cite[Theorem 9.3.1]{Kevin-book}, $W_{\gamma}(P_\epsilon^L, I)^{sing}$ has a
small $L$ asymptotic expansion in terms of local functionals. Since it does not depend on $L$, it follows that $W_{\gamma}(P_\epsilon^L, I)^{sing}$
is local.
\end{proof}

By the algorithm in \cite{Kevin-book}, $W_{\gamma}(P_\epsilon^L, I)^{sing}$ is the counter-term associated to $\gamma$, and
$$
    \lim_{\epsilon\to 0}W_\gamma(P_\epsilon^L, I)^{sm} \quad \text{exists}.
$$
The following Proposition now follows easily from the Feynman diagram analysis and the regularization process described in \cite{Kevin-book}.
\begin{prop}\label{1-loop naive} The one-loop naive quantization is given by
$$
   I_{naive}^{(1)}[L]=\lim_{\epsilon\to 0} \sum_{\gamma_1\in trees,  v\in V(\gamma_1), \gamma_2\in \Gamma^{Wheel}}W_{\gamma_1,v}(P_\epsilon^L, I,
W_{\gamma_2}(P_\epsilon^L, I )^{sm}),
$$
where the summation is over all connected tree diagrams $\gamma_1$ with a specified  vertex $v$, and a wheel diagram $\gamma_2$.
$W_{\gamma_1,v}(P_\epsilon^L, I, W_{\gamma_2}(P_\epsilon^L, I )^{sm})$ is the Feynman graph integral on $\gamma_1$, where we put
$I$  on those vertices
not being $v$, put $W_{\gamma_2}(P_\epsilon^L, I )^{sm}$ on the vertex $v$, and put $P_\epsilon^L$ on all internal edges. %for the propagators.
\end{prop}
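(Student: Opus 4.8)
The plan is to extract the $\hbar^{1}$-coefficient of the naive quantization $\lim_{\epsilon\to 0}W(P_\epsilon^L, I-I^{CT}(\epsilon))$ of \eqref{eqn:naive-quantization} and to reorganize the resulting sum over one-loop graphs so that all singular contributions are absorbed into the counter-terms. First I would sort contributions by powers of $\hbar$. Since $I$ has $\hbar$-degree zero, no counter-terms are required at tree level, so $I^{CT}(\epsilon)=\hbar\,I^{CT,(1)}(\epsilon)+O(\hbar^{2})$. Reading off the $\hbar^{1}$-part of the renormalization group flow then leaves exactly two types of terms: \emph{(a)} weights $W_\gamma(P_\epsilon^L, I)$ of connected stable graphs $\gamma$ with $b_1(\gamma)=1$ and every vertex labelled by $I$; and \emph{(b)} weights of tree graphs carrying a single insertion of the one-loop counter-term $-I^{CT,(1)}(\epsilon)$ at one vertex, the remaining vertices being labelled by $I$.

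Next I would reorganize the genus-one sum in \emph{(a)}. Every connected stable graph with $b_1(\gamma)=1$ and genus-zero vertices is a wheel with trees attached; collapsing the wheel to a single vertex $v$ identifies such a graph with a tree $\gamma_1$ carrying a marked vertex $v$ together with a wheel $\gamma_2\in\Gamma^{Wheel}$ inserted at $v$. Keeping track of the symmetry factors, this gives
\begin{equation*}
\sum_{\gamma:\,b_1(\gamma)=1}\frac{1}{|\mathrm{Aut}(\gamma)|}W_\gamma(P_\epsilon^L, I)
=\sum_{\gamma_1\in trees,\ v\in V(\gamma_1),\ \gamma_2\in\Gamma^{Wheel}}W_{\gamma_1,v}\bigl(P_\epsilon^L, I, W_{\gamma_2}(P_\epsilon^L, I)\bigr).
\end{equation*}
Applying the smooth/singular decomposition \eqref{smooth-sing-decomp} to each inner factor $W_{\gamma_2}(P_\epsilon^L, I)$ splits the right-hand side into a smooth piece, which is the one appearing in the desired formula, and a singular piece.

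Then I would identify the singular piece with the counter-term contribution. By the counter-term algorithm of \cite{Kevin-book}, the one-loop counter-term is precisely $I^{CT,(1)}(\epsilon)=\sum_{\gamma_2\in\Gamma^{Wheel}}W_{\gamma_2}(P_\epsilon^L, I)^{sing}$, which the preceding Lemma guarantees is local and independent of $L$. Consequently the singular piece of the reorganized sum equals $\sum_{\gamma_1,v}W_{\gamma_1,v}(P_\epsilon^L, I, I^{CT,(1)}(\epsilon))$, and it cancels \emph{(b)} exactly. Only the smooth piece $\sum_{\gamma_1,v,\gamma_2}W_{\gamma_1,v}(P_\epsilon^L, I, W_{\gamma_2}(P_\epsilon^L, I)^{sm})$ remains, and passing to $\epsilon\to 0$ yields the claimed identity; the limit exists because $\lim_{\epsilon\to 0}W_{\gamma_2}(P_\epsilon^L, I)^{sm}$ exists for each wheel and all tree-level weights converge as $\epsilon\to 0$.

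I expect the main difficulty to be entirely combinatorial rather than analytic: the delicate points are matching the automorphism/symmetry factors in the wheel-collapsing reorganization of the second paragraph, and verifying that Costello's counter-term is exactly the sum of the singular parts $\sum_{\gamma_2}W_{\gamma_2}(P_\epsilon^L, I)^{sing}$. Both are bookkeeping statements that follow from the construction in \cite{Kevin-book} together with the preceding Lemma, so no new estimates beyond those already established are needed.
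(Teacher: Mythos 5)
Your proposal is correct and follows essentially the same route as the paper: the paper's (one-line) proof likewise defines $I_{naive}[L]$ via $\lim_{\epsilon\to 0}W(P_\epsilon^L, I-I^{CT}(\epsilon))$, identifies the one-loop counter-term with $\sum_{\gamma\in\Gamma^{Wheel}}W_{\gamma}(P_\epsilon^L,I)^{sing}$ (using the preceding lemma that these singular parts are local and $L$-independent), and observes that the singular parts of wheels-with-trees cancel against the counter-term insertions, leaving only the smooth wheel weights placed at marked vertices of trees. Your write-up simply makes explicit the $\hbar$-bookkeeping and the wheel-collapsing combinatorics that the paper delegates to Costello's formalism.
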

Pictorially,
\begin{equation}\label{eqn:effective-interaction}
I_{naive}^{(1)}[L]=\left(\figbox{0.18}{effective-interaction}\right)
\end{equation}
\begin{rmk}
In the above picture, we are taking the sum of weights of all one-loop graphs.
\end{rmk}
Let
$$
  \bracket{Q+\hbar \Delta_L}e^{I_{naive}^{(0)}[L]/\hbar+I_{naive}^{(1)}[L]}=(O_1[L]+O(\hbar))e^{I_{naive}^{(0)}[L]/\hbar+I_{naive}^{(1)}[L]},
$$
where $O_1[L]$ is the leading term in the $\hbar$-expansion. By the construction in \cite[Chapter 5]{Kevin-book}, $O_1[L]$ is the
anomaly for solving quantum mater equation at one-loop. Moreover, $O_1[L]$ satisfies a version of classical renormalization group
flow, and
$$
 O_1:=\lim_{L\to 0}O_1[L]
$$
exists as a local functional. Our goal is to give a formula for computing $O_1$ in terms of graphs.

Let
$$
  I^{CT}(\epsilon)=\sum_{\gamma\in \Gamma^{Wheel}}W_{\gamma}(P_\epsilon^L, I)^{sing}
$$
denote the 1-loop counter-terms. Proposition \ref{1-loop naive} can be formally written as \cite{Kevin-book}
$$
e^{I_{naive}^{(0)}[L]/\hbar+I_{naive}^{(1)}[L]+O(\hbar)}=\lim_{\epsilon\to 0}e^{\hbar{\pa\over \pa P_\epsilon^L}}e^{I/\hbar-I^{CT}(\epsilon)}.
$$
Therefore
\begin{align*}
&(O_1[L]+O(\hbar))e^{I_{naive}^{(0)}[L]/\hbar+I_{naive}^{(1)}[L]+O(\hbar)}\\
=&\bracket{Q+\hbar \Delta_L}\lim_{\epsilon\to 0}e^{\hbar{\pa\over \pa P_\epsilon^L}}e^{I/\hbar-I^{CT}(\epsilon)}
=\lim_{\epsilon\to 0}e^{\hbar{\pa\over \pa P_\epsilon^L}}\bracket{Q+\hbar \Delta_\epsilon} e^{I/\hbar-I^{CT}(\epsilon)}\\
=&\lim_{\epsilon\to 0}e^{\hbar{\pa\over \pa P_\epsilon^L}}\bracket{\hbar^{-1}(\{I,I\}_\epsilon-\{I,I\}_0)+\Delta_\epsilon I-Q I^{CT}(\epsilon)-\{I,
I^{CT}(\epsilon)\}_\epsilon+O(\hbar)}e^{I/\hbar-I^{CT}(\epsilon)}.
\end{align*}
It follows that

\begin{align}\label{1-loop-anomaly-graph}
\begin{split}
O_1[L]=&\lim_{\epsilon\to 0} \Bigg( \sum_{\substack{\gamma: \text{1-loop connected},\\ v\in V(\gamma)}}W_{\gamma,v}(P_\epsilon^L, I,
\{I,I\}_\epsilon-\{I,I\}_0)\\
& \quad +\sum_{\gamma: tree, v\in V(\gamma)}W_{\gamma, v}(P_\epsilon^L,I, \Delta_\epsilon I-Q I^{CT}(\epsilon)-\{I,
I^{CT}(\epsilon)\}_\epsilon) \Bigg)
 \end{split}
\end{align}

\begin{lem} \label{lemma-Q-CT}
$$
QI^{CT}(\epsilon)=-\{I, I^{CT}(\epsilon)\}_0+\sum_{\substack{\gamma\in \Gamma^{Wheel}, \sharp E(\gamma)>1,\\ e\in
E(\gamma)}}W_{\gamma}(P_\epsilon^L, K_\epsilon-K_0,
I)^{sing}%-(\Delta_\epsilon I)^{sing}.
+(\Delta_\epsilon I)^{sing}.$$
\end{lem}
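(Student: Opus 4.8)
The plan is to differentiate the definition $I^{CT}(\epsilon)=\sum_{\gamma\in\Gamma^{Wheel}}W_\gamma(\mathbb P_\epsilon^L,I)^{sing}$ and to process each summand by a Leibniz rule for $Q$ acting on a Feynman weight. First I would record the operator identity $[Q,\partial_{\mathbb P_\epsilon^L}]=\Delta_\epsilon-\Delta_L$, which is immediate from $(Q\otimes 1+1\otimes Q)\mathbb P_\epsilon^L=\mathbb K_\epsilon-\mathbb K_L$; since $Q$ is a derivation of the graphical contraction, this yields for every wheel $\gamma$
\[
Q\,W_\gamma(\mathbb P_\epsilon^L,I)=\sum_{v\in V(\gamma)}W_{\gamma,v}(\mathbb P_\epsilon^L,I,QI)+\sum_{e\in E(\gamma)}W_{\gamma,e}(\mathbb P_\epsilon^L,\mathbb K_\epsilon-\mathbb K_L,I),
\]
where at $v$ the label $I$ is replaced by $QI$ and at $e$ the propagator is replaced by the indicated kernel. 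Before using this I would verify that $Q$ commutes with the renormalization-scheme projection onto the singular part, so that $QI^{CT}(\epsilon)=\sum_\gamma(Q\,W_\gamma(\mathbb P_\epsilon^L,I))^{sing}$; this should follow because the decomposition \eqref{smooth-sing-decomp} of \cite{Kevin-book} is carried out in local functionals and $Q=d_{\Sigma_g}+l_1$ is a finite-order operator on $\Sigma_g$ preserving the small-$\epsilon$ asymptotic expansion.

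Next I would eliminate the vertex insertions using the classical master equation $QI=-\tfrac12\{I,I\}$ (the general form underlying Lemma \ref{lem:classical-master-equation}). Inserting $\{I,I\}$ at a vertex $v$ of a wheel resolves $v$ into two $I$-vertices joined by a scale-$0$ kernel $\mathbb K_0$, and summing over $v$ (the factor $\tfrac12$ absorbing the two orderings) produces two topological types according to whether the distributed wheel half-edges stay on one new vertex or split. In the first case the new $\mathbb K_0$-edge is a bridge attaching a single extra $I$-vertex to the original wheel; since $W_\gamma^{sing}$ is local by the preceding lemma, the pairing $\{I,W_\gamma^{sing}\}_0$ is the well-defined classical Poisson bracket of Lemma/Definition \ref{Poisson-bracket}, and summing these bridge terms reconstitutes exactly $-\{I,I^{CT}(\epsilon)\}_0$, the first term on the right-hand side.

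It then remains to combine the cycle-internal contributions, namely the $\mathbb K_0$-edges coming from the master equation and the $\mathbb K_\epsilon-\mathbb K_L$-edges coming from the propagator insertions, into a single sum over wheels with one distinguished edge. The only mismatch is the scale-$L$ piece, and here I would use $\mathbb K_L=\mathbb K_0-(Q\otimes 1+1\otimes Q)\mathbb P_0^L$ and integrate the displaced $Q$ by parts over $\Sigma_g$; this is legitimate because $\int_{\Sigma_g}d_{\Sigma_g}(-)=0$ and the curving $l_1^2$ is booked into $F_{l_1}$. The effect is to trade every $\mathbb K_L$-insertion for a $\mathbb K_0$-insertion modulo weights that are smooth in $\epsilon$ and hence vanish under $(-)^{sing}$, leaving precisely $\sum_{\sharp E(\gamma)>1,\,e}W_\gamma(\mathbb P_\epsilon^L,\mathbb K_\epsilon-\mathbb K_0,I)^{sing}$. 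The degenerate one-edge wheel is handled separately: its edge insertion is the self-contraction $\Delta_\epsilon I-\Delta_L I$, and $\Delta_L I$ is independent of $\epsilon$ so has zero singular part, which isolates the final term $(\Delta_\epsilon I)^{sing}$.

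The derivation property, the kernel identity, and the master equation are routine; the genuine work lies in the graph combinatorics and the analytic replacement $\mathbb K_L\to\mathbb K_0$. The hard part will be bookkeeping the automorphism factors and signs while matching how a resolved vertex or a distinguished edge sits inside each wheel (in particular keeping the $\sharp E(\gamma)>1$ restriction clean and separating the self-loop), together with the integration-by-parts argument showing that the scale-$L$ kernels may be exchanged for scale-$0$ kernels inside the singular part. I expect this $\mathbb K_L\to\mathbb K_0$ step and the commutation of $Q$ with the singular-part projection to be the crux, since both rely on the chosen renormalization scheme being compatible with $Q$ and with integration by parts on $\Sigma_g$.
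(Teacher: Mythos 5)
Your first two steps reproduce the paper's argument exactly: you commute $Q$ past the singular-part projection, apply $Q$ as a derivation so that vertices receive $QI$ and edges receive $\mathbb{K}_\epsilon-\mathbb{K}_L$, resolve the vertex insertions by the classical master equation into bridge terms (which give $-\{I,I^{CT}(\epsilon)\}_0$) and cycle terms (which give $\mathbb{K}_0$-edges on wheels with $\sharp E(\gamma)>1$), and you treat the self-loop separately to isolate $(\Delta_\epsilon I)^{sing}$. The gap is in your last step. The claim that a $\mathbb{K}_L$-insertion can be traded for a $\mathbb{K}_0$-insertion ``modulo weights that are smooth in $\epsilon$'' is false: inserting $K_0$ on an edge of a wheel $\gamma$ with $n\geq 2$ edges collapses that edge and yields the weight of a one-loop graph with $n-1$ internal edges all labelled by $P_\epsilon^L$, and such all-propagator wheel weights are precisely the integrals whose $\epsilon\to 0$ divergences the counter-terms exist to cancel; so $W_{\gamma,e}(P_\epsilon^L,K_0,I)$ has a nonzero singular part in general, whereas $W_{\gamma,e}(P_\epsilon^L,K_L,I)$ converges as $\epsilon\to 0$. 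Hence $W_{\gamma,e}(P_\epsilon^L,K_0-K_L,I)$ is \emph{not} smooth, and your integration by parts cannot make it so --- it only re-expresses this difference through $Q\,W_{\gamma,e}(P_\epsilon^L,P_0^L,I)$ and further $QI$-insertions, each of which diverges as $\epsilon\to 0$, so the argument is circular. Moreover, even granting the trade, the bookkeeping fails: the cycle contributions are $-W_{\gamma,e}(P_\epsilon^L,K_0,I)+W_{\gamma,e}(P_\epsilon^L,\mathbb{K}_\epsilon-\mathbb{K}_L,I)$, and substituting $K_0$ for $K_L$ would leave $W_{\gamma,e}(K_\epsilon)-2\,W_{\gamma,e}(K_0)$ modulo smooth terms, not the desired $W_{\gamma,e}(K_\epsilon-K_0)$.

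What the paper does instead is simpler and requires no trading at all. The $\mathbb{K}_0$-edges produced by the master equation are already the exact partners of the $\mathbb{K}_\epsilon$ halves of the edge insertions, so after combining, the cycle contributions equal $\sum_{\sharp E(\gamma)>1,\,e}W_{\gamma}(P_\epsilon^L,K_\epsilon-K_0,I)+\Delta_\epsilon I-\sum_{\gamma,e}W_{\gamma}(P_\epsilon^L,K_L,I)$. The leftover $K_L$-terms are then discarded because each $W_{\gamma}(P_\epsilon^L,K_L,I)$ is \emph{smooth} as $\epsilon\to 0$: $K_L$ is a fixed smooth kernel independent of $\epsilon$, so labelling one edge of the wheel by it cuts the loop and the weight behaves like a tree diagram --- the same reasoning the paper uses to show that $W_\gamma(P_\epsilon^L,I)^{sing}$ is independent of $L$ in the lemma preceding this one. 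Consequently $\bigl(W_{\gamma}(P_\epsilon^L,K_L,I)\bigr)^{sing}=0$ and the stated identity follows. Replacing your integration-by-parts step by this single smoothness observation repairs the proof; the rest of your argument can stand as written.
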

\begin{proof} It is easy to see that $Q$ preserves the decomposition \eqref{smooth-sing-decomp}, hence
$$
   Q I^{CT}(\epsilon)=Q \bracket{\sum_{\gamma\in \Gamma^{Wheel}}W_{\gamma}(P_\epsilon^L, I)^{sing}}=\sum_{\gamma\in
\Gamma^{Wheel}}(QW_{\gamma}(P_\epsilon^L, I))^{sing}.
$$
By the identity $(Q\otimes 1+1\otimes Q)P_\epsilon^L=K_\epsilon-K_L$  and the classical master equation,
\begin{align*}
  \bracket{\sum_{\gamma\in \Gamma^{Wheel}}QW_{\gamma}(P_\epsilon^L, I)}
  =&-\fbracket{I,\sum_{\gamma\in \Gamma^{Wheel}}W_{\gamma}(P_\epsilon^L, I)}_0
  -\sum_{\substack{\gamma\in \Gamma^{Wheel}, \sharp E(\gamma)>1,\\e\in E(\gamma)}}W_{\gamma}(P_\epsilon^L, K_0, I)\\
  &-\sum_{\gamma\in \Gamma^{Wheel}, e\in E(\gamma)} W_{\gamma}(P_\epsilon^L, K_L-K_\epsilon, I)\\
  =&-\fbracket{I,\sum_{\gamma\in \Gamma^{Wheel}}W_{\gamma}(P_\epsilon^L, I)}_0
+\sum_{\substack{\gamma\in \Gamma^{Wheel}, \sharp E(\gamma)>1,\\ e\in E(\gamma)}}W_{\gamma}(P_\epsilon^L, K_\epsilon-K_0, I)%-\Delta_\epsilon I\\
+\Delta_\epsilon I\\&-\sum_{\gamma\in \Gamma^{Wheel}, e\in E(\gamma)}W_{\gamma}(P_\epsilon^L, K_L, I).
\end{align*}
Since the last term is smooth as $\epsilon\to 0$, it follows that
$$
QI^{CT}(\epsilon)=-\{I, I^{CT}(\epsilon)\}_0+\sum_{\substack{\gamma\in \Gamma^{Wheel}, \sharp E(\gamma)>1,\\ e\in
E(\gamma)}}W_{\gamma}(P_\epsilon^L,
K_\epsilon-K_0, I)^{sing}%-(\Delta_\epsilon I)^{sing}.
+(\Delta_\epsilon I)^{sing}.
$$
\end{proof}

\begin{thm}\label{thm-1-loop-anomaly} The 1-loop anomaly $O_1$ is given by
$$
  O_1=\lim_{\epsilon\to 0}\sum_{\gamma\in \Gamma^{Wheel}, e\in E(\gamma)}W_{\gamma}(P_\epsilon^L, K_\epsilon-K_0, I)^{sm}+(\Delta_\epsilon I)^{sm}
$$
\end{thm}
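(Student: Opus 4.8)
The plan is to start from the graphical expansion \eqref{1-loop-anomaly-graph} of $O_1[L]$, which was already obtained by applying $Q+\hbar\Delta_L$ to the regularized exponential $\lim_{\epsilon\to0}e^{\hbar\partial/\partial P_\epsilon^L}e^{I/\hbar-I^{CT}(\epsilon)}$ and extracting the leading $\hbar$-coefficient, and then to feed in Lemma \ref{lemma-Q-CT} so that all the \emph{singular} (counter-term) Feynman weights cancel, leaving precisely the \emph{smooth} wheel weights with one edge carrying $K_\epsilon-K_0$ together with $(\Delta_\epsilon I)^{sm}$. Thus the only two ingredients are the expansion \eqref{1-loop-anomaly-graph} and the identity $QI^{CT}(\epsilon)=-\{I,I^{CT}(\epsilon)\}_0+\sum_{\gamma\in\Gamma^{Wheel},\,\sharp E(\gamma)>1}W_\gamma(P_\epsilon^L,K_\epsilon-K_0,I)^{sing}+(\Delta_\epsilon I)^{sing}$ of Lemma \ref{lemma-Q-CT}.

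First I would substitute this expression for $QI^{CT}(\epsilon)$ into the tree-sum of \eqref{1-loop-anomaly-graph}. The insertion at the distinguished vertex then becomes $(\Delta_\epsilon I)^{sm}+\bigl(\{I,I^{CT}(\epsilon)\}_0-\{I,I^{CT}(\epsilon)\}_\epsilon\bigr)-\sum_{\sharp E>1}W_\gamma(P_\epsilon^L,K_\epsilon-K_0,I)^{sing}$, where the piece $(\Delta_\epsilon I)^{sing}$ has cancelled against $\Delta_\epsilon I$ to leave its smooth part. Next I would reorganize the one-loop sum of \eqref{1-loop-anomaly-graph}: using the classical master equation $QI+\tfrac12\{I,I\}=0$, the combination $\{I,I\}_\epsilon-\{I,I\}_0$ is exactly the operation of joining two copies of $I$ along the anomaly kernel $K_\epsilon-K_0$, so inserting it at a vertex of a one-loop graph produces precisely the wheels (dressed by trees) in which one loop edge carries $K_\epsilon-K_0$ and all remaining edges carry $P_\epsilon^L$. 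Decomposing each such wheel weight into its smooth and singular parts via the fixed renormalization scheme \cite{Kevin-book}, the singular parts with $\sharp E>1$ are cancelled term by term by the matching sum coming from $-QI^{CT}(\epsilon)$, while the contributions in which the anomaly edge is attached as a pendant tree edge are cancelled by $\{I,I^{CT}(\epsilon)\}_0-\{I,I^{CT}(\epsilon)\}_\epsilon$, since $I^{CT}(\epsilon)$ is by definition the sum of singular wheel weights. The self-contraction ($\sharp E=1$) case is treated separately: its singular part is $(\Delta_\epsilon I)^{sing}$, cancelled as above, and its smooth part is $(\Delta_\epsilon I)^{sm}$.

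What remains after all singular terms cancel is $\sum_{\gamma\in\Gamma^{Wheel},\,e\in E(\gamma)}W_\gamma(P_\epsilon^L,K_\epsilon-K_0,I)^{sm}+(\Delta_\epsilon I)^{sm}$, possibly still dressed by tree attachments. I would then take $\epsilon\to0$: each limit exists by the convergence estimates already established in Lemma/Definition \ref{lem:naive quantization} and in \cite[Theorems 9.3.1 and 9.5.1]{Kevin-book}. Finally, passing to $O_1=\lim_{L\to0}O_1[L]$ collapses the tree dressing and produces the stated local functional, the surviving expression being independent of $L$ by the same $\partial_L$-argument used in the proof of Lemma \ref{lemma-Q-CT} (namely that $\partial_L P_\epsilon^L$ is a smooth kernel, so the $L$-derivative of each wheel weight behaves like a convergent tree).

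The main obstacle I anticipate is not any single analytic estimate but the combinatorial bookkeeping of this cancellation: one must verify that every singular Feynman weight generated by the counter-terms is matched, with the correct sign and the correct automorphism factor $1/|\mathrm{Aut}(\gamma)|$, by exactly one singular weight in the anomaly sum, and that in identifying $\{I,I\}_\epsilon-\{I,I\}_0$ with an anomaly edge no smooth contribution is inadvertently dropped. Once this graph-by-graph pairing is set up carefully, the stated formula for $O_1$ follows immediately.
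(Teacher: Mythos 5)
Your proposal follows the paper's own route: start from \eqref{1-loop-anomaly-graph}, substitute Lemma \ref{lemma-Q-CT} into the tree sum, reorganize the one-loop sum according to where the edge carrying $K_\epsilon-K_0$ sits (on the wheel versus on the external tree), cancel singular parts against the counter-terms, and finally let $L\to 0$ to kill the external trees. However, one step fails as you state it, and it is precisely the risk you flag at the end: a smooth contribution does get dropped.

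The contributions in which the anomaly edge $K_\epsilon-K_0$ sits on the external tree are \emph{not} cancelled by $\{I,I^{CT}(\epsilon)\}_0-\{I,I^{CT}(\epsilon)\}_\epsilon$. In those graphs the wheel itself carries only $P_\epsilon^L$ on its edges, so it contributes its \emph{full} weight $W_\gamma(P_\epsilon^L,I)=W_\gamma(P_\epsilon^L,I)^{sm}+W_\gamma(P_\epsilon^L,I)^{sing}$, whereas $I^{CT}(\epsilon)$ consists of the singular parts only. Hence the bracket terms cancel just the singular wheel parts, and what survives your bookkeeping is the family of graphs in which the \emph{smooth} wheel weight $W_\gamma(P_\epsilon^L,I)^{sm}$ is joined to $I$-labelled tree vertices through an edge labelled $K_\epsilon-K_0$. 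This remainder is nonzero for fixed $\epsilon$; the paper disposes of it by a separate analytic step, not a cancellation: since the wheel factor is smooth (which is exactly why the counter-term cancellation must be performed first --- against the full, divergent weight the pairing would be an indeterminate form) and $K_\epsilon-K_0$ pairs to zero against smooth kernels as $\epsilon\to 0$ (the heat kernel concentrates on the diagonal), this whole term has vanishing limit. Once you replace ``cancelled'' by ``singular parts cancelled, smooth remainder vanishes as $\epsilon\to 0$,'' your argument coincides with the paper's proof.
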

\begin{proof}
The term
$$
\sum_{\substack{\gamma: \text{1-loop connected},\\ v\in V(\gamma)}}W_{\gamma,v}(P_\epsilon^L, I, \{I,I\}_\epsilon-\{I,I\}_0)
$$
in equation (\ref{1-loop-anomaly-graph}) can be expressed as the sum of the following two types of Feynman weights:
\begin{equation}\label{eqn:one-loop-anomaly-picture}
\figbox{0.18}{one-loop-anomaly-1}\hspace{9mm}\figbox{0.18}{one-loop-anomaly-2}
\end{equation}
In the left picture $K_\epsilon-K_0$ is labeled on the wheel (the red edge) while in the right picture it is labeled on the external tree. It is not difficult to see that the
right picture, together with the term
$$
\sum_{\gamma: tree, v\in V(\gamma)}W_{\gamma, v}(P_\epsilon^L,I, \{I,
I^{CT}(\epsilon)\}_0-\{I,I^{CT}(\epsilon)\}_\epsilon)
$$
contributes
$$
\figbox{0.18}{one-loop-anomaly-4}
$$ whose limit vanishies as $\epsilon\rightarrow 0$. The theorem then follows easily from equation (\ref{1-loop-anomaly-graph}), Lemma  \ref{lemma-Q-CT} and $O_1=\lim\limits_{L\to 0}O_1[L]$ (which kills all external trees).

\end{proof}
Theorem \ref{graph:one-loop anomaly} is now a graphic expression of Theorem \ref{thm-1-loop-anomaly}.

%{\color{red} Please modify the statement in Theorem \ref{graph:one-loop anomaly} since it is not stated correctly. Let's state is for $O_1$ instead
%of $O_1[L]$}.

\section{Chevalley-Eilenberg complex vs de Rham complex of jet bundles}\label{appendix:L_infty}
 The main objective of this section is to give an explicit description of the isomorphism in Proposition \ref{prop:iso-CE-jet}. We will also review
modules over $L_\infty$ algebras and the corresponding Chevalley-Eilenberg differential for the purpose of our discussion.

\subsubsection*{$L_\infty$ algebras and their modules}
Let us first recall the definition of $L_\infty$ algebras.
\begin{defn}\label{def:L-infty-algebra}
 Let $A$ be a commutative differential graded algebra and let $A^\sharp$ denote the underlying graded algebra.  A curved
$L_\infty$
algebra over $A$ consists of a locally free finitely generated graded $A^\sharp$-module $V$, together with a cohomological degree
$1$
and square zero derivation: $$d:\widehat{\text{Sym}}_{A^\sharp}(V^\vee[-1])\rightarrow\widehat{\text{Sym}}_{A^\sharp}(V^\vee[-1])
$$
such that the derivation $d$ makes $\widehat{\text{Sym}}_{A^\sharp}(V^\vee[-1])$ into a dga over the dga $A$.
Here $V^\vee$ denotes the $A^\sharp$-linear dual of $V$. We can decompose the derivation $d$ into components:
$$
d_n: V^\vee[-1]\rightarrow\text{Sym}^n_{A^\sharp}(V^\vee[-1]), n\geq 0.
$$
The structure maps of the curved $L_\infty$ algebra $V$ are defined by dualizing $d_n$ with a degree shift:
$$l_n:=d_n^*:\wedge^nV[n-2]\rightarrow V.$$
\end{defn}
The components $d_n$ of the derivation $d$ can be represented by the following ``corollas'', which should be read from bottom to
top:
the bottom line denotes the input of $d_n$ and the top lines denote the outputs.

%{\color{red}{should change $\g_X$ in the picture into V}. Similarly for the later pictures}

\begin{equation*}
 \figbox{0.26}{L-infty-algebra}
\end{equation*}
Modules over $L_\infty$ algebras are defined in a similar fashion:
\begin{defn}
Let $A$ and $V$ be the same as in Definition \ref{def:L-infty-algebra}. An $A^\sharp$-module $M$  is
called a module over the $L_\infty$ algebra $V$ if there is a differential
$$
d_M:\widehat{\text{Sym}}_{A^\sharp}(V^\vee[-1])\otimes_{A^\sharp} M \rightarrow
\widehat{\text{Sym}}_{A^\sharp}(V^\vee[-1])\otimes_{A^\sharp} M
$$
making $\widehat{\text{Sym}}_{A^\sharp}(V^\vee[-1])\otimes_{A^\sharp} M$ a differential graded module over
$\widehat{\text{Sym}}_{A^\sharp}(V^\vee[-1])$.
\end{defn}
It is clear from the definition that the differential $d_M$ is determined by its components
$$
(d_M)_n:M\rightarrow\text{Sym}_{A^\sharp}^n(V^\vee[-1])\otimes_{A^\sharp} M,
$$
which we represent by the following picture:
\begin{equation*}
\figbox{0.26}{L-infty-module}
\end{equation*}

\begin{eg}
$M=V^\vee$ has a naturally induced structure of $L_\infty$-module over $V$. We define the map $d_M$ by the following composition:
\begin{equation}\label{eqn:def-d-V}
M=V^\vee\rightarrow
V^\vee[-1]\overset{d_V}{\longrightarrow}\widehat{\text{Sym}}_{A^\sharp}(V^\vee[-1])\overset{d_{dR}}{\longrightarrow}\widehat{\text{Sym}}_{A^\sharp}
(V^\vee[-1])\otimes V^\vee=\widehat{\text{Sym}}_{A^\sharp}(V^\vee[-1])\otimes M.
\end{equation}
 This differential can be represented as follows:
\begin{equation*}
 \figbox{0.26}{tangent-module}
\end{equation*}
where the green lines denote the module $M$, and the black lines denote the components in $\widehat{\text{Sym}}_{A^\sharp}(V^\vee[-1])$. Notice
that the only difference between $d_M$ and $d_V$ is that there are green lines in the graphical representation of $d_M$. Thus it is clear
that the identity $d_M^2=0$ follows from $d_V^2=0$, and that the effect of the operator $d_{dR}$ in equation (\ref{eqn:def-d-V}) is
exactly ``picking out the green line''.
\end{eg}
\begin{eg}
$N=V$. We define the differential $d_N$ by the following graphics:
\begin{equation}\label{graph:cotangent-module}
\figbox{0.26}{cotangent-module}
\end{equation} where the downward ``elbow''
\begin{equation*}
\figbox{0.26}{downward-elbow}
\end{equation*} in (\ref{graph:cotangent-module}) denotes the evaluation map
$$\langle -,-\rangle: V\otimes V^\vee\rightarrow A^\sharp$$ and the reversed ``elbow'' denotes the
coevaluation map.

Again, $d_N^2=0$ follows
from the identity $d_V^2=0$.

\end{eg}

\subsubsection*{Proof of Proposition \ref{prop:iso-CE-jet}}
Let $X$ be a complex manifold, and let  $\mathfrak{g}_X$ be the curved $L_\infty$ algebra over $A=\mathcal{A}_X$ encoding the
complex
geometry of $X$. By the construction of $\mathfrak{g}_X$, there is an isomorphism of cochain complexes
$$\rho^*:\left(\mathcal{A}_X\otimes_{\mathcal{O}_X}\text{Jet}^{hol}
_X(\mathcal{O}_X),d_{D_X}\right)\overset{\sim}{\rightarrow}\left(C^*(\mathfrak{g}_X),d_{CE}\right).$$
We have the following proposition:
\begin{prop}\label{proposition:T-dual-to-jet}
The extension of the map (\ref{eqn:splitting-T}) over $\A_X$:
$$\mathfrak{g}_X^\vee[-1]\cong\mathcal{A}_X\otimes_{\mathcal{O}_X}\Omega_X^1\overset{\partial_{dR}\circ
T}{\longrightarrow}\mathcal{A}_X\otimes_{
\mathcal { O } _X}\text{Jet}_X^{hol}(\Omega_X^1)
$$
%where $T$ is the $\A_X^\sharp$-linear extension of (\ref{eqn:splitting-T}) and $\partial_{dR}$ is the operator on jet bundles
%induced by the
%holomorphic de Rham differential $\partial:\mathcal{O}_X\rightarrow\Omega_X^1$,
gives rise to an isomorphism of cochain complexes
$$\tilde{T}:\left(C^*(\mathfrak{g}_X)\otimes\mathfrak{g}_X^\vee[-1],d_{CE}\right)\overset{\sim}{\rightarrow}\left(\mathcal{A}
_X\otimes_{\mathcal{O}_X}
\text { Jet }_X^{hol} (\Omega_X^1),d_{D_X}\right).$$
\end{prop}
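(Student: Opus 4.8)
The plan is to deduce the statement from the already-established structure-sheaf isomorphism $\rho^*$ of \eqref{eqn:identification-CE-jet} by passing to the associated relative cotangent (Kähler-differential) data on the two sides, relative to the base dga $\A_X$. On the $C^*(\g_X)$ side, recall from the example around \eqref{eqn:def-d-V} that the module differential $d_M$ of the cotangent module $\g_X^\vee[-1]$ is built from the algebra differential $d_{CE}$ by composing with the fibrewise universal derivation $d_{dR}\colon C^*(\g_X)\to C^*(\g_X)\otimes\g_X^\vee[-1]$ that "picks out one green line"; by construction this $d_{dR}$ is a morphism of complexes intertwining $d_{CE}$ and $d_M$. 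On the jet side, the internal de Rham differential $\pa_{dR}\colon \A_X\otimes_{\OO_X}\Jet_X^{hol}(\OO_X)\to \A_X\otimes_{\OO_X}\Jet_X^{hol}(\Omega_X^1)$ is $D_X^{hol}$-linear and hence commutes with the flat-connection differential $d_{D_X}$. Thus both sides carry a "relative de Rham" datum, and the heart of the proof is to match them through $\rho^*$.

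First I would define $\tilde T$ explicitly. Since $T=\rho^{*-1}|_{\Omega_X^1}$ by definition, I set $\tilde T(\phi\otimes\omega):=\rho^{*-1}(\phi)\cdot(\pa_{dR}\circ T)(\omega)$ for $\phi\in C^*(\g_X)$ and $\omega\in\g_X^\vee[-1]=\A_X\otimes_{\OO_X}\Omega_X^1$, where the product is taken in the $\Jet_X^{hol}(\OO_X)$-module $\Jet_X^{hol}(\Omega_X^1)$. This is manifestly $\A_X$-linear and $\rho^*$-semilinear over the algebra factor, and it is the extension of \eqref{eqn:splitting-T} over $\A_X$ as required. Next I would show $\tilde T$ is an isomorphism of graded modules by a local frame computation: in local coordinates $\{z^i\}$ (Notation \ref{notation-basis}) the module $\Jet_X^{hol}(\Omega_X^1)$ is free over $\Jet_X^{hol}(\OO_X)$, and the non-degeneracy relation \eqref{compare-coordinate}, $\langle\pa_{dR}\circ T(dz^i),\tilde K(\widetilde{\pa_{z^j}})\rangle=\delta^i_j$, shows that $\{(\pa_{dR}\circ T)(dz^i)\}$ is again a $\Jet_X^{hol}(\OO_X)$-frame. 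Since $\rho^{*-1}$ is an isomorphism on the algebra factor, $\tilde T$ carries a frame to a frame and is therefore a graded isomorphism.

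The main step is that $\tilde T$ is a cochain map, i.e. $\tilde T\circ d_{CE}=d_{D_X}\circ\tilde T$. The clean formulation is functoriality of relative Kähler differentials: $\rho^*$ is an isomorphism of dg-$\A_X$-algebras, so it induces an isomorphism of the relative cotangent complexes intertwining the induced differentials; one then identifies the relative cotangent complex of $C^*(\g_X)$ (generated by $d_{dR}$ of the formal fibre coordinates) with $(C^*(\g_X)\otimes\g_X^\vee[-1],d_M)$, and that of $\A_X\otimes_{\OO_X}\Jet_X^{hol}(\OO_X)$ (generated by $\pa_{dR}$ of the jet coordinates) with $(\A_X\otimes_{\OO_X}\Jet_X^{hol}(\Omega_X^1),d_{D_X})$, under which the induced map is exactly $\tilde T$. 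Concretely, it suffices to verify the single compatibility of de Rham operators $\tilde T\circ d_{dR}=\pa_{dR}\circ\rho^{*-1}$; granting this, the computation $\tilde T d_M d_{dR}(\phi)=\tilde T d_{dR}d_{CE}(\phi)=\pa_{dR}\rho^{*-1}d_{CE}(\phi)=\pa_{dR}d_{D_X}\rho^{*-1}(\phi)=d_{D_X}\pa_{dR}\rho^{*-1}(\phi)=d_{D_X}\tilde T d_{dR}(\phi)$ on elements $d_{dR}(\phi)$, extended by Leibniz over the $C^*(\g_X)$-module structure (whose generators are of this form), forces $\tilde T d_{CE}=d_{D_X}\tilde T$ on all of $C^*(\g_X)\otimes\g_X^\vee[-1]$.

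I expect the delicate point, and the main obstacle, to be establishing the compatibility $\tilde T\circ d_{dR}=\pa_{dR}\circ\rho^{*-1}$ at the level of the $L_\infty$ corollas: one must confirm that the green-line operation defining $d_M$ from the structure maps $l_k$ of $\g_X$ is transported by $\rho^*$ to the fibre de Rham differential $\pa_{dR}$ on $\Jet_X^{hol}(\OO_X)$. This matching is precisely what the definition $T=\rho^{*-1}|_{\Omega_X^1}$ together with the composite $\pa_{dR}\circ T$ is engineered to achieve, so the verification reduces to an explicit local computation using \eqref{compare-coordinate} and the coordinate description of $\rho^*$, which I would carry out on monomials $\widetilde{\pa_{z^{i_1}}}\cdots\widetilde{\pa_{z^{i_k}}}\otimes\widetilde{dz^j}$ and check against $\delta z^i$ and $\pi_2^*(\pa_{z^j})$.
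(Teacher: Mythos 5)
Your proposal is correct and takes essentially the same route as the paper: the paper's proof consists of the commuting square \eqref{eqn:right-de rham-CE} (your key compatibility $\tilde T\circ d_{dR}=\pa_{dR}\circ\rho^{*-1}$, which the paper also treats as immediate from the definition of $\tilde T$) together with a cube diagram whose chase, using surjectivity of $d_{dR}$ and the relations $d_M\circ d_{dR}=d_{dR}\circ d_{CE}$, $\rho^{*-1}\circ d_{CE}=d_{D_X}\circ\rho^{*-1}$, and $[\pa_{dR},d_{D_X}]=0$, is exactly your displayed chain of equalities. The only cosmetic difference is that you also verify bijectivity via local frames, a point the paper leaves implicit.
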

\begin{proof}
It is clear from the definition of $\tilde{T}$ that the following diagram commutes:
 \begin{equation}\label{eqn:right-de rham-CE}
  \xymatrix{
  C^*(\mathfrak{g}_X) \ar[d]^{(\rho^*)^{-1}} \ar[r]^{d_{dR}} & C^*(\mathfrak{g}_X)\otimes\mathfrak{g}_X^\vee[-1]
\ar[d]^{\tilde{T}} \\
\mathcal{A}_X\otimes_{\mathcal{O}_X}\text{Jet}_X^{hol}(\mathcal{O}_X) \ar[r]^{\partial_{dR}} &
\mathcal{A}_X\otimes_{\mathcal{O}_X}\text{Jet}_X^{hol}(\Omega^1_X)}
  \end{equation}
Here $d_{dR}$ is the de Rham differential of the algebra $C^*(\mathfrak{g}_X)$, and we have identified
$C^*(\mathfrak{g}_X)\otimes\mathfrak{g}_X^\vee[-1]$ with 1-forms. Consider the following diagram:
\begin{equation*}
 \xymatrix{
& \mathcal{A}_X\otimes_{\mathcal{O}_X}\text{Jet}_X^{hol}(\Omega_X^1) \ar[dd]^<<<<<<{d_{D_X}}|!{"2,1";"2,3"}\hole
 &&
\mathcal{A}_X\otimes_{\mathcal{O}_X}\text{Jet}_X^{hol}(\mathcal{O}_X) \ar[ll]_{\partial_{dR}}\ar[dd]^{d_{D_X}}\\
 C^*(\mathfrak{g}_X)\otimes\mathfrak{g}_X^\vee[-1] \ar[dd]^{d_{CE}}  \ar[ru]^{\tilde{T}} &&
C^*(\mathfrak{g}_X)\ar[dd]^>>>>>>>{d_{CE}} \ar[ll]^>>>>>>>>>>{d_{dR}}
\ar[ur]_{(\rho^*)^{-1}} \\
& \mathcal{A}_X\otimes_{\mathcal{O}_X}\text{Jet}_X^{hol}(\Omega_X^1)  &&
\mathcal{A}_X\otimes_{\mathcal{O}_X}\text{Jet}_X^{hol}(\mathcal{O}_X)\ar[ll]_<<<<<{\partial_{dR}}|!{"2,3";"4,3"}\hole\\
 C^*(\mathfrak{g}_X)\otimes\mathfrak{g}_X^\vee[-1]\ar[ur]^{\tilde{T}} &&
C^*(\mathfrak{g}_X)\ar[ll]^{d_{dR}}\ar[ur]_{(\rho^*)^{-1}}
  }
\end{equation*}
It is straightforward to check that all the squares commute except the left vertical one:
\begin{itemize}
\item The commutativity of the top and the bottom squares follows from (\ref{eqn:right-de rham-CE}).
\item The  front vertical square commutes  by the definition of the Chevalley-Eilenberg differential on
$C^*(\mathfrak{g}_X)\otimes\mathfrak{g}_X^\vee[-1]$,
\item The commutativity of the back vertical square follows from the fact that  $\partial_{dR}$ and $d_{D_X}$ commute with each
other,
\item The right vertical square commutes by the definition of $\g_X$.
\end{itemize}
Since $d_{dR}$ is surjective, a simple diagram chase shows the commutativity of the left vertical square,
which implies that the Chevalley-Eilenberg differential $d_{CE}$ on $C^*(\mathfrak{g}_X)\otimes\mathfrak{g}_X^\vee[-1]$ is
identified with $d_{D_X}$
on
$\mathcal{A}_X\otimes_{\mathcal{O}_X}\text{Jet}^{hol}_X(\Omega_X^1)$ under $\tilde{T}$.
\end{proof}
Now we prove the following proposition:
\begin{prop}\label{proposition:T_X-to-jet}
Let $K$ be the smooth homomorphism
\begin{align*}
 K: T_X&\rightarrow \cinfty(X)\otimes_{\OO_X}\text{Jet}_X^{hol}(T_X[-1])
\end{align*} such that
\begin{equation}\label{eqn:definition-T}
v(\alpha)=\langle K(v),T(\alpha)\rangle,
\end{equation}
 for all $\alpha\in\Omega_X^1,v\in T_X$. Then the extension of $K$ over
$C^*(\mathfrak{g}_X)\cong\mathcal{A}_X\otimes_{\mathcal{O}_X}\text{Jet}_X^{hol}(\mathcal{O}_X)$:
 $$
 \tilde{K}:C^*(\mathfrak{g}_X)\otimes\mathfrak{g}_X\rightarrow\mathcal{A}_X\otimes_{\mathcal{O}_X}\text{Jet}_X^{hol}(T_X[-1])
 $$
 is an isomorphism of cochain complexes. In particular, $d_{D_X}\circ\tilde{K}=\tilde{K}\circ d_{CE}$.
\end{prop}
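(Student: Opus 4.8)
The plan is to deduce the statement from Proposition \ref{proposition:T-dual-to-jet} by a duality argument, exhibiting $\tilde{K}$ as the transpose of $\tilde{T}$ with respect to two compatible perfect pairings. First I would set up the pairings. On the algebraic side, the evaluation $\mathfrak{g}_X\otimes_{\mathcal{A}_X^\sharp}\mathfrak{g}_X^\vee\to\mathcal{A}_X^\sharp$ extends $C^*(\mathfrak{g}_X)$-bilinearly to a perfect pairing
\[
\langle-,-\rangle_{CE}\colon \bigl(C^*(\mathfrak{g}_X)\otimes\mathfrak{g}_X\bigr)\otimes_{C^*(\mathfrak{g}_X)}\bigl(C^*(\mathfrak{g}_X)\otimes\mathfrak{g}_X^\vee[-1]\bigr)\to C^*(\mathfrak{g}_X).
\]
On the geometric side, the fibrewise pairing of $T_X$ and $\Omega_X^1$ induces, after passing to jets and tensoring with $\mathcal{A}_X$, a perfect pairing
\[
\langle-,-\rangle_{\mathrm{jet}}\colon \bigl(\mathcal{A}_X\otimes_{\mathcal{O}_X}\text{Jet}_X^{hol}(T_X[-1])\bigr)\otimes\bigl(\mathcal{A}_X\otimes_{\mathcal{O}_X}\text{Jet}_X^{hol}(\Omega_X^1)\bigr)\to \mathcal{A}_X\otimes_{\mathcal{O}_X}\text{Jet}_X^{hol}(\mathcal{O}_X)\cong C^*(\mathfrak{g}_X),
\]
the last identification being $\rho^*$.

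Two facts will drive the argument, and I would establish them next. The first is that each differential is a graded derivation for its pairing: on the jet side, $d_{D_X}$ comes from the flat connection of the $D_X^{hol}$-module structure, so the Leibniz rule $d_{D_X}\langle\sigma,\tau\rangle_{\mathrm{jet}}=\langle d_{D_X}\sigma,\tau\rangle_{\mathrm{jet}}+(-1)^{|\sigma|}\langle\sigma,d_{D_X}\tau\rangle_{\mathrm{jet}}$ is automatic, while on the Chevalley-Eilenberg side the module structures on $\mathfrak{g}_X$ (the module $N=V$) and on $\mathfrak{g}_X^\vee[-1]$ (the module $M=V^\vee$) are built from the same structure maps $\{l_k\}$ via the mutually dual evaluation and coevaluation corollas, which is exactly the assertion that $d_{CE}$ is a derivation for $\langle-,-\rangle_{CE}$. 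The second is that the defining relation $v(\alpha)=\langle K(v),T(\alpha)\rangle$ of \eqref{eqn:definition-T}, together with the normalization \eqref{compare-coordinate}, says precisely that $\tilde{K}$ is the transpose of $\tilde{T}$:
\[
\langle\tilde{K}(s),\tilde{T}(t)\rangle_{\mathrm{jet}}=\langle s,t\rangle_{CE}\qquad\text{for all } s\in C^*(\mathfrak{g}_X)\otimes\mathfrak{g}_X,\ t\in C^*(\mathfrak{g}_X)\otimes\mathfrak{g}_X^\vee[-1],
\]
which holds on the generating modules $\mathfrak{g}_X$, $\mathfrak{g}_X^\vee[-1]$ by construction and then extends by $C^*(\mathfrak{g}_X)$-bilinearity.

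With these in hand the conclusion is a two-line computation: applying the jet-side Leibniz rule and Proposition \ref{proposition:T-dual-to-jet} (which gives $d_{D_X}\tilde{T}(t)=\tilde{T}(d_{CE}t)$), while evaluating the same expression through the transpose relation, the fact that $\rho^*$ is a cochain map, and the CE-side Leibniz rule, I would obtain
\[
\langle d_{D_X}\tilde{K}(s),\tilde{T}(t)\rangle_{\mathrm{jet}}=\langle \tilde{K}(d_{CE}s),\tilde{T}(t)\rangle_{\mathrm{jet}}
\]
for all $t$; since $\tilde{T}$ is an isomorphism and $\langle-,-\rangle_{\mathrm{jet}}$ is perfect, this forces $d_{D_X}\circ\tilde{K}=\tilde{K}\circ d_{CE}$, and the same perfectness together with the bijectivity of $\tilde{T}$ shows that $\tilde{K}$ is an isomorphism. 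I expect the main obstacle to be the careful verification of the two facts above—that $d_{CE}$ is genuinely a derivation for $\langle-,-\rangle_{CE}$ and that the transpose relation holds integrally rather than merely on dual frames—both of which are formal but require tracking the evaluation and coevaluation maps defining $d_N$ and $d_M$ along with their signs; should this bookkeeping become unwieldy, an alternative is to repeat verbatim the three-dimensional diagram chase used in the proof of Proposition \ref{proposition:T-dual-to-jet}, with $T_X$ in place of $\Omega_X^1$.
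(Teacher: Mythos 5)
Your proof is correct and follows essentially the same route as the paper's: the paper's chain of identities (1)--(3) is exactly your transpose relation $\langle\tilde{K}(s),\tilde{T}(t)\rangle_{\mathrm{jet}}=\langle s,t\rangle_{CE}$ combined with the Leibniz rule for $d_{D_X}$, its translation of the graphical module structure on $\g_X$ into an evaluation/coevaluation composition is your CE-side derivation property, and Proposition \ref{proposition:T-dual-to-jet} supplies the intertwining for $\tilde{T}$ in both arguments. The only difference is presentational: the paper carries out the computation in local coordinates on the generators $\widetilde{\partial_{z^i}}$, while you phrase it coordinate-freely via perfect pairings.
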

\begin{proof}
It is obvious from equation (\ref{eqn:definition-T}) that $\tilde{K}$ is both injective and surjective. We now show that
$\tilde{K}$ commutes with
differentials. After translating equation (\ref{graph:cotangent-module}) into homomorphisms, it is clear that the
Chevalley-Eilenberg differential
$d_{CE}$ on $C^*(\mathfrak{g}_X)\otimes\mathfrak{g}_X$ is given by the following composition:
\begin{equation*}
 \xymatrixcolsep{3pc}\xymatrix{
\mathfrak{g}_X \ar[r]^-{id\otimes coev}& \mathfrak{g}_X\otimes\mathfrak{g}_X^\vee\otimes\mathfrak{g}_X \ar[r]^-{id\otimes
d_{CE}\otimes id} &\mathfrak{g}_X\otimes\mathfrak{g}_X^\vee \otimes C^*(\mathfrak{g}_X)\otimes \mathfrak{g}_X \ar[r]^-{ev\otimes
id\otimes id}&
C^*(\mathfrak{g}_X)\otimes \mathfrak{g}_X.
}
\end{equation*}
We pick local holomorphic coordinates $\{z^i\}$ on $X$. The image of $\{\widetilde{\partial_{z^i}}\}$ under $d_{CE}$ is given by
$$\widetilde{\partial_{z^i}}\mapsto\widetilde{\partial_{z^i}}\otimes \widetilde{dz^j}\otimes\widetilde{\partial_{z^j}}\mapsto
\widetilde{\partial_{z^i}}\otimes
(\tilde{T}^{-1}\circ d_{D_X}\circ\tilde{T})(\widetilde{dz^j})\otimes
\widetilde{\partial_{z^j}}\mapsto\langle\widetilde{\partial_{z^i}},(\tilde{T}^{-1}\circ
d_{D_X}\circ\tilde{T})(\widetilde{dz^j})\rangle\otimes\widetilde{\partial_{z^j}}.$$
We have the following identities:
\begin{align*}
 &\ \langle\widetilde{\partial_{z^i}},(\tilde{T}^{-1}\circ
d_{D_X}\circ\tilde{T})(\widetilde{dz^j})\rangle\otimes\widetilde{\partial_{z^j}}\\
 \overset{(1)}{=}&\ \langle
\tilde{K}(\widetilde{\partial_{z^i}}),(d_{D_X}\circ\tilde{T})(\widetilde{dz^j})\rangle\otimes\widetilde{\partial_{z^j}}\\
 \overset{(2)}{=}&\ \langle
(d_{D_X}\circ\tilde{K})(\widetilde{\partial_{z^i}}),\tilde{T}(\widetilde{dz^j})\rangle\otimes\widetilde{\partial_{z^j}}\\
 \overset{(3)}{=}&\ \langle (\tilde{K}^{-1}\circ
d_{D_X}\circ\tilde{K})(\widetilde{\partial_{z^i}}),\widetilde{dz^j}\rangle\otimes\widetilde{\partial_{z^j}}\\
 =&\ (\tilde{K}^{-1}\circ d_{D_X}\circ\tilde{K})(\widetilde{\partial_{z^i}}),% Why the negative sign goes away?
\end{align*} where the identities $(1)$ and $(3)$ follow from equation (\ref{eqn:definition-T}) and identity $(2)$ follows from
the fact that
$d_{D_X}$ is a derivation
with respect to the pairing $\langle -,-\rangle$.
\end{proof}
It is clear that the wedge product of the map $\tilde{K}$ gives the desired isomorphism in Proposition \ref{prop:iso-CE-jet}.

\begin{bibdiv}
\begin{biblist}

\bib{AKSZ}{article}{
   author={Alexandrov, M.},
   author={Schwarz, A.},
   author={Zaboronsky, O.},
   author={Kontsevich, M.},
   title={The geometry of the master equation and topological quantum field
   theory},
   journal={Internat. J. Modern Phys. A},
   volume={12},
   date={1997},
   number={7},
   pages={1405--1429},
}

\bib{Barannikov-Kontsevich}{article}{
   author={Barannikov, S.},
   author={Kontsevich, M.},
   title={Frobenius manifolds and formality of Lie algebras of polyvector
   fields},
   journal={Internat. Math. Res. Notices},
   date={1998},
   number={4},
   pages={201--215},
}

\bib{Losev-I}{article}{
   author={Baulieu, L.},
   author={Losev, A.},
   author={Nekrasov, N.},
   title={Target space symmetries in topological theories. I},
   journal={J. High Energy Phys.},
   date={2002},
   number={2},
   pages={No. 21, 20},
}

\bib{BCOV}{article}{
   author={Bershadsky, M.},
   author={Cecotti, S.},
   author={Ooguri, H.},
   author={Vafa, C.},
   title={Kodaira-Spencer theory of gravity and exact results for quantum
   string amplitudes},
   journal={Comm. Math. Phys.},
   volume={165},
   date={1994},
   number={2},
   pages={311--427},
}

\bib{Kevin-book}{book}{
   author={Costello, K.},
   title={Renormalization and effective field theory},
   series={Mathematical Surveys and Monographs},
   volume={170},
   publisher={American Mathematical Society},
   place={Providence, RI},
   date={2011},
   pages={viii+251},
   isbn={978-0-8218-5288-0},
}

\bib{Kevin-CS}{article}{
   author={Costello, K.},
   title={A geometric construction of the Witten genus, II},
   eprint={arXiv:1111.4234 [math.QA]},
  }

\bib{Kevin-SUSY}{article}{
   author={Costello, K.},
   title={Notes on supersymmetric and holomorphic field theories in dimensions 2 and 4},
   eprint={arXiv:1112.0816 [math.QA]},
  }

\bib{Kevin-Owen}{article}{
   author={Costello, K.},
   author={Gwilliam, O.},
   title={Factorization algebras in quantum field theory},
   eprint={http://www.math.northwestern.edu/~costello/renormalization},
  }

\bib{Si-BCOV}{article}
{
   author={Costello, K.},
   author={Li, S.}
   title={Quantum BCOV theory on Calabi-Yau manifolds and the higher genus B-model},
   eprint={arXiv:1201.4501 [math.QA]}
}

\bib{Frenkel-Losev}{article}{
   author={Frenkel, E.},
   author={Losev, A.},
   title={Mirror symmetry in two steps: A-I-B},
   journal={Comm. Math. Phys.},
   volume={269},
   date={2007},
   number={1},
   pages={39--86},
}

\bib{Friedan}{article}{
   author={Friedan, D.},
   title={Nonlinear models in $2+\varepsilon$ dimensions},
   journal={Ann. Physics},
   volume={163},
   date={1985},
   number={2},
   pages={318--419}
}

  \bib{Owen-Ryan}{article}
  {
  author={Grady, R.},
  author={Gwilliam, O.},
  title={One-dimensional Chern-Simons and the $\hat{A}$-genus},
  eprint={arXiv:1110.3533 [math.QA]}
  }

  \bib{GH}{book}{
   author={Griffiths, P.},
   author={Harris, J.},
   title={Principles of algebraic geometry},
   note={Pure and Applied Mathematics},
   publisher={Wiley-Interscience [John Wiley \& Sons]},
   place={New York},
   date={1978},
   pages={xii+813},
   isbn={0-471-32792-1}
}

\bib{Yuan-Owen}{article}
{
author={Gwilliam, O.},
author={Shen, Y.},
title={personal communications}
}

\bib{mirror}{book}{
   author={Hori, K.},
   author={Katz, S.},
   author={Klemm, A.},
   author={Pandharipande, R.},
   author={Thomas, R.},
   author={Vafa, C.},
   author={Vakil, R.},
   author={Zaslow, E.},
   title={Mirror symmetry},
   series={Clay Mathematics Monographs},
   volume={1},
   note={With a preface by Vafa},
   publisher={American Mathematical Society},
   place={Providence, RI},
   date={2003},
   pages={xx+929},
}

 \bib{Kapranov}{article}
  {author={Kapranov, M.},
   title={Rozansky-Witten invariants via Atiyah classes}
   journal={Compositio Mathematica},
  volume={115},
  number={01}
  date={1999},
  pages={71-113},
  }

  \bib{CDC}{article}
  {
  author={Kapustin,A.},
  title={Chiral de Rham complex and the half-twisted sigma-model},
  eprint={arXiv:hep-th/0504074}
  }

\bib{LLS}{article}
{
   author={Li, C.},
   author={Li, S.},
   author={Saito, K.},
   title={Primitive forms via polyvector fields},
   eprint={arXiv:1311.1659  [math.AG]}
}

\bib{Si-elliptic}{article}
{
   author={Li, S.}
   title={BCOV theory on the elliptic curve and higher genus mirror symmetry},
   eprint={arXiv:1112.4063  [math.QA]}
}

\bib{Si-review}{article}
{
  author={Li, S.},
  title={Renormalization and Mirror symmetry},
  journal={SIGMA},
  volume={8},
  date={2012},
  pages={101},
  eprint={arXiv:1212.4234 [math.AG]}
}

\bib{Si-gauge}{article}{
   author={Li, S.},
   title={Variation of {H}odge structures, {F}robenius manifolds and {G}auge theory},
   eprint={arXiv:1303.2782  [math.AG]}
  }

  \bib{Losev}{article}
  {
  author={Losev,A.},
  title={personal communication}
  }

\bib{Nick}{article}
{
author={Rozenblyum, N.},
title={personal communications}
}

\bib{BV}{article}{
   author={Schwarz, A.},
   title={Geometry of Batalin-Vilkovisky quantization},
   journal={Comm. Math. Phys.},
   volume={155},
   date={1993},
   number={2},
   pages={249--260},
}

\bib{Yuan}{article}
{
author={Shen, Y.},
title={N=2 holomorphic sigma model},
journal={thesis},
year={2012}
}

\bib{vafa}{article}{
   author={Vafa, C.},
   title={Topological Landau-Ginzburg models},
   journal={Modern Phys. Lett. A},
   volume={6},
   date={1991},
   number={4},
   pages={337--346},
}

\bib{Witten-mirror}{article}{
   author={Witten, E.},
   title={Mirror manifolds and topological field theory},
   conference={
      title={Essays on mirror manifolds},
   },
   book={
      publisher={Int. Press, Hong Kong},
   },
   date={1992},
   pages={120--158},
}

\bib{Witten-CS}{article}{
   author={Witten, E.},
   title={Chern-Simons gauge theory as a string theory},
   conference={
      title={The Floer memorial volume},
   },
   book={
      series={Progr. Math.},
      volume={133},
      publisher={Birkh\"auser},
      place={Basel},
   },
   date={1995},
   pages={637--678},
}

\end{biblist}
\end{bibdiv}

%%%%%%%%%  References %%%%%%%%%%%%%

\end{document}